\DeclareFontFamily{U}{matha}{\hyphenchar\font45}
\DeclareFontShape{U}{matha}{m}{n}{
      <5> <6> <7> <8> <9> <10> gen * matha
      <10.95> matha10 <12> <14.4> <17.28> <20.74> <24.88> matha12
      }{}
\DeclareSymbolFont{matha}{U}{matha}{m}{n}
\DeclareFontFamily{U}{mathx}{\hyphenchar\font45}
\DeclareFontShape{U}{mathx}{m}{n}{
      <5> <6> <7> <8> <9> <10>
      <10.95> <12> <14.4> <17.28> <20.74> <24.88>
      mathx10
      }{}
\DeclareSymbolFont{mathx}{U}{mathx}{m}{n}
\DeclareMathSymbol{\obot}         {2}{matha}{"6B}
\DeclareMathSymbol{\bigobot}       {1}{mathx}{"CB}
\newcommand{\DistTo}{\xrightarrow{
   \,\smash{\raisebox{-0.65ex}{\ensuremath{\scriptstyle\sim}}}\,}}
\newcommand{\Lie}{\text{Lie}}
\theoremstyle{definition}
\newtheorem{theorem}[equation]{Theorem}
\newtheorem{proposition}[equation]{Proposition}
\newtheorem{definition}[equation]{Definition}
\newtheorem{lemma}[equation]{Lemma}
\newtheorem{corollary}[equation]{Corollary}
\newtheorem{remark}[equation]{Remark}
\newcommand{\diman}{\operatorname{diman}}
\numberwithin{equation}{section}
\newcommand{\be}{\begin{enumerate}}
\newcommand{\ee}{\end{enumerate}}
\newcommand{\bi}{\begin{itemize}}
\newcommand{\ei}{\end{itemize}}
\newcommand{\beq}{\begin{equation}}
\newcommand{\eeq}{\end{equation}}
\newcommand{\mf}{\mathfrak}
\newcommand{\ra}{\rightarrow }
\def\End{\operatorname{End}}
\def\Aut{\operatorname{Aut}}
\def\U{\operatorname{U}}
\def\Hom{\operatorname{Hom}}
\def\dim{\operatorname{dim}}
\def\id{\operatorname{id}}
\def\GL{\operatorname{GL}}
\def\SL{\operatorname{SL}}
\def\diag{\operatorname{diag}}
\def\ddag{{\,\dag}}
\def\Tr{\operatorname{Tr}}
\def\Res{\operatorname{Res}}
\def\Nrd{\operatorname{Nrd}}
\def\CC{\mathbb{C}}
\def\NN{\mathbb{N}}
\def\ZZ{\mathbb{Z}} 
\def\bbZ{\mathbb{Z}}
\def\bbG{\mathbb{G}}
\def\bbN{\mathbb{N}}
\def\C{{\mathcal{C}}}
\def\N{{\rm N}}
\def\p{\mathfrak{p}}
\def\>{\geqslant}
\def\<{\leqslant}
\def\({\left(}
\def\){\right)}
\def\presuper#1#2%
\tikzstyle{notestyleraw}=[
\newcommand{\piD}{\pi_D}
\newcommand{\piF}{\pi_F}
\def\C{{\rm C}}
\newcommand{\ti}[1]{\tilde{#1}}
\newcommand{\trd}{\text{trd}}
\newcommand{\tr}{\text{tr}}
\newcommand{\mc}{\mathcal}
\newcommand{\tiG}{\ti{G}}
\title{Semisimple characters for inner forms II: Quaternionic inner forms of classical groups}
\date{December 2017}
\author{D. Skodlerack}
\begin{document}

\maketitle

\begin{abstract}
In this article we consider a quaternionic inner form~$G$ of a~$p$-adic classical group defined over a non-archimedian local field of odd residue characteristic.
We construct all full self-dual semisimple characters for~$G$ and we classify their intertwining classes using endo-parameters. Further 
we prove an intertwining and conjugacy theorem for self-dual semisimple characters. We give the formulas for the set of intertwiners between self-dual 
semisimple characters. We count all~$G$-intertwining classes of self-dual semisimple characters which lift to the same~$\tiG$-intertwining class of a semisimple character for 
the ambient general linear group~$\tiG$ for~$G$. 

%
\end{abstract}

%
\section{Introduction}

Self-dual Semisimple characters play an important role in the classification of smooth representations of~$p$-adic classical groups in odd residue characteristic and 
for the explicit understanding of the Local Langlands correspondence and the Jaquet-Langlands correspondence. Throughout the introduction and the paper we only consider 
non-archimedian local fields of odd residue characteristic. One of them should be~$F$. To start the history of the development of semisimple characters we start with 
work of Bushnell and Kutzko~\cite{bushnellKutzko:93} who classified all irreducible complex representations of~$\GL_m(F)$ via types, the latter constructed using 
simple characters. This work which was generalized by S\'echerre and Stevens to~$\GL_m(D)$,~$D$ a non-split quaternion algebra of~$F$, see~\cite{secherreStevensVI:12}, here 
as well using simple characters. 
Self-dual semisimple characters were used for the exhaustiveness proof for the classification of all cuspidal irreducible representations for~$p$-adic classical 
groups in Stevens work~\cite{stevens:08} in introducing cuspidal types. A study of the intertwining of these characters was needed, 
see~\cite{kurinczukSkodlerackStevens:16}, to prove that Stevens' construction leads to equivalent cuspidal irreducible representations only
if the cuspidal types are conjugate up to equivalence. Motivated by that result the author generalized semisimple characters to~$\GL_m(D)$ in~\cite{skodlerack:17-1}
and to quaternionic inner forms of~$p$-adic classical groups in this paper. 

Let~$(D,\rho)$ be skew-field with an orthogonal anti-involution and~$(V,h)$ be~$\epsilon$-hermitian form with respect to~$\rho$ on a finite dimensional vector space~$V$. 
We consider the set~$G=\U(h)$ of isometries of~$h$ in the ambient general linear group~$\tiG$. A semisimple character of~$\tiG$ 
is a character on a compact open subgroup of~$\tiG$ which is constructed from a datum~$\Delta:=[\Lambda,n,r,\beta]$ such that the following holds. 
\begin{itemize}
 \item $\beta$ is an element of~$\End_D(V)$ generating a product~$E=\prod_{i\in I}E_i$ of fields over~$F$. It also gives a direct sum decomposition of~$V$ 
 into~$E_i\otimes D$-modules.  
 \item $\Lambda$ is an~$o_D$-lattice sequence in~$V$, i.e. a point of the Bruhat-Tits building~$B(\tiG)$ of~$\tiG$ with rational barycentric coordinates, which is in the image of
 the embedding
 \[\prod_i B(\tiG_i)\ra B(\tiG)\]
 where~$\tiG_i=\Aut_{E_i\otimes_F D}(V^i)$. In particular~$\Lambda$ splits into~$\oplus_{i\in I}\Lambda^i$.
 \item The integers~$n$ and~$r$,~$n>r$, are non-negative and indicate on which ``level'' the characters should be defined ($r$) and should be trivial ($n$).  
 \item Some more conditions which simplify the calculation of the intertwining.  
\end{itemize}

We call the set of semisimple characters defined by~$\Delta$ by~$\C(\Delta)$. All the characters in~$\C(\Delta)$ have the same domain which we call~$H(\Delta)$. 
To define self-dual semisimple characters we consider~$\Lambda^i$ to be a point in the building of~$(\prod_i\tiG_i)\cap G$ and~$\beta$ to be an element of the Lie-algebra of~$G$.
Now the self-dual semisimple characters for~$G$ are the restrictions of elements of~$\C(\Delta)$ to~$H(\Delta)\cap G$. We call the set of them by~$\C_-(\Delta)$. 

The first steps in the study of self-dual semisimple characters give the following results:
\begin{itemize}
 \item A nice intertwining formula, i.e. the set of elements in~$G$ which intertwine~$\theta_-\in\C_-(\Delta)$ is up to multiplication from left and right by a 
 compact subgroup of the~$1$-units of~$\Lambda$ the centralizer of~$\beta$ in~$G$, see Theorem~\ref{thmIntertwiningCharG}.
 \item Intertwining is an equivalence relation for full (this means~$r=0$) self-dual semisimple characters for~$G$, see Corollary~\ref{corTransIntertwiningCharG}.
 \item We have an intertwining and conjugacy theorem, see Theorem~\ref{thmIntConCharG}, which we explain now. 
\end{itemize}

The set of self-dual semisimple character~$\C_-(\Delta)$ comes along with an action of~$\sigma$ on the index set which leads to a disjoint union~$I=I_0\cup I_+\cup I_-$
where~$I_0$ is the set of~$\sigma$ fixed points and~$I_+$ is a section through the~$\sigma$-orbits of length~$2$. 
Given two full semisimple characters~$\theta_-\in\C_-(\Delta)$ and~$\theta'_-\in\C_-(\Delta')$ which intertwine by some element of~$G$, they possess a bijection~$\zeta: I\ra I'$ 
between the index sets such that there is an element of~$G\cap \prod_i\End_D(V^i,V^{\zeta(i)})$ which intertwines~$\theta_-$ with~$\theta'_-$. 
Further the intertwining gives a bijection~$\bar{\zeta}$ between the residue algebras of~$F[\beta]$ and~$F[\beta']$. Now Theorem~\ref{thmIntConCharG} states
\begin{theorem}[see~\ref{thmIntConCharG}]
 Suppose there is an element~$t\in G$ such that~$t\Lambda^i$ is equal to~$\Lambda^{\zeta(i)}$ and that the conjugation with~$t$ verifies~$\bar{\zeta}.$
 Then there is an element~$g\in G$ such that~$g.\theta_-=\theta'_-$. 
\end{theorem}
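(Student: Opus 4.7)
The plan is to reduce via $t$ to a normalised situation, then lift the intertwining to the ambient group $\tiG$, apply the $\GL_m(D)$ intertwining–implies–conjugacy theorem from \cite{skodlerack:17-1}, and finally descend the resulting element back to $G$ by a Galois/Lang-type argument for the involution $\sigma$.

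First I would exploit the hypothesis to normalise. Since $t\in G$, replacing $(\Delta',\theta'_-)$ by $(t^{-1}.\Delta',t^{-1}.\theta'_-)$ does not change the problem. After this change the two lattice decompositions coincide, the index bijection $\zeta$ becomes the identity, and $\bar\zeta$ becomes the identity on the residue algebras of $F[\beta]$ and $F[\beta']$ (by the assumption on $t$). In particular $\theta_-$ and $t^{-1}.\theta'_-$ are still $G$-intertwining full self-dual semisimple characters, defined from strata sharing the same splitting $\Lambda=\oplus_i\Lambda^i$ and the same $\sigma$-stable decomposition $I=I_0\sqcup I_+\sqcup I_-$.

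Next I would lift the problem to $\tiG$. Choose $\sigma$-self-dual semisimple characters $\tth\in\C(\Delta)$ and $\tth'\in\C(\Delta')$ restricting to $\theta_-$ and $t^{-1}.\theta'_-$ respectively. By the intertwining formula of Theorem~\ref{thmIntertwiningCharG}, the existence of a $G$-intertwiner for $\theta_-,\theta'_-$ forces $\tth$ and $\tth'$ to be $\tiG$-intertwining. The intertwining–implies–conjugacy theorem for $\GL_m(D)$, \cite{skodlerack:17-1}, then produces $\tilde g\in\tiG$ with $\tilde g.\tth=\tth'$ and $\tilde g\Lambda=\Lambda$.

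Finally I would descend $\tilde g$ to $G$. Because $\tth$ and $\tth'$ are both $\sigma$-self-dual, the element $c:=\sigma(\tilde g)^{-1}\tilde g$ lies in the stabiliser $\Sigma$ of $\tth$ inside the normaliser of $\Lambda$, and it is a $\sigma$-cocycle in the sense that $c\cdot\sigma(c)=1$. The group $\Sigma$ is an extension of a finite reductive quotient $\bar\Sigma$ by a pro-$p$ subgroup; by the reduction of the first paragraph, $\sigma$ permutes the natural factors of $\bar\Sigma$ by swapping the pairs in $I_+\sqcup I_-$ and preserving the $I_0$-factors. Odd residue characteristic kills $H^1(\sigma,\cdot)$ on the pro-$p$ part, Shapiro plus Hilbert~90 kills it on the $I_\pm$-factors, and Lang's theorem over the finite residue field kills it on the $I_0$-factors. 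Hence $c$ is a coboundary, giving $h\in\Sigma$ with $\sigma(h)^{-1}h=c$. Setting $g:=\tilde g h^{-1}$ yields $g\in G$ with $g.\tth=\tth'$, and restricting to $H(\Delta)\cap G$ gives $g.\theta_-=t^{-1}.\theta'_-$, whence $tg.\theta_-=\theta'_-$.

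The main obstacle is the last step: one must identify $\Sigma$ and its $\sigma$-action precisely enough to apply the cohomology vanishing, and ensure the cocycle $c$ lies in a pro-$p$-by-reductive group of the expected shape. The delicate bookkeeping is to track the $\sigma$-orbit decomposition of $I$ through the construction of the stabiliser of $\tth$ and to verify that the lift $h$ not only matches $c$ modulo the pro-$p$ radical, but really centralises $\tth$; this is where the reduction $\bar\zeta=\id$ from the first paragraph plays its essential role.
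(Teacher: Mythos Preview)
Your strategy diverges from the paper's, and the descent step contains a genuine gap. The appeal to ``Lang's theorem'' for the $I_0$-factors is not valid: Lang--Steinberg applies to Frobenius endomorphisms, not to involutions. For an involution of orthogonal type on $\GL_n$ over a finite field of odd characteristic, say $\sigma(g)=(g^{T})^{-1}$, a $1$-cocycle is a symmetric invertible matrix $c$ and a coboundary is one of the form $h^{T}h$; since there are two congruence classes of symmetric forms over a finite field (distinguished by discriminant), $H^{1}(\langle\sigma\rangle,\GL_n(k))$ has order~$2$. Already for $n=1$ one gets $H^{1}=k^{\times}/(k^{\times})^{2}\neq 1$. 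So you cannot conclude that the cocycle $c=\sigma(\tilde g)^{-1}\tilde g$ is a coboundary without further input. Notice also that in passing from $G$-intertwining to $\tiG$-intertwining and then applying the $\GL$ conjugacy theorem, you have discarded exactly the information (compatibility with $h$) that would be needed to pin down the class of $c$.

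The paper proceeds quite differently and never attempts a cohomological descent. It first uses the self-dual diagonalisation theorem for characters (Theorem~\ref{thmDiagonalizationCharG}) to arrange that $\beta$ and $\beta'$ have the same minimal polynomial and that $\theta'=\tau_{\Delta,\Delta'}(\theta)$. The intertwining formula (Theorem~\ref{thmIntertwiningCharG}) then shows $I_G(\Delta,\Delta')\neq\emptyset$, and the matching pair for the characters coincides with that for the strata. One is thereby reduced to the stratum-level statement Theorem~\ref{thmIntConStrataG}, whose proof handles $I_\pm$-orbits by the $\GL$ result (taking $u=\diag(u_i,\sigma_h(u_i)^{-1})$) and handles $I_0$-blocks by a direct construction: Skolem--Noether for $G$ produces $g_0\in G$ with $g_0\beta g_0^{-1}=\beta'$, and a Bruhat--Tits building argument via the Broussous--Lemaire map yields an element of $P(\Lambda)\cap G$ conjugating $\beta$ to $\beta'$. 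No $H^{1}$ computation is required.
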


In the second part we parametrize the intertwining classes of self-dual semisimple characters using endo-parameters. 
The idea is to break up~$\theta_-$ in elementary self-dual pieces, i.e. in self-dual semisimple characters where the index set is just one~$\sigma$-orbit,
~$\theta_{i,-}=\theta_-|_{H(\Delta)\cap\tiG_i}$, for~$i\in I_0$, and~$\theta_{i,-}=\theta_-|_{H(\Delta)\cap\tiG_{i,\sigma(i)}}$, for~$i\in I_+$. 
To every elementary character we attach an endo-class, see after Definition~\ref{defselfdualpss}. 
We denote the set of all elementary endo-classes by~$\mc{E}_-$. Further we can attach to any~$h_i$,~$i\in I_0$, 
an~$\epsilon$-hermitian~$\sigma_{E_i}\otimes\rho$-form~$\ti{h}_{\beta_i}$ which corresponds to some element~$t_i$ of the Witt group~$W_\epsilon(\sigma_{E_i}\otimes\rho)$ which we
call Witt tower. 
We introduce an equivalence relation of the set these kind of pairs~$(\gamma,t)$, see section~\ref{sectionEndoParameter}, 
and we call the equivalence classes~$(\rho,\epsilon)$-Witt types and the set of Witt types is denoted by~$\mc{W}_{\rho,\epsilon}$. 
An endo-parameter is a map of finite support
\[f_-=(f_1,f_2):\ \mc{E}_-\ra \bbN_0\times \mc{W}_{\rho,\epsilon}\]
such that for simple~$c_-\in \mc{E}_-$ the value~$f_1(c_-)$ essentially plays the role of a Witt index, and~$f_2(c_-)$ is a Witt type which occurs in~$c_-$ (note that in~$c_-$ can occur several Witt types), 
and such that for non-simple~$c_-$ we have that~$f_2(c_-)$ is hyperbolic and~$f_1(c_-)$ is a certain degree of~$c_-$.
These endo-parameters classify intertwining classes of self-dual semisimple characters, see Theorem~\ref{thmEndoparameter}. 

At the end in the appendix we calculate the number of~$G$-intertwining classes of self-dual semisimple characters whose semisimple lifts are in the 
same~$\tiG$-intertwining class.

I thank Shaun Stevens for his interest and remarks concerning this article.

\section{Quaternionic inner Forms of~$p$-adic classical groups}
\subsection{Fixing notation}\label{subsecFirstNotations}

At first, this article is the second in a series of articles where the first one is~\cite{skodlerack:17-1}
. There will be only one difference in the notation, see Remark~\ref{remNotationDiff}.
Let~$F$ be a non-archimedean local field of odd residual characteristic. We use the usual notation~$o_F,\mf{p}_F,\kappa_F$ and~$\nu_F$ for
the valuation ring, the valuation ideal, the residue field and the normalized valuation of~$F$, the image of~$\nu_F$ being~$\bbZ$, and we use 
similar notation for other non-archimedean local skewfields. Further we fix a non-split quaternion algebra~$D$ with centre~$F$ together with an orthogonal 
anti-involution~$\rho$ on~$D$, i.e. an~$F$-linear automorphism of~$D$ which satisfies~$\rho(xy)=\rho(y)\rho(x)$. 
We can choose~$\rho$ such that there is an unramified field extension~$L|F$ and a uniformizer~$\pi_D$ in~$D$ both point-wise fixed by~$\rho$ such 
that~$\pi_D$ normalizes~$L$. We denote the non-trivial automorphism of~$L|F$ by~$\tau$. The square of~$\pi_D$ is a uniformizer of~$F$ and we
denote it by~$\pi_F$. 

We further fix a finite dimensional non-zero~right-$D$ vector space~$V$ and an~$\epsilon$-hermitian form~$h$ on~$V$,~$\epsilon\in\{-1,1\}$, i.e. a~$\bbZ$-bilinear form such that: \[h(vx,wy)=\epsilon\rho(x)\rho(h(w,v))y,\] for all~$x,y\in D$ and~$v,w\in V$. 
We denote by~$G$ the set
\[\U(h)=\{g\in\Aut_D(V)|~h(gv,gw)=h(v,w)\text{ for all }v,w\in V\}\] of isometries of~$h$. 
 We write~$\sigma_h$ for the adjoint anti-involution of~$h$,~$\tilde{G}$ for the ambient general 
linear group~$\Aut_D(V)$ and~$A$ for the ring of~$D$-linear endomorphisms of~$V$.  
\begin{remark}\label{remNotationDiff}
There will be only one difference in the notation between~\cite{skodlerack:17-1} and this article. Precisely~$\ti{G}$ denotes the group~$\GL_D(V)$ and~$G$
denotes the classical group in question. 
\end{remark}

\subsection{$L$-rational points}

The group~$G$ is the set of~$F$-rational points of an~$F$-form of a symplectic or an orthogonal algebraic group~$\bbG$. 
In this section we will describe the set~$\bbG(L)$ by an hermitian~$L$-form on~$V$. 
The set of~$L$-rational points of~$\bbG$ is given by the anti-involution~$\sigma_h\otimes_F \id$ on~$\End_D(V)\otimes_F L$,
and the latter~$L$-algebra is canonically isomorphic to~$\End_L(V)$ via 
\[\Phi:\End_D(V)\otimes_F L\ra \End_L(V),\ f\otimes_F l\mapsto (v\mapsto f(vl)).\]
The group of~$L$-rational points of~$\bbG$ is algebraically isomorphic to 
\[\{g\in \End_L(V)|\ \Phi((\sigma_h\otimes_F \id)(\Phi^{-1}(g))) g=1\}.\]
We identify this set with~$\bbG(L)$.
By \cite{broussousStevens:09} there is a unique~$\epsilon$-hermitian~$L$-form~$h_L$ on~$V$ such that
\[\tr_{L|F}\circ h_L=\trd_{D|F}\circ h.\]

\begin{proposition}
$\bbG(L)$ is equal to~$\U(h_L)$.
\end{proposition}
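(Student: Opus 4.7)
The strategy is to show that the anti-involution
$\sigma := \Phi \circ (\sigma_h \otimes_F \id) \circ \Phi^{-1}$
on $\End_L(V)$ coincides with the adjoint anti-involution $\sigma_{h_L}$ of $h_L$. Since the description of $\bbG(L)$ given just before the proposition rewrites as $\bbG(L) = \{g \in \End_L(V) : \sigma(g) g = 1\}$, and tautologically $\U(h_L) = \{g : \sigma_{h_L}(g) g = 1\}$, the proposition follows at once from $\sigma = \sigma_{h_L}$.

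By construction of $\Phi$ the $F$-algebra $\End_L(V)$ is generated by the image of $\End_D(V)$ (embedded as $L$-endomorphisms by restriction of scalars, $a \mapsto \Phi(a \otimes 1)$) together with the right-multiplications $R_l$ for $l \in L$. Since both $\sigma$ and $\sigma_{h_L}$ are $F$-linear anti-involutions, it suffices to verify their agreement on these two families of generators. On $R_l$ the verification is formal: $\sigma(R_l) = \Phi(1 \otimes l) = R_l$; and because $\rho|_L = \id_L$ the uniqueness of $h_L$ compels it to be $\epsilon$-hermitian with respect to the trivial involution on $L$, so commutativity of $L$ gives $h_L(vl, w) = h_L(v, wl)$, whence $\sigma_{h_L}(R_l) = R_l$ as well.

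The heart of the argument is to establish $\sigma_{h_L}(a) = \sigma_h(a)$ for $a \in \End_D(V)$, i.e.\
$$h_L(\sigma_h(a) v, w) = h_L(v, a w), \qquad v, w \in V.$$
My approach: the adjoint property $h(a v, w) = h(v, \sigma_h(a) w)$ in $D$, combined with the defining relation $\tr_{L|F} \circ h_L = \trd_{D|F} \circ h$, yields the $F$-valued identity $\tr_{L|F} h_L(a v, w) = \tr_{L|F} h_L(v, \sigma_h(a) w)$. Replacing $w$ by $wl$ for arbitrary $l \in L$, and using both the $D$-linearity of $a$ and $\sigma_h(a)$ (so they commute with $R_l$) and the $L$-linearity of $h_L$ in the second slot, this promotes to
$$\tr_{L|F}\bigl( h_L(a v, w)\, l \bigr) \,=\, \tr_{L|F}\bigl( h_L(v, \sigma_h(a) w)\, l \bigr) \quad \text{for every } l \in L.$$
Non-degeneracy of the trace form on $L|F$ (separability holds since the residue characteristic is odd) then lifts this to the desired pointwise equality in $L$.

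The principal obstacle is precisely this final upgrade: one must parlay an identity valid only after reduced trace into a pointwise $L$-valued identity, and this relies crucially on the $L$-module structure on $V$ combined with the non-degeneracy of $\tr_{L|F}$. Once both anti-involutions agree on generators they coincide on all of $\End_L(V)$, and the proposition follows.
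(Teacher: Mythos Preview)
Your proof is correct and follows essentially the same approach as the paper: both reduce to showing that the push-forward anti-involution $\Phi\circ(\sigma_h\otimes\id)\circ\Phi^{-1}$ coincides with $\sigma_{h_L}$, and both do so by exploiting the defining relation $\tr_{L|F}\circ h_L=\trd_{D|F}\circ h$ together with the adjoint property of $\sigma_h$, then lifting from an $F$-valued trace identity to a pointwise $L$-valued one by varying an $L$-factor and invoking non-degeneracy of $\tr_{L|F}$. The only organisational difference is that the paper computes directly for a general element $\Phi(f\otimes l)$ (so the $R_l$-case is absorbed automatically), whereas you split into the generators $\End_D(V)$ and $R_l$ separately; you are also more explicit about the non-degeneracy step, which the paper leaves implicit.
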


\begin{proof}
 We need to show that~$\sigma_{h_L}$ is equal to the push forward of~$\sigma_h\otimes_{F}\id$ via~$\Phi$. Take~$v,w\in V,f\in \End_D(V)$ and~$l\in L$. 
 Then,
 \begin{eqnarray*}
  \tr_{L|F}(h_L(\Phi(f\otimes_Fl)v,w)) &=& \tr_{L|F}(h_L(f(v)l,w))\\
  &=& \tr_{L|F}(h_L(f(v),wl))\\
  &=& \trd_{D|F}(h(f(v),wl))\\
  &=& \trd_{D|F}(h(v,\sigma_h(f)(wl)))\\
  &=& \tr_{L|F}(h_L(v,\Phi((\sigma_h\otimes_F\id)(f\otimes l))(w)))\\
 \end{eqnarray*}
\end{proof}

\subsection{Witt group of a local non-split division algebra}\label{subsecWitt}
Let~$\tilde{D}$ be a central division algebra over~$F$ of finite degree, with anti-involution~$\tilde{\rho}$, such that~$(\tilde{D},\tilde{\rho})$ 
is orthogonal or unitary. Then the Witt group of~$\tilde{D}$ with respect to~$\epsilon$ and~$\tilde{\rho}$ is the set of  equivalence classes 
of~$\epsilon$-hermitian forms 
\[\tilde{h}:\ \tilde{V}\times\tilde{V}\ra (\tilde{D},\ti{\rho})\]
on finite dimensional~$\ti{D}$-vector spaces~$\ti{V}$, where two forms are equivalent, if they have isomorphic anisotropic components in there Witt-decomposition. We write~$\tilde{h}_\equiv$ for the classes.
We write~$W_{\epsilon,\tilde{\rho}}(\tilde{D})$ for the Witt group. We call this Witt group 
\begin{itemize}
 \item {\it orthogonal} if~$\epsilon=1$ and~$\tilde{\rho}|_F=\id_F$,
 \item {\it symplectic} if~$\epsilon=-1$ and~$\tilde{\rho}|_F=\id_F$ and 
 \item {\it unitary} if~$\tilde{\rho}|_F\neq\id_F$.
\end{itemize}

If we have a Gram matrix~$M$ for~$\tilde{h}$ we also write~$\langle M\rangle$ for the form with Gram matrix~$M$. If~$f$ is a symmetric or skew-symmetric element of~$\End_{\tilde{D}}(\tilde{V})$
 then we write~$\tilde{h}^f$ for the form which maps~$(\tilde{v},\tilde{w})\in \tilde{V}^2$ to~$\tilde{h}(\tilde{v},f(\tilde{w}))$ and we call~$\ti{h}^f$ the~\emph{twist} of~$\ti{h}$ by~$f$. 

We refer to section 6 of \cite{skodlerackStevens:16} for the description of the Witt group in the case where~$\tilde{D}$ is abelian.
We write~$C_n$ for the cyclic group of order~$n$. 
In the case~$(\ti{D},\ti{\rho})=(D,\rho)$, see~\ref{subsecFirstNotations}, we have the following result for the two Witt groups.
Recall, that~$D$ is not abelian.

\begin{proposition}\label{propWittGroupD}
\begin{enumerate}
 \item The orthogonal Witt group of~$(D,\rho)$ is isomorphic to an elementary~$2$-group of order~$8$. A set of generators is given by 
 \[\langle 1\rangle_\equiv,\ \langle \pi_D\rangle_\equiv,\ \langle \alpha \rangle_\equiv,\]
where~$\alpha$ is a non-square unit of~$L$. We could take~$\alpha$ skew-symmetric with respect to~$\tau$ if~$-1$ is not a square of~$F$. 
 \item The symplectic Witt group is a cyclic group of order two. The non-hyperbolic class is~$\langle \pi_D l_s\rangle_\equiv $, where ~$l_s$ is a non-zero element of~$L$ which 
 is skew-symmetric with respect to~$\tau$. 
\end{enumerate}
\end{proposition}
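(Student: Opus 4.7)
The plan is to diagonalise every $\epsilon$-hermitian form over $(D,\rho)$ (possible since the residue characteristic is odd) and reduce the classification to: (a) enumerating one-dimensional forms up to the twisted-conjugation equivalence $d\sim\rho(x)dx$ with $x\in D^\times$, and (b) detecting which small-dimensional diagonal forms admit an isotropic vector and so vanish in the Witt group. From the setup of \S\ref{subsecFirstNotations}, the $\rho$-symmetric part of $D$ is $L+F\pi_D = F\oplus F\alpha\oplus F\pi_D$ while the $\rho$-skew part is the one-dimensional $Fl_s\pi_D$.

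For (ii), every nonzero one-dimensional skew form is $\langle f\pi_Dl_s\rangle$ with $f\in F^\times$. Using $\tau(l_s)=-l_s$ and the fact that $\rho$ fixes $L$ pointwise, conjugation by $x\in L^\times$ yields $\rho(x)\pi_Dl_sx = \pi_Dl_s\Nrd_{L/F}(x)$, and conjugation by $\pi_D$ yields $\rho(\pi_D)\pi_Dl_s\pi_D = -\pi_F\pi_Dl_s$. Because $L/F$ is unramified in odd residue characteristic, $\Nrd_{L/F}(L^\times)=o_F^\times\cdot\pi_F^{2\ZZ}$, and together with multiplication by $-\pi_F$ these two operations cover all of $F^\times$; hence every nonzero one-dimensional skew form is isometric to $\langle\pi_Dl_s\rangle$. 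Surjectivity of $\Nrd_{L/F}$ on units gives $y\in L^\times$ with $\Nrd_{L/F}(y)=-1$, so $(1,y)$ is isotropic in $\langle\pi_Dl_s,\pi_Dl_s\rangle$, which is therefore hyperbolic. Hence $W_{-1,\rho}(D)\cong C_2$ with non-trivial class $\langle\pi_Dl_s\rangle_\equiv$.

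For (i), an analogous but longer orbit calculation on the three-dimensional $(L+F\pi_D)\setminus\{0\}$ under twisted conjugation shows that every nonzero one-dimensional symmetric form is isometric to one of $\langle 1\rangle$, $\langle\alpha\rangle$, $\langle\pi_D\rangle$, $\langle\alpha\pi_D\rangle$; the reduction again uses scaling by norms from $L$, conjugation by $\pi_D$, and the surjectivity of the reduced norm $\Nrd_{D/F}\colon D^\times\to F^\times$. One then verifies the relation $\langle d,d\rangle\sim 0$ for each $d\in\{1,\alpha,\pi_D\}$ by producing $x\in D^\times$ with $\rho(x)dx=-d$; in each case the equation reduces, after suitable manipulation, to the existence of a norm-$(-1)$ element in $L^\times$, which is available because the residue characteristic is odd. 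Hence the three proposed generators each have order dividing $2$, and three independent $\ZZ/2$-valued invariants of anisotropic representatives---for instance the Witt rank modulo $2$, the square class of the discriminant in $F^\times/(F^\times)^2$, and a $\pi_F$-valuation parity of the discriminant---distinguish the eight combinations obtained from subsets of $\{\langle 1\rangle,\langle\alpha\rangle,\langle\pi_D\rangle\}$ and prove that $W_{1,\rho}(D)\cong(\ZZ/2)^3$.

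The main obstacle is the orbit bookkeeping in (i): the twisted conjugation $x\mapsto\rho(x)dx$ couples the additive decomposition $D=L\oplus L\pi_D$ with the Galois action $\tau$, so proving that $\{1,\alpha,\pi_D,\alpha\pi_D\}$ is a complete set of one-dimensional representatives and that the three generators are mutually independent in the Witt group requires a careful case split according to whether $\nu_D(d)$ is even or odd and whether the unit component lies in $F^\times$ or $F^\times\alpha$. Modulo this case analysis, both parts of the proposition reduce to the same local class-field-theoretic inputs: the explicit image $\Nrd_{L/F}(L^\times)=o_F^\times\cdot\pi_F^{2\ZZ}$ and the surjectivity of $\Nrd_{L/F}$ on units.
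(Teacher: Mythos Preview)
Your overall strategy---diagonalise and then classify the one-dimensional forms $\langle d\rangle$ up to the relation $d\sim\rho(x)dx$---is exactly the route the paper takes, and your treatment of the symplectic case (ii) is correct and in fact more complete than the paper's sketch, since you explicitly exhibit an isotropic vector in $\langle\pi_Dl_s,\pi_Dl_s\rangle$.

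There is, however, a concrete error in your orthogonal case. You list $\langle\alpha\pi_D\rangle$ among the one-dimensional $\rho$-symmetric forms, but $\alpha\pi_D$ is \emph{not} $\rho$-symmetric. Since $\rho$ fixes $L$ and $\pi_D$ pointwise and $\pi_D$ conjugates $L$ via $\tau$, one has
\[
\rho(\alpha\pi_D)=\rho(\pi_D)\rho(\alpha)=\pi_D\alpha=\tau(\alpha)\pi_D,
\]
and this equals $\alpha\pi_D$ only if $\alpha\in F$. But every unit of $F$ is a square in $L$ (because $\kappa_F^\times\subseteq(\kappa_L^\times)^2$ for the quadratic residue extension in odd characteristic, and Hensel lifts this), so a non-square unit $\alpha$ of $L$ cannot lie in $F$. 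Hence $\alpha\pi_D\notin L+F\pi_D$ and $\langle\alpha\pi_D\rangle$ is not a hermitian form for $\rho$ at all. This is why your argument becomes internally inconsistent: you produce four purported one-dimensional classes but then work with only three generators and never say what happened to $\langle\alpha\pi_D\rangle$.

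Once this is corrected---there are exactly three one-dimensional classes $\langle 1\rangle$, $\langle\alpha\rangle$, $\langle\pi_D\rangle$---the rest of your outline goes through. The paper reaches the same conclusion not by a direct orbit computation but via two reduction lemmas: one showing $\langle d\rangle_\equiv=\langle xd\rangle_\equiv$ for all $x\in F^\times$ (Lemma~\ref{lemFtimesd}), and one showing that elements of $D^\times$ congruent modulo $\nu_D$ give the same Witt class (Lemma~\ref{lemCongrElementsd}). Together with the parity of $\nu_D(d)$ these force $\langle d\rangle_\equiv$ into one of the three classes without any case-splitting on the additive decomposition. Your explicit verification that each generator has order $2$ (via solving $\rho(x)dx=-d$) is a useful addition that the paper leaves implicit.
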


At first we need the next lemmas. 
\begin{lemma}\label{lemconjFEforWD}
For every symmetric element~$d\in D^\times$ and field extension~$F'|F$ which is fixed point-wise by~$\rho$ such that~$F[d]|F$ is isomorphic to~$F'|F$  there is an element~$g$ of 
$D^\times$ such that~$gF[d]\rho(g)=F'$. In fact if~$d\not\in F$ every element which conjugates the first field extension to the second works. 
\end{lemma}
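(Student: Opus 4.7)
The plan is to split on whether $d$ lies in $F$, and in the nontrivial case to first invoke Skolem--Noether for existence and then prove the stronger ``any conjugator works'' statement by a short centralizer argument.

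First I would dispose of the case $d\in F$. Then $F[d]=F$, and the hypothesis $F[d]\cong F'$ forces $F'=F$, so $g=1$ trivially gives $gF[d]\rho(g)=F=F'$.

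Next assume $d\notin F$. Then $F[d]$ is a quadratic extension of $F$, hence a maximal subfield of the quaternion algebra $D$. Applying Skolem--Noether to the given $F$-algebra isomorphism $F[d]\xrightarrow{\sim} F'$ produces an element $g\in D^\times$ with $gF[d]g^{-1}=F'$, which takes care of existence. It remains to verify the final claim: any such $g$ already satisfies $gF[d]\rho(g)=F'$. For this, note that $gdg^{-1}\in F'$ and $\rho$ fixes $F'$ pointwise, so applying $\rho$ and using $\rho(d)=d$ yields
\[
\rho(g^{-1})\,d\,\rho(g)=gdg^{-1},
\]
which rearranges to $d\cdot\rho(g)g=\rho(g)g\cdot d$. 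Thus $\rho(g)g$ lies in the centralizer of $d$ in $D$; since $F[d]$ is a maximal subfield, this centralizer is exactly $F[d]$, so $\rho(g)g\in F[d]$. Conjugating by $g$ gives $g\rho(g)\in gF[d]g^{-1}=F'$, and therefore
\[
gF[d]\rho(g)=(gF[d]g^{-1})\cdot(g\rho(g))=F'\cdot(g\rho(g))=F',
\]
the last equality because $g\rho(g)\in F'^\times$ and $F'$ is a field.

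The argument is essentially a one-step centralizer computation, so I do not anticipate any real obstacle. The only point deserving care is the appeal to maximality of $F[d]$ as a subfield of $D$: this is what collapses the centralizer of $d$ down to $F[d]$ itself and thereby forces $\rho(g)g\in F[d]$. Without this step one would only know $\rho(g)g$ commutes with $d$, which is not enough.
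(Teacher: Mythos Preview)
Your proof is correct and follows essentially the same approach as the paper's: both split on $d\in F$, invoke Skolem--Noether for existence in the nontrivial case, then use that $gdg^{-1}\in F'$ is $\rho$-fixed to deduce $\rho(g)g$ centralizes $d$ and hence lies in $F[d]$. Your write-up is slightly more explicit about the final step $gF[d]\rho(g)=F'\cdot(g\rho(g))=F'$, but the argument is the same.
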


\begin{proof}
In the case~$d\in F$ we take~$g=1$. So we only need to consider the case~$d\not\in F$, i.e. where~$F[d]|F$ has degree~$2$. 
By Skolem-Noether there is an element~$g$ conjugating~$F[d]|F$ to~$F'|F$. The element~$gdg^{-1}$ is symmetric by assumption. So, 
$\rho(g)g$ centralizes~$F[d]$ and is thus an element of~$F[d]$ and we obtain that~$gF[d]g^{-1}$ is equal to~$gF[d]\rho(g)$.
\end{proof}

\begin{lemma}\label{lemFtimesd}
For every symmetric or skew-symmetric element~$d\in D^\times$ and~$x\in F^\times$ the Witt classes~$\langle d \rangle_\equiv$ and~$\langle dx \rangle_\equiv$ are equal. 
\end{lemma}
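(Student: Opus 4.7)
The plan is as follows. Since $\langle e\rangle$ is anisotropic for any $e\in D^\times$, Witt equivalence coincides with isometry for one-dimensional forms, so the claim $\langle d\rangle_\equiv=\langle dx\rangle_\equiv$ reduces to producing $g\in D^\times$ with $\rho(g)\,d\,g=xd$. First I would note that the set
\[
S_d=\{x\in F^\times:\exists\,g\in D^\times\text{ with }\rho(g)dg=xd\}
\]
is a subgroup of $F^\times$ (composition of two isometries multiplies the $x$'s), and that it contains $F^{\times2}$ (take $g\in F^\times$ with $g^2=x$). Since $F$ is a local field of odd residual characteristic, $F^\times/F^{\times2}$ has order four, with representatives $\{1,\alpha^2,\pi_F,\alpha^2\pi_F\}$; so it suffices to hit each of these four cosets.

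The central case $d\in F^\times$ is handled at once: then $\rho(g)dg=d\,\rho(g)g$, and the four elements $g\in\{1,\alpha,\pi_D,\alpha\pi_D\}$ give $\rho(g)g\in\{1,\alpha^2,\pi_F,\alpha^2\pi_F\}$ respectively, covering all four cosets.

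For $d\notin F$ I would use Lemma~\ref{lemconjFEforWD} to reduce to the case that $K:=F[d]$ is one of the ``standard'' $\rho$-stable quadratic $F$-subalgebras of~$D$, namely $L$ or $F[\pi_D]$ (if $d$ is symmetric) or $F[\alpha\pi_D]$ (if $d$ is skew-symmetric). One then picks $\xi\in\{\pi_D,\alpha\}$ with $\xi^2\in F^\times$ so that $D=K\oplus K\xi$ and conjugation by $\xi$ induces the non-trivial Galois involution $\bar{(\cdot)}$ of $K|F$. A direct computation with $g=k_1+k_2\xi$ shows that $\rho(g)dg$ is an explicit combination of $d$, $\bar d$ and a term in $K\xi$; demanding $\rho(g)dg=xd$ with $x\in F^\times$ simultaneously kills the $K\xi$-component (a ``trace equation'') and forces the $K$-part to be a scalar multiple of $d$. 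Taking $k_2=0$ with $k_1^2\in F^\times$ gives $x=k_1^2$, which covers the unit cosets $F^{\times2}\cup\alpha^2 F^{\times2}$; taking $k_1=0$ with $k_2 d$ in one of the two $F$-eigenlines of $\bar{(\cdot)}$ gives $x=\xi^2\bar k_2^{\,2}\bar d/d\in F^\times$, which lies in a uniformizer coset. In the skew-symmetric case the analogous calculation gives $\rho(g)dg=d\bigl(N_{K/F}(k_1)-\alpha^2 N_{K/F}(k_2)\bigr)$, and the cosets missed by the norm image are obtained from mixed elements such as $g=1+\alpha$ (yielding $x=1-\alpha^2$).

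The main obstacle is the parity/squareness bookkeeping: in each subfield case one must verify that the cosets produced by $k_2=0$ and by $k_1=0$ together exhaust $F^\times/F^{\times2}$. The key input is the parity of $v_F(N_{K/F}(d))$, controlled by the (un)ramification of $K|F$: when $K=L$ this valuation is always even, so $\pi_F N_{L/F}(d)^{-1}$ automatically represents a uniformizer coset, and multiplication by $\alpha^2$ then produces the other uniformizer coset. The two ramified standard subfields are handled by a parallel---slightly more delicate---parity analysis.
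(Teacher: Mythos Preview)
Your overall strategy---reduce to a standard $\rho$-stable quadratic subfield via Lemma~\ref{lemconjFEforWD} and then exhibit enough $g$'s---is sound and is what the paper does. But there are two organisational differences and one genuine gap.

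\textbf{Comparison with the paper.} The paper does \emph{not} adapt the decomposition $D=K\oplus K\xi$ to $F[d]$. It uses the single decomposition $D=L\oplus L\pi_D$ for all three standard cases $d\in L^\times$, $d=\pi_D$, $d=\pi_D l_s$, and argues by the \emph{parity of $\nu_F(x)$} rather than by cosets of $F^{\times2}$: for $y\in L$ one always lands in even $\nu_F$, and for $y\in L^\times\pi_D$ in odd $\nu_F$. This avoids your ``slightly more delicate parity analysis'' for the ramified subfields entirely---there is no case split on $K$. Your $K$-adapted approach also works, but it multiplies the bookkeeping. (A notational point: your $\alpha$ is being used as an element of $L$ with $\alpha^2\in F^\times$ a non-square unit; in the paper's notation that element is $l_s$, while $\alpha$ denotes a non-square unit of $L$, which need not square into $F$.)

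\textbf{The gap.} In the skew-symmetric case you correctly obtain
\[
\rho(g)\,d\,g=\bigl(N_{K/F}(k_1)-l_s^{\,2}\,N_{K/F}(k_2)\bigr)\,d,
\]
but your claim that $g=1+l_s$ produces $x=1-l_s^{\,2}$ lying outside $N_{K/F}(K^\times)$ is not true in general: whether the residue of $1-l_s^{\,2}$ is a square in $\kappa_F$ depends on the field. The clean way to finish is to observe that the quadratic form $N_{K/F}(k_1)-l_s^{\,2}N_{K/F}(k_2)$ is exactly the reduced norm $\Nrd_{D|F}(g)$ with respect to the splitting $D=K\oplus K l_s$, and $\Nrd_{D|F}\colon D^\times\to F^\times$ is surjective for a local division algebra; this gives all of $F^\times$ at once. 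Similarly, your ramified symmetric case $K=F[\pi_D]$ is only asserted, not proved; the paper sidesteps this by never leaving the $L\oplus L\pi_D$ decomposition.
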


\begin{proof}
 We have to find an element~$y$ of~$D$ such that~$\rho(y)dy$ is equal to~$dx$. For the first case let us assume that 
~$d$ is~$\pi_D l_s$,~$\piD$ or in~$L^\times$. Using elements of~$y\in L$ we can obtain all~$xd$ for~$x\in F^\times$ with~$2|\nu_F(x)$. If we 
 use~$y\in L^\times\piD$ we obtain every~$xd$ for~$x\in F^\times$ with odd valuation. A general symmetric~$d$ generates a field extension
 which is generated by an element~$d'$ of~$Fl_s+F\piD$. If~$d'$ has an odd~$D$-valuation then~$F[d]$ is isomorphic to~$F[\piD]$ and if~$d'$ has an even~$D$-valuation
 then~$F[d]|F$ is unramified. Thus Lemma \ref{lemconjFEforWD} finishes the proof. 
\end{proof}

We say that two elements~$d$ and~$d'$ of~$D$ are congruent to each other modulo~$\nu_D$ if~$d-d'$ is an element of~$\p_D d$. 
This is an equivalence relation, and two non-zero elements~$d$ and~$d'$ are congruent modulo~$\nu_D$ if and only if~$dd'^{-1}$ is an 
element of~$1+\p_D$.

\begin{lemma}\label{lemCongrBasis}
Let~$a_i,\ i=1,2,3,4$ be an~$F$-splitting basis of~$D$ and suppose that two non-zero elements ~$d=\sum_i x_ia_i$ and 
$d'=\sum_i y_ia_i$ satisfy~$d d'^{-1}\in 1+\mf{p}_D$,~$x_i,y_i\in F$. Then, if~$\nu_D(x_1a_1)=\nu_D(d)$ then~$x_1y_1^{-1}\in 1+\mf{p}_F$.
\end{lemma}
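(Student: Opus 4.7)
The plan is to translate the congruence $d d'^{-1} \in 1 + \mathfrak{p}_D$ into a valuation inequality on $d - d'$ and then use the defining property of an $F$-splitting basis, namely that the $D$-valuation of any $F$-linear combination $\sum_i z_i a_i$ equals the minimum $\min_i \nu_D(z_i a_i)$ over the individual terms.

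First I would observe that, since $d d'^{-1} \in 1 + \mathfrak{p}_D \subset o_D^\times$, we have $\nu_D(d) = \nu_D(d')$, and the relation $d - d' = (d d'^{-1} - 1) d'$ forces $\nu_D(d-d') > \nu_D(d') = \nu_D(d)$. Writing the difference in the basis, $d - d' = \sum_i (x_i - y_i) a_i$, the splitting-basis property gives
\[
\nu_D((x_1 - y_1) a_1) \;\geq\; \min_i \nu_D((x_i - y_i) a_i) \;=\; \nu_D(d - d') \;>\; \nu_D(d) \;=\; \nu_D(x_1 a_1).
\]
Subtracting $\nu_D(a_1)$ from both sides yields $\nu_D(x_1 - y_1) > \nu_D(x_1)$, and since both sides lie in $F$ this is the same as $\nu_F(x_1 - y_1) > \nu_F(x_1)$.

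Next I would note that $x_1 \neq 0$ because $\nu_D(x_1 a_1) = \nu_D(d)$ is finite; therefore I can divide to get $(x_1 - y_1)/x_1 \in \mathfrak{p}_F$, equivalently $y_1/x_1 \in 1 + \mathfrak{p}_F$. In particular $y_1 \neq 0$, and since $1 + \mathfrak{p}_F$ is a multiplicative subgroup of $F^\times$ we conclude $x_1 y_1^{-1} \in 1 + \mathfrak{p}_F$, as required.

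The only non-routine ingredient is the equality $\nu_D(\sum_i z_i a_i) = \min_i \nu_D(z_i a_i)$ for $z_i \in F$, which is precisely what makes $\{a_i\}$ an \emph{$F$-splitting basis}; once this is available, the remainder is a direct valuation computation, and there is no real obstacle.
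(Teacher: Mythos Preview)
Your proof is correct and follows essentially the same route as the paper's: both derive $\nu_D(d-d')>\nu_D(d)$ from the congruence, invoke the splitting-basis property to obtain $\nu_D((x_1-y_1)a_1)>\nu_D(x_1a_1)$, and conclude $\nu_F(x_1-y_1)>\nu_F(x_1)$. Your write-up is in fact slightly more detailed, spelling out why $x_1\neq 0$ and why the final inequality yields $x_1y_1^{-1}\in 1+\mathfrak{p}_F$.
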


\begin{proof}
We have~$\nu_D(d-d')>\nu_D(d)$ and therefore~$\nu(x_ia_i-y_ia_i)>\nu_D(d)$ for all~$i$ because~$(a_i)$ is a splitting basis for~$D$. Thus, 
$\nu_D(x_1a_1-y_1a_1)>\nu_D(x_1a_1)$ if~$\nu_D(x_1a_1)=\nu_D(d)$, i.e.~$\nu_F(x_1-y_1)>\nu_F(x_1)$.  
\end{proof}

\begin{lemma}\label{lemCongrElementsd}
Suppose~$d, d'\in D^\times$ are two symmetric or skew-symmetric elements of~$D$ which are congruent to each other modulo~$\nu_D$. Then 
\begin{enumerate}
 \item If~$d$ and~$d'$ are not elements of~$F^\times (1+\mf{p}_D)$ then~$F[d]|F$ and~$F[d']|F$ are isomorphic and there is a~$g\in D^\times$ which conjugates the first field extension \label{lemCongrElementsd.i}
 to the second such that~$gdg^{-1} d'^{-1}\in 1+\mf{p}_{F[d']}$.
 \item The Witt classes~$\langle d\rangle_\equiv$ and~$\langle d'\rangle_\equiv$ are equal. \label{lemCongrElementsd.ii}
\end{enumerate}
\end{lemma}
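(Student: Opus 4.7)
The plan is to address the two parts in order: part \ref{lemCongrElementsd.i} by case analysis plus Skolem--Noether, and part \ref{lemCongrElementsd.ii} by a Hensel-type iteration, which bypasses the hypothesis of (i).

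For \ref{lemCongrElementsd.i}, I first note that $d$ and $d'$ must be of the same type (both symmetric or both skew): in the mixed case, applying $\rho$ to $dd'^{-1}\in 1+\mathfrak{p}_D$ together with $\rho(d)=\pm d$, $\rho(d')=\mp d'$ forces $2\in\mathfrak{p}_D$, impossible in odd residue characteristic. In the skew case, $D^{-\rho}=F\pi_D l_s$ is one-dimensional over $F$, so $d,d'$ are $F^\times$-multiples of $\pi_D l_s$; the scalar ratio $d/d'\in (1+\mathfrak{p}_D)\cap F^\times=1+\mathfrak{p}_F\subset 1+\mathfrak{p}_{F[d']}$, and $g=1$ works. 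In the symmetric case, I expand $d=a+bl_s+x\pi_D$ and $d'=a'+b'l_s+x'\pi_D$ ($a,b,x,a',b',x'\in F$) in the splitting basis $\{1,l_s,\pi_D,\pi_D l_s\}$ of $D$, and use $l_s\pi_D+\pi_D l_s=0$ to compute $(d-a)^2=b^2l_s^2+x^2\pi_F\in F$, so that $F[d]=F[\sqrt{b^2l_s^2+x^2\pi_F}]$ is controlled by the square class of $(d-a)^2$ in $F^\times/F^{\times2}$. The hypothesis $d\notin F^\times(1+\mathfrak{p}_D)$ forces the leading $\nu_D$-term of $d$ to come from the $l_s$- or $\pi_D$-direction, making the class unambiguous (either $l_s^2\cdot F^{\times2}$ giving $F[d]\cong L$, or $\pi_F\cdot F^{\times2}$ giving a ramified quadratic extension). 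By Lemma~\ref{lemCongrBasis} applied to this splitting basis, the leading coefficients of $d$ and $d'$ agree modulo $1+\mathfrak{p}_F$, so the square classes of $(d-a)^2$ and $(d'-a')^2$ coincide and $F[d]\cong F[d']$.

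By Skolem--Noether, I pick $g\in D^\times$ with $g(d-a)g^{-1}=d'-a'$ (choosing the $+$ rather than the $-$ Galois-conjugate target, since the latter would give $g d g^{-1}=-d'+(a+a')$, not close to $d'$). Then $gdg^{-1}=d'+(a-a')$, so $gdg^{-1}(d')^{-1}-1=(a-a')(d')^{-1}\in F[d']$. The splitting-basis non-cancellation property combined with $\nu_D(d-d')>\nu_D(d)$ gives $\nu_F(a-a')>\nu_D(d)/2$; treating the unramified and ramified regimes separately (recalling $\nu_D|_{F[d']}=2\nu_{F[d']}$ in the unramified case and $\nu_D|_{F[d']}=\nu_{F[d']}$ in the ramified case), this translates to $\nu_{F[d']}((a-a')/d')\geq 1$, which proves \ref{lemCongrElementsd.i}.

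For \ref{lemCongrElementsd.ii}, I would construct directly $h\in D^\times$ with $\rho(h)dh=d'$, which is the Witt-equivalence condition for the one-dimensional $\epsilon$-hermitian forms $\langle d\rangle$ and $\langle d'\rangle$. Writing $h=1+\delta$, the equation becomes $L(\delta)+\rho(\delta)d\delta=d'-d$, where $L(\delta):=d\delta+\rho(\delta)d$ is $F$-linear from $D$ into $D^{\epsilon\rho}$ (with $\epsilon$ the $\rho$-sign of $d$). A direct $F$-dimension count shows $L$ is surjective: the kernel is the Lie algebra of the unitary group of $\langle d\rangle$, of dimension $1$ (symmetric case) or $3$ (skew case), complementary to $\dim_F D^{\epsilon\rho}=3$ or $1$ respectively, which I verify by computing $L$ on the splitting basis. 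Restricted to an $F$-linear complement of $\ker L$, the map $L$ is valuation-preserving up to the shift $\nu_D(d)$. Since $d'-d\in\mathfrak{p}_D d\cap D^{\epsilon\rho}$, this lets one solve $L(\delta_1)=d'-d$ with $\delta_1\in\mathfrak{p}_D$; the quadratic residual $\rho(\delta_1)d\delta_1$ then has strictly higher $\nu_D$-order than $d'-d$, driving a Hensel iteration that converges in the $\nu_D$-topology to an exact solution $h$, giving $\langle d\rangle_\equiv=\langle d'\rangle_\equiv$. The main obstacle is verifying that the complement of $\ker L$ can be chosen so that the inverse of $L$ preserves valuations up to a bounded shift, which is what makes the iteration quadratically convergent rather than merely formal.
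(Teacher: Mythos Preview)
For part~\ref{lemCongrElementsd.i} your approach is essentially the paper's: pass to the trace-zero part $d-a\in Fl_s+F\pi_D$, whose square lies in $F$, and invoke Skolem--Noether. There is, however, a slip: Skolem--Noether only lets you conjugate $d-a$ to $d'-a'$ when they have the \emph{same} minimal polynomial, i.e.\ when $(d-a)^2=(d'-a')^2$, whereas you have only argued that these lie in the same square class of $F^\times$. The repair is exactly what the paper does: the congruence $d\equiv d'\pmod{\nu_D}$ forces $(d-a)^2/(d'-a')^2\in 1+\mathfrak{p}_F$, which in odd residue characteristic is $v^2$ for some $v\in 1+\mathfrak{p}_F$; then $v^{-1}(d-a)$ and $d'-a'$ do have the same minimal polynomial, and conjugation gives $gdg^{-1}=a+v(d'-a')$. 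The extra correction $(v-1)(d'-a')$ again lies in $\mathfrak{p}_{F[d']}d'$, so your conclusion survives.

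For part~\ref{lemCongrElementsd.ii} your route is genuinely different. The paper \emph{re-uses} the conjugator $g$ from~\ref{lemCongrElementsd.i}: it shows that $\rho(g)g\in F[d]$ has the shape $ay^2$ with $a\in F^\times$ and $y\in o_{F[d]}^\times$ (via a case split on the ramification of $F[d]|F$), deduces $\langle d\rangle_\equiv=\langle gdg^{-1}\rangle_\equiv$, and then finishes because $gdg^{-1}$ and $d'$ commute. Your Hensel argument is more uniform: it needs neither~\ref{lemCongrElementsd.i} nor any case split on whether $d\in F^\times(1+\mathfrak{p}_D)$. The obstacle you flag at the end has a clean resolution which you should make explicit: writing $\delta=d^{-1}\gamma$ one gets $L(d^{-1}\gamma)=\gamma+\epsilon\rho(\gamma)$, so $\ker L=d^{-1}D^{-\epsilon\rho}$ and on the complement $C:=d^{-1}D^{\epsilon\rho}$ one has $L(d^{-1}\gamma)=2\gamma$. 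Thus $L|_C$ is, up to the unit $2$, left multiplication by $d$, and the valuation shift is exactly $\nu_D(d)$; with this the iteration is genuinely contractive and your proof of~\ref{lemCongrElementsd.ii} is complete.
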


\begin{proof}
The residue characteristic is odd, so either both are skew-symmetric or both are symmetric. 

Case 1: Let us first consider the case where~$d$ and~$d'$ commute. It is worth to remark that this is already implied if both elements are skew-symmetric, because~$\rho$ is orthogonal. Now, then~$d$ and~$d'$ are elements of a~$\rho$-invariant field~$F'$, and we have~$d=ud'$ for some~$\rho$-symmetric element~$u$ of~$1+\mf{p}_{F'}$, using the congruence of~$d$ to~$d'$. The residue characteristic is odd and thus~$u$ is a square of a~$\rho$-symmetric element 
$v$ of~$1+\mf{p}_{F'}$ and thus~$\langle d \rangle_\equiv$ is equal to~$\langle d' \rangle_\equiv$. This proves~\ref{lemCongrElementsd.ii} in this case, and in~\ref{lemCongrElementsd.i} we can take~$g=1$.

Case 2: Suppose now that~$d$ and~$d'$ are symmetric and do not commute. We have two sub-cases. 

Case 2.1: At first let us assume that~$d$ is an element of~$F^\times (1+\mf{p}_D)$, say~$dx^{-1}\in (1+\mf{p}_D)$, for some~$x\in F^\times$. 
The elements~$d$ and~$x$ commute and therefore~$\langle d \rangle_\equiv$ is equal to~$\langle x \rangle_\equiv$, by Case 1, and similar we have~$\langle x\rangle_\equiv=\langle d'\rangle_\equiv$. So, the Witt classes of~$\langle d\rangle$ and~$\langle d'\rangle$ are the same.
It proves~\ref{lemCongrElementsd.ii}. The statement~\ref{lemCongrElementsd.i} is empty in this case. 

Case 2.2: Let us assume that~$d$ is not an element of~$F^\times (1+\mf{p}_D)$. $d'$ is congruent to~$d$ and thus it is not an element 
of~$F^\times (1+\mf{p}_D)$ either.  
\begin{enumerate}
 \item[~\ref{lemCongrElementsd.i}]  We apply Lemma~\ref{lemCongrBasis}  on 
\[d=x_1+x_2l_s+x_3\pi_D,\ d'=y_1+y_2l_s+y_3\pi_D\]
to obtain~$d-x_1$ and~$d'-y_1$ are congruent modulo~$\nu_D$. Indeed, either~$\nu_D(d)=\nu_D(x_1)$ and thus the~$x_1-y_1$ is an element of~$\p_D x_1$,  by Lemma~\ref{lemCongrBasis}, or~$\nu_D(d)<\nu_D(x_1)$ and all four elements~$d,d',d-x_1$ and~$d'-y_1$ are congruent to each other modulo~$\nu_D$. Nevertheless, the difference of~$d-x_1$ with~$d'-y_1$ must be an element of~$\p_D(d-x_1)$, because
\[\p_Dd=\p_D(d-x_1)\supseteq\p_Dx_1,\]
by~$\nu_D(d)=\nu_D(d-x_1)\leq\nu_D(x_1)$. Thus, we can assume for the proof of~\ref{lemCongrElementsd.i} that~$d$ and~$d'$ are elements of~$Fl_s+F\piD$. 
But then~$d^2$ and~$d'^2$ are congruent elements of~$F$, i.e. there is a one-unit~$v$ of~$F$ such that~$v^2d^2=d'^2$, because the residue characteristic is odd. Both,~$vd$ and~$d'$ are not elements of~$F$, and therefore~$vd$ is conjugate to~$d'$ by an element of~$D^\times$ by Skolem-Noether.
\item[~\ref{lemCongrElementsd.ii}] We know that there is an element~$g$ satisfying the first assertion. The element~$g$ is congruent to an element of~$L$ or to an element of~$L\piD$. Thus 
$\rho(g)g$, an element of~$F[d]$, is congruent to an element of the form~$\piF^i x^2$ with some unit~$x$ of~$L$. 
We claim that~$\rho(g)g$ is of the form~$ay^2$ for some~$a\in F^\times$ and~$y\in o_{F[d]}^\times$. If~$F[d]|F$ is ramified then~$\rho(g)g$ is congruent to an element of~$F$ because~$\nu_{F[d]}(\rho(g)g)$  is even and Hensel's Lemma implies the claim. 
If~$F[d]|F$ is unramified then
\[\pi_F^{-i}\rho(g)g=\rho(\piD^{-i}g)\piD^{-i}g.\] 
So, its residue class is a square and hence the claim. Thus we get
\[\langle d\rangle_\equiv=\langle gd\rho(g)\rangle_\equiv=\langle gdy^2g^{-1}\rangle_\equiv=\langle gyg^{-1}gdg^{-1}gyg^{-1}\rangle_\equiv.\] 
The latter term is~$\langle gdg^{-1} \rangle_\equiv$, because~$gyg^{-1}$ is~$\rho$-symmetric. Now,~$gdg^{-1}$ and~$d'$ are~$\rho$-symmetric, congruent and commute, and hence we get~\ref{lemCongrElementsd.ii} by Case 1. 
\end{enumerate}
\end{proof}

\begin{remark}
 The proof of Lemma \ref{lemCongrElementsd} shows that if two~$\rho$-symmetric elements~$d$ and~$d'$ are conjugate by an element of~$D^{\times}$ then
 they define the same Witt class.
\end{remark}

\begin{proof}[Proof of Proposition \ref{propWittGroupD}]
Using \cite[1.14]{bruhatTitsIV:87} every signed hermitian space~$(h,V)$ over~$D$ has a Witt basis, i.e. a basis which has a Gram matrix with two diagonal blocks:
\be
\item An anti-diagonal matrix having~$1$ and~$\epsilon$ in the anti-diagonal (block~$(1,1)$), 
\item A diagonal matrix which is the Gram matrix of an anisotropic subspace of~$V$ (block~$(2,2)$). 
\item The blocks~$(2,1)$ and~$(1,2)$ have only zero entries. 
\ee

Thus to classify all equivalence classes of signed hermitian forms, one only needs to classify the possible diagonal blocks~$(2,2)$, and for them only the diagonal entries.
The~$F$-vector space of skew-symmetric elements of~$D$ is~$F\piD l_s$. Thus, from Lemma \ref{lemFtimesd} follows that there is only one non-trivial 
Witt class in the symplectic case. The~$F$-vector space of symmetric elements of~$D$ is~$L+\piD F$. By Lemma \ref{lemCongrElementsd} and Lemma~\ref{lemFtimesd}, the class~$\langle d\rangle_\equiv$ is
$\langle \piD\rangle_\equiv$ or~$\langle 1\rangle_\equiv$ or~$\langle \alpha\rangle_\equiv$. They are pairwise different because of valuation reasons and because~$\alpha$ cannot be of the form~$\rho(x)x$ because the residue class 
of a unit of the form~$\rho(x)x$ is a square.
\end{proof}

\subsection{$G\subseteq \SL_D(V)$}
An element~$g$ of~$G$ satisfies~$g\sigma_h(g)=1$, and~$\rho|_F=\id$ leaves for~$\Nrd_{A|F}(g)$ only~$1$ or~$-1$.
It is remarkable that in fact all elements of~$G$ have reduced norm~$1$. We are going to prove this fact in this section. 

\begin{proposition}\label{propNrd1}
Every element of~$G$ has reduced norm~$1$. 
\end{proposition}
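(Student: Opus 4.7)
The plan is to compute~$\Nrd(g)$ by base-changing to~$L$ via the isomorphism~$\Phi$ from Section~2.2, and then to exploit the $D$-linearity of~$g$ to rule out the possibility~$\Nrd(g) = -1$. First I would observe that~$\sigma_h$ is an anti-involution of the first kind on the central simple $F$-algebra~$A$, since~$\rho|_F = \id$ forces~$\sigma_h|_F = \id$; such involutions preserve the reduced norm, so applying~$\Nrd$ to the defining relation~$g\sigma_h(g) = 1$ yields~$\Nrd(g)^2 = 1$ and hence~$\Nrd(g) \in \{\pm 1\}$.

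Next I would identify this reduced norm with an ordinary $L$-determinant. Since~$\Phi(g \otimes 1)$ is simply~$g$ viewed as an $L$-linear endomorphism of the $2\dim_D V$-dimensional $L$-space~$V$, one obtains~$\Nrd(g) = \det_L(g)$. Because~$\rho|_L = \id$, the form~$h_L$ is $\epsilon$-symmetric $L$-bilinear and~$g \in \bbG(L) = \U(h_L)$. If~$\epsilon = -1$, then~$\U(h_L)$ is a symplectic group over~$L$ and~$\det_L(g) = 1$ is automatic, so this case is settled.

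For the orthogonal case~$\epsilon = 1$ we have~$\det_L(g) \in \{\pm 1\}$, and the task is to exclude~$-1$. I would invoke the standard eigenvalue pairing~$\lambda \leftrightarrow \lambda^{-1}$ for~$g \in \O(h_L)$, with equal algebraic multiplicities of paired eigenvalues, to conclude~$\det_L(g) = (-1)^{m_{-1}}$ where~$m_{-1}$ is the algebraic multiplicity of the eigenvalue~$-1$. Thus~$\det_L(g) = -1$ would force~$m_{-1}$ odd.

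The crucial observation, and the resolution of the main obstacle, is that the generalized $(-1)$-eigenspace~$V_{-1}^{\mathrm{gen}} := \ker((g+1)^N)$ for large~$N$ is not merely an $L$-subspace but a right $D$-submodule of~$V$, because~$g$ is $D$-linear and therefore so is~$(g+1)^N$. Consequently~$\dim_L V_{-1}^{\mathrm{gen}} = 2\dim_D V_{-1}^{\mathrm{gen}}$ is even, contradicting~$m_{-1} = \dim_L V_{-1}^{\mathrm{gen}}$ being odd. This parity obstruction is precisely the feature distinguishing the quaternionic inner form from the split orthogonal group over~$L$, and is what ultimately forces~$G \subseteq \SL_D(V)$.
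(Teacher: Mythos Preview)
Your argument is correct and takes a genuinely different, more streamlined route than the paper. Both you and the paper dispose of the symplectic case in the same way, by observing that~$\bbG(L)=\U(h_L)$ is a symplectic group over~$L$. For the orthogonal case, however, the paper proceeds by induction on~$m=\dim_DV$: it factors the minimal polynomial of~$g$ over~$F$, studies how~$\sigma_h$ permutes the primary components, and then performs a case analysis (two swapped factors versus a single $\sigma_h$-stable power of an irreducible) to show that the constant term of the reduced characteristic polynomial is~$1$. Your approach bypasses all of this: once~$\Nrd(g)=\det_L(g)$ is in hand, the standard pairing~$\lambda\leftrightarrow\lambda^{-1}$ of eigenvalues of an orthogonal transformation reduces the question to the parity of the algebraic multiplicity~$m_{-1}$ of the eigenvalue~$-1$, and the $D$-linearity of~$g$ forces the generalized $(-1)$-eigenspace~$\ker((g+1)^N)\subseteq V$ to be a right $D$-submodule, whence~$m_{-1}=2\dim_D\ker((g+1)^N)$ is even. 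This is shorter and more conceptual; it isolates cleanly the single place where the quaternionic structure (namely~$[D:L]=2$) enters, whereas the paper's induction is more hands-on but perhaps makes the role of~$D$ less transparent.
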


\begin{proof}
 We prove the assertion by induction on~$m=\dim_DV$. 
 We only need to consider orthogonal groups, because in the symplectic case the group~$\bbG(L)$ is a split symplectic group where every element 
 has determinant~$1$.
 Thus let us assume that~$G$ is an inner form of an orthogonal group. 
 
 Induction start $(m=1)$: Let~$g$ be an element of~$G$. We take the canonical isomorphism from~$D$ to~$\End_DD$. 
 Then~$F[g]$ is a field in~$D$, which is invariant under the action of~$\sigma_h$. If~$\sigma_h(g)=g$ then~$g^2=1$, because~$g\in G$ and thus~$g\in\{1,-1\}$, i.e.
 the reduced norm of~$g$ would be~$1$. If~$\sigma_h(g)\neq g$ then~$\sigma_h|_{F[g]}$ is the Galois generator of~$F[g]|F$. Thus~$\Nrd(g)=N_{F[g]|F}(g)=\sigma_h(g)g=1$. 
 
 Induction step~$(m>1)$: We consider~$F[g]$ generated by an element~$g$ of~$G$ . The minimal polynomial~$\mu_{g,F}$ of~$g$ over~$F$ has a prime factorization
 \[\mu_{g,F}=P^{\nu_1}_1\ldots P^{\nu_l}_l,\]
 which gives a decomposition:
 \[F[g]\cong  F[X]/P^{\nu_1}_1\ldots F[X]/P^{\nu_l}_l\]
The factors  are permuted by~$\sigma_h$. For every orbit of this action we get a~$\sigma_h$-fixed idempotent. In the case of at least two orbits we conclude that~$g$
is an element of the product of at least two quaternionic inner forms of classical groups and the induction hypothesis implies that the reduced norm of the restrictions of~$g$ to the
factors is~$1$. Thus we are left with the case of one orbit.

Case~$\mu_{g,F}=P^{\nu_1}_1P^{\nu_2}_2$ and~$\sigma_h$ flips~$F[X]/P^{\nu_1}_1$ and~$F[X]/P^{\nu_2}_2$: The primitive idempotents~$1^1$  and~$1^2$ of~$F[g]$ 
give rise to decompositions~$V^1+V^2$ of~$V$ and~$g_1+g_2$ of~$g$. From~$g\in G$ 
follows~$g_1^{-1}=\sigma_h(g_2)$. Thus
\[1^1P_2(g_1^{-1})^{\nu_2}=1^1P_2(\sigma_h(g_2))^{\nu_2}=\sigma_h(1^2P_2(g_2)^{\nu_2})=\sigma_h(1^20)=0.\]
We put~$b_0=P_2(0)$. Note that~$b_0$ is non-zero. Thus,  the polynomials $\frac{1}{b_0}P_2(X^{-1})X^{\deg P_2}$ and~$P_1$ coincide by symmetry and minimality. 
Further, the reduced characteristic polynomials of~$g$ and~$g^{-1}$ coincide by~$\sigma_h(g)=g^{-1}$, we call the polynomial~$\chi$, and by linear algebra~$X^{md}\chi(X^{-1})$ and~$\chi$ are equal up to multiplication by an element of~$F^\times$. Thus~$P_1$ and~$P_2$ have the same multiplicity in~$\chi$,
 and therefore~$\chi(0)=1$, taking~$a_0=P_1(0)=\frac{1}{b_0}$ into account.

Case~$\mu_{g,F}=P^{\nu}$: As in the case above we obtain that~$X^{\deg P}P(X^{-1})\frac{1}{P(0)}$ and~$P$ coincide. We split~$P$ in an algebraic closure of~$F$: 
\[P=(X-\lambda_1)\ldots (X-\lambda_l).\]
Then inverse map of~$F^\times$ permutes the roots of~$P$. Thus~$P(0)=1$ if every root of~$P$ satisfies~$\lambda^{-1}\neq\lambda$. If the latter is not the case then~$P$ has a root~$\lambda$ which is~$1$ or~$-1$,
and in this case~$P=(X-\lambda)$ because it is irreducible. Then~$\chi$ is an even power of~$(X-\lambda)$ and thus~$\chi(0)=1.$ 
\end{proof}

\section{Stevens' cohomology argument on double cosets}
Let~$p$ and~$l$ be different primes and let~$\Gamma$ be an~$l$-group acting continuously on some topological Hausdorff group~$Q$. Kurinczuk 
and Stevens proved the following result in~\cite[2.7]{KurinczukStevens:15}: We denote by~$S^\Gamma$ the set of~$\Gamma$-fixed points of~$S$ 
for any subset~$S$ of~$Q$.

\begin{theorem}[\cite{KurinczukStevens:15}~2.7(ii)(b)]\label{thmDC}
 Suppose~$U_1$ and~$U_2$ are two~$\Gamma$-stable pro-$p$-subgroups of~$Q$. Let~$H$ be a further~$\Gamma$-stable subgroup of~$Q$ such that for every~$h\in H$ the following identity holds:
 \begin{equation}\label{eqDCCShaun}
 (U_1hU_2)\cap H=(U_1\cap H)h(U_2\cap H).
\end{equation}
 Then, we have the double coset decomposition:
 \[(U_1HU_2)^\Gamma=U_1^\Gamma H^\Gamma U_2^\Gamma.\]
\end{theorem}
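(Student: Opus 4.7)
The plan is to reduce the statement to the vanishing of continuous non-abelian cohomology: if $\Gamma$ is an $l$-group acting continuously on a pro-$p$ group $P$ with $p\neq l$, then the pointed set $H^1(\Gamma,P)$ is trivial, so every continuous $1$-cocycle $\Gamma\to P$ is a coboundary. I will apply this three times, to three different pro-$p$ subgroups built out of $U_1$, $U_2$ and $H$.

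As a template I would first prove the auxiliary identity $(HU_2)^\Gamma=H^\Gamma U_2^\Gamma$, which does not even require hypothesis~(\ref{eqDCCShaun}). Writing $q=hu_2\in(HU_2)^\Gamma$ and comparing $\gamma(q)$ with $q$ gives $h^{-1}\gamma(h)=u_2\gamma(u_2)^{-1}$, a $1$-cocycle valued in the closed, hence pro-$p$, subgroup $H\cap U_2$. Killing it produces $y\in H\cap U_2$ with $hy^{-1}\in H^\Gamma$; then $yu_2=(hy^{-1})^{-1}q$ is automatically in $U_2^\Gamma$.

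For the main statement, take $q=u_1hu_2\in(U_1HU_2)^\Gamma$. The $\Gamma$-invariance of $q$ yields
\[(u_1^{-1}\gamma(u_1))\,\gamma(h)\,(\gamma(u_2)u_2^{-1}) = h \in H,\]
which places the left-hand side in $(U_1\gamma(h)U_2)\cap H$; hypothesis~(\ref{eqDCCShaun}) then forces $\gamma(h)\in (U_1\cap H)\,h\,(U_2\cap H)$, so the double coset $D:=(U_1\cap H)h(U_2\cap H)$ is $\Gamma$-stable. The key step is to produce a $\Gamma$-fixed representative $h^{*}$ of $D$. Choosing, for each $\gamma$, elements $\alpha_\gamma\in U_1\cap H$ and $\beta_\gamma\in U_2\cap H$ with $\gamma(h)=\alpha_\gamma h\beta_\gamma^{-1}$, the class of $(\alpha_\gamma,\beta_\gamma)$ in the coset space $\bigl((U_1\cap H)\times(U_2\cap H)\bigr)/K_h$, where $K_h=\{(\alpha,\beta):\alpha h\beta^{-1}=h\}$, is a well-defined non-abelian $1$-cocycle; trivializing it yields $(\mu,\nu)\in(U_1\cap H)\times(U_2\cap H)$ with $h^{*}:=\mu h\nu^{-1}\in H^\Gamma$. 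Absorbing $\mu$ and $\nu$ rewrites $q=u_1^{*}h^{*}u_2^{*}$ with $u_1^{*}=u_1\mu^{-1}\in U_1$, $u_2^{*}=\nu u_2\in U_2$ and $h^{*}\in H^\Gamma$.

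A third application of the same principle closes the argument. Since $h^{*}$ and $q$ are $\Gamma$-fixed, $u_1^{*-1}\gamma(u_1^{*})=h^{*}(u_2^{*}\gamma(u_2^{*})^{-1})h^{*-1}$ is a $1$-cocycle in the $\Gamma$-stable pro-$p$ group $W:=U_1\cap h^{*}U_2 h^{*-1}$. Some $w\in W$ satisfies $u_1^{*-1}\gamma(u_1^{*})=w^{-1}\gamma(w)$, so $u_1^{*}w^{-1}\in U_1^\Gamma$, and writing $\widetilde w:=h^{*-1}wh^{*}\in U_2$ gives $q=(u_1^{*}w^{-1})\,h^{*}\,(\widetilde w u_2^{*})\in U_1^\Gamma H^\Gamma U_2^\Gamma$, the rightmost factor being $\Gamma$-fixed for free. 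I expect the genuinely delicate part to be the middle step, where the cocycle $(\alpha_\gamma,\beta_\gamma)$ is only well-defined up to the non-normal pro-$p$ stabilizer $K_h$: the $H^1$-vanishing has to be applied in a set-theoretic, non-abelian form — either by a stepwise lifting along a $\Gamma$-stable open filtration of $(U_1\cap H)\times(U_2\cap H)$ or by phrasing the obstruction directly on the quotient set. Hypothesis~(\ref{eqDCCShaun}) is exactly what confines this obstruction to the pro-$p$ world where the cohomology vanishes.
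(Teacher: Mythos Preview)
Your approach is correct and is essentially the same as the paper's: the paper does not prove Theorem~\ref{thmDC} directly (it is a citation) but proves the generalization Proposition~\ref{propDCnew}, whose proof specializes to exactly your strategy when~$g=1$. The difference is purely in packaging. The paper isolates your ``delicate middle step'' as the standalone Lemma~\ref{lemKurStevDC2p7iia}: for $\Gamma$-stable pro-$p$ subgroups~$U_1',U_2'$, a double coset~$U_1'hU_2'$ is $\Gamma$-stable if and only if it contains a $\Gamma$-fixed point. Applied with~$U_i'=U_i\cap H$, this immediately gives your~$h^{*}\in H^\Gamma$ without having to wrestle with cocycles modulo the non-normal stabilizer~$K_h$; the lemma itself is proved by the orbit-counting argument modulo~$l$ on finite $p$-power quotients plus compactness, which is precisely the ``stepwise lifting along a $\Gamma$-stable open filtration'' you allude to. Your final cocycle step in~$W=U_1\cap h^{*}U_2h^{*-1}$ is then exactly Lemma~\ref{lemKurStevDC2p7i}. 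Your preliminary identity~$(HU_2)^\Gamma=H^\Gamma U_2^\Gamma$ is not needed.
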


In this section we are going to generalize this result in allowing~$\Gamma$-stable cosets of~$H$ instead of~$H$. 

\begin{proposition}\label{propDCnew}
 Suppose~$U_1$ and~$U_2$ are two~$\Gamma$-stable pro-$p$-subgroups of~$Q$ and~$H$ is a further~$\Gamma$-stable subgroup of~$Q$. 
 Suppose~$gH$ is a~$\Gamma$-stable coset of~$H$ in~$Q$ such that for every~$h\in H$ the following identity holds:
 \begin{equation}\label{eqDCCnew}
 (U_1ghU_2)\cap gH=(U_1\cap gHg^{-1})gh(U_2\cap H).
   \end{equation} 
  Then, we have:
  \[(U_1gHU_2)^\Gamma=U_1^\Gamma (gH)^\Gamma U_2^\Gamma.\]
\end{proposition}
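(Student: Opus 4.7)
The inclusion $U_1^\Gamma(gH)^\Gamma U_2^\Gamma\subseteq(U_1gHU_2)^\Gamma$ is immediate, so the plan concerns the reverse inclusion. The strategy is to imitate the proof of Theorem~\ref{thmDC}, the new ingredient being that $g$ itself is only $\Gamma$-fixed modulo $H$.

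The key preliminary point is that $gHg^{-1}$ is $\Gamma$-stable, even though $g$ need not be $\Gamma$-fixed. Indeed, $\Gamma$-stability of $gH$ yields for each $\gamma\in\Gamma$ an element $h_\gamma\in H$ with $\gamma(g)=gh_\gamma$, and then
\[\gamma(gHg^{-1})=gh_\gamma Hh_\gamma^{-1}g^{-1}=gHg^{-1}\]
since $h_\gamma\in H$ and $H$ is $\Gamma$-stable. It follows that $N_1:=U_1\cap gHg^{-1}$ and $N_2:=U_2\cap H$ are $\Gamma$-stable pro-$p$-subgroups of $Q$, and the hypothesis \eqref{eqDCCnew} is precisely the coset analogue of \eqref{eqDCCShaun}, reading $(U_1\cdot gh\cdot U_2)\cap gH=N_1\cdot gh\cdot N_2$ for every $h\in H$.

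Now take $x\in(U_1gHU_2)^\Gamma$ and fix a factorisation $x=u_1gh u_2$ with $u_1\in U_1$, $h\in H$, $u_2\in U_2$. For each $\gamma\in\Gamma$, the identity $\gamma(x)=x$ together with $\gamma(gh)\in gH$ gives a second factorisation of $x$ of the same type, and \eqref{eqDCCnew} converts the discrepancy of the two factorisations into data living inside the $\Gamma$-stable pro-$p$-group $N_1\times N_2$. The cohomological mechanism of \cite[2.7]{KurinczukStevens:15}, which exploits the triviality of $H^1$ of the $l$-group $\Gamma$ acting on a pro-$p$-group (using $p\neq l$), then produces elements $n_1\in N_1$ and $n_2\in N_2$ such that $u_1':=u_1n_1^{-1}$ and $u_2':=n_2^{-1}u_2$ are $\Gamma$-fixed. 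Because $n_1\in gHg^{-1}$ and $n_2\in H$, the product $gh':=n_1\,gh\,n_2=g\cdot(g^{-1}n_1g)hn_2$ still lies in $gH$, yielding $x=u_1'\cdot gh'\cdot u_2'$ with $gh'=(u_1')^{-1}x(u_2')^{-1}\in(gH)^\Gamma$.

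The main obstacle is the transfer of the cohomological step from the subgroup setting of Theorem~\ref{thmDC} to the coset setting here. After the observation that $gHg^{-1}$ is $\Gamma$-stable, however, the twist by $g$ becomes invisible to the cohomology: the asymmetry between $N_1=U_1\cap gHg^{-1}$ on the left and $N_2=U_2\cap H$ on the right in \eqref{eqDCCnew} is exactly the asymmetry needed to absorb the correction factors on each side via the conjugation $g^{-1}N_1g\subseteq H$, and the argument should therefore go through with only formal modifications to the proof of \cite[2.7(ii)(b)]{KurinczukStevens:15}.
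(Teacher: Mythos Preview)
Your approach is essentially the same as the paper's: both adapt the argument of \cite[2.7(ii)(b)]{KurinczukStevens:15} to cosets, and your explicit verification that $gHg^{-1}$ is $\Gamma$-stable is a point the paper leaves implicit. The one place where the paper is slightly more careful is the cohomological step: rather than claiming directly that $u_1'=u_1n_1^{-1}$ and $u_2'=n_2^{-1}u_2$ with $n_i\in N_i$ can be made $\Gamma$-fixed (which does not follow straightforwardly, since the cocycle $(u_1^{-1}\gamma(u_1),\gamma(u_2)u_2^{-1})$ need not land in $N_1\times N_2$), the paper first applies Lemma~\ref{lemKurStevDC2p7iia} to the $\Gamma$-stable set $N_1\,gh\,N_2$ to find a $\Gamma$-fixed element $gh'\in gH$, and then applies Lemma~\ref{lemKurStevDC2p7i} to the triple $(U_1,gh',U_2)$ to obtain $x\in U_1^\Gamma\,gh'\,U_2^\Gamma$.
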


\begin{remark}\label{remDoubleCoset}
 The condition~\eqref{eqDCCnew} for~$U_1,g,H,U_2$ is equivalent to~\eqref{eqDCCShaun}
 for~$g^{-1}U_1g,H,U_2$. So it is enough to establish~\eqref{eqDCCShaun} for a big class of triples~$U'_1,H',U'_2$. 
\end{remark}

The proof is literally the same, but we repeat the argument where minor changes occur. At first we need the following two statements
from~\cite{KurinczukStevens:15}.

\begin{lemma}[\cite{KurinczukStevens:15}~2.7(i)]\label{lemKurStevDC2p7i}
 Suppose that~$U_1$ and~$U_2$ are two subgroups of~$Q$ such that the (non-abelian) cohomology~$H^1(\Gamma,gU_1g^{-1}\cap U_1)$ is trivial. Then we have for any~$\Gamma$-fixed element~$g$ of~$Q$ the identity~$(U_1gU_2)^\Gamma=U_1^\Gamma gU_2^\Gamma$.
\end{lemma}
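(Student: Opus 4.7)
The plan is to prove the nontrivial inclusion $(U_1 g U_2)^\Gamma \subseteq U_1^\Gamma g U_2^\Gamma$ by a non-abelian $1$-cocycle argument; the reverse inclusion $U_1^\Gamma g U_2^\Gamma \subseteq (U_1 g U_2)^\Gamma$ is automatic since $g$ is $\Gamma$-fixed. Fix $x \in (U_1 g U_2)^\Gamma$ and any factorisation $x = u_1 g u_2$ with $u_i \in U_i$. For every $\gamma \in \Gamma$ the relation $\gamma(x)=x$, combined with $\gamma(g)=g$, becomes $\gamma(u_1)\,g\,\gamma(u_2) = u_1 g u_2$. Rearranging gives
\[
c_\gamma \;:=\; u_1^{-1}\gamma(u_1) \;=\; g\,(\gamma(u_2)u_2^{-1})\,g^{-1},
\]
so $c_\gamma$ lies in $U_1 \cap g U_2 g^{-1}$. (I read the $H^1$ hypothesis in the Kurinczuk--Stevens sense that $H^1(\Gamma,\, U_1 \cap g U_2 g^{-1})$ vanishes, which is the intersection that appears naturally here.)

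Next I would verify that $\gamma \mapsto c_\gamma$ is a continuous $1$-cocycle for the $\Gamma$-action on $U_1 \cap g U_2 g^{-1}$ obtained by twisting the ambient action by the $0$-cochain $u_1$, namely $\gamma \star y := u_1^{-1}\gamma(u_1 y u_1^{-1})u_1$. This is the standard device that makes $c_{\gamma\delta} = c_\gamma \cdot (\gamma \star c_\delta)$ hold automatically, and continuity of $\gamma \mapsto c_\gamma$ follows from continuity of $\Gamma \to \operatorname{Aut}(Q)$. The assumed triviality of $H^1$ then supplies an element $w \in U_1 \cap g U_2 g^{-1}$ with $c_\gamma = w^{-1}\gamma(w)$ for every $\gamma$; equivalently $u_1 w^{-1}$ is $\Gamma$-fixed and so lies in $U_1^\Gamma$.

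Finally I would rewrite the factorisation as
\[
x \;=\; u_1 g u_2 \;=\; (u_1 w^{-1})\, g \, \bigl((g^{-1}wg)\,u_2\bigr).
\]
Because $w \in g U_2 g^{-1}$, the element $u_2' := (g^{-1}wg)\,u_2$ lies in $U_2$; and because $x$, $u_1 w^{-1}$, and $g$ are all $\Gamma$-fixed, $u_2'$ is $\Gamma$-fixed as well, so $u_2' \in U_2^\Gamma$. Hence $x = (u_1 w^{-1})\, g\, u_2' \in U_1^\Gamma g U_2^\Gamma$, as required. The only step that needs care is the twisted $\Gamma$-action under which $c_\gamma$ is a cocycle, together with the observation that the trivialising element $w$ genuinely belongs to $g U_2 g^{-1}$, which is precisely what allows $g^{-1} w g$ to slide back into $U_2$; beyond this bookkeeping I do not expect any substantive obstacle.
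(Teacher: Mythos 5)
Your strategy is the standard cocycle argument behind the cited result (note the paper itself does not reprove this lemma; it only imports it from Kurinczuk--Stevens), and the skeleton is sound: extract $c_\gamma:=u_1^{-1}\gamma(u_1)\in U_1\cap gU_2g^{-1}$, trivialise it by some $w$, then absorb $w$ into the factorisation. Two corrections, though. First, a small algebra slip: from $\gamma(u_1)g\gamma(u_2)=u_1gu_2$ one gets $c_\gamma=g\,u_2\gamma(u_2)^{-1}g^{-1}$, not $g\,\gamma(u_2)u_2^{-1}g^{-1}$; this is harmless, since membership in $gU_2g^{-1}$ is all you use. Second, and more substantively, the ``twisted action'' step is both unnecessary and false as stated: with $\gamma\star y=u_1^{-1}\gamma(u_1yu_1^{-1})u_1=c_\gamma\,\gamma(y)\,c_\gamma^{-1}$ one computes $\gamma\star c_\delta=c_{\gamma\delta}c_\gamma^{-1}$, hence $c_\gamma\cdot(\gamma\star c_\delta)=c_\gamma c_{\gamma\delta}c_\gamma^{-1}$, which is not $c_{\gamma\delta}$ in general. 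No twist is needed: any map of coboundary shape $\gamma\mapsto u^{-1}\gamma(u)$ automatically satisfies $c_{\gamma\delta}=c_\gamma\,\gamma(c_\delta)$ for the ambient action, and the group $U_1\cap gU_2g^{-1}$ is stable under that action because $g$ is $\Gamma$-fixed and $U_1,U_2$ are $\Gamma$-stable --- a hypothesis you use implicitly whenever you write $\gamma(u_i)\in U_i$, and which is part of the setting in which the lemma is applied in this paper. With that repair, triviality of $H^1(\Gamma,U_1\cap gU_2g^{-1})$ --- your reading of the hypothesis, which is indeed the intended one; the group $gU_1g^{-1}\cap U_1$ printed in the statement looks like a typo --- produces $w$ with $c_\gamma=w^{-1}\gamma(w)$, and your final bookkeeping ($u_1w^{-1}\in U_1^\Gamma$, $(g^{-1}wg)u_2\in U_2^\Gamma$, and the trivial reverse inclusion from $\gamma(g)=g$) is correct.
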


\begin{lemma}[\cite{KurinczukStevens:15}~2.7(ii)(a)]\label{lemKurStevDC2p7iia}
 For any two~$\Gamma$-stable pro-$p$-subgroups~$U_1$ and~$U_2$ of~$Q$ and any element~$g$ of~$Q$ the following assertions are equivalent:
 \begin{enumerate}
  \item $(U_1gU_2)^\Gamma\neq \emptyset$
  \item $U_1gU_2$ is~$\Gamma$-stable.
 \end{enumerate}
\end{lemma}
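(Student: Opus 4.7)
The direction (i) $\Rightarrow$ (ii) is immediate: given $x_0 \in (U_1 g U_2)^\Gamma$, we have $U_1 g U_2 = U_1 x_0 U_2$, so the $\Gamma$-stability of $U_1$, $x_0$, and $U_2$ yields the $\Gamma$-stability of the triple product.

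For (ii) $\Rightarrow$ (i), the plan is to realise $X := U_1 g U_2$ as a compact pro-$p$ set carrying a continuous $\Gamma$-action, and to invoke the standard principle that a pro-$l$-group acting continuously on a nonempty compact pro-$p$-set (with $l\neq p$) has a fixed point. The finite prototype of this principle is the congruence $|Y^\Gamma|\equiv|Y|\pmod{l}$ for an $l$-group acting on a finite set $Y$, which forces $|Y^\Gamma|\geq 1$ whenever $|Y|$ is a power of $p$. The pro-$p$ structure on $X$ is obtained from the transitive action $(u_1,u_2)\cdot q := u_1 q u_2$ of $U_1\times U_2$ on $Q$, whose stabiliser of $g$ is $S=\{(u_1,u_2): u_1gu_2=g\}$, projecting isomorphically onto the pro-$p$-group $U_1\cap gU_2g^{-1}$; hence $X\cong (U_1\times U_2)/S$ as sets. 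The hypothesis $\gamma(X)=X$ yields decompositions $\gamma(g)=a_\gamma g b_\gamma$ with $(a_\gamma,b_\gamma)\in U_1\times U_2$, and the $\Gamma$-action on $X$ transports under $(u_1,u_2)S\mapsto u_1 g u_2$ to the (still continuous) twisted action $\gamma\star (u_1,u_2)S=(\gamma(u_1)a_\gamma,\,b_\gamma\gamma(u_2))\,S$.

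To extract a $\Gamma$-fixed point I would construct a cofinal system of $\Gamma$-invariant open neighbourhoods of $S$ in $U_1\times U_2$: starting from any open normal subgroup $N\supseteq S$, replace it by the intersection of its $\Gamma$-translates, finiteness following from continuity of the action together with the fact that the $\Gamma$-orbit of $N$ lies in the discrete set of open subgroups of $U_1\times U_2$ of fixed (finite) index. Each quotient of $X$ by such a neighbourhood is a finite $p$-set on which a finite $l$-quotient of $\Gamma$ acts, and so carries fixed points by the finite congruence; taking the inverse limit of these nonempty compact fixed-point sets produces the desired element of $X^\Gamma$. The main technical obstacle is precisely this construction of compatible $\Gamma$-invariant quotients, which is delicate because $S$ is \emph{not} itself $\Gamma$-stable when $g$ fails to be $\Gamma$-fixed; the twisted action formula above, however, records exactly how $S$ is displaced under $\Gamma$, so that any $\Gamma$-invariant open neighbourhood of $S$ yields a well-defined continuous $\Gamma$-equivariant quotient. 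Once this compatibility is in place at each finite level, the $l$-group counting argument and the inverse-limit conclusion are routine.
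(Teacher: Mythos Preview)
The paper does not supply a proof of this lemma; it is quoted verbatim from \cite{KurinczukStevens:15}~2.7(ii)(a). So there is no ``paper's own proof'' to compare against, and your argument has to be judged on its own merits.

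Your overall strategy is the right one and does go through, but the execution has one genuine slip and one unnecessary complication. The slip is the phrase ``starting from any open normal subgroup $N\supseteq S$''. The stabiliser $S\cong U_1\cap gU_2g^{-1}$ is typically an infinite closed subgroup, so open normal subgroups containing $S$ will \emph{not} form a neighbourhood basis of the identity in $U_1\times U_2$; in general there is no cofinal system of such $N$. Fortunately you do not need this containment at all. Take instead any $\Gamma$-stable open normal subgroup $N\trianglelefteq U_1\times U_2$ (equivalently $N=N_1\times N_2$ with $N_i\trianglelefteq U_i$ open and $\Gamma$-stable). Then $N$ acts on $X$ via the ambient $(U_1\times U_2)$-action, and since $N$ is $\Gamma$-stable this action commutes with the $\Gamma$-action on $X\subseteq Q$; hence $\Gamma$ acts on the orbit space $X/N\cong (U_1\times U_2)/NS$, a set of $p$-power order because it is a quotient of the finite $p$-group $(U_1\times U_2)/N$. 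Now the finite counting argument and the inverse limit over all such $N$ give the fixed point exactly as you describe.

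The unnecessary complication is your care about the finiteness of the $\Gamma$-orbit of $N$. In the paper's setting ``$l$-group'' means a finite group of $l$-power order (the only instances used are $\Gamma=\{1,\sigma\}$ and $\Gamma=\{\pm1\}^{\#I_1}$), so $\bigcap_{\gamma\in\Gamma}\gamma(N)$ is automatically a finite intersection and hence open; no appeal to discreteness of the set of subgroups of fixed index is needed.
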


\begin{proof}[Proof of Proposition~\ref{propDCnew}]
 There is nothing to prove for the inclusion~$\supseteq$, so we continue with the other inclusion. Let~$x$ be an element of~$(U_1gHU_2)^\Gamma$. 
 Then there is an element~$h$ of~$H$ and there are elements~$u_1\in U_1$,~$u_2\in U_2$ such that~$x=u_1ghu_2$. We have to show that we 
 could have taken~$h$ such that~$gh$ is~$\Gamma$-fixed, because Lemma~\ref{lemKurStevDC2p7i} would then imply that~$x$ is an element 
 of~$U_1^\Gamma (gH)^\Gamma U_2^\Gamma$. 
 Now,~$U_1ghU_2$ and~$gH$ are~$\Gamma$-stable, so, by~\eqref{eqDCCShaun} and Lemma~\ref{lemKurStevDC2p7iia}, there is a~$\Gamma$-fixed 
 point in~$(U_1\cap gHg^{-1})gh(U_2\cap H)$, which we could have chosen instead of~$gh$ in the product decomposition of~$x$. This finishes the proof.  
 \end{proof}

\section{Strata for~$G$}\label{secStrataAndCharactersForG}

In this section we generalize the notion of self-duality for strata from~$p$-adic classical groups, see~\cite{stevens:05} 
and~\cite{skodlerackStevens:16}, to its quaternionic inner forms. We generalize the usual statements about strata,
 as for example about the intertwining formula and a Skolem--Noether result, and recall endo--classes of semisimple strata.
We take all definitions and results from~\cite{skodlerack:17-1}. 

\subsection{First definitions}

We refer to~\cite{stevens:05} and~\cite{skodlerackStevens:16} for the non-quaternionic case. 
We use all notation and definitions in~\cite{skodlerack:17-1}. 
For example the definitions for strata (pure, simple, semi-pure, semisimple) can be found in section 4. 

A stratum is denoted as a quadruple~$[\Lambda,n,r,\beta]$ and if we write 
$\Delta'$ for a stratum, then the entries appear with a superscript~$'$, i.e.~$[\Lambda',n',r',\beta']$, and similar with 
subscripts:~$\Delta_c:=[\Lambda^c,n_c,r_c,\beta_c]$ (The superscript on~$\Lambda$ is not a typo.) 
We write~$E$ for~$F[\beta]$ and similar~$E'$,~$E_c$ etc.. And we write~$\C_?(!)$ for the centralizer of~$!$ in~$?$. 

If~$\Delta$ is a semi-pure stratum it has an associated splitting which we denote by~$V=\oplus_{i\in I}V^i$ coming from the 
decomposition of~$E$ into a product of fields, and we call the corresponding idempotents~$1^i$. Split means that~$\Delta$ is the
direct sum of its restrictions~$\Delta|_{V^i}$ which is called the~$i$th block of~$\Delta$. 

Given a full~$o_F$-module~$M$ in~$V$ we define the dual for~$M$ with respect to~$h$ via
\[M^\#=\{v\in V\mid h(v,M)\subseteq \mf{p}_D\}.\] 
The form~$h$ defines an involution~$\#$ on the set of lattice sequences for~$V$, in defining~$\Lambda^{\#}_i$ as~$(\Lambda_{-i})^{\#}$. 
This depends on the choice of~$h$. 
A lattice sequence~$\Lambda$ is called self-dual if~$\Lambda^\#$ and~$\Lambda$ differ by a translation, i.e. there is an 
integer~$k$ such that~$\Lambda-k$, which is defined as~$(\Lambda_{i+k})_{i\in\ZZ}$, coincides with~$\Lambda^\#$. 
This gives an action of~$\#$ on the set of strata for~$V$:
\[\Delta^\#:=[\Lambda^\#,n,r,-\sigma_h(\beta)].\]
In our notation the latter says:~$n^\#=n,~r^\#=r$ and~$\beta^\#=-\sigma_h(\beta)$. 

\begin{definition}
 A stratum~$\Delta$ is called~\emph{self-dual} if~$\Delta^\#$ and~$\Delta$ only differ by a translation of~$\Lambda$, i.e. if~$\Lambda$ is self-dual and~$\sigma_h(\beta)=-\beta$. Further if~$\Delta$ is a self-dual semi-pure stratum, 
 then~$\sigma$ induces an involution on the index set~$I$, which we also call~$\sigma$. 
 The action of~$\{1,\sigma\}$ decomposes the index set~$I$ into
a set of fixed points~$I_0$ and the set~$I_{+-}$ of elements which have an orbit of length~$2$. 
We usually choose a section~$I_+\subseteq I_{+-}$ through all orbits of length~$2$. 
Given a union of~$\sigma$-orbits~$J\subseteq I$ we denote the restriction of~$h$ to~$V^J:=\oplus_{i\in J}V^i$ by~$h_J$.
We also  write~$h_{i,\ldots,j}$ instead of~$h_{\{i,\ldots,j\}}$.
 We call~$\Delta$~\emph{skew} if~$I=I_0$.
\end{definition}


\subsection{Diagonalization for self-dual strata}

One of the first important properties for strata is the diagonalization proposition for self-dual simple strata.
Let us first state the diagonalization proposition for~$\GL_D(V)$.

\begin{proposition}[\cite{skodlerack:17-1}~Theorem 4.30]\label{propDiagonalizationStrataGL}
 Let~$V^i,\ i=1,\ldots,l$, be sub-$D$-vector spaces of~$V$ whose direct sum 
 is~$V$. Let~$\Delta$ be a stratum which splits under~$\oplus_j V^j$ into a direct sum of pure strata~$\Delta|_{V^j}$.
 Suppose further that~$\Delta$ is equivalent to a simple strata. 
 Then, there is a simple stratum which is equivalent to~$\Delta$ and split by~$\oplus_iV^i$. 
\end{proposition}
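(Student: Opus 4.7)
The plan is to produce the desired stratum by ``block-diagonalizing'' an arbitrary simple stratum $[\Lambda,n,r,\gamma]$ equivalent to $\Delta$ (one exists by hypothesis) along the idempotents $1^j$ of the given decomposition $V=\oplus_j V^j$, and then to verify that block-diagonalization preserves simplicity.

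First I would record the structural consequences of $\Delta$ splitting under $\oplus_j V^j$: the idempotents $1^j$ commute with $\beta$, the lattice sequence decomposes as $\Lambda=\oplus_j \Lambda^j$, and every filtration step decomposes compatibly as $\mathfrak{a}_k(\Lambda)=\bigoplus_{i,j}1^i\mathfrak{a}_k(\Lambda)1^j$. Setting $\gamma':=\sum_j 1^j\gamma 1^j$, the off-diagonal part $\gamma-\gamma'=\sum_{i\neq j}1^i\gamma 1^j$ equals $\sum_{i\neq j}1^i(\gamma-\beta)1^j$ since $1^i\beta 1^j=0$ for $i\neq j$, and so lies in $\mathfrak{a}_{-r}(\Lambda)$ because $\gamma-\beta$ does. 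Hence $[\Lambda,n,r,\gamma']$ is equivalent to $[\Lambda,n,r,\gamma]$ (and thus to $\Delta$), and is split by $\oplus_j V^j$ by construction.

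The decisive step is to verify that $[\Lambda,n,r,\gamma']$ is simple. My plan is to invoke the invariance under equivalence of the characteristic polynomial of the stratum (in the $\GL_D$ setting, the analog built from the reduced characteristic polynomial, taking values in $\kappa_F[X]$). This polynomial thus coincides for $\gamma$ and $\gamma'$, and is a power of an irreducible polynomial since $[\Lambda,n,r,\gamma]$ is simple. Because $\gamma'$ commutes with each $1^j$, its reduced characteristic polynomial factors according to the blocks; the purity of each $\Delta|_{V^j}$ ensures that each $F[1^j\gamma' 1^j]$ is a field, and the power-of-irreducible constraint forces these blockwise fields to be pairwise isomorphic and to amalgamate into a single field $F[\gamma']$. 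The minimality / $k_0$-condition depends only on the equivalence class, so it transfers from $\gamma$ to $\gamma'$.

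The main obstacle will be this gluing step: showing that the blockwise fields genuinely amalgamate to a single field rather than to an \'etale $F$-algebra. In the quaternionic setting additional care is needed to track reduced characteristic data and potential Brauer-class obstructions coming from~$D$. An alternative I would keep in reserve is to replace $\gamma$ by a conjugate $u\gamma u^{-1}$ with $u\in 1+\mathfrak{a}_{?}(\Lambda)$, built by successive approximation so as to commute with each $1^j$; this sidesteps the gluing question since conjugation preserves simplicity, reducing the task to a filtration estimate that ensures the equivalence class with $\Delta$ is preserved.
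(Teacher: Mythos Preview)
The paper does not prove this proposition; it is imported as Theorem~4.30 of~\cite{skodlerack:17-1}, so there is no in-paper argument to compare against. Nonetheless, your primary approach has a genuine gap.

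The step that fails is the passage from ``$[\Lambda,n,r,\gamma']$ is equivalent to a simple stratum and split by~$\oplus_j V^j$'' to ``$[\Lambda,n,r,\gamma']$ is itself simple.'' Your argument for this hinges on the claim that ``the purity of each~$\Delta|_{V^j}$ ensures that each~$F[1^j\gamma'1^j]$ is a field,'' but purity of~$\Delta|_{V^j}$ is a statement about~$\beta_j$, not about~$\gamma'_j=1^j\gamma 1^j$; the congruence~$\gamma'_j\equiv\beta_j$ modulo~$\mathfrak{a}_{-r}(\Lambda^j)$ does not force~$F[\gamma'_j]$ to be a field. Likewise, the assertion that ``the minimality/$k_0$-condition depends only on the equivalence class'' is not available: the critical exponent~$k_0(\,\cdot\,,\Lambda)$ is attached to the element, not to the coset~$\gamma+\mathfrak{a}_{-r}$, and nothing you have written gives~$k_0(\gamma',\Lambda)<-r$. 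What your outline actually establishes is only that~$[\Lambda,n,r,\gamma']$ is split and lies in the equivalence class of a simple stratum---which is the hypothesis of the proposition restated, not its conclusion.

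Your reserve plan (conjugate~$\gamma$ into the block-diagonal subalgebra by successive approximation inside~$1+\mathfrak{a}_{?}(\Lambda)$) is much closer to the techniques that actually succeed here; indeed the paper's own proof of the self-dual analogue, Proposition~\ref{propDiagonalizationStrataG}, proceeds by exactly such an iterative limit argument and invokes the present proposition at every step as a black box. But the approximation scheme is the entire substance of the proof and cannot be deferred to ``a filtration estimate'': at each stage one must produce a correction that kills the next off-diagonal piece while remaining simple, which requires control via tame corestrictions. If you want to carry this out, the natural route is to reduce to the split case over~$F$ via Lemmas~\ref{lemAquivToSimpleStratum} and~\ref{lemPureSimpleResF} and then appeal to or adapt the arguments of~\cite{bushnellKutzko:93} and~\cite{stevens:01-2}.
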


We want to prove: 

\begin{proposition}[see \cite{skodlerackStevens:16}~6.6 for the case over~$F$]\label{propDiagonalizationStrataG}
 Let~$V=\bigoplus_{j\in J}V^j$ a decomposition of~$V$ such that the projections~$1^j:V\ra V^j$ are permuted by~$\sigma_h$. 
 Let~$\Delta$ be a self-dual stratum which is split under~$(V^j)$ such that the restrictions~$\Delta|_{V^j}$ 
 and~$\Delta$ are equivalent to a simple stratum. Then there is a self-dual simple stratum which is split by~$(V^j)$ and equivalent
 to~$\Delta$.
\end{proposition}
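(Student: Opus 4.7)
The plan is to apply Proposition~\ref{propDiagonalizationStrataGL} in the ambient general linear group to get a simple stratum split by~$(V^j)$ and equivalent to~$\Delta$, and then to symmetrise its stratum element under the adjoint anti-involution~$\sigma_h$ to make the result self-dual. The division by~$2$ that this symmetrisation requires is available because the residue characteristic is odd.

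First I apply Proposition~\ref{propDiagonalizationStrataGL} to find a simple stratum~$[\Lambda,n,r,\beta']$ which is split by the decomposition~$(V^j)$ and equivalent to~$\Delta$. Since~$\sigma_h$ permutes the idempotents~$1^j$ by hypothesis, the element~$\sigma_h(\beta')$ is also split by the same decomposition. By self-duality of~$\Delta$, the dualised stratum~$[\Lambda^{\#},n,r,-\sigma_h(\beta')]$ is simple and equivalent to~$\Delta^{\#}$, and~$\Delta^{\#}$ agrees with~$\Delta$ up to a translation of~$\Lambda$. Hence both~$\beta'$ and~$-\sigma_h(\beta')$ are congruent to~$\beta$ modulo the lattice bounding the stratum at level~$-r$ in the enveloping algebra.

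Setting $\beta'':=\tfrac12\bigl(\beta'-\sigma_h(\beta')\bigr)$ produces an element with $\sigma_h(\beta'')=-\beta''$ which is still split by~$(V^j)$, because the averaging respects the splitting, and which satisfies~$\beta''\equiv\beta$ modulo the same lattice, using that~$\sigma_h(\beta)=-\beta$. Consequently~$[\Lambda,n,r,\beta'']$ is self-dual, split by~$(V^j)$, and equivalent to~$\Delta$, so all that remains is to show it is actually a simple stratum.

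The main obstacle is precisely this last point: upgrading ``equivalent to a simple stratum'' to ``simple'', since simpleness is not automatic under the perturbation~$\beta''-\beta'=-\tfrac12(\beta'+\sigma_h(\beta'))$. My plan is to mirror the refinement mechanism behind Proposition~\ref{propDiagonalizationStrataGL} in a~$\sigma_h$-equivariant way: at each approximation step of that refinement I would replace the current approximating element by its~$\sigma_h$-skew part, which is harmless because~$2$ is invertible and because~$\sigma_h$ preserves the data of the iteration (the self-dual lattice sequence~$\Lambda$, the~$\sigma_h$-stable partition of the splitting, and the critical exponent and field-generation conditions that characterise simpleness). Performing the symmetrisation \emph{at every step} rather than only once at the end is what delivers a genuinely simple self-dual stratum split by~$(V^j)$, and carrying out this equivariant refinement is the step I expect to require the most care.
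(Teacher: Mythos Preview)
Your proposal is essentially correct and follows the same strategy as the paper: construct a Cauchy sequence~$(\gamma_t)$ of split simple stratum elements that become increasingly skew-symmetric, with the limit giving the desired self-dual simple stratum. The paper does precisely the symmetrise-then-re-diagonalise iteration you describe, taking at each step the skew part~$\tfrac12(\gamma_t-\sigma_h(\gamma_t))$ at one level deeper and then re-applying Proposition~\ref{propDiagonalizationStrataGL}.

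The only place your sketch is thin is the one you yourself flag: to iterate, you must verify at each step that~$[\Lambda,n,r-t-1,\tfrac12(\gamma_t-\sigma_h(\gamma_t))]$ \emph{and all of its block restrictions} are again equivalent to simple strata, so that Proposition~\ref{propDiagonalizationStrataGL} may be re-applied. The paper does this not by inspecting the critical-exponent and field-generation conditions directly, but by passing to~$\Res_F$ and using derived strata with a tame corestriction relative to~$\gamma_t$ (invoking~\cite[1.10]{stevens:01-2} and~\cite[2.13]{secherreStevensVI:12}), together with Lemmas~\ref{lemAquivToSimpleStratum}--\ref{lemLimitOfSinmpleStrata} to transfer simpleness between~$D$ and~$F$ and to pass to the limit. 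That machinery is what makes the inductive step and the limiting argument go through.
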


For the non-quaternionic skew case, see~\cite[Theorem 6.16]{skodlerackStevens:16}.
For the proof we need three further technical lemmas: 

\begin{lemma}[\cite{skodlerack:17-1}~4.28]\label{lemAquivToSimpleStratum}
 Let~$\Delta$ be a stratum. Then~$\Delta$ is equivalent to a simple stratum if and only if~$\Res_F(\Delta)$ is equivalent to a simple 
 stratum.
\end{lemma}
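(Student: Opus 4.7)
The plan is to reduce the iff to an invariance property of the standard polynomial characterisation of ``equivalent-to-simple'' under restriction of scalars. A classical criterion (a version of which holds both for $\End_D(V)$ and for $\End_F(V)$) states that a stratum $[\Lambda,n,r,\beta]$ with $\nu_\Lambda(\beta)=-n$ is equivalent to a simple stratum precisely when a suitably normalised residual polynomial attached to $\beta$ and $\Lambda$ is a power of an irreducible polynomial over $\kappa_F$. The key observation is that this data depends only on $\beta$ as an element of the $F$-algebra it generates and on the filtration $(\mathfrak{a}_k(\Lambda))_k$: the minimal polynomial of $\beta$ over $F$ does not change when we forget the $D$-structure, and the filtrations are related by a controlled reindexing coming from $\pi_D^2=\pi_F$.

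For the forward direction, I would start from a simple stratum $[\Lambda,n,r,\gamma]$ equivalent to $\Delta$, with $F[\gamma]$ a field, $\gamma$ normalising $\Lambda$, $\nu_\Lambda(\gamma)=-n$ and $k_0(\gamma,\Lambda)<-r$. I would then check that the same $\gamma$, now viewed as an $F$-linear endomorphism, produces a simple stratum equivalent to $\Res_F(\Delta)$: the field $F[\gamma]$ is unchanged, $\gamma$ continues to normalise $\Res_F\Lambda$, and the critical exponent can be computed via the adjoint map $a_\gamma=[\gamma,-]$, which commutes with restriction of scalars; the ``simple approximation'' condition $\gamma-\beta\in\mathfrak{a}_{-r}(\Lambda)$ transfers to $\mathfrak{a}_{-r}(\Res_F\Lambda)$ up to the standard reindexing.

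For the converse, the naive strategy of lifting an $F$-linear simple approximation $\gamma'$ of $\beta$ fails because $\gamma'$ need not be $D$-linear. Instead I would apply the polynomial criterion: from $\Res_F(\Delta)$ being equivalent to a simple stratum, one extracts that the associated residual polynomial of $\Res_F(\Delta)$ is a power of an irreducible in $\kappa_F[Y]$. Because restriction of scalars raises the characteristic polynomial of $\pi_F^m\beta$ to the power $\dim_F D=4$, the residual polynomial attached to $\Delta$ itself must also be a power of an irreducible, which by the criterion means $\Delta$ is equivalent to a simple stratum.

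The main obstacle will be the book-keeping: carefully tracking how $n$, $r$ and the period $e(\Lambda)$ translate under $\Res_F$, verifying the compatibility $\mathfrak{a}_k(\Res_F\Lambda)\cap\End_D(V)=\mathfrak{a}_{k'}(\Lambda)$ with the correct reindexing, and comparing the residual polynomial of a stratum in $\End_D(V)$ with that of its restriction. Once this bookkeeping is in place, the iff is immediate, since being a power of an irreducible polynomial over $\kappa_F$ is preserved under raising to a fixed power.
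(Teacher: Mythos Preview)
The paper does not give its own proof of this lemma; it is quoted verbatim from the companion article \cite{skodlerack:17-1}, Proposition~4.28, and no argument is reproduced here. So there is nothing in the present paper to compare your attempt against directly.

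That said, your outline is the standard route and matches what is done in the source. Two remarks. First, your forward direction can be shortened: once $\Delta$ is equivalent to a simple stratum $[\Lambda,n,r,\gamma]$, equivalence is trivially preserved by $\Res_F$, and the fact that $\Res_F$ of a \emph{pure} simple stratum is again simple is precisely Lemma~\ref{lemPureSimpleResF} (also cited from \cite{skodlerack:17-1}). You do not need to recompute the critical exponent by hand. Second, for the converse your polynomial criterion is stated a bit loosely: the ``residual polynomial is a prime power'' test characterises equivalence to a simple stratum cleanly only in the fundamental case $r=n-1$. For general $r$ one does not have a single polynomial invariant; the argument in \cite{skodlerack:17-1} proceeds by the usual induction along a defining sequence, applying the fundamental-stratum criterion to the derived strata at each step and using that taking derived strata commutes with $\Res_F$. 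Your key observation---that the $F$-characteristic polynomial of $\Res_F(\beta)$ is the $d$th power of the reduced characteristic polynomial over $D$, so ``prime power'' transfers---is exactly what drives the base case, but you should flag that the full statement needs the inductive wrapper.
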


\begin{lemma}[\cite{skodlerack:17-1}~4.21]\label{lemPureSimpleResF}
 Let~$\Delta$ be a pure stratum. Then~$\Delta$ is a simple stratum if and only if~$\Res_F(\Delta)$ is a simple stratum.  
\end{lemma}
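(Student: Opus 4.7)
Both $\Delta = [\Lambda, n, r, \beta]$ and $\Res_F(\Delta)$ share the same element $\beta$, the same field $E = F[\beta]$, and the same underlying chain of lattices; only the ambient endomorphism algebra switches from $A = \End_D(V)$ to $\tilde A = \End_F(V)$ (with the period of $\Lambda$ rescaled by $e(D|F)=2$ when passing from an $o_D$- to an $o_F$-lattice sequence, and the indices $n,r$ adjusted accordingly). My plan is to transfer the pureness conditions directly and then reduce the remaining simplicity condition to Lemma~\ref{lemAquivToSimpleStratum}.

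For pureness, each of the three defining conditions is intrinsic: the field $E = F[\beta]$ is a property of $\beta$ alone; the normalizer condition $E^\times \subseteq \mathfrak{K}(\Lambda)$ holds in $A$ iff it holds in $\tilde A$, since $E \subseteq A \subseteq \tilde A$ and normalizing $\Lambda$ as an $o_D$-sequence coincides with normalizing it as the corresponding $o_F$-sequence; and the integer $n = -\nu_\Lambda(\beta)$ rescales cleanly because $\beta \in E$. Hence $\Delta$ is pure iff $\Res_F(\Delta)$ is pure.

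For the simplicity condition $k_0(\beta,\Lambda) < -r$, the plan is to combine Lemma~\ref{lemAquivToSimpleStratum}, which already transfers ``equivalent to a simple stratum'' across $\Res_F$, with the classical fact that a pure stratum is simple iff it is equivalent to a simple stratum. Concretely, if $[\Lambda,n,r,\beta]$ is pure and equivalent to some simple $[\Lambda,n,r,\gamma]$, then $\gamma-\beta$ lies in the deep filtration step $\mathfrak a_{-r}(\Lambda)$, and a standard approximation argument forces $k_0(\beta,\Lambda) = k_0(\gamma,\Lambda) < -r$. The equivalence chain then reads: $\Delta$ simple $\Leftrightarrow$ $\Delta$ pure and equivalent to a simple stratum $\Leftrightarrow$ $\Res_F(\Delta)$ pure and equivalent to a simple stratum $\Leftrightarrow$ $\Res_F(\Delta)$ simple.

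The main obstacle is justifying the step ``pure and equivalent to simple $\Rightarrow$ simple'' in a form that applies identically in the $D$- and $F$-pictures. This rests on the classical critical-exponent theory, and ultimately reduces to the compatibility of the tame corestrictions $s^A_\beta : A \to \C_A(E)$ and $s^{\tilde A}_\beta : \tilde A \to \C_{\tilde A}(E)$, together with the relation $\mathfrak a_\bullet(\Lambda) = \tilde{\mathfrak a}_\bullet(\Lambda) \cap A$ between the two filtrations; these comparisons are routine but require careful bookkeeping of the ramification factor $e(D|F)$.
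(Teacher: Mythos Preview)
The paper does not supply its own proof of this lemma; it is quoted from \cite[4.21]{skodlerack:17-1}. So there is no in-paper argument to compare against, but your proposal can be assessed on its merits, and it contains a genuine gap.

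The implication ``pure and equivalent to a simple stratum $\Rightarrow$ simple'' in your third paragraph is false. By \cite[2.4.1]{bushnellKutzko:93}, \emph{every} pure stratum is equivalent to some simple stratum; if your implication held, every pure stratum would already be simple. The simple strata in a fixed equivalence class of pure strata are precisely those for which $[F[\beta]:F]$ is minimal, not all of them. Concretely, your claim that $\gamma-\beta\in\mathfrak{a}_{-r}$ forces $k_0(\beta,\Lambda)=k_0(\gamma,\Lambda)$ fails: one can perturb a simple $\gamma$ by an element of $\mathfrak{a}_{-r}$ so that $F[\beta]$ is strictly larger than $F[\gamma]$, and then $k_0(\beta,\Lambda)\geq -r$ while $k_0(\gamma,\Lambda)<-r$.

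Consequently your equivalence chain collapses at both ends, since the outer biconditionals rest on this false step. Lemma~\ref{lemAquivToSimpleStratum} by itself only transfers ``equivalent to a simple stratum'' across $\Res_F$, not ``simple''. The correct route is to compare the critical exponent $k_0(\beta,\Lambda)$ computed in $A=\End_D(V)$ with the one computed in $\tilde A=\End_F(V)$ directly, showing that they are related by the scaling factor $e(D|F)$ so that the defining inequality $k_0<-r$ transfers intact. This uses exactly the filtration compatibility $\mathfrak{a}_\bullet(\Lambda)=\tilde{\mathfrak{a}}_\bullet(\Lambda)\cap A$ and the adjoint map $a_\beta$ that you mention in your last paragraph, but applied to $\beta$ itself rather than routed through an equivalent $\gamma$.
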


\begin{lemma}[\cite{stevens:01-2}~1.9]\label{lemLimitOfSinmpleStrata}
 Let~$[\Lambda_F,n,r,\gamma_t]$ be a sequence of equivalent simple strata in~$\End_F(V)$ such that~$\gamma_t$ converges to some~$\gamma$
 in~$\End_F(V)$. Then the stratum~$[\Lambda_F,n,r,\gamma]$ is simple.
\end{lemma}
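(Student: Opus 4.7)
The equivalence of the strata $[\Lambda_F,n,r,\gamma_t]$ for varying $t$ tells us that $\gamma_t-\gamma_{t'}\in\mf{a}_{-r}(\Lambda_F)$ for all $t,t'$. Since $\mf{a}_{-r}(\Lambda_F)$ is closed in $\End_F(V)$, the convergence $\gamma_t\to\gamma$ yields $\gamma-\gamma_{t_0}\in\mf{a}_{-r}(\Lambda_F)$ for every fixed $t_0$; in particular $\nu_{\Lambda_F}(\gamma)=-n$, and $[\Lambda_F,n,r,\gamma]$ is a stratum equivalent to each of the simple strata $[\Lambda_F,n,r,\gamma_t]$. The content of the lemma is to upgrade ``equivalent to a simple stratum'' to ``simple'', i.e.\ to show that $F[\gamma]$ is a field, that $F[\gamma]^\times$ normalises $\Lambda_F$, and that $k_0(\gamma,\Lambda_F)<-r$.

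The key observation I would exploit is that convergence $\gamma_t\to\gamma$ is much stronger than equivalence modulo $\mf{a}_{-r}(\Lambda_F)$: for every integer $N$, one has $\gamma-\gamma_t\in\mf{a}_N(\Lambda_F)$ for all sufficiently large $t$. My plan is then to combine this with a stability statement for simple strata under high-valuation perturbations: for a simple stratum $[\Lambda,n,r,\beta]$ there should exist an integer $N$ (depending on $k_0(\beta,\Lambda)$, $[F[\beta]:F]$ and $n$) such that any $\beta'$ with $\beta-\beta'\in\mf{a}_N(\Lambda)$ gives a simple stratum $[\Lambda,n,r,\beta']$ with $F[\beta']$ a field of the same degree as $F[\beta]$ and with the same critical exponent. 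Choosing $t$ large enough that this $N$-congruence holds between $\gamma_t$ and $\gamma$ then finishes the proof.

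The main obstacle is proving the stability statement. I would argue via characteristic and minimal polynomials: since $[\Lambda_F,n,r,\gamma_t]$ is simple, the characteristic polynomial of $\gamma_t$ on $V$ is a power $\mu_t^{m_t}$ of its irreducible minimal polynomial $\mu_t$ of degree $d_t=[F[\gamma_t]:F]$. A perturbation $\beta'=\gamma_t+\delta$ with $\delta\in\mf{a}_N(\Lambda_F)$ changes the coefficients of this characteristic polynomial by elements of $\p_F^M$ with $M\to\infty$ as $N\to\infty$. For $N$ large enough (quantified in terms of the minimal separation of the roots of $\mu_t$), a Krasner-type continuity-of-roots argument forces the factorisation of the perturbed characteristic polynomial to have exactly the same shape as $\mu_t^{m_t}$, so $F[\beta']$ is a field isomorphic to $F[\gamma_t]$, and the normalisation condition on $\Lambda_F$ passes through the isomorphism. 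The invariance of $k_0$ under high-valuation perturbation is handled in the same spirit: the chain of centraliser orders defining the critical exponent depends only on $\beta$ modulo a sufficiently high power of $\mf{a}$, so increasing $N$ once more delivers $k_0(\beta',\Lambda_F)=k_0(\gamma_t,\Lambda_F)<-r$ and completes the argument.
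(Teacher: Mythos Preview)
The paper does not prove this lemma; it is quoted from \cite{stevens:01-2}~1.9 without argument, so there is no in-paper proof to compare against.

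Your reduction to a pointwise stability statement is where the plan fails. The claim that for a simple stratum $[\Lambda,n,r,\beta]$ there exists $N$ such that any $\beta'$ with $\beta-\beta'\in\mf{a}_N(\Lambda)$ again yields a simple stratum is false. Take $E/F$ quadratic, $V=E\oplus E$ as an $F$-space, $\beta$ the diagonal multiplication by a generator of $E$ of negative $\Lambda$-valuation, and $\Lambda$ any $o_E$-lattice chain, so that $[\Lambda,n,r,\beta]$ is simple. For any $N$ set $\beta'=(\beta,\beta+\epsilon)$ with $0\neq\epsilon\in\mf{p}_E^N$; then $\beta'-\beta\in\mf{a}_N(\Lambda)$, yet the minimal polynomials of $\beta$ and $\beta+\epsilon$ over $F$ are distinct irreducible quadratics, so $F[\beta']\cong E\times E$ is not a field and $[\Lambda,n,r,\beta']$ is not even pure. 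In terms of your Krasner picture: the perturbed characteristic polynomial does factor into irreducibles each close to $\mu_\beta$, but nothing forces those nearby factors to be \emph{equal}; they can be genuinely different irreducible polynomials of the same degree.

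What goes missing is that the hypothesis gives a convergent \emph{sequence} of simple strata, not one simple stratum together with a nearby point. A correct argument has to exploit the whole sequence: the invariants $[E_t:F]$, $e(E_t|F)$, $f(E_t|F)$ and $k_0(\gamma_t,\Lambda)$ are constant because the strata are equivalent, and one works with the convergence of the minimal polynomials $\mu_{\gamma_t}$ themselves (Hensel then excludes coprime factorisations of the limit, and the constant ramification data are used to exclude a power of a single irreducible and to transport the $o_E$-structure on $\Lambda$). Your plan would need to be reorganised around these sequence-level invariants rather than the one-shot perturbation bound.
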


Here we consider~$h_F:=\trd_{D|F}\circ h$. 

\begin{proof}[Proof of Proposition~\ref{propDiagonalizationStrataG}]
 This proof uses the strategy of~\cite[1.10]{stevens:01-2}, where the author shows the existence of a sequence~$\gamma_t,\ t\in \bbN_0$,
 satisfying:
 \begin{itemize} 
  \item[(i)] $[\Lambda_F,n,r,\gamma_t]$ is simple and equivalent to~$\Res_F(\Delta)$ for all~$t\in\bbN_0$,
  \item[(ii)] $\gamma_t+\sigma_{h_F}(\gamma_t)\in\mf{a}_{\Lambda_F,-r+t}$ for all~$t\in\bbN_0$.
  \item[(iii)] $\gamma_t-\gamma_{t+1}\in\mf{a}_{\Lambda_F,-r+t}$ for all~$t\in\bbN_0$. 
 \end{itemize}
 We show that we could have chosen~$\gamma_t$ in~$\prod_{i=1}^l\End_DV^i$. 
 This follows from Proposition~\ref{propDiagonalizationStrataGL} for~$\gamma_0$. So suppose that 
 $\gamma_0,\ldots,\gamma_t$ are elements of~$\prod_{i=1}^l\End_DV^i$ which satisfy (i)-(iii).~$\sigma$ induces an involution on~$J$. Let~$J_0$ be the set of~$\sigma$-fixed points in~$J$.
 We denote the stratum~$[\Lambda_F,n,r-t-1,\frac{\gamma_t-\sigma_{h}(\gamma_t)}{2}]$ by~$\ti{\Delta}$.
 Let~$s$ be a tame corestriction with respect to~$\gamma$. The derived stratum~$\partial_{\gamma_t}(\Res_F(\ti{\Delta}))$
 is equivalent to a stratum~$[\Lambda,r-t,r-t-1,\delta]$ for some~$\delta\in F[\gamma_t]$, by the proof of~\cite[1.10]{stevens:01-2}. 
 Now,~$\ti{\beta}$ and~$\delta$ commute with the projections~$1^j$ and thus~$\partial_{1^j\gamma_t}(\Res_F(\ti{\Delta}|_{V^j}))$ is 
 equivalent to a simple stratum. Therefore the strata~$\Res_F(\ti{\Delta})$ and~$\Res_F(\ti{\Delta}|_{V^j})$ are equivalent to simple 
 strata, by~\cite[2.13]{secherreStevensVI:12}. 
 Thus~$\ti{\Delta}$ and its restrictions are equivalent to simple strata, by Lemma~\ref{lemAquivToSimpleStratum}. 
 Now, we can find by Proposition~\ref{propDiagonalizationStrataGL} a simple stratum~$\ti{\ti{\Delta}}$ split under~$(V^j)$ and 
 equivalent to~$\ti{\Delta}$. We put~$\gamma_{t+1}:=\ti{\ti{\beta}}$. 
 
 The sequence~$(\gamma_t)_{t\in\bbN_0}$ converges and we denote the limit by~$\gamma$.  Then~$[\Lambda_F,n,r,\gamma]$ is simple, by 
 Lemma~\ref{lemLimitOfSinmpleStrata}, and therefore~$[\Lambda,n,r,\gamma]$ is simple, 
 by Lemma~\ref{lemPureSimpleResF}. This finishes the proof. 
\end{proof}

One consequence is:
\begin{corollary}\label{corSemisimpleEquiv}
 Let~$\Delta$ be a semisimple stratum such that~$\#\Delta$ is equivalent to~$\Delta$. Then,~$\Delta$ is equivalent to a self-dual 
 semisimple stratum. 
\end{corollary}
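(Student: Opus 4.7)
The plan is to modify $\beta$ to make it self-dual via an averaging trick and then diagonalize the resulting stratum block by block along the $\sigma$-orbits of the semisimple splitting. The hypothesis $\Delta^{\#}\sim\Delta$ gives $\Lambda^{\#}=\Lambda$ (after translation) and $\beta+\sigma_h(\beta)\in\mathfrak{a}_{\Lambda,-r}$, so setting
\[
\beta' := \tfrac{1}{2}\bigl(\beta-\sigma_h(\beta)\bigr)
\]
yields an element satisfying $\sigma_h(\beta')=-\beta'$ and $\beta'-\beta\in\mathfrak{a}_{\Lambda,-r}$. Hence $[\Lambda,n,r,\beta']$ is a self-dual stratum equivalent to $\Delta$, and it remains only to exhibit a self-dual \emph{semisimple} stratum equivalent to it.

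Both $\Delta$ and $\Delta^{\#}$ are semisimple strata on the same $\Lambda$, with splittings coming from the primitive idempotents of $F[\beta]$ and $F[-\sigma_h(\beta)]$ respectively. By the uniqueness of the splitting attached to a semisimple stratum (from \cite{skodlerack:17-1}), these splittings must coincide, so $\sigma_h$ permutes the idempotents $1^i$ and hence the subspaces $V^i$, inducing an involution $\sigma$ on $I$ with $\sigma_h(V^i)=V^{\sigma(i)}$. It follows that $\sigma_h(\beta)$ preserves the fine splitting $V=\bigoplus_{i\in I}V^i$, and therefore so does $\beta'$; moreover, reading the identity $\sigma_h(\beta')=-\beta'$ block by block gives $\beta'|_{V^{\sigma(i)}}=-\sigma_h(\beta'|_{V^i})$ for every $i\in I$.

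Now treat each $\sigma$-orbit on $I$ separately. For a fixed point $i\in I_0$ the block $V^i$ is $\sigma_h$-stable, and $[\Lambda|_{V^i},n,r,\beta'|_{V^i}]$ is a self-dual stratum equivalent to the simple stratum $\Delta|_{V^i}$; Proposition~\ref{propDiagonalizationStrataG} applied with the trivial splitting produces a self-dual simple $\tilde{\beta}_i$ equivalent to $\beta'|_{V^i}$. For a pair $\{i,\sigma(i)\}$ with $i\in I_+$, apply Proposition~\ref{propDiagonalizationStrataGL} to obtain a simple $\tilde{\beta}_i$ on $V^i$ equivalent to $\beta'|_{V^i}$, and then set $\tilde{\beta}_{\sigma(i)}:=-\sigma_h(\tilde{\beta}_i)\in\End_D(V^{\sigma(i)})$; this is simple because $\sigma_h$ preserves simpleness, and the block identity for $\beta'$ shown above forces $\tilde{\beta}_{\sigma(i)}\sim\beta'|_{V^{\sigma(i)}}$. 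Gluing, $\tilde{\beta}:=\sum_{i\in I}\tilde{\beta}_i$ is self-dual and $[\Lambda,n,r,\tilde{\beta}]$ is a self-dual semisimple stratum equivalent to $[\Lambda,n,r,\beta']$, hence to $\Delta$.

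The main obstacle is the uniqueness-of-splitting step: without it one cannot identify the splittings of $\Delta$ and $\Delta^{\#}$ in order to define $\sigma$ on $I$ and the block-wise argument breaks down. If this statement is not directly available in the form needed, the fix is to pass to the coarsest common refinement of the two splittings, which is automatically $\sigma_h$-stable, and to run the orbit-by-orbit diagonalization at that coarser level; the remainder of the argument carries over verbatim.
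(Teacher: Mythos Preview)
Your overall strategy coincides with the paper's: arrange that $\sigma_h$ permutes the idempotents of the associated splitting, apply Proposition~\ref{propDiagonalizationStrataG} on each $I_0$-block, and on a two-element orbit $\{i,\sigma(i)\}$ enforce self-duality by defining the $\sigma(i)$-entry as $-\sigma_h$ of the $i$-entry. The averaging $\beta'=\tfrac12(\beta-\sigma_h(\beta))$ is an unnecessary detour: the paper works with $\beta$ directly, keeping $\beta_i$ for $i\in I_+$ and replacing $\beta_i$ by $-\sigma_h(\beta_{\sigma(i)})$ for $i\in I_-$. Your route through $\beta'$ is harmless except that, for $i\in I_+$, you invoke Proposition~\ref{propDiagonalizationStrataGL} on $[\Lambda^i,n,r,\beta'|_{V^i}]$, where $\beta'|_{V^i}=\tfrac12(\beta_i-\sigma_h(\beta_{\sigma(i)}))$ need not generate a field, so the purity hypothesis of that proposition is not met; simply taking $\tilde\beta_i=\beta_i$ avoids this and is exactly what the paper does.

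The gap you flag at the splitting step is genuine, and ``uniqueness of the splitting'' is false as stated: if $u\in 1+\mathfrak a_1$ does not respect the block decomposition, then $[\Lambda,n,r,u\beta u^{-1}]$ is an equivalent semisimple stratum whose splitting $(uV^i)_i$ differs from $(V^i)_i$. The paper does not argue via a coarsest common refinement --- that would typically produce smaller blocks on which the restricted strata are no longer equivalent to simple strata, so the orbit-by-orbit diagonalization would not apply. Instead it invokes \cite[4.37]{skodlerack:17-1} together with \cite[A.4]{kurinczukSkodlerackStevens:16}: these give that the splittings of $\Delta$ and $\Delta^{\#}$ are conjugate by an element of a suitable pro-$p$ group and then (using the odd residue characteristic) that one may replace $\Delta$ by an equivalent semisimple stratum whose idempotents are genuinely permuted by $\sigma_h$. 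That is the correct patch for your argument.
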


\begin{proof}
 By Corollary~\cite[4.37]{skodlerack:17-1} and Corollary~\cite[A.4]{kurinczukSkodlerackStevens:16} we can assume that the idempotents of the associate splitting of~$\Delta$ are permuted
 by the action of~$\sigma_h$. We apply Proposition~\ref{propDiagonalizationStrataG} on~$\Delta_i$, for~$i\in I_0$, and we replace~$\beta_i$
 by~$-\sigma_h(\beta_{\sigma(i)})$ for~$i\in I_-$. 
\end{proof}

\subsection{Intersection formulas}

For the next sections we need some special cases of~\eqref{eqDCCShaun}. 
For this we need to recall that on attaches to a semisimple stratum two further pro-$p$-subgroups of~$\ti{G}$:
$S(\Delta)$ and~$M(\Delta):=1+\mf{m}(\Delta)$, see~\cite[before 4.25, after 5.14]{skodlerack:17-1} for the definitions of~$S(\Delta)$ 
and~$\mf{m}(\Delta)$. We need the technique of~$\ddag$ construction for strata, see~\cite[4.6]{skodlerack:17-1}, to attach to a 
stratum~$\Delta$ a stratum~$\Delta^\ddag$ where the lattice sequence is principal.  

\begin{proposition}\label{propIntersectionFormulas}
 Let~$\Delta_1$ and~$\Delta_2$ be two semisimple strata which share the associated splitting and the parameters~$r$ and~$n$. 
 Let~$H$ be one of the following groups: 
 \begin{itemize} 
  \item $\prod_{i\in I_1}\Aut_DV^i$ or
  \item $C_A(\beta_1)^\times$, if~$\beta_1=\beta_2$. 
 \end{itemize}
Let~$h$ be an element of~$H$. Then we have the following 
intersection formulas.
 \begin{enumerate}
  \item \label{propDCCForS} 
  $(S(\Delta_2)h S(\Delta_1))\cap H =(S(\Delta_2)\cap H)h(S(\Delta_1)\cap H)$,
  \item \label{propDCCForM} 
  $(M(\Delta_2)h M(\Delta_1))\cap H =(M(\Delta_2)\cap H)h(M(\Delta_1)\cap H)$,
 \end{enumerate}
\end{proposition}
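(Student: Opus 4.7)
The plan is to exploit the common associated splitting $V=\bigoplus_{i\in I}V^i$ of $\Delta_1$ and $\Delta_2$ in order to reduce both identities to a one-block situation. In either choice for $H$ the group $H$ centralizes every idempotent $1^i$: this is immediate for $H=\prod_{i\in I_1}\Aut_D(V^i)$, and for $H=C_A(\beta_1)^\times$ it holds because $1^i\in F[\beta_1]\subseteq C_A(\beta_1)$. By the constructions of~\cite{skodlerack:17-1}, both $S(\Delta_j)$ and $M(\Delta_j)$ are compatible with the splitting in the sense that $S(\Delta_j)=\prod_i S(\Delta_j|_{V^i})$ and analogously for $M$. Writing every element of $(S(\Delta_2)hS(\Delta_1))\cap H$ in its $i$-components then reduces the proposition to the case $|I|=1$, where each $\Delta_j$ is a simple stratum (and in the second case $\beta_1=\beta_2$, so we are studying the centralizer of a common element inside a simple block).

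In this reduced setting I would proceed by successive approximation along the pro-$p$ filtration of $S(\Delta_j)$ (respectively $M(\Delta_j)$) by its congruence subgroups. Starting from a decomposition $g=u_2hu_1$ with $g\in H$ and $u_j$ in the relevant subgroup, I would inductively construct $v_j^{(k)}\in S(\Delta_j)\cap H$ (resp.\ $M(\Delta_j)\cap H$) with $v_2^{(k)}hv_1^{(k)}\equiv u_2hu_1$ modulo deeper and deeper pieces of the filtration, so that the pro-$p$ limits $v_j=\lim_k v_j^{(k)}$ produce the desired factorisation. The inductive step is a linear problem on an associated graded piece of $\mf{m}(\Delta_j)$ (or the analogous piece for $S$): an $H$-invariant discrepancy at level $k$ must be realised by modifying $v_j^{(k)}$ through an element of $S(\Delta_j)\cap H$ (resp.\ $M(\Delta_j)\cap H$).

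The crux will be this surjectivity at the graded level. For $H=\prod_i\Aut_D(V^i)$ the graded pieces themselves decompose under the splitting, so the correction is essentially automatic once the blockwise reduction is in place. For $H=C_A(\beta_1)^\times$ with $\beta_1=\beta_2$ the claim becomes that a suitable tame corestriction $s_{\beta_1}$ is surjective onto the centraliser part of the graded piece at the required level; this is the quaternionic analogue of the classical Bushnell--Kutzko surjectivity, and I expect to handle it via the $\ddag$-construction of~\cite[4.6]{skodlerack:17-1} to pass to principal lattice sequences where the statement is already known. Taking the pro-$p$ limit then yields the required $v_j$, and the reverse inclusion $\supseteq$ is trivial.
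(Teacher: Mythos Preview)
Your reduction to the one-block case rests on the claim that $S(\Delta_j)=\prod_{i\in I}S(\Delta_j|_{V^i})$ and $M(\Delta_j)=\prod_{i\in I}M(\Delta_j|_{V^i})$. This is false: for a semisimple stratum the lattices $\mf{m}(\Delta_j)$ (and the analogous ones for $S$) contain genuine off-diagonal pieces $\mf{m}(\Delta_j)\cap A^{i,i'}$ for $i\neq i'$, coming from high-level congruence terms in the defining sequence. Thus $M(\Delta_j)$ and $S(\Delta_j)$ are \emph{not} contained in $\prod_i\Aut_D V^i$, and your blockwise splitting of an equation $g=u_2hu_1$ does not make sense. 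Note also that once you do reduce to a single block with $H=\Aut_D V^i$, both sides of the asserted identity become $S(\Delta_2|_{V^i})h_iS(\Delta_1|_{V^i})$, so nothing is left to prove; the entire content of the first case would then sit in the product decomposition you claimed, which does not hold.

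The paper avoids this problem by not reducing to blocks at all. For $H=\prod_i\Aut_D V^i$ it lets the $2$-group $\Gamma=\{\pm 1\}^{\#I_1}$ act on $\tilde G$ by conjugation; then $\tilde G^\Gamma=H$, the pro-$p$ groups $S(\Delta_j)$ are $\Gamma$-stable (since $\Gamma\subseteq P(\Lambda^1)\cap P(\Lambda^2)$ centralizes both strata), and the vanishing of $H^1(\Gamma,\text{pro-}p)$ together with Lemma~\ref{lemKurStevDC2p7i} gives $(U_1hU_2)^\Gamma=U_1^\Gamma h U_2^\Gamma$ immediately. For $H=C_A(\beta)^\times$ the paper extends scalars to $L$, applies the $\ddag$-construction to land on principal lattice chains, invokes the known identity \cite[(1.6.1)]{bushnellKutzko:93} there, conjugates back and runs a limit argument to get the formula over $L$, and finally takes $\tau$-fixed points (again via Lemma~\ref{lemKurStevDC2p7i}) to descend to $D$. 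Your idea of using the $\ddag$-construction is on the right track for this second case, but the graded-piece approximation scheme you outline is not how the argument is organised, and the crucial descent from $L$ to $D$ via $\tau$-fixed points is missing from your sketch.
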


\begin{proof}
 We only show~\ref{propDCCForS} because the proof of the second equation is obtained in replacing~$S$ by~$M$. 
 We start with the case that~$H$ is equal to~$\prod_{i\in I_1}\Aut_DV^i$. Consider the group~$\Gamma:=\{\pm 1\}^{\# I_1}$ acting 
 on~$\ti{G}$ by conjugation. Its fixed point set is~$H$ and~$\Gamma$ is contained in~$P(\Lambda^1)\cap P(\Lambda^2)$.
 Lemma~\ref{lemKurStevDC2p7i} implies~\ref{propDCCForS}. 
 Let us now assume~$\beta_1=\beta_2=:\beta$ and that~$H$ is~$C_A(\beta)^\times$. 
 Without loss of generality we can assume that both lattice sequences have the same~$F$-period. We make the~$\ddag$-construction 
 for~$\Delta_1\otimes L$ and~$\Delta_2\otimes L$, and we get~$(\Delta_1\otimes L)^\ddag$ and~$(\Delta_2\otimes L)^\ddag$ 
 which are in fact equal to~$\Delta_1^\ddag\otimes L$ and~$\Delta_2^\ddag\otimes L$, respectively. Both latter strata are 
 semisimple and the lattice sequences on the blocks are principal lattice chains. Thus there is an invertible element~$g$ of
 $C:=C_{\End_L(V^\ddag)}(\beta_1^\ddag)$ which sends~$\Lambda^{1,\ddag}$ to~$\Lambda^{2,\ddag}$.
 Formula~\ref{propDCCForS} is true for the triple~$(\Delta_2\otimes L)^\ddag, (\Delta_2\otimes L)^\ddag$,~$C^\times$, and 
 for every element~$c\in C^\times$ by~\cite[(1.6.1)]{bushnellKutzko:93}. We conjugate back with~$g$ to obtain~\ref{propDCCForS} 
 for~$(\Delta_2\otimes L)^\ddag, (\Delta_1\otimes L)^\ddag$,~$C^\times$ and every~$c\in C^\times$. In particular the formula is valid for 
 $\diag(b_i|\ i\in I_1)$ with~$b_i\in C_{\Aut_L(V)}(\beta_1)^\times$,~$i\in I_1$. Using a limit-argument 
 we obtain for all~$h\in C_{\Aut_L(V)}(\beta)^\times$
 \[(S(\Delta_2\otimes L)h S(\Delta_1\otimes L))\cap C_{\Aut_L(V)}(\beta_1)^\times \subseteq 
 (S(\Delta_2\otimes L)\cap C_{\Aut_L(V)}(\beta_1)^\times)h (S(\Delta_1\otimes L)\cap C_{\Aut_L(V)}(\beta_1)^\times).\]
 The last inclusion is an equality. We consider~$h\in C_A(\beta_1)^\times$ and take~$\tau$-fixed 
 points. Then~\ref{lemKurStevDC2p7i} implies the assertion. 
\end{proof}

\subsection{Matching and intertwining}

The intersection formulas of the last section allow us to determine the $G$-intertwining of two self-dual semisimple strata with the same~$\beta$. Two semisimple strata with same parameters which intertwine by an element of~$\ti{G}$ 
possess a matching, i.e. they have a unique one-to-one correspondence between the block structures, in particular with coinciding dimensions. We prove in the self-dual case that this correspondence is given by isometries for~$\sigma$-fixed blocks.

In this section we suppose that~$\Delta$ and~$\Delta'$ are two semisimple strata which satisfy~$e(\Lambda|F)=e(\Lambda'|F)$,
$n=n'$ and~$r=r'$.

At first we recall the matching and the intertwining formulas for semisimple strata.

\begin{theorem}[\cite{skodlerack:17-1}~4.41 (with~4.32)]\label{thmMatchingStrataGL}
 Suppose~$I(\Delta,\Delta')\neq \emptyset$. Then there is a unique bijection~$\zeta:\ I\ra I'$ such that
 the set~$I(\Delta,\Delta')\cap \prod_{i\in I}\Hom_D(V^i,V^{\zeta(i)})$ is not empty. The latter non-emptiness condition is
 equivalent in saying that for all~$i\in I$ the~$D$-dimensions of~$V^i$ and~$V^{\zeta(i)}$ coincide and the direct 
 sum~$\Delta_i\oplus\Delta'_{\zeta(i)}$ is equivalent to a simple stratum. 
%
\end{theorem}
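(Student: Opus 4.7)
The plan is to recast the matching and intertwining problem as a question on the doubled space $V \oplus V'$. First I would form the direct-sum stratum $\Delta \oplus \Delta' := [\Lambda \oplus \Lambda', n, r, \beta \oplus \beta']$ on $V \oplus V'$, equipped with the splitting indexed by $I \sqcup I'$ coming from the associated splittings of $\Delta$ and $\Delta'$. The central step is to show that, given $I(\Delta,\Delta') \neq \emptyset$, this direct sum is equivalent to a semisimple stratum (with a possibly coarser splitting). Any intertwiner $g$ is used to ``glue'' $\Delta$ and $\Delta'$: conjugation by a suitable block element built from $g$ puts $\beta \oplus \beta'$ in a form to which Proposition~\ref{propDiagonalizationStrataGL} can be applied blockwise to produce a semisimple replacement.

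Once $\Delta \oplus \Delta'$ is realised as equivalent to a semisimple stratum, its canonical associated splitting on $V \oplus V'$ refines $I \sqcup I'$: each new orbit is a union of certain $V^i$ together with certain $V^{\prime j}$. To extract the matching $\zeta$, I would argue that every orbit meets $I$ and $I'$ in exactly one element each. Within a single semisimple orbit, the reduced characteristic polynomial of $\beta \oplus \beta'$ modulo $\mf{p}_F$ is a power of a single irreducible polynomial, while the individual semisimplicity of $\Delta$ and $\Delta'$ already attaches pairwise distinct irreducibles to distinct blocks in $I$ and in $I'$; comparing these partitions forces the one-to-one correspondence. Uniqueness of $\zeta$ then follows from the uniqueness of the associated splitting of a semisimple stratum up to equivalence.

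For the characterisation of $I(\Delta,\Delta') \cap \prod_i \Hom_D(V^i, V^{\zeta(i)}) \neq \emptyset$: the ``only if'' direction is immediate, since restricting an element of this intersection to the pair $(V^i,V^{\zeta(i)})$ forces $\dim_D V^i = \dim_D V^{\zeta(i)}$ and realises $\Delta_i \oplus \Delta'_{\zeta(i)}$ as a single orbit of a semisimple splitting, hence equivalent to a simple stratum. Conversely, assuming the dimension match and simplicity of each $\Delta_i \oplus \Delta'_{\zeta(i)}$, I would apply Proposition~\ref{propDiagonalizationStrataGL} on $V^i \oplus V^{\zeta(i)}$ to realise the sum as a simple stratum split by those two factors; the classical intertwining formula for simple strata then supplies an intertwiner with non-zero $\Hom_D(V^i, V^{\zeta(i)})$-component, and summing across $i \in I$ assembles a global intertwiner with the prescribed block support.

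The main obstacle is the very first step: upgrading the bare intertwining hypothesis into an equivalence of $\Delta \oplus \Delta'$ with a semisimple stratum. This is essentially a Skolem--Noether-type statement modulo the filtration at level $-r$, and requires careful control of how $g$ transports the characteristic polynomials of $\beta$ and $\beta'$ modulo $\mf{a}_{\Lambda,-r}$ and $\mf{a}_{\Lambda',-r}$, followed by an appeal to the diagonalization result to normalise the resulting splitting. Once this is in hand, the rest of the proof is essentially combinatorial bookkeeping on the semisimple splitting of the doubled stratum.
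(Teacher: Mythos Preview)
The present paper does not actually prove this theorem: it is quoted verbatim from \cite{skodlerack:17-1}, Theorems~4.41 and~4.32, as a known input from the first paper in the series. There is therefore no proof here to compare your proposal against.

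That said, your strategy is essentially the one used in the cited source. Passing to the doubled stratum~$\Delta\oplus\Delta'$, showing it is equivalent to a semisimple stratum, and then reading off the matching~$\zeta$ from the canonical splitting of that semisimple stratum is precisely how \cite{skodlerack:17-1}~4.32 produces the bijection; the characterisation in terms of~$\Delta_i\oplus\Delta'_{\zeta(i)}$ being equivalent to a simple stratum is then a reformulation (4.41). Your identification of the main obstacle is also accurate: the real work lies in turning the bare intertwining hypothesis into the statement that~$\Delta\oplus\Delta'$ is equivalent to a semisimple stratum, and this is exactly what \cite{skodlerack:17-1}~4.32 establishes via characteristic-polynomial control and the diagonalization machinery.

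One minor caution: in your ``only if'' direction for the characterisation, restricting a block-diagonal intertwiner to~$V^i\oplus V^{\zeta(i)}$ shows that~$\Delta_i\oplus\Delta'_{\zeta(i)}$ intertwines with itself nontrivially and is a single block of a semisimple stratum, hence \emph{equivalent} to a simple stratum; be sure not to conflate this with the sum itself being simple.
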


The map~$\zeta$ is called the matching of~$(\Delta,\Delta')$, and we denote it also as~$\zeta_{\Delta.\Delta'}$. 

\begin{theorem}[\cite{skodlerack:17-1}~4.36]\label{thmIntertwiningStrataGL}
 Suppose~$I(\Delta,\Delta')\neq \emptyset$ with matching~$\zeta$. Then the following holds. 
 \begin{enumerate}
  \item $I(\Delta,\Delta')= M(\Delta')(I(\Delta,\Delta')\cap \prod_{i\in I}\Hom_D(V^i,V^{\zeta(i)})) M(\Delta)$.
  \item Suppose there is an element~$\ti{g}$ of~$\ti{G}$ such that~$\ti{g}\beta\ti{g}^{-1}=\beta'$, then we have
  \[I(\Delta,\Delta')= M(\Delta')\ti{g}C_A(\beta)^\times M(\Delta).\]
 \end{enumerate}
\end{theorem}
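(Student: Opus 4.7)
The plan is to derive both parts from the quaternionic analogue of the Bushnell--Kutzko intertwining formula for \emph{simple} strata (namely: for simple strata $\Delta,\Delta'$ with matched blocks, $I(\Delta,\Delta')=M(\Delta')\cdot\{g\in\ti{G}:g\beta g^{-1}=\beta'\}\cdot M(\Delta)$), together with the matching theorem (Theorem~\ref{thmMatchingStrataGL}) and the intersection formula of Proposition~\ref{propIntersectionFormulas}.

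For Part (1), the inclusion $\supseteq$ is automatic since $M(\Delta)\subseteq I(\Delta,\Delta)$ and $M(\Delta')\subseteq I(\Delta',\Delta')$, and intertwining sets absorb self-intertwiners on both sides. For the opposite inclusion, take $y\in I(\Delta,\Delta')$. Theorem~\ref{thmMatchingStrataGL} supplies the matching $\zeta$ and the non-emptiness of $I(\Delta,\Delta')\cap\prod_i\Hom_D(V^i,V^{\zeta(i)})$. The core technical step is to show that the primitive idempotents $y1^iy^{-1}$ and $1^{\zeta(i)}$ are congruent modulo $\mf{m}(\Delta')$; this follows by comparing the primary decompositions of $y\beta y^{-1}$ and $\beta'$ block-by-block via the pairing $\zeta$, using $y\beta y^{-1}\equiv\beta'\pmod{\mf{a}_{\Lambda',-r}}$. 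A Hensel-type lifting of idempotents inside the pro-$p$-group $M(\Delta')$ then yields $m'\in M(\Delta')$ with $m'\cdot y1^iy^{-1}\cdot(m')^{-1}=1^{\zeta(i)}$ for every $i$, so that $m'y$ respects the block decomposition. Writing $y=(m')^{-1}(m'y)$ gives the required factorization.

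For Part (2), the hypothesis $\ti{g}\beta\ti{g}^{-1}=\beta'$ places $\ti{g}$ in $I(\Delta,\Delta')$ and forces conjugation by $\ti{g}$ to realize the matching $\zeta$, by uniqueness of primary decompositions. Apply Part (1) to any $y\in I(\Delta,\Delta')$ to reduce to the case where $y$ itself sends $V^i$ to $V^{\zeta(i)}$. Both $y$ and $\ti{g}$ then act on each block as intertwiners between $\beta_i$ and $\beta'_{\zeta(i)}$, so the simple-stratum intertwining formula produces on each block a factorization $y_i\in M(\Delta'_{\zeta(i)})\,\ti{g}_i\,(C_{\End_D V^i}(\beta_i))^\times\, M(\Delta_i)$. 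Assembling blocks and using that $\prod_i M(\Delta_i)\subseteq M(\Delta)$ and $\prod_i C_{\End_D V^i}(\beta_i)^\times=C_A(\beta)^\times$ gives the claimed inclusion. The reverse inclusion is clear since $\ti{g}C_A(\beta)^\times\subseteq I(\Delta,\Delta')$ and is absorbed on both sides by $M(\Delta')$ and $M(\Delta)$.

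The main obstacle is establishing the $\mf{m}(\Delta')$-congruence of the idempotents: this is what enables the lifting step, and it must be extracted by carefully following the multi-level congruences from the proof of the matching theorem. Once this is available, the remaining assembly and the removal of the inner double pro-$p$ product---for which one may appeal to Proposition~\ref{propIntersectionFormulas}(2) with $H=C_A(\beta)^\times$---are routine.
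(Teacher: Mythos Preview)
Your overall strategy is a valid alternative, but it reverses the logical order used in the paper and contains a gap in the key step.

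The paper proceeds in the opposite direction from you: it takes Part~(ii) as the known input (this is precisely~\cite[4.36]{skodlerack:17-1}) and then \emph{derives} Part~(i) from Part~(ii) via diagonalization. Concretely, given intertwining semisimple strata~$\Delta,\Delta'$ with matching~$\zeta$, one uses Proposition~\ref{propDiagonalizationStrataGL} block-wise to replace~$\Delta'$ by an equivalent semisimple stratum~$\Delta''$ with the same associated splitting, such that~$\beta$ and~$\beta''$ have the same minimal polynomial; then Skolem--Noether produces~$\ti{g}\in\prod_i\Hom_D(V^i,V^{\zeta(i)})$ with~$\ti{g}\beta\ti{g}^{-1}=\beta''$, Part~(ii) applies, and since~$M(*)$ depends only on the equivalence class and~$\ti{g}C_A(\beta)^\times\subseteq\prod_i\Hom_D(V^i,V^{\zeta(i)})$, Part~(i) follows immediately. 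This bypasses any direct manipulation of idempotents.

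Your route---prove~(i) directly by lifting idempotents, then assemble~(ii) block-wise from the simple case---is in principle workable, but the justification you give for the crucial congruence is wrong as stated. Intertwining~$y\in I(\Delta,\Delta')$ only yields~$y(\beta+a)y^{-1}=\beta'+a'$ for some~$a\in\mf{a}_{\Lambda,-r}$,~$a'\in\mf{a}_{\Lambda',-r}$; it does \emph{not} give~$y\beta y^{-1}\equiv\beta'\pmod{\mf{a}_{\Lambda',-r}}$, since~$yay^{-1}$ need not lie in~$\mf{a}_{\Lambda',-r}$. The actual argument that~$y1^iy^{-1}\equiv 1^{\zeta(i)}\pmod{\mf{m}(\Delta')}$ requires the full inductive machinery along a defining sequence (essentially reproving the core of the matching theorem in~\cite{skodlerack:17-1}), which you correctly flag as the main obstacle but do not carry out. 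The paper's approach is preferable precisely because diagonalization is already available as a black box, so one never has to reopen that argument.
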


\begin{proof}
 The second assertion follows from~\cite[4.36]{skodlerack:17-1} and the first assertion follows from the second by 
 diagonalization, Proposition~\ref{propDiagonalizationStrataGL}, and the fact
 that~$M(*)$ only depends on the equivalence class of the semisimple stratum. 
\end{proof}

We now can come to the self-dual case: 

\begin{theorem}[see~\cite{skodlerackStevens:16}~6.22 for the case over~$F$]\label{thmIntertwiningStrataG}
Suppose~$\Delta$ and~$\Delta'$ are intertwined by an element of~$G$ with matching~$\zeta$. Then the following holds.
\begin{enumerate}
\item\label{thmIntertwiningStrataG.i} $I_G(\Delta,\Delta')=(G\cap M(\Delta'))(I_G(\Delta,\Delta')\cap \prod_{i\in I}\Hom_D(V^i,V^{\zeta(i)}))(G\cap M(\Delta))$.
\item Suppose there is an element~$\ti{g}$ of~$\ti{G}$ such that~$\ti{g}\beta\ti{g}^{-1}=\beta'$, then we have
\[I_G(\Delta,\Delta')=(G\cap M(\Delta'))(G\cap\ti{g}C_A(\beta)^\times)(G\cap M(\Delta)).\]
\end{enumerate}
\end{theorem}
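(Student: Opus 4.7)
The strategy is to take $G$-fixed points of the $\ti{G}$-intertwining formulas in Theorem \ref{thmIntertwiningStrataGL} via the Stevens cohomology mechanism of Proposition \ref{propDCnew}. Introduce the group automorphism $\tau\colon\ti{G}\ra\ti{G}$, $\tau(x):=\sigma_h(x)^{-1}$, whose fixed-point set is $G$. The cyclic group $\Gamma:=\langle\tau\rangle$ has order $2$, and since $F$ has odd residue characteristic we have $\ell=2\neq p$, which is the setting in which Proposition \ref{propDCnew} operates.

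For part~(ii) I first verify the $\tau$-stability of all relevant data. By self-duality of $\Delta$ and $\Delta'$, the pro-$p$-subgroups $M(\Delta)$ and $M(\Delta')$ are $\tau$-stable: $\sigma_h$ sends each stratum to its $\#$-dual, which differs only by a translation of the lattice sequence, and translations do not change $M$. The subgroup $C_A(\beta)^\times$ is $\tau$-stable because $\sigma_h(\beta)=-\beta$. Crucially, applying $\sigma_h$ to $\ti{g}\beta\ti{g}^{-1}=\beta'$ and using $\sigma_h(\beta)=-\beta$, $\sigma_h(\beta')=-\beta'$ yields that $\sigma_h(\ti{g})^{-1}$ also conjugates $\beta$ to $\beta'$, so $\ti{g}^{-1}\tau(\ti{g})\in C_A(\beta)^\times$; this shows that the coset $\ti{g}C_A(\beta)^\times$ is $\tau$-stable. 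Now the intersection hypothesis of Proposition \ref{propDCnew} is, by Remark \ref{remDoubleCoset}, equivalent to
\[
(\ti{g}^{-1}M(\Delta')\ti{g}\cdot h\cdot M(\Delta))\cap C_A(\beta)^\times=(\ti{g}^{-1}M(\Delta')\ti{g}\cap C_A(\beta)^\times)\cdot h\cdot(M(\Delta)\cap C_A(\beta)^\times)
\]
for every $h\in C_A(\beta)^\times$. The conjugated stratum $\ti{g}^{-1}\Delta'\ti{g}$ is semisimple with the same $(n,r)$, the same element $\beta$, and the same associated splitting as $\Delta$, since $\ti{g}$ matches the primitive idempotents of $F[\beta']$ with those of $F[\beta]$ according to the matching $\zeta$. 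Therefore Proposition \ref{propIntersectionFormulas}(\ref{propDCCForM}) applied to the pair $(\Delta,\ti{g}^{-1}\Delta'\ti{g})$ with $H=C_A(\beta)^\times$ yields the displayed equality. Combining Theorem \ref{thmIntertwiningStrataGL}(2) with Proposition \ref{propDCnew}, and using $M(\Delta)^\tau=G\cap M(\Delta)$ (and the analogous identities for $M(\Delta')$ and the coset), gives part~(ii).

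Part~(i) is then a corollary of~(ii). Using Proposition \ref{propDiagonalizationStrataGL} together with a blockwise Skolem--Noether applied to the pairs $(\Delta_i,\Delta'_{\zeta(i)})$ (which are equivalent to simple strata by the matching), I produce a $\ti{g}_0\in\ti{G}$ that maps $V^i$ onto $V^{\zeta(i)}$ and satisfies $\ti{g}_0\beta\ti{g}_0^{-1}=\beta'$. Then $\ti{g}_0C_A(\beta)^\times\subseteq\prod_i\Hom_D(V^i,V^{\zeta(i)})$, so the right-hand side of~(ii) with $\ti{g}=\ti{g}_0$ is contained in the right-hand side of~(i); the reverse containment is immediate. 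The main technical obstacle is arranging the intersection formula into precisely the shape required by Proposition \ref{propDCnew}; once the $\tau$-stability of the coset $\ti{g}C_A(\beta)^\times$ is established and Proposition \ref{propIntersectionFormulas} is invoked on the conjugated stratum, the remainder of the argument is formal.
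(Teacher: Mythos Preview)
Your proof is correct and follows essentially the same route as the paper: take $\Gamma$-fixed points of the $\tiG$-formulas in Theorem~\ref{thmIntertwiningStrataGL} via Proposition~\ref{propDCnew}, with the intersection hypothesis supplied by Proposition~\ref{propIntersectionFormulas}. The paper compresses this into a single sentence citing exactly these three ingredients; you have simply unpacked the verification of $\tau$-stability and the reduction of~(i) to~(ii) via diagonalization, which is the same reduction used in the proof of Theorem~\ref{thmIntertwiningStrataGL}. One minor remark: your parenthetical that $\ti g$ matches idempotents ``according to the matching~$\zeta$'' is not actually needed (and not asserted by the hypothesis of~(ii)); what Proposition~\ref{propIntersectionFormulas} requires is only that $\Delta$ and $\ti g^{-1}\Delta'\ti g$ share the associated splitting as a decomposition of~$V$, which follows immediately from $\ti g\beta\ti g^{-1}=\beta'$.
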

\begin{proof}
 This Theorem follows from Theorem~\ref{thmIntertwiningStrataGL}, Proposition~\ref{propIntersectionFormulas} and 
 Proposition~\ref{propDCnew} for the group~$\Gamma=\{\sigma,1\}$. 
\end{proof}

The last Theorem has now consequences for the matching. 

\begin{corollary}[see~\cite{kurinczukSkodlerackStevens:16}~9.5 for the case over~$F$]\label{corIsometriesForMatchings}
Suppose~$\Delta$ and~$\Delta'$ are self-dual and intertwined by an element of~$G$ with matching~$\zeta$. Then~$I_G(\Delta,\Delta')\cap \prod_{i\in I}\Hom_D(V^i,V^{\zeta(i)})$ is non-empty and~$h_J$ is isometric to~$h_{\zeta(J)}$ for all~$\sigma$-invariant subsets~$J$ of~$I$. In particular~$\zeta$ and~$\sigma$ commute. 

\end{corollary}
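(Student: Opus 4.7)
The plan is to apply part~(i) of Theorem~\ref{thmIntertwiningStrataG} to extract a $G$-intertwiner that is already compatible with the matching~$\zeta$, and then read off both the isometry statement and the commutation $\zeta\sigma=\sigma\zeta$ from the fact that this intertwiner preserves the form $h$.

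First I would establish that $I_G(\Delta,\Delta')\cap\prod_{i\in I}\Hom_D(V^i,V^{\zeta(i)})$ is non-empty. The groups $G\cap M(\Delta)$ and $G\cap M(\Delta')$ preserve the blocks $V^i$ and $V^{\zeta(i)}$ of the associated splittings (since $\mf{m}(\Delta)$ and $\mf{m}(\Delta')$ are split by the associated decompositions, as recalled in the earlier sections). Hence if the middle factor in Theorem~\ref{thmIntertwiningStrataG}(i) were empty the entire right-hand side would be empty, contradicting the hypothesis $I_G(\Delta,\Delta')\neq\emptyset$. Fix any $g_0$ in the intersection. Theorem~\ref{thmMatchingStrataGL} gives $\dim_DV^i=\dim_DV^{\zeta(i)}$, so invertibility of $g_0$ forces the restriction $g_0|_{V^i}\colon V^i\to V^{\zeta(i)}$ to be a $D$-linear isomorphism, and therefore $g_0(V^J)=V^{\zeta(J)}$ for any $J\subseteq I$.

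Next I would show $\zeta\sigma=\sigma\zeta$. Since $\sigma_h$ permutes the primitive idempotents $1^i$ of $F[\beta]$ according to $\sigma_h(1^i)=1^{\sigma(i)}$, the form $h$ restricts to a non-degenerate pairing $V^i\times V^{\sigma(i)}\to D$, while $h(V^i,V^j)=0$ for $j\neq\sigma(i)$. Choose $v\in V^i$, $w\in V^{\sigma(i)}$ with $h(v,w)\neq 0$. Because $g_0\in G$ we have $h(g_0v,g_0w)=h(v,w)\neq 0$, with $g_0v\in V^{\zeta(i)}$ and $g_0w\in V^{\zeta(\sigma(i))}$; the same non-vanishing/vanishing dichotomy on the $\Delta'$-side then forces $\zeta(\sigma(i))=\sigma(\zeta(i))$.

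Finally, for a $\sigma$-invariant subset $J\subseteq I$ the commutation just proven makes $\zeta(J)$ $\sigma$-invariant, so $V^J$ and $V^{\zeta(J)}$ are both $\sigma_h$-stable and the restricted forms $h_J$, $h_{\zeta(J)}$ are non-degenerate. The $D$-linear bijection $g_0|_{V^J}\colon V^J\to V^{\zeta(J)}$ satisfies $h_{\zeta(J)}(g_0v,g_0w)=h(g_0v,g_0w)=h(v,w)=h_J(v,w)$ for all $v,w\in V^J$, so it is the desired isometry. The only subtle point I expect is the first one, namely that $M(\Delta)$ and $M(\Delta')$ genuinely preserve the associated splittings so that the non-emptiness of the middle factor is forced; this is built into the definition of $\mf{m}$ from~\cite{skodlerack:17-1} and needs only to be invoked.
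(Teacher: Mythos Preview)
Your argument is correct and follows essentially the same route as the paper: invoke Theorem~\ref{thmIntertwiningStrataG}\ref{thmIntertwiningStrataG.i} to produce a block-diagonal $G$-intertwiner, and then read off both the isometry and the commutation of $\zeta$ with $\sigma$ from the fact that this element lies in $G$.

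Two small remarks. First, your detour through block preservation of $M(\Delta)$ is unnecessary: the product formula in Theorem~\ref{thmIntertwiningStrataG}\ref{thmIntertwiningStrataG.i} already forces the middle factor to be non-empty, since a product $A\cdot B\cdot C$ with $B=\emptyset$ is empty regardless of $A$ and $C$. So the point you flag as ``subtle'' is in fact not needed at all. Second, for the commutation $\zeta\sigma=\sigma\zeta$ the paper argues slightly differently: since $g_0\in G$ we have $g_0=\sigma_h(g_0)^{-1}$, and computing the block structure of $\sigma_h(g_0)^{-1}$ shows $g_0\in\prod_i\Hom_D(V^i,V^{\sigma\zeta\sigma^{-1}(i)})$ as well; uniqueness of the matching (Theorem~\ref{thmMatchingStrataGL}) then gives $\zeta=\sigma\zeta\sigma^{-1}$. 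Your direct argument via the orthogonality $h(V^i,V^j)=0$ for $j\neq\sigma(i)$ is a perfectly good alternative and arguably more transparent.
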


\begin{proof}
 The non-emptiness of~$I_G(\Delta,\Delta')\cap \prod_{i\in I}\Hom_D(V^i,V^{\zeta(i)})$  follows from~\ref{thmIntertwiningStrataG}\ref{thmIntertwiningStrataG.i}.
 Every element of this intersection restricts to an isometry from~$h_J$ to~$h_{\zeta(J)}$.  If~$g$ is an element of the above intersection then~$\sigma(g)$ too, and the uniqueness of the matching implies 
 that~$\zeta$ and~$\sigma$ commute. 
\end{proof}

Later we are going to see that the non-emptiness of~$I_G(\Delta,\Delta')$ is not needed in Corollary~\ref{corIsometriesForMatchings}
for~$\sigma$ and~$\zeta$ to commute.

\subsection{Skolem-Noether} 
We prove a version of Skolem--Noether for~$G$.  

\begin{theorem}[see~\cite{skodlerackStevens:16}~5.2 for the case over~$F$]\label{thmSkolemNoetherG}
 Let~$\Delta$ and~$\Delta'$ be two pure strata such that~$e(\Lambda|F)=e(\Lambda'|F)$ and~$n=n'>r\geq r'$. Suppose that there is an element of~$G$  which intertwines~$\Delta$ with~$\Delta'$ and that~$\beta$ and~$\beta'$ have the same minimal polynomial. Then the element~$\beta$ is  conjugate to~$\beta'$ by an element of~$G$. 
\end{theorem}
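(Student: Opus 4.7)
The plan is to deduce this from Skolem--Noether in the central simple $F$-algebra $A=\End_D(V)$ together with the $G$-intertwining formula, Theorem~\ref{thmIntertwiningStrataG}(ii). The essential idea: the hypotheses produce a candidate conjugator $\tilde g\in\tilde G$ from ordinary Skolem--Noether, and the intertwining formula supplies the correction by a centralizer element that lives in $G$.

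First I would reduce to the case $r=r'$. Since $r\geq r'$ and $n>r$, the quadruple $[\Lambda',n,r,\beta']$ is still a pure stratum, and coarsening $\Delta'$ from level $r'$ to level $r$ enlarges the intertwining set, so $I_G(\Delta,[\Lambda',n,r,\beta'])\supseteq I_G(\Delta,\Delta')\neq\emptyset$. Hence it suffices to prove the statement when $r=r'$, and I will henceforth assume this. Next, because $\beta$ and $\beta'$ have the same minimal polynomial over $F$, the $F$-algebras $F[\beta]$ and $F[\beta']$ are isomorphic via $\beta\mapsto\beta'$. Applying Skolem--Noether to this embedding into the central simple $F$-algebra $A$ produces an element $\tilde g\in A^\times=\tilde G$ satisfying $\tilde g\beta\tilde g^{-1}=\beta'$.

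Now I would invoke Theorem~\ref{thmIntertwiningStrataG}(ii) for this~$\tilde g$:
\[
I_G(\Delta,\Delta')=(G\cap M(\Delta'))\bigl(G\cap\tilde g\, C_A(\beta)^\times\bigr)(G\cap M(\Delta)).
\]
By hypothesis the left-hand side is non-empty, and the two outer factors both contain the identity, so the middle factor must be non-empty. Choose any $g=\tilde g c\in G$ with $c\in C_A(\beta)^\times$. Since $c$ commutes with $\beta$,
\[
g\beta g^{-1}=\tilde g(c\beta c^{-1})\tilde g^{-1}=\tilde g\beta\tilde g^{-1}=\beta',
\]
proving the theorem.

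The main (and essentially only) obstacle lies in the $G$-intertwining formula that is already available as Theorem~\ref{thmIntertwiningStrataG}; given that input, the argument above is a routine application. One small verification required along the way is that the reduction $r'\rightsquigarrow r$ preserves purity and only enlarges the intertwining set, which is immediate from the definitions since $\mathfrak a_{\Lambda',-r}\subseteq\mathfrak a_{\Lambda',-r'}$.
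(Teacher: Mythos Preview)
Your argument is exactly the paper's Step~1, and it is correct \emph{when both strata are simple}. The gap is that Theorem~\ref{thmIntertwiningStrataG} (and the underlying Theorem~\ref{thmIntertwiningStrataGL}) is stated and proved only for \emph{semisimple} strata: the groups $M(\Delta)$, $M(\Delta')$ and the whole machinery behind the formula are attached to semisimple strata, not to arbitrary pure ones. The hypothesis of Theorem~\ref{thmSkolemNoetherG} only says $\Delta$ and $\Delta'$ are pure, and a pure stratum need not be simple (the extra condition $r<-k_0(\beta,\Lambda)$ can fail). So you are invoking the intertwining formula outside its domain of validity.

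One natural fix --- replacing $\Delta,\Delta'$ by equivalent simple strata $\tilde\Delta,\tilde\Delta'$ --- does not repair the argument either: the passage to an equivalent simple stratum changes $\beta$ to some $\tilde\beta$, and your conclusion would then only give $\tilde\beta$ conjugate to $\tilde\beta'$ in $G$, which says nothing about the original $\beta,\beta'$. This is precisely the difficulty the paper's Step~2 is designed to overcome: one passes to the maximal tamely ramified piece $F[\beta_{\min,tr}]$ (which is controlled by the equivalence class of the stratum), applies Step~1 there to conjugate those tamely ramified generators inside $G$, and then finishes inside the centraliser using~\cite[5.1]{skodlerackStevens:16} together with a careful argument that the field isomorphism $E\cong E'$ sending $\beta\mapsto\beta'$ is the identity on $F[t]$. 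That reduction is the substantive content of the proof and is missing from your proposal.
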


\begin{proof}
 We only consider pure strata, so without loss of generality we can assume~$n=r-1=r'-1$.
 We can find an element~$\ti{g}\in\ti{G}$ such that~$\ti{g}\beta\ti{g}^{-1}=\beta'$. The proof consists of two steps. 
 \begin{enumerate}
  \item[Step 1] Let us at first assume that both strata are simple. 
  The set~$G\cap\ti{g}B$ is non-empty by Theorem~\ref{thmIntertwiningStrataG} because~$I_G(\Delta,\Delta')$ is non-empty. 
  This finishes Step 1. 
  \item[Step 2] There are simple strata~$\ti{\Delta}$ and~$\ti{\Delta}'$ equivalent to~$\Delta$ and~$\Delta'$, respectively.
  By Proposition~\ref{propDiagonalizationStrataG} we can assume that~$\ti{\beta}$ and~$\ti{\beta}'$ have the same minimal polynomial.
  Let~$e_w$ be the wild ramification index of~$\ti{E}|F$. There is an element~$\gamma\in \ti{E}$ which is congruent 
  to~$\tilde{\beta}^{e_w}$ and generates the maximal tamely ramified sub-extension~$\ti{E}_{tr}$ in~$\ti{E}$. 
  By~\cite[2.15]{kurinczukSkodlerackStevens:16} there is an~$F$-linear field monomorphism~$\phi:\ti{E}_{tr}\rightarrow E$ 
  such that~$\phi(\gamma)$ is congruent to~$\beta^{e_w}$ in~$E$. We take~$\gamma'$ to be the image of~$\gamma$ under
  the isomorphism~$\ti{E}\cong \ti{E}'$ which sends~$\ti{\beta}$ to~$\ti{\beta}'$. We repeat the argument to get an~$F$-linear field monomorphism~$\phi':\ti{E}'_{tr}\ra E'$ such that~$\phi'(\gamma')$ is congruent to~$\beta'^{e_w}$.
  The elements~$\phi(\gamma)$ and~$\phi'(\gamma')$ are conjugate  by an element of~$G$ by Step 1, and we can assume that~$\phi(\gamma)$ 
  and~$\phi'(\gamma')$ coincide and we denote~$\phi(\gamma)$ by~$t$. If~$E|F[t]$ and~$E'|F[t]$ are isomorphic by a 
  map which sends~$\beta$ to~$\beta'$,   then~$\beta$ is conjugate to~$\beta'$ by an element of~$C_G(F[t])$ 
  by~\cite[5.1]{skodlerackStevens:16}. 
  Let~$\psi$ be the~$F$-linear isomorphism from~$E$ to~$E'$ which sends~$\beta$ to~$\beta'$. Then
  \begin{equation}\label{eqCentr}\psi(t)\equiv\psi(\beta^{e_w})=\beta'^{e_w}\equiv t.\end{equation}
  Thus,~$\psi(x)$ is congruent to~$x$ in~$E'$ for all~$x\in F[t]$ because~$t$ is minimal over~$F$.
  We obtain at first that~$\psi$ is the identity on the maximal unramified sub-extension~$F[t]_{ur}$ of~$F[t]|F$. Let~$\pi_t$ 
  be a uniformizer of~$F[t]$ whose~$e(F[t]|F)$th power is an element of~$F[t]_{ur}$. We just denote the ramification index of~$F[t]|F$ 
  by~$e_t$. From the congruence~$\psi(\pi_t)\equiv \pi_t$ we obtain an element~$y\in 1+\mf{p}_{E'}$ such that~$y\pi_t=\psi(\pi_t)$. 
  Thus~$y^{e_t}=1$. Therefore~$y$ and~$1$ are~$e_t$th root of unity which are congruent in~$E'$. Thus~$y=1$ because~$e_t$ is not divisible by the residue characteristic of~$F$. We have established that~$\psi$ is the identity on~$F[t]$ which finishes the proof. \end{enumerate}
\end{proof}

This has an immediate analogue consequence for semisimple strata.
\begin{corollary}\label{corSkolemNoetherSemisimple}
Suppose~$\Delta$ and~$\Delta'$ are self-dual semisimple strata with~$r=r'$, the same~$F$-period  and a bijection~$\zeta:I\ra I'$ such that~$\beta_i$ and~$\beta'_{\zeta(i)}$ have the same minimal polynomial over~$F$. 
Suppose further  there is an element of~$G\cap\prod_{i\in I}\Hom(V^i,V^{\zeta(i)})$  which intertwines~$[\Lambda^i,n_i,n_{i}-1,\beta_i]$ with~$[\Lambda'^{\zeta(i)},n'_{\zeta(i)},n'_{\zeta(i)}-1,\beta'_{\zeta(i)}]$ for all~$i\in I$ with~$\beta_i\neq 0$. Then~$\beta$ is conjugate to~$\beta'$ by an element of~$G$. 
\end{corollary}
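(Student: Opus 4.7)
The plan is to apply the pure self-dual Skolem--Noether theorem (Theorem~\ref{thmSkolemNoetherG}) orbit-by-orbit along the involution $\sigma$. The block-diagonal $G$-intertwiner supplied by the hypothesis is automatically $\sigma$-equivariant, so $\zeta$ commutes with $\sigma$, and the restrictions of $h$ to the $\sigma$-orbit blocks are non-degenerate and pairwise orthogonal; this makes it possible to work one orbit at a time and assemble the pieces.

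First I fix a block-diagonal element $g_0 \in G \cap \prod_{i\in I}\Hom_D(V^i, V^{\zeta(i)})$ furnished by the hypothesis. Since $g_0 \in G$, the matching $\zeta$ commutes with $\sigma$, so the $\sigma$-orbits of $I$ are either singletons inside $I_0$ or pairs $\{i,\sigma(i)\}$ with $i\in I_+$, and $\zeta$ permutes orbits of the same type. Because $\Delta$ and $\Delta'$ are self-dual, each $V^O := \bigoplus_{j\in O}V^j$ ($O$ a $\sigma$-orbit) is non-degenerate with respect to $h|_{V^O}$ and the orbits are pairwise $h$-orthogonal, and likewise for the primed decomposition.

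Next I treat each orbit $O$ with $\beta|_{V^O}\neq 0$ separately. If $O=\{i\}\subseteq I_0$, then $g_0|_{V^i}$ is already an isometry $(V^i,h_i)\to(V^{\zeta(i)},h_{\zeta(i)})$, so I transport $\Delta'_{\zeta(i)}$ along $g_0|_{V^i}$ to obtain a second simple self-dual stratum on $(V^i,h_i)$ with the same minimal polynomial as $\Delta_i$ and intertwined with it in $U(h_i)$ by the identity; Theorem~\ref{thmSkolemNoetherG} then produces $u_i\in U(h_i)$ conjugating $\beta_i$ to the transported $\beta'_{\zeta(i)}$, and the composition $g_i := g_0|_{V^i}\circ u_i$ is an isometry $V^i\to V^{\zeta(i)}$ with $g_i\beta_ig_i^{-1}=\beta'_{\zeta(i)}$. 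If $O=\{i,\sigma(i)\}$ with $i\in I_+$, there is no non-degenerate form on $V^i$ alone, so I apply the GL-version of Skolem--Noether (used as Step~1 in the proof of Theorem~\ref{thmSkolemNoetherG}) to the pure strata $\Delta_i$ and $\Delta'_{\zeta(i)}$, which are $\tilde G$-intertwined by $g_0|_{V^i}$, to obtain a $D$-linear isomorphism $g_i:V^i\to V^{\zeta(i)}$ with $g_i\beta_ig_i^{-1}=\beta'_{\zeta(i)}$. I then define
\[g_{\sigma(i)} := \sigma_h(g_i)^{-1}: V^{\sigma(i)}\longrightarrow V^{\sigma(\zeta(i))}=V^{\zeta(\sigma(i))},\]
and set $g_O := g_i\oplus g_{\sigma(i)}$. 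A short calculation with the defining relation $h(g_iv,\cdot)=h(v,\sigma_h(g_i)\cdot)$ shows $g_O$ is an isometry of $(V^O,h|_{V^O})$ onto $(V^{\zeta(O)},h|_{V^{\zeta(O)}})$, and applying $-\sigma_h$ to $g_i\beta_ig_i^{-1}=\beta'_{\zeta(i)}$ and using $\beta_{\sigma(i)}=-\sigma_h(\beta_i)$, $\beta'_{\zeta(\sigma(i))}=-\sigma_h(\beta'_{\zeta(i)})$ gives $g_{\sigma(i)}\beta_{\sigma(i)}g_{\sigma(i)}^{-1}=\beta'_{\zeta(\sigma(i))}$. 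For orbits with $\beta|_{V^O}=0$ I simply take $g_O:=g_0|_{V^O}$, which is an isometry and intertwines the (zero) strata trivially.

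Finally, because $V=\bigoplus_O V^O$ and $V=\bigoplus_O V^{\zeta(O)}$ are $h$-orthogonal decompositions, the block sum $g := \bigoplus_O g_O$ is a well-defined isometry of $(V,h)$, hence lies in $G$, and $g\beta g^{-1}=\beta'$ by construction. I expect the main obstacle to be the orbit-pair case $i\in I_+$: one has to hand-build the second half $g_{\sigma(i)}=\sigma_h(g_i)^{-1}$ and verify, via a careful but elementary $\sigma_h$-calculation, that it lies in $\Hom_D(V^{\sigma(i)},V^{\zeta(\sigma(i))})$, that the combined map $g_O$ is an isometry, and that it correctly intertwines the $\sigma$-image blocks using the self-duality relations.
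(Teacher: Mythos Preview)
Your proof is correct and follows essentially the same orbit-by-orbit strategy as the paper: for $i\in I_0$ apply Theorem~\ref{thmSkolemNoetherG} on the block, and for $i\in I_+$ choose any $D$-linear isomorphism $g_i:V^i\to V^{\zeta(i)}$ conjugating $\beta_i$ to $\beta'_{\zeta(i)}$ and complete it by $g_{\sigma(i)}=\sigma_h(g_i)^{-1}$ on the partner block. Your version is merely more explicit in places the paper leaves implicit (the transport along $g_0|_{V^i}$ before invoking Theorem~\ref{thmSkolemNoetherG}, the observation that $\zeta$ commutes with $\sigma$, and the treatment of the null block), but there is no substantive difference in method.
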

 
\begin{proof}
 We take at first an index~$i\in I_0$ with~$\beta_i\neq 0$. 
 Then we get~$n_i=n'_{\zeta(i)}$ from~\cite[6.9]{skodlerackStevens:16}, and we get  by~\ref{thmSkolemNoetherG} an isometry from~$h_i$ to~$h_{\zeta(i)}$ which conjugates~$\beta_i$ to~$\beta_{\zeta(i)}$. 
 For~$i\in I_+$ we can take any~$D$-linear isomorphism~$g_i$ from~$V^i$ to~$V^{\zeta(i)}$ which conjugates~$\beta_i$ 
 to~$\beta_{\zeta(i)}$, and we obtain an element~$\sigma_h(g_i)\in\Hom(V^{\zeta(\sigma(i))},V^{\sigma(i)})$ 
 which conjugates~$\beta'_{\zeta(\sigma(i))}$ to~$\beta_{\sigma(i)}$. This finishes the proof. 
\end{proof}

\subsection{Conjugate semisimple strata}
In this section we prove an ``intertwining implies conjugacy''-kind of result for self-dual semisimple strata. It will lead 
directly to the analogue result for self-dual semisimple characters.
In this section we fix two intertwining semisimple strata~$\Delta$ and~$\Delta'$ and we assume~$n=n'$,~$r=r'$ and~$e(\Lambda|F)=e(\Lambda'|F)$.

Intertwining does not imply conjugacy in general, even if one would assume that~$\ti{g}\Lambda^i$ is equal to~$\Lambda^{\zeta(i)}$ 
for some~$\ti{g}\in\ti{G}$ because one has to keep track of the embedding type. This is the reason why we have 
introduced the map~$\bar{\zeta}$ in~\cite[4.46]{skodlerack:17-1}. It is the map from~$\kappa_E$ to~$\kappa_{E'}$ given by the 
diagram
\begin{equation}\label{eqZetqbar}
 \kappa_E\hookrightarrow \mf{a}_0/\mf{a}_1\rightarrow (g\mf{a}_0g^{-1}+\mf{a}'_0)/(g\mf{a}_1g^{-1}+\mf{a}'_1)
 \leftarrow \mf{a}'_0/\mf{a}'_1\hookleftarrow \kappa_{E'},\ g\in I(\Delta,\Delta'). 
\end{equation}
This map does not depend on the choice of~$g$. We call the pair~$(\zeta,\bar{\zeta})$ the matching pair. 
In fact we do not need the whole map for the conjugacy. The restriction to~$\kappa_{E_D}$  is enough.
The algebra~$E_D$ is the product of the unramified field extensions~$E_{i,D}|F$ of degree~$\gcd(\sqrt{[D:F]},f(E_i|F))$,~$i\in I$.

\begin{theorem}[\cite{skodlerack:17-1}~4.48]\label{thmIntConStrataGL}
 Suppose there is an element~$\ti{g}\in\prod_i\Hom_D(V^i,V^{\zeta(i)})$ such that~$\ti{g}\Lambda=\Lambda'$ and such that 
 the conjugation with~$\ti{g}$ verifies~$\bar{\zeta}|_{\kappa_{E_D}}$.
 Then there is an element~$u$ of~$\prod_i\Hom_D(V^i,V^{\zeta(i)})$ such that~$u\Lambda=\Lambda'$ and~$u\Delta$ is equivalent to
 $\Delta'$. We can choose~$u$ to further satisfy~$u\beta u^{-1}=\beta'$ if~$\beta$ and~$\beta'$ have the same characteristic polynomial  
 over~$F$. 
\end{theorem}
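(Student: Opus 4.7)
The plan is to reduce first to the simple case by exploiting the block decomposition, then handle the simple case by a careful embedding-type argument, and finally upgrade the equivalence to an actual conjugation of $\beta$ when the characteristic polynomials coincide.

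First I would argue block by block. Since $\ti{g}\in\prod_i\Hom_D(V^i,V^{\zeta(i)})$ respects the splittings via $\zeta$, the restriction $\ti{g}_i:=\ti{g}|_{V^i}:V^i\ra V^{\zeta(i)}$ satisfies $\ti{g}_i\Lambda^i=\Lambda'^{\zeta(i)}$, and the hypothesis that conjugation by $\ti{g}$ realises $\bar{\zeta}|_{\kappa_{E_D}}$ decomposes into the analogous statement for each factor $\kappa_{E_{i,D}}\ra\kappa_{E'_{\zeta(i),D}}$, because $\kappa_{E_D}=\prod_i\kappa_{E_{i,D}}$ and the diagram \eqref{eqZetqbar} respects the block decomposition. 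If we produce $u_i:V^i\ra V^{\zeta(i)}$ with $u_i\Lambda^i=\Lambda'^{\zeta(i)}$ and $u_i\Delta_i$ equivalent to $\Delta'_{\zeta(i)}$ for every $i$, then $u:=\bigoplus_i u_i$ does the job, since equivalence of semisimple strata with a common associated splitting is tested block by block.

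Next, for the simple case, the content of the hypothesis on $\bar{\zeta}|_{\kappa_{E_D}}$ is that the embedding type of $\kappa_{E_D}\hookrightarrow\mfa_0/\mfa_1$ coming from $\beta$ is identified, via $\ti{g}$, with that of $\kappa_{E'_D}\hookrightarrow\mfa'_0/\mfa'_1$ coming from $\beta'$. Using the S\'echerre--Stevens theory of embedding types for $\GL_m(D)$, one can then find an element $v$ of the normaliser of $\Lambda'$ such that $v\ti{g}\beta\ti{g}^{-1}v^{-1}$ and $\beta'$ have the same image in the residue algebra of the hereditary order attached to $\Lambda'$. Promoting this residual equality to an actual equivalence $v\ti{g}\Delta\equiv\Delta'$ is then a standard successive-approximation argument through the filtration $(\mfa'_{-r+k})_{k\geq 0}$: at each step, the derived stratum in the centraliser of the tame part is equivalent to a simple stratum by Proposition~\ref{propDiagonalizationStrataGL}, and correcting $v\ti{g}$ by an element of $M(\Delta')$ kills one filtration step without disturbing $u\Lambda=\Lambda'$. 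Setting $u:=v\ti{g}$ yields the first conclusion.

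For the last assertion, suppose $\beta$ and $\beta'$ have the same characteristic polynomial (equivalently, the same minimal polynomial on each block, with the same multiplicities). We already have $u$ with $u\Delta\equiv\Delta'$; in particular $u\beta u^{-1}$ and $\beta'$ are intertwined by $1\in\ti{G}$ and generate isomorphic fields on each block. A Skolem--Noether argument in the ambient group $\ti{G}$, in the spirit of Theorem~\ref{thmSkolemNoetherG} but without the self-duality hypothesis, produces an element $c$ of $P(\Lambda')\cap C_{\ti{G}}(\text{tame part of }\beta')^\times$ with $c(u\beta u^{-1})c^{-1}=\beta'$. Replacing $u$ by $cu$ preserves $u\Lambda=\Lambda'$ (since $c\in P(\Lambda')$) and the equivalence $u\Delta\equiv\Delta'$.

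The main obstacle I expect is the simple case of Step~2: the embedding type in the $\GL_m(D)$ setting is controlled only by $\kappa_{E_D}$ (the residue field of the unramified part of $E$ of degree $\gcd(\sqrt{[D:F]},f(E|F))$) rather than by all of $\kappa_E$ as in the split case, and verifying that conjugating $\kappa_{E_D}$ correctly is sufficient to conjugate the whole simple stratum up to equivalence requires a careful comparison of the Brauer class of $D$ with the residual structure of the hereditary order, combined with an internal Skolem--Noether statement for $\Aut_D(V)$.
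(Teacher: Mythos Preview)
The paper does not prove this theorem here; it is imported from~\cite[4.48]{skodlerack:17-1}, with only the remark that the second assertion follows directly from that proof. However, the proof of the self-dual analogue Theorem~\ref{thmIntConStrataG} immediately afterwards makes the intended $\GL$-strategy transparent, and your proposal diverges from it in a way that leaves a gap.

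Your block-by-block reduction is fine and matches the paper. The divergence is in the simple case. The paper's route (visible in the proof of~\ref{thmIntConStrataG}) is the reverse of yours: first apply $\ti{g}$ to reduce to $\Lambda=\Lambda'$ and $\bar{\zeta}|_{\kappa_{E_D}}=\id$; then use diagonalization (Proposition~\ref{propDiagonalizationStrataGL}) to assume $\beta$ and $\beta'$ have the same minimal polynomial; then Skolem--Noether in $\ti{G}$ produces some $g_0$ with $g_0\beta g_0^{-1}=\beta'$; since $g_0$ intertwines the strata, conjugation by $g_0$ automatically realises $\bar{\zeta}|_{\kappa_{E_D}}=\id$, i.e.\ $g_0xg_0^{-1}\equiv x\pmod{\mfa_1}$ for $x\in o_{E_D}$; finally~\cite[4.39]{skodlerack:17-1} (together with the Broussous--Lemaire map $j_{E_D}$ in the self-dual case) upgrades $g_0$ to an element of $P(\Lambda)$ still conjugating $\beta$ to $\beta'$. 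The second assertion then falls out for free, as the paper notes.

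Your Step~2 runs the other way: you keep $\ti{g}\Lambda=\Lambda'$ and try to promote the residual agreement on $\kappa_{E_D}$ to equivalence of strata by ``successive approximation through the filtration $(\mfa'_{-r+k})_k$''. This is the gap. Matching the embedding of $\kappa_{E_D}$ into $\mfa'_0/\mfa'_1$ controls only the embedding type, not the coset $\beta'+\mfa'_{-r}$; there is no reason the two simple strata should agree even at the first filtration step. Your approximation invokes Proposition~\ref{propDiagonalizationStrataGL}, but that proposition produces a simple stratum \emph{split by a decomposition}; it does not manufacture an element of $M(\Delta')$ moving one simple stratum toward another. Without first having an element that genuinely conjugates $\beta$ to $\beta'$ (or at least their tame parts, placing you inside a centraliser where approximation makes sense), there is no mechanism by which a correction in $M(\Delta')$ kills a filtration step. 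The missing ingredient is precisely the Skolem--Noether step followed by~\cite[4.39]{skodlerack:17-1}, after which no approximation is needed at all.
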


The second part of this Theorem follows directly from the proof of~\cite[4.48]{skodlerack:17-1}. We continue now with its self-dual 
analogue. 

\begin{theorem} \label{thmIntConStrataG}
 Suppose both strata are self-dual and intertwine by an element of~$G$.
 Let~$g$ be an element of~$G\cap\prod_i\Hom_D(V^i,V^{\zeta(i)})$ which satisfies~$g\Lambda=\Lambda'$ and such that 
 the conjugation with~$g$ verifies~$\bar{\zeta}|_{\kappa_{E_D}}$.
 Then there is an element~$u$ of~$G\cap \prod_i\Hom_D(V^i,V^{\zeta(i)})$ such that~$u\Lambda=\Lambda'$ and~$u\Delta$ is equivalent to
 $\Delta'$. We can choose~$u$ to further satisfy~$u\beta u^{-1}=\beta'$ if~$\beta$ and~$\beta'$ have the same characteristic 
 polynomial  over~$F$. 
\end{theorem}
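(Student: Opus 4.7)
The plan is to reduce to the general linear case Theorem \ref{thmIntConStrataGL} and then descend to $G$ via the double-coset cohomology argument of Proposition \ref{propDCnew}, whose hypotheses are supplied exactly by the intersection formulas of Proposition \ref{propIntersectionFormulas}.

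First I would view $g$ as an element of $\tilde G$ and apply Theorem \ref{thmIntConStrataGL}: this produces $\tilde u \in \tilde G \cap H$, where $H := \prod_i \Hom_D(V^i, V^{\zeta(i)})$, with $\tilde u \Lambda = \Lambda'$ and $\tilde u \Delta \sim \Delta'$, and in the matching-characteristic-polynomial case we may further arrange $\tilde u \beta \tilde u^{-1} = \beta'$. It then remains to replace $\tilde u$ by a $\sigma$-fixed element with the same properties.

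Next I would identify the set of all such solutions
\[
X \;:=\; \{u \in \tilde G \cap H \,\mid\, u\Lambda = \Lambda',\ u\Delta \sim \Delta'\}
\]
as a double coset of pro-$p$ groups. Elements of $M(\Delta)$ fix $\Lambda$ and conjugate $\Delta$ within its equivalence class (symmetrically for $M(\Delta')$); combining this with the intertwining formula Theorem \ref{thmIntertwiningStrataGL} and the intersection formula Proposition \ref{propIntersectionFormulas}(ii) one obtains
\[
X \;=\; \bigl(M(\Delta') \cap H_2\bigr)\, \tilde u\, \bigl(M(\Delta) \cap H_1\bigr),
\]
where $H_1 = \prod_i \Aut_D V^i$ and $H_2 = \prod_i \Aut_D V^{\zeta(i)}$. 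By self-duality of $\Delta$ and $\Delta'$, the groups $M(\Delta)$ and $M(\Delta')$ are $\sigma$-stable; the identity $\zeta\sigma = \sigma\zeta$ from Corollary \ref{corIsometriesForMatchings} ensures that $H_1$, $H_2$, and hence $H$ are $\sigma$-stable, so $X$ is $\sigma$-stable. Proposition \ref{propDCnew}, applied to the $2$-group $\Gamma = \langle \sigma \rangle$ with the required double-coset hypothesis furnished by Proposition \ref{propIntersectionFormulas}(ii), then yields a $\sigma$-fixed element of $X$, which is the desired $u \in G \cap H$ satisfying $u\Lambda = \Lambda'$ and $u\Delta \sim \Delta'$.

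For the refined statement when $\beta$ and $\beta'$ share a characteristic polynomial, one may either run the same descent argument starting from the stronger form of $\tilde u$ supplied by Theorem \ref{thmIntConStrataGL}, or start from the $u \in G \cap H$ already produced and apply Corollary \ref{corSkolemNoetherSemisimple} block by block to $u\Delta$ and $\Delta'$ to obtain an element of $G$ preserving $\Lambda'$ and the splitting that conjugates $u\beta u^{-1}$ to $\beta'$; composition with $u$ gives the sharper conclusion. The principal obstacle I expect is the clean double-coset description of $X$: one must check that any two elements of $X$ differ on the right by $M(\Delta) \cap H_1$ and on the left by $M(\Delta') \cap H_2$, which requires both Theorem \ref{thmIntertwiningStrataGL} and the splitting-sensitive intersection formula Proposition \ref{propIntersectionFormulas}(ii); once this is in place, the remainder is formal via the cohomology machinery of Proposition \ref{propDCnew}.
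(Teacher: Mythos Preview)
Your double-coset description of the solution set $X$ is incorrect, and this is not a minor technicality but the crux of the matter. If $u_1,u_2\in X$ then $u_1^{-1}u_2\in P(\Lambda)\cap I(\Delta,\Delta)$, and Theorem~\ref{thmIntertwiningStrataGL} gives $I(\Delta,\Delta)=M(\Delta)\,C_A(\beta)^\times\,M(\Delta)$, not just $M(\Delta)$. Hence
\[
X \;=\; \bigl(M(\Delta')\cap H_2\bigr)\,\tilde u\,\bigl(C_A(\beta)^\times\cap P(\Lambda)\bigr)\,\bigl(M(\Delta)\cap H_1\bigr),
\]
with an unavoidable factor of the centralizer parahoric $C_A(\beta)^\times\cap P(\Lambda)$. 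To apply Proposition~\ref{propDCnew} you would need the coset $\tilde u\bigl(C_A(\beta)^\times\cap P(\Lambda)\bigr)$ to be $\sigma$-stable, which you have no reason to know, and even then you would only reduce to finding a $\sigma$-fixed point in that coset. Since $C_A(\beta)^\times\cap P(\Lambda)$ is \emph{not} a pro-$p$ group (its reductive quotient is a genuine finite group of Lie type over the residue field), the vanishing of $H^1$ that underlies Lemma~\ref{lemKurStevDC2p7iia} fails, and the cohomological descent alone cannot produce the required fixed point. In short, the pro-$p$ machinery handles the $M(\Delta)$-layers but not the centralizer layer.

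The paper supplies exactly the missing input: it first uses Skolem--Noether for $G$ (Theorem~\ref{thmSkolemNoetherG}) to produce an element $g_0\in G$ conjugating $\beta$ to $\beta'$, and then invokes the Broussous--Lemaire building embedding $j_{E_D}$ together with the transitivity result \cite[5.2]{skodlerack:13} for $C_G(E_D)$ on points of the same simplicial type (the type-preservation coming from Proposition~\ref{propNrd1}) to correct $g_0$ by an element of $C_G(E_D)$ so that it also normalizes $\Lambda$. This building-theoretic step is precisely what replaces the non-existent $H^1$-vanishing for the centralizer parahoric; your argument has no substitute for it.
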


\begin{proof}
 We apply~$g$ and can assume~$\Lambda^i=\Lambda'^{\zeta(i)}$, for all~$i$, and~$\bar{\zeta}|_{\kappa_{E_D}}=\id|_{\kappa_{E_D}}$ 
 without loss of generality. We can therefore restrict to the case of a single~$\sigma$-orbit, i.e.~$I$ is either a singleton or 
 consists of two non-fixed points of~$\sigma$. We apply in the latter case Theorem~\ref{thmIntConStrataGL} for one index~$i$, to get 
 an appropriate element~$u_i$, and~$u:=\diag(u_i,\sigma_h(u_i)^{-1})$ satisfies the assertions. So we are left with the singleton 
 case~$I=\{i_0\}$, i.e. we assume that both strata are simple. By diagonalization, see~\ref{propDiagonalizationStrataG},
 we can assume that~$\beta$ and~$\beta'$ have the same minimal polynomial over~$F$. The element~$\beta$ is conjugate to~$\beta'$ by 
 some element~$g_0$ of~$G$ by Theorem~\ref{thmSkolemNoetherG}. The conjugation by~$g_0$ 
 induces~$\bar{\zeta}|_{\kappa_{E_D}}=\id_{\kappa_{E_D}}$ because~$g_0$ intertwines~$\Delta$ with~$\Delta'$. 
 Thus,~$g_0xg_0^{-1}+\mf{a}_1$ is equal to~$x+\mf{a}_1$ in~$\mf{a}_0/\mf{a}_1$ for all~$x\in o_E$. By~\cite[4.39]{skodlerack:17-1} there is an 
 element~$v$ of~$P(\Lambda)$ which conjugates~$\beta$ to~$\beta'$. We consider the~$C_{\ti{G}}(E_D)$-equivariant 
 affine Broussous--Lemaire isomorphism 
 \[j_{E_D}:\ B_{red}(\ti{G})^{E_D^\times}\DistTo B_{red}(C_{\ti{G}}(E_D))\]
 between the reduced Bruhat--Tits buildings, see~\cite{broussousLemaire:02}.
 The translation classes~$[\Lambda]$ and~$[g_0^{-1}\Lambda]$ are elements of~$B_{red}(\ti{G})^{E_D^\times}$. They satisfy
 \[v^{-1}g_0j_{E_D}([g_0^{-1}\Lambda])=j_{E_D}(v^{-1}g_0[g_0^{-1}\Lambda])=j_{E_D}([\Lambda]).\]
 The element~$v^{-1}g_0$ acts type-preserving on~$B_{red}(C_{\ti{G}}(E_D))$ because~$\nu_{E_D}(\Nrd_{E_D}(v^{-1}g_0))$ 
 vanishes, i.e. the points~$j_{E_D}([g_0^{-1}\Lambda])$ and~$j_{E_D}([\Lambda])$ have the same simplicial
 type in~$B_{red}(C_{\ti{G}}(E_D))$. Note that both are points of the building~$B(C_G(E_D))$ of the centralizer of~$E_D$ in~$G$. 
 By~\cite[5.2]{skodlerack:13} 
 there is an element~$g_1\in C_G(E_D)$ such that 
 \[g_1j_{E_D}([g_0^{-1}\Lambda])=j_{E_D}([\Lambda]).\]
 The~$C_G(E_D)$-equivariance and the injectivity of~$j_{E_D}$ imply~$[g_1g_0^{-1}\Lambda]=[\Lambda]$. 
 Thus~$g_1g_0^{-1}\Lambda$ is a translate of~$\Lambda$, i.e.~$g_1g_0^{-1}$ lies  in the normalizer of~$\Lambda$ 
 and hence in~$\mf{a}_0^\times$, because it is an element of~$G$. The element~$u:=g_1g^{-1}_0$ is an element of~$P(\Lambda)\cap G$ which
 conjugates~$\beta$ to~$\beta'$. This finishes the proof. 
 \end{proof}

\section{Endo-classes of self-dual strata}

Now we compare self-dual semisimple strata which are allowed to correspond to different hermitian spaces where we also allow 
orthogonal and symplectic spaces over~$F$. We further want to allow the strata to have different parameter~$r$ and different~$E_i$-periods, so we repeat the notion of group level and degree. The~\emph{degree} of a semi-pure stratum~$\Delta$ is the~$F$-dimension of~$E$ and the \emph{group level} of a semisimple stratum~$\Delta$ is~$\lfloor\frac{r}{e(\Lambda|E)}\rfloor$ where~$e(\Lambda|E)$ is the lowest common multiple of~$(e(\Lambda^i,E_i))_{i\in I}$ except for the null case where we set the group level to be infinity. 
Two semisimple strata~$\Delta$ and~$\Delta'$ (possibly given on different vector spaces and different skew-fields over~$F$) of the same group level and the same degree are called~\emph{endo-equivalent} if there is a bijection~$\zeta: I\ra I'$ such that~$\Res_F(\Delta_i)\oplus\Res_F(\Delta_{\zeta(i)})$ is equivalent to a simple stratum for all indexes~$i\in I$, 
see~\cite[6.6,6.7]{skodlerack:17-1}. We call~$\zeta_{\Delta,\Delta'}$ the matching from~$\Delta$ to~$\Delta'$.  
This is an equivalence relation by~\cite[,6.7]{skodlerack:17-1}, and note that the used direct sum is a slight generalization, see~\cite[after 6.3]{skodlerack:17-1}. 
The equivalence classes are called~\emph{endo-classes} and the endo-class of a semisimple stratum~$\Delta$ is denoted by~$\mathfrak{E}(\Delta)$. 

\begin{proposition}[\cite{skodlerack:17-1}~6.7]\label{propEndoStrataDegrees}
 Two endo-equivalent strata~$\Delta$ and~$\Delta'$ with matching~$\zeta$ satisfy 
 \[e(\Lambda^i|E_i)=e(\Lambda'^{\zeta(i)}|E'_{\zeta(i)}),\ f(\Lambda^i|E_i)=f(\Lambda'^{\zeta(i)}|E'_{\zeta(i)}),\] for all~$i\in I$. 
\end{proposition}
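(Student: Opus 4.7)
The strategy is to reduce to the simple case via the block matching and then exploit diagonalization: once both strata sit, up to equivalence, inside a single simple stratum on $V \oplus V'$, a single field $F[\gamma]$ acts diagonally, and its period and residue dimension are forced to agree on both blocks.

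First, I would observe that the invariants $e(\Lambda^i|E_i)$ and $f(\Lambda^i|E_i)$ are blockwise data, and the endo-equivalence hypothesis is itself phrased blockwise through $\zeta$. So I would fix $i \in I$, set $i' := \zeta(i)$, and reduce to the case where both $\Delta$ and $\Delta'$ are simple strata with $\Res_F(\Delta) \oplus \Res_F(\Delta')$ on $V \oplus V'$ equivalent to a simple stratum. The null case $\beta = 0 = \beta'$ collapses to $E = F = E'$ and is handled directly from the definitions together with the equality of group levels; so I may assume $\beta, \beta' \neq 0$.

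Next I would apply Proposition~\ref{propDiagonalizationStrataGL} to $\Res_F(\Delta) \oplus \Res_F(\Delta')$ with the splitting $V \oplus V'$. This yields a simple stratum $\tilde{\Delta} = [\Lambda \oplus \Lambda', n, r, \gamma]$ equivalent to the direct sum and itself split by $V \oplus V'$. Writing $\gamma = \gamma_1 \oplus \gamma_2$, the irreducibility of the minimal polynomial of $\gamma$ forces $F[\gamma_1] \cong F[\gamma] \cong F[\gamma_2]$ (via the block projections), and each block restriction $[\Lambda, n, r, \gamma_1]$ and $[\Lambda', n, r, \gamma_2]$ is then a simple stratum equivalent to $\Delta$, respectively to $\Delta'$.

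For the final step, $\Lambda \oplus \Lambda'$ is naturally an $o_{F[\gamma]}$-lattice sequence, so multiplication by a uniformizer of $F[\gamma]$ shifts its indexing by a single integer --- the $F[\gamma]$-period. But on the first block this shift equals $e(\Lambda | F[\gamma_1])$ and on the second it equals $e(\Lambda' | F[\gamma_2])$, hence these two coincide. Combined with equivalence-invariance of the $E$-period for simple strata, this yields $e(\Lambda | E) = e(\Lambda | F[\gamma_1]) = e(\Lambda' | F[\gamma_2]) = e(\Lambda' | E')$. The analogous argument on the residue quotients $\Lambda_n / \Lambda_{n+1}$ viewed as $\kappa_{F[\gamma]}$-modules delivers the equality for $f$. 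The main obstacle will be justifying the equivalence-invariance step: two equivalent simple strata $[\Lambda, n, r, \beta] \sim [\Lambda, n, r, \gamma_1]$ can have genuinely distinct fields $F[\beta]$ and $F[\gamma_1]$, yet these must share ramification index and residue degree over $F$ --- a standard but non-trivial consequence of the theory of simple strata that would need to be invoked.
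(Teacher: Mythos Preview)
The paper does not provide its own proof of this proposition; it is quoted as a citation of \cite[6.7]{skodlerack:17-1}, so there is no in-paper argument to compare against.

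Your strategy is the natural one and essentially reconstructs the argument one would expect in the reference: reduce blockwise via~$\zeta$ to the simple case, then use diagonalization (Proposition~\ref{propDiagonalizationStrataGL}) to produce a single simple stratum on~$V\oplus V'$ split by the decomposition, so that one field~$F[\gamma]\cong F[\gamma_1]\cong F[\gamma_2]$ acts on both blocks and forces the~$E$-periods to agree. Two points deserve care. First, endo-equivalence allows~$\Delta$ and~$\Delta'$ to have different~$F$-periods and different parameters~$n,r$; before forming the direct sum~$\Res_F(\Delta)\oplus\Res_F(\Delta')$ you must align the~$F$-periods via the repeating procedure of Remark~\ref{remRaisingRepeating} (the generalized direct sum mentioned after the definition already incorporates this, but you should make it explicit). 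Second, the step you correctly flag as the main obstacle---that equivalent simple strata~$[\Lambda,n,r,\beta]\sim[\Lambda,n,r,\gamma_1]$ have~$e(E|F)=e(F[\gamma_1]|F)$ and~$f(E|F)=f(F[\gamma_1]|F)$---is indeed nontrivial and must be imported from the predecessor paper (it follows, for instance, from the equivalence-invariance of the characteristic polynomial of the minimal element attached to the stratum). Once that is cited, your plan is complete; since~$e(\Lambda|F)$ is fixed, the equality~$e(\Lambda|E)=e(\Lambda|F)/e(E|F)$ then transfers across, and likewise for~$f$.
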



For endo-equivalence to make sense it is important that two intertwining semisimple strata of same degree and group level
are endo-equivalent. This is true because the  matching results remain true.
\begin{theorem}\label{thmMatchingsDegree&GroupLevel}
 Theorem~\ref{thmMatchingStrataGL}, and Corollary~\ref{corIsometriesForMatchings} remain true if we replace 
 the conditions:~$n=n'$,~$r=r'$ and~$e(\Lambda|F)=e(\Lambda'|F)$ by the assumption that~$\Delta$ and~$\Delta'$ have the same degree and 
 the same group level. 
\end{theorem}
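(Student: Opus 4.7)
The plan is to reduce to the special case already covered by Theorem~\ref{thmMatchingStrataGL} and Corollary~\ref{corIsometriesForMatchings} via a refinement of the lattice sequences that equalises their $F$-periods, $n$, and $r$, while preserving degree, group level, and all the structural data that feeds into the matching statements.

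The central tool is the standard $k$-fold refinement of a lattice sequence: for each positive integer $k$, one constructs $\Lambda^{(k)}$ by repeating each lattice $k$ times, so that its $F$-period is $k\cdot e(\Lambda|F)$ and $\mathfrak{a}(\Lambda^{(k)})_{kj}=\mathfrak{a}(\Lambda)_j$. The stratum $[\Lambda^{(k)},kn,kr,\beta]$ is semisimple with the same $\beta$, the same fields $E_i$, and the same associated splitting; all $E_i$-periods are scaled by $k$, whence the degree is preserved and the group level $\lfloor kr/e(\Lambda^{(k)}|E)\rfloor=\lfloor r/e(\Lambda|E)\rfloor$ is preserved. Moreover the canonical identifications of the filtration $\mathfrak{a}(\Lambda^{(k)})_\bullet$ with $\mathfrak{a}(\Lambda)_\bullet$ transfer the intertwining set, the groups $M(\Delta),S(\Delta)$, the embedding $\kappa_E\hookrightarrow\mathfrak{a}_0/\mathfrak{a}_1$, and consequently the matching pair $(\zeta,\bar\zeta)$ between a stratum and its refinement in a fully canonical way; in particular a matching for the refined pair descends to a matching for the original pair and conversely.

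With this tool in hand the argument runs in three steps. First, set $e_0=\mathrm{lcm}(e(\Lambda|F),e(\Lambda'|F))$ and replace $\Delta,\Delta'$ by their $k$- and $k'$-fold refinements, with $k=e_0/e(\Lambda|F)$ and $k'=e_0/e(\Lambda'|F)$; the $F$-periods now coincide. Second, since the group levels of the refined strata still coincide, I would align $r$ and $n$ on the two sides to common values by lowering $r$ and raising $n$ within the equivalence class of semisimple representatives: both operations preserve the associated splitting and the intertwining set only grows under them. Third, the refined, parameter-aligned pair satisfies the hypotheses of Theorem~\ref{thmMatchingStrataGL} and Corollary~\ref{corIsometriesForMatchings}, which produce the matching $\zeta$, the commutation $\zeta\sigma=\sigma\zeta$, and the isometries $h_J\simeq h_{\zeta(J)}$ for $\sigma$-stable $J$; transporting back via the canonical identification of block indices finishes the proof.

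The main obstacle is the parameter bookkeeping in the second step: one must verify that after refinement the two strata can indeed be brought to a common $n$ and $r$ without leaving the intertwining class and without losing semisimplicity or the associated splitting, and that the matching and bar-matching $\bar\zeta$ produced for the normalised refined strata coincide with the ones one would attach to the original pair. The crucial input is precisely the equality of group levels (rather than merely of $r$), which is what makes the simultaneous alignment of the truncation parameters on both sides possible after the refinement has equalised the $F$-periods.
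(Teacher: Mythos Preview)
Your overall strategy---refine (repeat) to equalise the $F$-periods, then align $r$, then invoke the already-proved versions---is exactly the route the paper takes (Remark~\ref{remRaisingRepeating} together with Lemma~\ref{lemIntertwining&Lowering}). Two points need correction, one minor and one substantive.

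Minor: after repeating to a common $F$-period, the equality $n=n'$ is automatic for non-null intertwining semisimple strata (see Step~1 of Remark~\ref{remRaisingRepeating}, which cites~\cite[6.9]{skodlerackStevens:16}). There is no separate ``raise $n$'' step to perform.

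Substantive: your claim that ``the intertwining set only grows'' under \emph{lowering} $r$ is backwards. Lowering $r$ replaces the coset $\beta+\mathfrak a_{-r}$ by the smaller coset $\beta+\mathfrak a_{-(r-1)}$, so the intertwining condition becomes \emph{more} restrictive and the intertwining set can a~priori shrink. That it does not become empty is precisely the content of Lemma~\ref{lemIntertwining&Lowering}, and its proof is not a containment argument: one shows that the chain
\[
\Delta((r-r')-),\quad \Delta,\quad \Delta'((r-r')+),\quad \Delta'
\]
consists of pairwise endo-equivalent strata (the middle pair by Theorem~\ref{thmMatchingStrataGL}, the outer pairs because raising/lowering does not change the endo-class once one knows $e(\Lambda|E)=e(\Lambda'|E')$ so that the group level is preserved throughout), and then uses transitivity of endo-equivalence plus \cite[4.32]{skodlerack:17-1} to conclude that $\Delta((r-r')-)$ and $\Delta'$ intertwine. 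For the self-dual statement (the analogue of Corollary~\ref{corIsometriesForMatchings}) one further needs diagonalisation and the Skolem--Noether result, Corollary~\ref{corSkolemNoetherSemisimple}, to upgrade endo-equivalence to $G$-intertwining. You allude to the role of the group-level hypothesis at the very end, but the actual mechanism that makes the $r$-alignment work is missing from your argument.
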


We come to the proof after some preparation. 

\begin{remark}\label{remRaisingRepeating}
There are two steps to reduce from same group level and same degree to equal parameters. 
\begin{enumerate}
 \item[Step 1:] The method of~\emph{repeating}, sometimes also called doubling. Given a stratum~$\Delta$ and a positive integer~$k$
 we can scale the~$F$-period of~$\Lambda$ to~$ke(\Lambda|F)$ in the following way. One constructs a new lattice sequence~$k\Lambda$ by repeating~$k$-times the lattices which occur in the image of~$\Lambda$: 
 \[k\Lambda_j:=\Lambda_{\lfloor \frac{j}{k}\rfloor},\ j\in\ZZ.\]
 The strata~$[k\Lambda,\max(nk,kr+j),kr+j,\beta]$,~$j=0,\ldots,k-1$ define the same coset as~$\Delta$ and 
 they are semisimple if and only if~$\Delta$ is too. The case~$j=0$ is used for the definition of the direct sum and~$j=k-1$
 is useful if one does not want to leave the class of strata which satisfy~$n=r+1$. One important consequence: Given two 
 strata~$\Delta$ and~$\Delta'$ we can assume without loss of generality that both have the same~$F$-period, and if both are 
 non-null, semisimple and intertwine then we obtain~$n=n'$  by~\cite[6.9]{skodlerackStevens:16}. 
 This process does not change the group level and the degree. 
 
 \item[Step 2:] Raising or lowering the parameter~$r$. 
 Suppose~$\Delta$ and~$\Delta'$ are two intertwining semisimple strata of the same degree and the same group level, which share 
 the~$F$-period and the parameter~$n$. Suppose further~$r\geq r'$. Then the stratum~$\Delta'((r-r')+)$, i.e. the stratum obtained from~$\Delta'$
 in raising~$r'$ to~$r$, is still semisimple, by the proof of~\cite[5.48]{skodlerack:17-1}, and we have~$e(\Lambda|E)=e(\Lambda'|E')$
 by~\cite[4.2]{skodlerack:17-1}. Thus~$\Delta$ and~$\Delta'((r-r')+)$ share the group level and the degree. 
 Instead of raising~$r'$ we can lower~$r$ in~$\Delta$ and the next lemma states that~$\Delta((r-r')-)$ and~$\Delta'$ still intertwine. 
\end{enumerate}
An endo-class is invariant under repeating, raising and lowering. 
\end{remark}

\begin{lemma}\label{lemIntertwining&Lowering}
 Suppose~$\Delta$ and~$\Delta'$ are two intertwining semisimple strata sharing the~$F$-period, the parameter~$n$, the group level 
 and the degree. Suppose $r\geq r'$. Then~$\Delta((r-r')-)$ and~$\Delta'$ intertwine. 
 If further both strata are self-dual and intertwine by some element of~$G$, then~$\Delta((r-r')-)$ and~$\Delta'$ intertwine
 by some element of~$G$. 
\end{lemma}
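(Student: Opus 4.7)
The key observation is that any element $g \in \tilde G$ satisfying $g\beta g^{-1} = \beta'$ exactly intertwines $[\Lambda, n, s, \beta]$ with $[\Lambda', n, s, \beta']$ for every level $s$, because $\beta' = g\beta g^{-1}$ then lies in both cosets $g(\beta + \mf{a}_{1-s}(\Lambda))g^{-1}$ and $\beta' + \mf{a}_{1-s}(\Lambda')$. The strategy is therefore to produce such an exact conjugator, in $\tilde G$ for the first assertion and in $G$ for the second, after replacing $\Delta$ and $\Delta'$ by equivalent strata whose minimal polynomials match block by block under the matching of Theorem~\ref{thmMatchingStrataGL}.

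For the $\tilde G$-assertion, Remark~\ref{remRaisingRepeating} (Step 2) tells us that $\Delta'((r-r')+)$ is semisimple and shares $n$, $r$, and the $F$-period with $\Delta$, and the intertwining hypothesis lifts to intertwining of the pair $(\Delta,\Delta'((r-r')+))$. Theorem~\ref{thmMatchingStrataGL} then yields a matching $\zeta$. For each $i \in I$, the direct sum $\Delta_i \oplus \Delta'_{\zeta(i)}$ is equivalent to a simple stratum; Proposition~\ref{propDiagonalizationStrataGL} applied to this sum (split by $V^i \oplus V^{\zeta(i)}$) produces a simple stratum whose two blocks are simple strata equivalent to $\Delta_i$ and $\Delta'_{\zeta(i)}$ and share the same minimal polynomial (namely that of the common $\beta$-entry). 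Gluing over $i$ yields equivalent semisimple strata $\tilde\Delta \sim \Delta$ and $\tilde\Delta' \sim \Delta'$ with block-matched minimal polynomials under $\zeta$. A blockwise Skolem--Noether in $\tilde G$ supplies $\tilde g$ with $\tilde g\,\tilde\beta\,\tilde g^{-1} = \tilde\beta'$, and the opening observation together with invariance of intertwining under stratum equivalence shows that $\tilde g$ intertwines $\Delta((r-r')-)$ with $\Delta'$.

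For the self-dual case I run the analogous argument with $G$-versions. Self-duality is preserved by the raising operation, so $\Delta$ and $\Delta'((r-r')+)$ are self-dual semisimple strata intertwined by some element of $G$, and Corollary~\ref{corIsometriesForMatchings} gives a matching $\zeta$ commuting with $\sigma$ and realised by an element of $G \cap \prod_i \Hom_D(V^i, V^{\zeta(i)})$. The blockwise diagonalization is then carried out self-dually, one $\sigma$-orbit at a time, using Proposition~\ref{propDiagonalizationStrataG}, producing equivalent self-dual semisimple strata $\tilde\Delta$ and $\tilde\Delta'$ with block-matched minimal polynomials and with the $G$-block intertwiners of Corollary~\ref{corIsometriesForMatchings} inherited. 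Corollary~\ref{corSkolemNoetherSemisimple} then supplies $g \in G$ with $g\,\tilde\beta\,g^{-1} = \tilde\beta'$, which intertwines $\Delta((r-r')-)$ with $\Delta'$ in $G$ by the same observation as before.

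The main obstacle will be the self-dual blockwise diagonalization: Proposition~\ref{propDiagonalizationStrataG} alone does not guarantee that the self-dual simple replacements of $\Delta$ and $\Delta'$ have minimal polynomials matched under $\zeta$. On a $\sigma$-fixed orbit $i \in I_0$ one applies the proposition to the self-dual sum $(\Delta_i \oplus \Delta'_{\zeta(i)}, h_i \oplus h'_{\zeta(i)})$ so that the two summands of the resulting simple stratum inherit a common minimal polynomial; on a swapped orbit $i \in I_+$ one diagonalizes in $\tilde G$ on a single representative $i$ and then extends equivariantly via $\tilde\beta_{\sigma(\cdot)} = -\sigma_h(\tilde\beta_\cdot)$, exactly as in the proof of Corollary~\ref{corSemisimpleEquiv}. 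Once this bookkeeping is in place the hypotheses of Corollary~\ref{corSkolemNoetherSemisimple} hold and the proof concludes as above.
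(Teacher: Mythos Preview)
Your argument has a genuine gap in the passage from level $r$ to level $r'$. You diagonalize the pair $(\Delta,\Delta'((r-r')+))$ at level $r$, obtaining $\tilde\beta,\tilde\beta'$ with $\tilde\beta-\beta\in\mf{a}_{-r}(\Lambda)$ and $\tilde\beta'-\beta'\in\mf{a}_{-r}(\Lambda')$. You then find $g$ with $g\tilde\beta g^{-1}=\tilde\beta'$ and claim, via ``invariance of intertwining under stratum equivalence'', that $g$ intertwines $\Delta((r-r')-)$ with $\Delta'$. But that last step needs $[\Lambda,n,r',\tilde\beta]$ to be equivalent to $[\Lambda,n,r',\beta]$, i.e.\ $\tilde\beta-\beta\in\mf{a}_{-r'}(\Lambda)$. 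Since $r\geq r'$ gives $\mf{a}_{-r}\supseteq\mf{a}_{-r'}$ (the filtration is decreasing), the congruence you have is \emph{coarser} than the one you need, and the equivalence at level $r'$ simply does not follow. The same problem appears symmetrically on the $\Delta'$ side, and it affects both your $\tilde G$- and $G$-arguments.

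The paper sidesteps this by never attempting to transport a concrete conjugator from level $r$ down to level $r'$. For the $\tilde G$-statement it argues abstractly: the chain $\Delta((r-r')-)$, $\Delta$, $\Delta'((r-r')+)$, $\Delta'$ consists of strata all of the same group level (using $e(\Lambda|E)=e(\Lambda'|E')$), consecutive pairs are endo-equivalent (the outer two trivially, the middle pair by the matching theorem at level $r$), and transitivity of endo-equivalence then gives that $\Delta((r-r')-)$ and $\Delta'$ are endo-equivalent, hence intertwine by \cite[4.32]{skodlerack:17-1}. Only \emph{after} this is established does the paper, for the $G$-statement, diagonalize at level $r'$ (which is now legitimate because endo-equivalence at level $r'$ is already known) and feed the $G$-block-intertwiner coming from the hypothesis into Corollary~\ref{corSkolemNoetherSemisimple}; the bottom-level intertwining condition there is insensitive to the replacement $\beta\mapsto\tilde\beta$ at level $r'$ since $\mf{a}_{-r'}\subseteq\mf{a}_{-(n_i-1)}$ on each non-null block. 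If you want to rescue your approach, the diagonalization must be carried out at level $r'$, and for that you first need the endo-equivalence of the lowered strata --- which is exactly the missing ingredient the paper supplies via transitivity.
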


\begin{proof}
 If the common group level is infinite then the identity is a possible intertwiner. So let us suppose that both strata
 have finite group level, i.e. are non-null. Then all strata
 \[\Delta((r-r')-),\Delta,\Delta'((r-r')+),\Delta'\]
 have the same group level, because~$e(\Lambda|E)=e(\Lambda'|E')$, see~\ref{remRaisingRepeating} Step 2 above, 
 and two successive strata are endo-equivalent, the middle two by Theorem~\ref{thmMatchingStrataGL}. Thus, by transitivity, the strata~$\Delta((r-r')-)$ and~$\Delta'$ are endo-equivalent, and therefore intertwine by~\cite[4.32]{skodlerack:17-1}. 
 
 In the self-dual case we apply diagonalization~\ref{propDiagonalizationStrataG} on the endo-equivalent strata~$\Delta((r-r')-)$ and~$\Delta'$ to reduce to the case where~$\beta$ and~$\beta'$ have the same characteristic polynomial. So, if~$\Delta$ 
 and~$\Delta'((r-r')+)$ intertwine by some element of~$G$ then~$\beta$ and~$\beta'$ are conjugate by some element of~$G$, by 
 Corollary~\ref{corSkolemNoetherSemisimple} and therefore~$\Delta((r-r')-)$ and~$\Delta'$ intertwine by some element of~$G$. 
\end{proof}

\begin{proof}[Proof of Theorem~\ref{thmMatchingsDegree&GroupLevel}]
 This Theorem follows now from Remark~\ref{remRaisingRepeating} and Lemma~\ref{lemIntertwining&Lowering}. We leave this to the reader as
 an exercise. 
\end{proof}

In the following, if we say ``possibly  different signed hermitian spaces'' it includes possibly different vector spaces and 
possibly different skew-fields. 

\begin{proposition}
 Let~$\Delta$ and~$\Delta'$ be two endo-equivalent self-dual strata on possibly different signed hermitian spaces~$h$ and~$h'$
 of the first kind and of the same type. 
 Then the matching is equivariant with respect to the adjoint involutions. 
\end{proposition}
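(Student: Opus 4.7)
The plan is to exploit the uniqueness of the matching from Theorem~\ref{thmMatchingStrataGL}, by applying the adjoint involution to an intertwiner with a prescribed block structure.

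First I would reduce to the situation where~$\Delta$ and~$\Delta'$ share the parameters~$n$,~$r$, and the~$F$-period: by Remark~\ref{remRaisingRepeating} (repeating, and then raising or lowering~$r$), these operations preserve self-duality, the endo-class, and hence the matching~$\zeta$. Second, to put both strata on the same signed hermitian space, I would form the orthogonal direct sum~$(V'',h''):=(V\oplus V',h\perp h')$ (which is of the same type as~$h$ and~$h'$, since both are of the first kind and of the same type) and extend~$\Delta$ (resp.~$\Delta'$) to a self-dual semisimple stratum on~$V''$ by appending a self-dual null block on~$V'$ (resp.~$V$). The matching of the extended strata is~$\zeta$ augmented by identifying the two new null-block indices, and the extended adjoint involutions fix those indices; so~$\sigma$-equivariance of the extended matching is equivalent to~$\sigma$-equivariance of~$\zeta$. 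This reduces the proposition to the case~$V=V'$,~$h=h'$.

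Under this reduction, Theorem~\ref{thmMatchingStrataGL} (extended to same degree and group level by Theorem~\ref{thmMatchingsDegree&GroupLevel}) supplies an element
\[g\in I(\Delta,\Delta')\cap\prod_{i\in I}\Hom_D(V^i,V^{\zeta(i)}).\]
The core claim is that~$g^\ast:=\sigma_h(g)^{-1}$ also lies in~$I(\Delta,\Delta')$ but now with block map~$\sigma'\circ\zeta\circ\sigma$. For the intertwining, starting from a relation~$g(\beta+x)g^{-1}=\beta'+x'$ with~$x\in\mf{a}_{-r}(\Lambda)$ and~$x'\in\mf{a}_{-r}(\Lambda')$, I would apply~$\sigma_h$ and use~$\sigma_h(\beta)=-\beta$,~$\sigma_h(\beta')=-\beta'$ together with the self-duality of~$\Lambda,\Lambda'$ (so that~$\sigma_h(\mf{a}_{-r}(\Lambda))=\mf{a}_{-r}(\Lambda)$, and analogously for~$\Lambda'$) to obtain~$g^\ast(\beta-\sigma_h(x))(g^\ast)^{-1}=\beta'-\sigma_h(x')$, which is again an intertwining relation. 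For the block map, writing~$1^i$ and~$1'^{i'}$ for the idempotents attached to~$\Delta$ and~$\Delta'$, the identity~$g\cdot 1^i=1'^{\zeta(i)}\cdot g$ becomes, after applying~$\sigma_h$ and using~$\sigma_h(1^i)=1^{\sigma(i)}$ and~$\sigma_h(1'^{\zeta(i)})=1'^{\sigma'(\zeta(i))}$, the equality~$1^{\sigma(i)}\sigma_h(g)=\sigma_h(g)\cdot 1'^{\sigma'(\zeta(i))}$, which forces~$g^\ast$ to send~$V^i$ into~$V^{\sigma'(\zeta(\sigma(i)))}$.

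The uniqueness of the matching in Theorem~\ref{thmMatchingStrataGL} then identifies~$\sigma'\circ\zeta\circ\sigma$ with~$\zeta$, which is exactly the desired equivariance~$\zeta\circ\sigma=\sigma'\circ\zeta$. The hard part will be the bookkeeping in the reduction step (verifying that the null-block extensions are genuinely self-dual and semisimple with the matching being the obvious extension of~$\zeta$) and the careful sign-tracking in the~$\sigma_h$-computation; both steps are routine but deserve attention.
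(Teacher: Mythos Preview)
Your core argument in steps 3--5 is correct and clean: once you have an intertwiner~$g\in I(\Delta,\Delta')\cap\prod_i\Hom_D(V^i,V^{\zeta(i)})$, applying~$\sigma_h$ and inverting does produce another intertwiner with block map~$\sigma'\circ\zeta\circ\sigma$, and uniqueness of the matching finishes. This is a genuinely different route from the paper.

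The gap is in step~2. Appending null blocks to pass to a common space~$V''=V\oplus V'$ does not in general give you a pair of strata to which Theorem~\ref{thmMatchingStrataGL} applies. That theorem requires~$I(\tilde\Delta,\tilde\Delta')\neq\emptyset$, and its conclusion forces~$\dim_D V^i=\dim_D V'^{\zeta(i)}$ for every~$i$. But endo-equivalence does not impose this: two endo-equivalent simple blocks may live on spaces of different dimension. When that happens your extended strata on~$V''$ simply do not intertwine (no block-diagonal isomorphism~$V^i\to V'^{\zeta(i)}$ exists), so no~$g$ is available and steps~3--5 never get off the ground. This is not bookkeeping; it is a structural obstruction to the reduction as written. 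There is also the secondary issue you flagged, that if either stratum already carries a null block then the extension is not semisimple, but that one really is routine.

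The paper avoids a common ambient space altogether. After passing to~$\Res_F$ and matching parameters, it uses diagonalization (Proposition~\ref{propDiagonalizationStrataG} on the~$\sigma$-fixed blocks, Proposition~\ref{propDiagonalizationStrataGL} on the others) to arrange that~$\beta_i$ and~$\beta'_{\zeta(i)}$ have the same minimal polynomial for every~$i$. Since~$\sigma_h$ is~$F$-linear, $\beta_{\sigma(i)}=-\sigma_h(\beta_i)$ and~$\beta'_{\sigma'(\zeta(i))}=-\sigma_{h'}(\beta'_{\zeta(i)})$ then share a minimal polynomial too; because distinct blocks of a semisimple stratum have pairwise distinct minimal polynomials, this forces~$\zeta(\sigma(i))=\sigma'(\zeta(i))$. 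Note this argument never needs an actual element of~$I(\Delta,\Delta')$, only the characterisation of~$\zeta$ via simplicity of~$\Res_F(\Delta_i)\oplus\Res_F(\Delta'_{\zeta(i)})$, which is exactly what survives when the spaces differ. If you want to salvage your approach, replace step~2 by this diagonalization reduction (or argue directly with that characterisation and the fact that~$\#$ of a simple stratum is simple).
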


\begin{proof}
 We need to consider the restrictions of~$\Delta$ and~$\Delta'$ to~$F$. So for this proof we just assume that 
 both strata are strata over~$F$. By repeating and raising we can assume that both strata share the~$F$-period,
 and the parameters~$n$ and~$r$. By diagonalization, see Proposition~\ref{propDiagonalizationStrataG} for~$i\in I_0$ and 
 Proposition~\ref{propDiagonalizationStrataGL} for~$i\in I_+$, we can assume that~$\beta$ and~$\beta'$ have the same minimal 
 polynomial. For~$i\in I$ we have that~$\beta_i$ and~$\beta'_{\zeta(i)}$ have the same minimal polynomial, and 
 therefore~$-\sigma_h(\beta_i)$ and~$-\sigma_{h'}(\beta_{\zeta(i)})$, i.e.~$\beta_{\sigma(i)}$ and~$\beta'_{\sigma'(\zeta(i))}$,
 have the same minimal polynomial. Thus~$\zeta\circ\sigma=\sigma'\circ\zeta$. 
\end{proof}

We denote the class of self-dual semisimple strata which are endo-equivalent to a given self-dual semisimple stratum~$\Delta$ 
by~$\mathfrak{E}_-(\Delta)$.

\section{Self-dual semisimple  characters}
\newcommand{\tiDelta}{\ti{\Delta}}
\newcommand{\tisigma}{\ti{\sigma}}
\newcommand{\titheta}{\ti{\theta}}
\newcommand{\tibeta}{\ti{\beta}}

In this section we study the first building block for explicit constructions of cuspidal irreducible representations of~$G$. 
These building blocks are the self-dual semisimple characters. We are going to see that they correspond to the semisimple characters
of~$\it{G}$ which are fixed by the adjoint involution. After defining them we turn directly to transfers, followed by the matching 
theory and results on intertwining, diagonalization theory  and conjugacy.

\subsection{First definitions}

Here we  define self-dual semisimple characters. There is a big introduction about semisimple characters in section 5 in~\cite{skodlerack:17-1}, and we use the notations and definitions from there. We fix an additive character~$\psi_F$ from~$F$ to~$\CC^\times$.  One attaches to semisimple stratum~$\Delta$  a set~$\C(\Delta)$ of complex valued characters on a compact open subgroup~$H(\Delta)$ of~$\ti{G}$. Given a character~$\chi$ on a subgroup of~$\it{G}$ we write~$\sigma.\chi$ for~$\chi\circ\sigma.$ Recall that~$p$ is the odd residue characteristic of~$F$.

\begin{definition}\label{defSelfdualSemisChar}
 Let~$\Delta$ be a self-dual semisimple stratum. Then~$H(\Delta)$ and~$\C(\Delta)$ are invariant under the action of~$\sigma$.
 We define~$\C_-(\Delta)$ to be the set of all restrictions of the elements of~$\C(\Delta)^\sigma$, i.e. the~$\sigma$-fixed elements of~$\C(\Delta)$, to~$H_-(\Delta):=H(\Delta)\cap G$. The restriction map is a bijection by Glaubermann-correspondence using that $H(\Delta)$ is a pro-$p$-group with odd~$p$. We call an element~$\theta\in\C(\Delta)^\sigma$ the~\emph{lift} of~$\theta|_{H_-(\Delta)}$ to~$H(\Delta)$. The elements of~$\C_-(\Delta)$ are called~\emph{self-dual semisimple characters}. For a stratum~$\Delta'$ equivalent to~$\Delta$ we define~$\C(\Delta'):=\C(\Delta)$ and~$\C_-(\Delta'):=\C_-(\Delta)$. 
\end{definition}

\begin{proposition}
 Let~$\Delta$ be a self-dual semisimple stratum and~$\theta\in\C(\Delta)$. Then~$\theta|_{H_-(\Delta)}$ is an element of~$\C_-(\Delta)$. 
\end{proposition}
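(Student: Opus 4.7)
The plan is, given $\theta \in \C(\Delta)$, to produce a $\sigma$-invariant character $\theta^+ \in \C(\Delta)$ whose restriction to $H_-(\Delta)$ coincides with $\theta|_{H_-(\Delta)}$; by the defining property of $\C_-(\Delta)$, this places $\theta|_{H_-(\Delta)}$ in $\C_-(\Delta)$.

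First I would record two easy observations. Since $\Delta$ is self-dual, the set $\C(\Delta)$ is $\sigma$-stable, hence $\sigma.\theta \in \C(\Delta)$. Moreover, every $h \in H_-(\Delta)$ is by definition $\sigma$-fixed, so
\[
(\sigma.\theta)(h) \;=\; \theta(\sigma(h)) \;=\; \theta(h),
\]
i.e.\ $\theta$ and $\sigma.\theta$ agree on $H_-(\Delta)$.

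Next I would form a Glauberman-style symmetrization. Because $H(\Delta)$ is a pro-$p$-group with $p$ odd, every character of $H(\Delta)$ takes values in $\mu_{p^\infty}$, where the squaring map is a bijection; there is therefore a unique character $\theta^+$ of $H(\Delta)$ satisfying $(\theta^+)^2 = \theta \cdot \sigma.\theta$. By the uniqueness of this square root, $\sigma.\theta^+ = \theta^+$, and on $H_-(\Delta)$ we get $(\theta^+|_{H_-(\Delta)})^2 = (\theta|_{H_-(\Delta)})^2$, whence $\theta^+|_{H_-(\Delta)} = \theta|_{H_-(\Delta)}$.

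The hard part is to verify that $\theta^+$ actually lies in $\C(\Delta)$; the construction only guarantees that it is a character of $H(\Delta)$. For this I would invoke the torsor structure of $\C(\Delta)$: fixing some $\theta_\ast \in \C(\Delta)^\sigma$ (whose existence is provided by the construction of self-dual semisimple characters earlier in the paper), one writes $\theta = \theta_\ast \cdot w$ with $w$ in the $\sigma$-stable abelian pro-$p$-group $W$ of admissible twists attached to $\Delta$. Then $\sigma.\theta = \theta_\ast \cdot \sigma.w$, and $\theta^+ = \theta_\ast \cdot \sqrt{w \cdot \sigma.w}$; since $W$ is pro-$p$ with odd $p$, the square root lies in $W$, so $\theta^+ \in \C(\Delta)$. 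Consequently $\theta|_{H_-(\Delta)} = \theta^+|_{H_-(\Delta)} \in \C_-(\Delta)$, as required.
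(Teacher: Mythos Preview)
Your approach is correct in spirit and takes a genuinely different route from the paper. The paper argues by counting: using the torsor property $\theta_1\theta_2^{-1}\theta_3\in\C(\Delta)$, the fibres of the restriction map $\C(\Delta)\to\C_-(\Delta)$ all have the same cardinality, hence cardinality $p^s$ for some $s$; since $\sigma$ preserves each fibre (exactly your first observation) and has order~$2$ while $p$ is odd, each fibre contains a $\sigma$-fixed point. Your argument instead writes down the fixed point explicitly as the unique square root of $\theta\cdot\sigma.\theta$. The explicit formula is pleasant and makes the Glauberman flavour visible, while the paper's argument is slightly slicker because it never needs to name the fixed point.

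There is one gap to flag. In your final paragraph you fix a base point $\theta_\ast\in\C(\Delta)^\sigma$ ``whose existence is provided \dots\ earlier in the paper''. In fact nothing before this proposition guarantees that $\C(\Delta)^\sigma$ is non-empty; the present proposition is what furnishes this (as a by-product). So as written the step is circular. The fix is immediate and does not require any base point: set $w:=\theta^{-1}\cdot(\sigma.\theta)$, which lies in the structure group $W=\{\theta_1\theta_2^{-1}:\theta_1,\theta_2\in\C(\Delta)\}$ by the torsor property you already invoke. Since $W$ is a finite abelian $p$-group with $p$ odd, $w$ has a unique square root $w^{1/2}\in W$, and $\theta^+:=\theta\cdot w^{1/2}\in\C(\Delta)$ satisfies $(\theta^+)^2=\theta\cdot\sigma.\theta$, hence coincides with your $\theta^+$. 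With this adjustment your proof goes through.
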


\begin{proof}
 If~$\theta_1,\theta_2$ and~$\theta_3$ are elements of~$\C(\Delta)$ then~$\theta_1\theta_2^{-1}\theta_3$ is an element of~$\C(\Delta)$ by the definition of~$\C(\Delta)$, see~\cite[5.6]{skodlerack:17-1}. 
 Thus the number of elements of~$\C(\Delta)$ which have the same restriction as~$\theta$ on~$H_-(\Delta)$ is independent~$\theta$.
 The number of elements of~$\C(\Delta)$ is a power of~$p$ and thus there is a non-negative integer~$s$ such that for every element 
 of~$\theta\in\C(\Delta)$ there are exactly~$p^s$ elements in~$\C(\Delta)$ with the same restriction as~$\theta$. 
 Thus the action of~$\sigma$ on the set of those characters must have a fixed point. This finishes the proof. 
\end{proof}

The group~$\ti{G}$ acts by conjugation on the set of all complex characters~$\chi$ on subgroups of~$\ti{G}$ which we denote 
by~$(g,\chi)\mapsto g.\chi$. An element~$g\in\ti{G}$ is said to intertwine a character~$\chi:\ K\ra \CC$ with a 
second character~$\chi':\ K'\ra \CC$ if the restrictions of~$\chi'$ and~$g.\chi$ coincide on~$gKg^{-1}\cap K'$. 
We denote the set of all elements of~$\ti{G}$ which intertwine~$\chi$ with~$\chi'$ by~$I(\chi,\chi')$. We adapt the 
notation~$I_H(\chi,\chi')$ for the intersection of~$I(\chi,\chi')$ with a subgroup~$H$ of~$\ti{G}$. 
We say that~$\chi$ and~$\chi'$ intertwine by some element of~$H$ if~$I_H(\chi,\chi')$ is non-empty. 


\subsection{Transfers}
We now recall the notion of transfer. For more detail consult section~\cite[section 6.2]{skodlerack:17-1}. Transfers are defined between strata over different vectors spaces over different skew-fields central and of finite degree over~$F$. 
Let~$\Delta$ and~$\Delta'$ be two endo-equivalent semisimple strata. Let us at first recall the transfer over the same skew-field. 
The canonically defined restriction maps
\[res_{\Delta\oplus \Delta',\Delta}:\ \C(\Delta\oplus\Delta')\ra\C(\Delta)\]
define a  bijection
\[\tau_{\Delta,\Delta'}:=res_{\Delta\oplus \Delta',\Delta'}\circ res_{\Delta\oplus \Delta',\Delta}^{-1}:\C(\Delta)\ra\C(\Delta')\]
called the~\emph{transfer map} from~$\Delta$ to~$\Delta'$. 

In the case of different skew-fields the transfer map is defined as follows:
\[\tau_{\Delta,\Delta'}(\theta):=\tau_{\Delta\otimes L,\Delta\otimes L}(\theta_L)|_{H(\Delta')},\]
where~$\theta_L$ is any extension of~$\theta$ to~$H(\Delta\otimes L)$. Here we use that~$L$ splits the skew-fields. 
We call~$\tau_{\Delta,\Delta'}(\theta)$ the transfer of~$\theta$ from~$\Delta$ to~$\Delta'$. 
Let us recall that if~$\Delta$ and~$\Delta'$ are endo-equivalent and~$V=V'$ then~$\tau_{\Delta,\Delta'}(\theta)=\theta'$ if
 and only if~$I(\Delta,\Delta')\subseteq I(\theta,\theta')$, see~\cite[6.10, 5.9]{skodlerack:17-1}. 

\begin{proposition}\label{propTransferAndInvolution}
 Transfer commutes with the adjoint involutions, i.e. given two signed hermitian spaces~$(h,V)$ and~$(h',V')$ of the first kind  and the same type with adjoint involutions~$\sigma$ and~$\sigma'$, respectively, and given two endo-equivalent semisimple strata~$\Delta$ and~$\Delta'$
 in~$V$ and~$V'$, respectively, then~$\sigma'\circ\tau_{\Delta,\Delta'}=\tau_{\#\Delta,\#'\Delta'}\circ\sigma$.  
\end{proposition}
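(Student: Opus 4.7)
The plan is to reduce to the split case by extending scalars to $L$, and then exploit the direct sum construction that defines the transfer. Throughout, let me write $\sigma=\sigma_h$ and $\sigma'=\sigma_{h'}$, and view $\sigma$ (respectively $\sigma'$) as acting on characters via $\sigma.\chi=\chi\circ\sigma$, which sends $\C(\Delta)$ to $\C(\#\Delta)$ because it sends the datum $[\Lambda,n,r,\beta]$ to $[\Lambda^\#,n,r,-\sigma(\beta)]$.

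First I would handle the skew-field change. By the definition $\tau_{\Delta,\Delta'}(\theta)=\tau_{\Delta\otimes L,\Delta'\otimes L}(\theta_L)|_{H(\Delta')}$, it suffices to prove the identity after extending to $L$. The key point is that the involution on $\End_L(V)$ induced by $h_L$ is the push-forward of $\sigma_h\otimes\id$ under $\Phi$, as computed in the proof that $\bbG(L)=\U(h_L)$. Hence extending a character $\theta$ to $H(\Delta\otimes L)$ and applying $\sigma_{h_L}$ recovers $\sigma.\theta$ on restriction back to $H(\#\Delta)$. Thus we may assume from now on that $V,V'$ are over a common split skew-field.

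Second, I would form the direct sum $\Delta\oplus\Delta'$ inside $V\oplus V'$ equipped with the form $h\oplus h'$; its adjoint involution is the block-diagonal involution whose restrictions to $\End(V)$ and $\End(V')$ are $\sigma$ and $\sigma'$ respectively. One then checks directly from the definitions (lattice sequence by lattice sequence, element by element) that
\[
 \#(\Delta\oplus\Delta')=\#\Delta\oplus\#'\Delta',
\]
since $(\Lambda\oplus\Lambda')^\#=\Lambda^\#\oplus\Lambda'^{\#'}$ and $-\sigma_{h\oplus h'}(\beta\oplus\beta')=(-\sigma(\beta))\oplus(-\sigma'(\beta'))$. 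Consequently the action of $\sigma_{h\oplus h'}$ sends $\C(\Delta\oplus\Delta')$ bijectively onto $\C(\#\Delta\oplus\#'\Delta')$ and commutes blockwise with the canonical restriction maps $res_{\Delta\oplus\Delta',\Delta}$ and $res_{\Delta\oplus\Delta',\Delta'}$.

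Finally, given $\theta\in\C(\Delta)$, choose the unique lift $\Theta\in\C(\Delta\oplus\Delta')$ with $res_{\Delta\oplus\Delta',\Delta}(\Theta)=\theta$; then by definition $res_{\Delta\oplus\Delta',\Delta'}(\Theta)=\tau_{\Delta,\Delta'}(\theta)$. Applying $\sigma_{h\oplus h'}$ yields a character $\sigma_{h\oplus h'}.\Theta\in\C(\#\Delta\oplus\#'\Delta')$ whose two block-restrictions are $\sigma.\theta$ and $\sigma'.\tau_{\Delta,\Delta'}(\theta)$. The defining property of $\tau_{\#\Delta,\#'\Delta'}$ then reads $\tau_{\#\Delta,\#'\Delta'}(\sigma.\theta)=\sigma'.\tau_{\Delta,\Delta'}(\theta)$, which is the desired identity. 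The main obstacle is the bookkeeping verification that the generalized direct sum of strata across different skew-fields (from \cite[after 6.3]{skodlerack:17-1}) commutes with the dual-and-negate operation $\#$, but this follows transparently once one has reduced via $L$ to a common split field.
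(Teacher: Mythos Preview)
Your proposal is correct and follows essentially the same approach as the paper: reduce to a common skew-field via the extension to $L$, then observe that the block restriction maps in the direct sum $\Delta\oplus\Delta'$ commute with the adjoint involution $\sigma\oplus\sigma'$. The paper compresses your second and third steps into the single line $res_{\#\Delta\oplus\#'\Delta',\#'\Delta'}\circ(\sigma\oplus\sigma')=\sigma'\circ res_{\Delta\oplus\Delta',\Delta'}$, but what you have written is exactly the unpacking of that identity.
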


\begin{proof}
 Without loss of generality we can assume that we work over the same skew-field, because the transfer is defined by reducing to that case, i.e. from different skew-fields to~$L$. Now the result follows, because the restriction
 maps commute with the adjoint involutions, i.e. $res_{\#\Delta\oplus\#'\Delta',\#'\Delta'}\circ(\sigma\oplus\sigma')=\sigma'\circ res_{\Delta\oplus\Delta',\Delta'}$.
\end{proof}

The transfer map induces for self-dual semisimple strata~$\Delta$ and~$\Delta'$ via the Glauberman correspondence a 
bijection from~$\C_-(\Delta)$ to~$\C_-(\Delta')$ which maps~$\theta_-$ 
to~$\tau_{\Delta,\Delta'}(\theta)|_{H_-(\Delta')}$. We denote this map still by~$\tau_{\Delta,\Delta'}$. 

Let us recall:
\begin{definition}[\cite{skodlerack:17-1}~6.3]\label{defEndoEquivCharacters}
 Two semisimple characters~$\theta\in\C(\Delta)$ and~$\theta'\in\C(\Delta')$ on possibly different vector spaces over possibly different skew-fields over strata with the same degree and the same group level are called~\emph{endo-equivalent} if there are transfers which intertwine, i.e. if there are strata~$\tiDelta\in\mathfrak{E}(\Delta)$ and~$\tiDelta'\in\C(\Delta')$ such 
 that~$\tau_{\Delta,\tiDelta}(\theta)$ and~$\tau_{\Delta',\tiDelta'}(\theta')$ intertwine. 
\end{definition}

\subsection{Matching, diagonalization and intertwining}
Here we repeat the result on matching and slightly generalize the diagonalization theorem from~\cite{skodlerack:17-1} and we 
further prove a diagonalization theorem for self-dual semisimple characters. 
We fix in this section two semisimple strata~$\Delta$ and~$\Delta'$ of the same group level and the same degree and 
two semisimple characters~$\theta\in\C(\Delta)$ and~$\theta'\in\C(\Delta')$. 
The restriction~$\theta$ to~$\Aut_D(V^i)\cap H(\Delta)$ is denoted by~$\theta_i$, and it is a simple character for~$\Delta_i$.

\subsubsection{For~$GL$}

We recall the analogue of Theorem~\ref{thmMatchingStrataGL} for characters. 
\begin{theorem}[\cite{skodlerack:17-1}~5.48,~6.18,\cite{broussousSecherreStevens:12}~1.11]\label{thmMatchingCharGL}
 Suppose~$I(\theta,\theta')\neq \emptyset$. Then there is a unique bijection~$\zeta:\ I\ra I'$ such that
 the set~$I(\theta,\theta')\cap \prod_{i\in I}\Hom_D(V^i,V^\zeta(i))$ is not empty. The latter non-emptiness condition is
 equivalent in saying that for all~$i\in I$ the characters~$\theta_i$ and~$\theta'_{\zeta(i)}$ are endo-equivalent and 
 the~$D$-dimensions of~$V^i$ and~$V^{\zeta(i)}$ agree. 
\end{theorem}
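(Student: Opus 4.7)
The plan is to reduce the character matching theorem to the stratum matching theorem (Theorem~\ref{thmMatchingStrataGL}), using transfers to replace $\theta,\theta'$ by characters on strata with matching parameters, and then exploiting the relationship between $I(\theta,\theta')$ and $I(\Delta,\Delta')$ coming from the construction of $\C(\Delta)$.

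First I would normalize the parameters. By the repeating construction and the raising/lowering argument of Remark~\ref{remRaisingRepeating}, together with the fact that transfers preserve intertwining classes of characters (this is essentially~\cite[5.9,6.10]{skodlerack:17-1}), I can replace $\Delta$ and $\Delta'$ by endo-equivalent strata on the same ambient space satisfying $n=n'$, $r=r'$, and $e(\Lambda|F)=e(\Lambda'|F)$, while transporting $\theta,\theta'$ accordingly through the transfer maps. The hypothesis $I(\theta,\theta')\neq\emptyset$ is preserved under this reduction. Now it suffices to prove the statement in the normalized case.

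Next I would produce the matching bijection $\zeta$. The key fact is that, by construction, $\C(\Delta)$ consists of characters whose $\ti G$-intertwining lies inside the intertwining of the underlying stratum, modulo the pro-$p$-groups $M(\Delta),M(\Delta')$; more precisely, any $g\in I(\theta,\theta')$ also intertwines $\Delta$ with $\Delta'$ (this is the content used in the proofs of~\cite[5.48]{skodlerack:17-1} and~\cite[1.11]{broussousSecherreStevens:12}). Thus $I(\Delta,\Delta')\neq\emptyset$, and Theorem~\ref{thmMatchingStrataGL} produces a unique bijection $\zeta:I\to I'$ with $I(\Delta,\Delta')\cap\prod_i\Hom_D(V^i,V^{\zeta(i)})\neq\emptyset$. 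To promote this to an intertwiner of characters, I would invoke the intertwining formula for strata (Theorem~\ref{thmIntertwiningStrataGL}) together with Proposition~\ref{propIntersectionFormulas}\ref{propDCCForM}: given $g\in I(\theta,\theta')$ and the block decomposition, one can modify $g$ by elements of $M(\Delta),M(\Delta')$, which preserve $\theta,\theta'$, so as to land in the block-diagonal Hom-space, yielding a block-diagonal element of $I(\theta,\theta')$.

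For the equivalent characterization, the forward direction is immediate: restricting a block-diagonal intertwiner of $\theta$ with $\theta'$ to each block gives an invertible element of $\Hom_D(V^i,V^{\zeta(i)})$ intertwining $\theta_i$ with $\theta'_{\zeta(i)}$, forcing $\dim_D V^i=\dim_D V^{\zeta(i)}$ and showing endo-equivalence by Definition~\ref{defEndoEquivCharacters}. For the converse, assuming endo-equivalence on each block together with dimension agreement, one uses the stratum side: block-wise endo-equivalence combined with matching dimensions implies $\Delta_i\oplus\Delta'_{\zeta(i)}$ is equivalent to a simple stratum (the criterion in Theorem~\ref{thmMatchingStrataGL}), so the block data glue, via transfers, to a semisimple stratum on $V\oplus V'$ whose character restricts to $\theta$ and $\theta'$; any intertwiner of the simple sub-data then assembles to a block-diagonal intertwiner of $\theta$ with $\theta'$. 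Uniqueness of $\zeta$ follows because any two such bijections would coincide with the unique matching of the strata.

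The step I expect to be the main obstacle is the careful bookkeeping in the reduction to equal parameters: the transfer must genuinely preserve the existence of a block-diagonal intertwiner and the dimension-matching condition, which requires that the ``repeating'' and ``raising/lowering'' operations interact correctly with the block decompositions. This is essentially the content that makes endo-equivalence an equivalence relation in~\cite[6.7]{skodlerack:17-1} and was already the hard technical content of~\cite[5.48]{skodlerack:17-1}; in the present proof I would invoke it rather than redo it.
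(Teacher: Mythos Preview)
The paper does not give its own proof of this theorem: it is stated as a citation of \cite[5.48, 6.18]{skodlerack:17-1} and \cite[1.11]{broussousSecherreStevens:12}, with no proof environment following. So there is nothing to compare against in this paper directly; the content lives in the cited references.

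That said, your sketch contains a genuine gap. You write that ``any $g\in I(\theta,\theta')$ also intertwines $\Delta$ with $\Delta'$'', and then invoke Theorem~\ref{thmMatchingStrataGL} to produce $\zeta$. This inference is not available: the set $\C(\Delta)$ does not determine $\Delta$ up to intertwining, and the paper itself warns, immediately after the theorem, that ``in the case of intertwining strata it is possible that $\zeta_{\theta,\theta'}$ and $\zeta_{\Delta,\Delta'}$ do not coincide'' (with a reference to an explicit example in \cite[9.7]{kurinczukSkodlerackStevens:16}). So the matching for characters cannot be obtained simply by reading off the matching for the chosen underlying strata and then adjusting by $M(\Delta),M(\Delta')$.

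The actual argument in \cite[5.48]{skodlerack:17-1} proceeds by induction along a defining sequence for the stratum: one passes to $\Delta(1+)$, uses that intertwining of $\theta,\theta'$ restricts to intertwining of their restrictions in $\C(\Delta(1+)),\C(\Delta'(1+))$, applies the inductive hypothesis and the translation principle to arrange compatible approximations, and only at the bottom of the induction does one reach a situation where character intertwining genuinely controls stratum intertwining. Your parameter-normalization step and the ``equivalent characterization'' paragraph are fine in spirit, but the core existence step needs this inductive mechanism rather than a one-shot reduction to Theorem~\ref{thmMatchingStrataGL}.
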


We call~$\zeta$ the~\emph{matching} from~$\theta\in\C(\Delta)$ to~$\theta'\in\C(\Delta')$ and we 
write~$\zeta_{\theta,\Delta,\theta',\Delta'}$, but if there is no reason for confusion then we just skip~$\Delta$ and~$\Delta'$. 
Further we get a map~$\bar{\zeta}_{\theta,\theta'}$ between the residue algebras defined in the similar way as it was done 
for strata, see~\cite[5.52]{skodlerack:17-1}.~$(\zeta,\bar{\zeta})$ is again called the matching pair of~$\theta\in\C(\Delta)$
and~$\theta'\in\C(\Delta')$.

Note that in the case of intertwining strata it is possible that~$\zeta_{\theta,\theta'}$ and~$\zeta_{\Delta,\Delta'}$ do not 
coincide. For an example see~\cite[9.7]{kurinczukSkodlerackStevens:16}. But, if~$\theta'$ is a transfer of~$\theta$ from~$\Delta$
to~$\Delta'$, then both matching pairs coincide.

Diagonalization theorems are important to reduce a statement about semisimple characters to the case of transfers. Here is 
the~$\GL$-version. 
\begin{theorem}\label{thmDiagonalizationCharGL}
  Suppose~$\theta$ and~$\theta'$ are endo-equivalent and suppose that the strata share the~$F$-period and the parameters~$r$ and~$n$.
  Then there is a semisimple stratum~$\Delta''$ on~$V'$ with the same associate splitting as~$\Delta'$,~$\Lambda''=\Lambda'$,~$n''=n$ 
  and~$r''=r$ such that~$\Delta\oplus\Delta''$ is semisimple and~$\theta\otimes\theta'\in\C(\Delta\oplus\Delta'')$. 
\end{theorem}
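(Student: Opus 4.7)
The plan is to reduce to the case of a single block via the character matching theorem, and then, in the simple case, use diagonalization of strata to construct the required $\Delta''$ block-by-block.

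First I would apply Theorem~\ref{thmMatchingCharGL} to $\theta\in\C(\Delta)$ and $\theta'\in\C(\Delta')$ to obtain the matching bijection $\zeta=\zeta_{\theta,\theta'}\colon I\to I'$ for which, for each $i\in I$, the simple characters $\theta_i$ and $\theta'_{\zeta(i)}$ are endo-equivalent and $\dim_D V^i=\dim_D V'^{\zeta(i)}$. Along the decomposition $V\oplus V'=\bigoplus_{i\in I}(V^i\oplus V'^{\zeta(i)})$, the product $\theta\otimes\theta'$ factors as $\prod_{i\in I}\theta_i\otimes\theta'_{\zeta(i)}$. Hence it suffices to produce, for each $i\in I$, a pure stratum $\Delta''_{\zeta(i)}$ on $V'^{\zeta(i)}$ sharing the lattice sequence $\Lambda'^{\zeta(i)}$ and the parameters $n,r$ with $\Delta'_{\zeta(i)}$, such that $\theta'_{\zeta(i)}\in\C(\Delta''_{\zeta(i)})$, the sum $\Delta_i\oplus\Delta''_{\zeta(i)}$ is simple, and $\theta_i\otimes\theta'_{\zeta(i)}$ lies in its character set. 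Taking $\Delta''=\bigoplus_{i'\in I'}\Delta''_{i'}$ then produces a semisimple $\Delta''$ with the same associate splitting as $\Delta'$, and $\Delta\oplus\Delta''$ is semisimple with associate splitting $\{V^i\oplus V'^{\zeta(i)}\mid i\in I\}$.

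Turning to the single-block (simple) case, I would exploit endo-equivalence of $\theta$ and $\theta'$: by Definition~\ref{defEndoEquivCharacters} there are endo-equivalent targets $\ti{\Delta}\in\mathfrak{E}(\Delta),\ti{\Delta}'\in\mathfrak{E}(\Delta')$ whose transfers intertwine. Using transitivity of transfers and Theorem~\ref{thmMatchingsDegree&GroupLevel} to align group levels and lattice sequences, one reduces to two intertwining semisimple characters on a common underlying space, whence the underlying strata are endo-equivalent as strata by Theorem~\ref{thmMatchingStrataGL}. Transferring back, this forces $\Res_F(\Delta)\oplus\Res_F(\Delta')$ to be equivalent to a simple stratum. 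Lemma~\ref{lemAquivToSimpleStratum} promotes this to an equivalence of $\Delta\oplus\Delta'$ with a simple stratum, and Proposition~\ref{propDiagonalizationStrataGL} then yields an equivalent simple stratum split by $V\oplus V'$; writing it as $\Delta\oplus\Delta''$ produces the required block piece, with $\Delta''$ pure on $V'$ and sharing $\Lambda',n,r$ with $\Delta'$.

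The hard part will be verifying the two character-level claims $\theta'\in\C(\Delta'')$ and $\theta\otimes\theta'\in\C(\Delta\oplus\Delta'')$. The strata equivalence $\Delta\oplus\Delta'\sim\Delta\oplus\Delta''$ gives $\C(\Delta\oplus\Delta')=\C(\Delta\oplus\Delta'')$, so the second claim reduces to showing that $\theta\otimes\theta'$ is a simple character for the (non-semisimple!) sum $\Delta\oplus\Delta'$. This requires a careful unwinding of the inductive construction of $\C$ in~\cite[Section 5]{skodlerack:17-1}: one must match the additive characters underlying $\theta$ and $\theta'$ and verify that the restriction maps $\C(\Delta\oplus\Delta')\to\C(\Delta),\C(\Delta')$ send the candidate to $\theta$ and $\theta'$ respectively. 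Once this is established, $\theta'\in\C(\Delta'')$ follows by restriction to the second summand, and consistency of the matching $\zeta_{\theta,\theta'}$ with the stratum matching $\zeta_{\Delta,\Delta''}$ is automatic from Theorem~\ref{thmMatchingStrataGL} because $\Delta_i\oplus\Delta''_{\zeta(i)}$ is simple on each paired block.
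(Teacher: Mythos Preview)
Your reduction to the simple case via the matching is reasonable in spirit, but the crucial step is inverted relative to the paper, and this creates the gap you yourself flag as ``the hard part.''

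You first diagonalize at the \emph{stratum} level (using Proposition~\ref{propDiagonalizationStrataGL} on $\Delta\oplus\Delta'$) and only afterwards try to verify that $\theta\otimes\theta'$ lies in the resulting character set. But knowing that $\Delta''$ is equivalent to $\Delta'$ tells you only that $\C(\Delta'')=\C(\Delta')$, so $\theta'\in\C(\Delta'')$; it does \emph{not} tell you that $\theta'=\tau_{\Delta,\Delta''}(\theta)$. And that transfer identity is exactly what is needed: the restriction map $\C(\Delta\oplus\Delta'')\to\C(\Delta)\times\C(\Delta'')$ hits precisely the pairs related by transfer, not arbitrary pairs. Your proposed fix, ``unwinding the inductive construction of $\C$ and matching the additive characters,'' is not a workable strategy here, because the choice of $\Delta''$ coming from stratum diagonalization carries no information about the character $\theta$ at all --- many inequivalent simple characters live in $\C(\Delta)$, and stratum-level diagonalization cannot distinguish them.

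The paper proceeds in the opposite order: it works directly at the character level, invoking the proof of~\cite[5.42]{skodlerack:17-1} (with~\cite[6.18]{skodlerack:17-1} replacing~\cite[5.35]{skodlerack:17-1} to handle endo-equivalence rather than intertwining) to produce $\Delta''=[\Lambda',n,r,\beta'']$ with $\beta''$ having the same minimal polynomial as $\beta$ and the same splitting as $\beta'$, and crucially such that $\theta'$ \emph{is} the transfer of $\theta$ from $\Delta$ to $\Delta''$. Once this is built in, $\theta\otimes\theta'\in\C(\Delta\oplus\Delta'')$ is immediate from the definition of the transfer map via restriction from the direct sum. So the missing idea in your outline is that $\Delta''$ must be constructed \emph{from the character data}, not from the stratum equivalence.
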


The assertions of theorem imply that~$\beta$ and~$\beta''$ have the same minimal polynomial, because~$\Delta\oplus\Delta''$ is semisimple, 
and~$\theta$ and~$\theta'$ are endo-equivalent. 
 
\begin{proof}
 See the proof of~\cite[5.42]{skodlerack:17-1} and use Theorem~\cite[6.18]{skodlerack:17-1} instead of~\cite[5.35]{skodlerack:17-1}, 
 to get~$\Delta''=[\Lambda',n,r,\beta'']$ such that~$\beta''$ has the same minimal polynomial as~$\beta$ and the 
 same associate splitting as~$\beta'$ and such that $\theta'$ is the transfer of~$\theta$ from~$\Delta$ to~$\Delta''$. 
 Then~$\theta\otimes\theta'\in\C(\Delta\oplus\Delta'')$. 
\end{proof}

We are now able to state the result on intertwining for semisimple characters. 
\begin{theorem}\label{thmIntertwiningCharGL}
 Suppose are~$\Delta$ and~$\Delta'$ are semisimple strata which share the~$F$-period and the parameters~$n$ and~$r$ and 
 suppose that~$\theta\in\C(\Delta)$ and~$\theta'\in\C(\Delta')$ intertwine with matching~$\zeta$.  Then the following holds. 
 \begin{enumerate}
  \item $I(\theta, \theta')= S(\Delta')(I(\theta,\theta')\cap \prod_{i\in I}\Hom_D(V^i,V^{\zeta(i)})) S(\Delta)$.
  \item Suppose~$I(\Delta,\Delta')\neq\emptyset$ and~$\theta'=\tau_{\Delta,\Delta'}(\theta)$, then we have
   \[I(\theta,\theta')= S(\Delta')I(\Delta,\Delta') S(\Delta).\]
  If further there is an element~$\ti{g}$ of~$\ti{G}$ such that~$\ti{g}\beta\ti{g}^{-1}=\beta'$, then we have
  \begin{equation}\label{eqIntCharbetaGL}
   I(\theta,\theta')= S(\Delta')\ti{g}C_A(\beta)^\times S(\Delta).
  \end{equation}
 \end{enumerate}
\end{theorem}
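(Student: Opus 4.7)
The plan is to establish part (2) first and then reduce part (1) to it via the diagonalization result for characters.

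For part (2), the inclusion
\[S(\Delta')\,I(\Delta,\Delta')\,S(\Delta)\subseteq I(\theta,\theta')\]
is immediate from the defining property of transfers recalled just before the theorem (namely $I(\Delta,\Delta')\subseteq I(\theta,\tau_{\Delta,\Delta'}(\theta))$), combined with $S(\Delta)\subseteq I(\theta,\theta)$ and $S(\Delta')\subseteq I(\theta',\theta')$. The converse inclusion is the core content and is the standard intertwining formula for transferred semisimple characters on $\ti{G}$; it was established in the predecessor paper \cite{skodlerack:17-1} by induction on the level, generalising the simple-character case of Bushnell--Kutzko and S\'echerre--Stevens. Granting this, the refined formula \eqref{eqIntCharbetaGL} follows by substituting Theorem~\ref{thmIntertwiningStrataGL}(2), which reads
\[I(\Delta,\Delta') \;=\; M(\Delta')\,\ti{g}\,C_A(\beta)^\times\,M(\Delta),\]
into $S(\Delta')I(\Delta,\Delta')S(\Delta)$, and absorbing the pro-$p$ factors $M(\Delta),M(\Delta')$ into $S(\Delta),S(\Delta')$ using $M\subseteq S$.

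For part (1), apply the diagonalization theorem (Theorem~\ref{thmDiagonalizationCharGL}) to produce a semisimple stratum $\tiDelta$ on $V'$ sharing with $\Delta'$ the lattice sequence, the associated splitting, and the parameters $n,r$, and such that $\theta\otimes\theta'\in\C(\Delta\oplus\tiDelta)$. Restriction gives $\theta'\in\C(\tiDelta)$ with $\theta' = \tau_{\Delta,\tiDelta}(\theta)$, and since $S(\,\cdot\,)$ depends only on the lattice sequence, the associated splitting, and the parameters $n,r$, we have $S(\tiDelta)=S(\Delta')$. Moreover the stratum matching $\zeta_{\Delta,\tiDelta}$ coincides with the character matching $\zeta_{\theta,\theta'}$ because the transfer preserves matching pairs. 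Part (2) applied to $(\Delta,\tiDelta)$ then yields
\[I(\theta,\theta') \;=\; S(\Delta')\,I(\Delta,\tiDelta)\,S(\Delta),\]
and Theorem~\ref{thmIntertwiningStrataGL}(1) rewrites
\[I(\Delta,\tiDelta) \;=\; M(\tiDelta)\Bigl(I(\Delta,\tiDelta)\cap\prod_{i\in I}\Hom_D(V^i,V^{\zeta(i)})\Bigr)M(\Delta).\]
Absorbing $M$'s into $S$'s and invoking the inclusion $I(\Delta,\tiDelta)\cap\prod_i\Hom_D(V^i,V^{\zeta(i)})\subseteq I(\theta,\theta')\cap\prod_i\Hom_D(V^i,V^{\zeta(i)})$ (again via transfer) delivers the formula in part (1).

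The main obstacle is the reverse inclusion in (2) when $\theta'=\tau_{\Delta,\Delta'}(\theta)$: this is the genuinely non-formal step, requiring an induction on the level of the stratum to pass from the known strata-level intertwining formula to the character-level formula. Once that ingredient is in hand from \cite{skodlerack:17-1}, the rest of the theorem is a formal consequence of the diagonalization theorem together with the known intertwining formulas for semisimple strata recalled in Theorem~\ref{thmIntertwiningStrataGL}. A minor subtlety to verify carefully is the identification $S(\tiDelta)=S(\Delta')$ after diagonalization, which relies on $S$ being a functor of the lattice-sequence, the splitting, and $(n,r)$ alone, all of which are preserved by the diagonalization construction.
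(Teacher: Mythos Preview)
Your approach matches the paper's: derive (2) from the predecessor intertwining formula \cite[5.15]{skodlerack:17-1}, then deduce (1) from (2) via diagonalization (Theorem~\ref{thmDiagonalizationCharGL}), exactly parallel to how Theorem~\ref{thmIntertwiningStrataGL} was proved from \cite[4.36]{skodlerack:17-1} and Proposition~\ref{propDiagonalizationStrataGL}.

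One correction is needed in your justification of $S(\tiDelta)=S(\Delta')$. The group $S(\Delta)$ is \emph{not} determined by the lattice sequence, the associated splitting and $(n,r)$ alone; it genuinely depends on $\beta$. The paper instead uses the characterization
\[
S(\Delta)=P_{\min(-(k_0+r),\lfloor (k_0+1)/2\rfloor)}(\Lambda)\cap I(\theta),
\]
so that $S(\Delta)$ depends only on $\theta$, $\Lambda$ and $r$. After diagonalization you have $\theta'\in\C(\tiDelta)$ with $\tiDelta$ sharing $\Lambda'$ and $r$ with $\Delta'$, and this characterization (applied with the common character $\theta'$) yields $S(\tiDelta)=S(\Delta')$. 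With this fix, your argument goes through and is the same as the paper's.
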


\begin{proof}
The proof of Theorem~\ref{thmIntertwiningCharGL} is the same as for Theorem~\ref{thmIntertwiningStrataGL} 
using~\cite[5.15]{skodlerack:17-1} and Proposition~\ref{thmDiagonalizationCharGL} instead of~\cite[4.36]{skodlerack:17-1} 
and Proposition~\ref{propDiagonalizationStrataGL}. We use that the set~$S(\Delta)$ only depends on~$\theta,\ \Lambda$ and~$r$ because it 
coincides with the intersection of~$P_{\min(-(k_0+r),\lfloor \frac{k_0+1}{2}\rfloor)}(\Lambda)$ with~$I(\theta)$. 
\end{proof}

The next proposition is needed to establish the diagonalization theorem for self-dual characters later. 
\begin{proposition}\label{propSelfdualProplimit}
 Suppose that~$\Delta$ is a stratum such that~$\C(\Delta)=\C(\#\Delta)$,~$\Lambda$ is self-dual and~$-\sigma_h(\beta)$ has the same minimal polynomial and  associate splitting as~$\beta$. 
 Suppose further that~$\tau_{\Delta,\#\Delta}$ is the identity map. Then, there is an element~$\gamma$ 
 of~$A$ with  the same minimal polynomial and  associate splitting as~$\beta$ such that~$[\Lambda,n,r,\gamma]$ is self-dual 
 with the same set of semisimple characters as~$\Delta$. 
\end{proposition}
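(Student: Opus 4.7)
I would construct $\gamma$ as the limit of a Cauchy sequence $(\gamma_t)_{t \ge 0}$ in $A$ with $\gamma_0 = \beta$, mirroring the iterative strategy from the proof of Proposition~\ref{propDiagonalizationStrataG} (which goes back to~\cite[1.10]{stevens:01-2}), but adapted to preserve the set of semisimple characters rather than mere equivalence of strata. The goal is to enforce at each step:
\begin{enumerate}
\item[(a)] $[\Lambda, n, r, \gamma_t]$ is semisimple with $\C([\Lambda, n, r, \gamma_t]) = \C(\Delta)$, and $\gamma_t$ has the same minimal polynomial and associate splitting as $\beta$;
\item[(b)] $\gamma_t + \sigma_h(\gamma_t) \in \mf{a}_{\Lambda, -r+t}$;
\item[(c)] $\gamma_{t+1} - \gamma_t \in \mf{a}_{\Lambda, -r+t}$.
\end{enumerate}
Condition (c) makes the sequence Cauchy in the $\mf{a}$-adic topology on $A$, and the limit $\gamma$ satisfies $\gamma + \sigma_h(\gamma) = 0$ by (b), so $[\Lambda, n, r, \gamma]$ is self-dual. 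Preservation of the minimal polynomial and associate splitting passes to the limit by Lemma~\ref{lemLimitOfSinmpleStrata} applied blockwise together with Lemma~\ref{lemPureSimpleResF}, and preservation of the character set passes to the limit by the stratification-of-$\C$ description from~\cite[Section~5]{skodlerack:17-1}.

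For the inductive step, suppose $\gamma_t$ is constructed. Consider the antisymmetric part $\delta_t := (\gamma_t - \sigma_h(\gamma_t))/2$, which is automatically skew under $\sigma_h$. By (b) it satisfies $\delta_t - \gamma_t \in \mf{a}_{\Lambda, -r+t}$, and the stratum $\widetilde{\Delta}_t := [\Lambda, n, r-t-1, \delta_t]$ is equivalent to $[\Lambda, n, r-t-1, \gamma_t]$, hence equivalent to a semisimple stratum of the right shape. Apply the self-dual diagonalization (Proposition~\ref{propDiagonalizationStrataG} on blocks fixed by $\sigma$ and Proposition~\ref{propDiagonalizationStrataGL} on $\sigma$-paired blocks, combined via the argument of Corollary~\ref{corSemisimpleEquiv}) to produce a self-dual semisimple stratum $[\Lambda, n, r-t-1, \gamma_{t+1}]$ equivalent to $\widetilde{\Delta}_t$, with the same minimal polynomial and associate splitting as $\beta$. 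Then (c) holds by construction, and self-duality of $\gamma_{t+1}$ improves (b) at the next level. For the base case take $\gamma_0 = \beta$; (b) at $t=0$ follows from the hypothesis that $\beta$ and $-\sigma_h(\beta)$ share associate splitting and minimal polynomial, so that $\beta + \sigma_h(\beta)$ lies in the radical of $\beta$'s splitting.

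\textbf{Main obstacle.} The delicate point is preservation of the character set $\C(\Delta)$ when we pass from $\gamma_t$ to $\gamma_{t+1}$. The self-dual correction only guarantees equivalence of strata at parameter $r-t-1$, which is insufficient at parameter $r$. This is where the hypothesis $\tau_{\Delta, \#\Delta} = \id$ is essential: it says that $\theta$ and $\#\theta$ agree as characters on $H(\Delta) = H(\#\Delta)$, which by the characterization of $\C$ via iterated $\psi_F$-twists (see~\cite[5.6]{skodlerack:17-1} and the intertwining/transfer compatibility of Proposition~\ref{propTransferAndInvolution}) forces $\beta$ and $-\sigma_h(\beta)$ to be congruent modulo the module defining $H(\Delta)$. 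Propagating this through the induction shows that each correction $\gamma_{t+1} - \gamma_t$ lies in a coset that does not alter the set of characters at level $r$, so (a) is maintained. Verifying this character-invariance at each step — in particular that the self-dual diagonalization can be chosen to lie in this coset — is the technical heart of the proof, and it is where the transfer hypothesis does the real work.
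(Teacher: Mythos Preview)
Your proposal has a genuine gap at exactly the point you flag as the ``main obstacle'', and the iteration is not structured correctly to close it.

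First, the iteration collapses after one step. Proposition~\ref{propDiagonalizationStrataG} already produces a \emph{skew} element $\gamma_{1}$ (that is what ``self-dual stratum'' means), so condition~(b) holds exactly, not just to first order. There is nothing to iterate: the whole question reduces to whether $[\Lambda,n,r,\gamma_1]$ has the same set of semisimple characters as $\Delta$. Diagonalization only gives you equivalence at the lowered parameter, which is strictly weaker, and your sketch of how the transfer hypothesis should force the correction to lie in a coset that preserves $\C(\Delta)$ is not an argument --- you have not produced any mechanism by which an \emph{additive} modification of $\beta$ preserves $\C(\Delta)$. (Your base case is also unjustified: sharing a minimal polynomial and associate splitting does not make $\beta+\sigma_h(\beta)$ lie in $\mf{a}_{-r}$.)

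The paper avoids the obstacle entirely by working \emph{multiplicatively} inside the $S(\Delta)$-conjugacy class of $\beta$ rather than additively. The transfer hypothesis $\tau_{\Delta,\#\Delta}=\id$ is used to identify the matching pair of $(\Delta,\#\Delta)$ with the identity, which via Theorem~\ref{thmIntConStrataGL} and the intertwining formula~\eqref{eqIntCharbetaGL} yields an element $u\in S(\Delta)$ with $u\beta u^{-1}=-\sigma_h(\beta)$. Thus the involution $-\sigma_h$ stabilises the $S(\Delta)$-orbit of $\beta$, and a standard successive-approximation argument in the pro-$p$ group $(\prod_i A^{ii})\cap S(\Delta)$ produces a sequence $\beta^{(k)}=s_k\cdots s_1\,\beta\,(s_k\cdots s_1)^{-1}$ with $-\sigma_h(\beta^{(k)})\equiv\beta^{(k)}\bmod\mf{a}_{k+1}$; a limit point $s_0\in S(\Delta)$ gives $\gamma:=s_0\beta s_0^{-1}$ skew. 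Because $\gamma$ is an $S(\Delta)$-conjugate of $\beta$, the equalities of minimal polynomial, associate splitting, and $\C([\Lambda,n,r,\gamma])=\C(\Delta)$ are automatic --- the last because $S(\Delta)$ normalises every $\theta\in\C(\Delta)$. That is precisely the step your additive scheme cannot supply.
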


\begin{proof}
 We write~$\Delta'$ for~$\#\Delta$. 
 We assume~$I=I'$ without loss of generality. The character~$\theta\in\C(\Delta)$ is intertwined by~$1$.  
 Thus~$\bar{\zeta}_{\theta,\Delta,\theta,\Delta'}=\id_{\kappa_E}$. Further~$\theta$ is its transfer from~$\Delta$ 
 to~$\Delta'$ and thus
 \[\id_I=\zeta_{\theta,\Delta,\theta,\Delta'}=\zeta_{\Delta,\Delta'}\]
 and therefore~$\beta_i$ and~$\beta'_i$ have the same minimal polynomial, and
 \[\id_{\kappa_E}=\bar{\zeta}_{\theta,\Delta,\theta,\Delta'}=\bar{\zeta}_{\theta,\tau_{\Delta,\Delta'}(\theta)}=
 \bar{\zeta}_{\Delta,\Delta'}.\]
 Thus by Theorem~\ref{thmIntertwiningStrataGL} there is an element ~$u$ of~$P_1(\Lambda)$ which conjugates~$\beta$ to~$\beta'$ and by~\eqref{eqIntCharbetaGL} (for~$\theta,\Delta\Delta,\ti{g}=1$) we can assume that~$u$ is an element of~$S(\Delta)$ and therefore the~$S(\Delta)$-conjugacy 
 class of~$\beta$ is invariant under the map~$-\sigma_h$. Using the latter action we use the pro-$p$-group property 
 to show  by induction that there is a sequence~$(s_k)_{k\in\NN}$ in~$(\prod_iA^{ii})\cap S(\Delta)$ such that 
 \[-\sigma_h(s_ks_{k-1}\cdots s_1\beta (s_k s_{k-1}\cdots s_1)^{-1})\equiv s_ks_{k-1}\cdots s_1\beta (s_k s_{k-1}\cdots s_1)^{-1}=:\beta^{(k)}~\text{mod }\mf{a}_{1+k}\]
 and 
 \[\beta^{(k)}\equiv \beta^{(k-1)}~\text{mod }\mf{a}_{k}.\]
 
 But then~$(\beta^{(k)})_k$ converges to some element~$\gamma$ and by compactness there is a convergent subsequence~$s_{k_j}$,
 say with limit~$s_0$. We obtain by continuity:~$\gamma=s_0\beta s_0^{-1}$. This finishes the proof. 
\end{proof}

\subsubsection{For~$G$}

In this section we assume that both strata~$\Delta$ and~$\Delta'$ are self-dual and~$\theta\in\C(\Delta)^\sigma$ 
and~$\theta'\in\C(\Delta')^{\sigma'}$. 
We also have a~$G$-version of the diagonalization theorem. Note that there is no~$G$ intertwining required in the 
assumption: 

\begin{theorem}\label{thmDiagonalizationCharG}
Under the assumptions of Theorem~\ref{thmDiagonalizationCharGL} we further assume
that~$\Lambda\oplus\Lambda'$ is self-dual. Then there exists a stratum~$\Delta''$ with~$\Lambda''=\Lambda'$,~$n''=n$ and~$r''=r$ 
such that $\Delta\oplus\Delta''$ is a self-dual semisimple stratum and~$\theta\otimes\theta'\in\C(\Delta\oplus\Delta'')$. 
\end{theorem}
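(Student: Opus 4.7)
The plan is to upgrade the $\tiG$-diagonalization of Theorem~\ref{thmDiagonalizationCharGL} to a self-dual one. First I apply that theorem to obtain a semisimple stratum $\Delta''$ on $V'$ with $\Lambda''=\Lambda'$, $n''=n$ and $r''=r$, such that $\Delta\oplus\Delta''$ is semisimple and $\theta\otimes\theta'\in\C(\Delta\oplus\Delta'')$. This $\Delta''$ need not be self-dual, so the strategy is to replace its defining element $\beta''$ by a self-dual one without disturbing the character set, by invoking Proposition~\ref{propSelfdualProplimit}.

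Next I would verify the hypotheses of Proposition~\ref{propSelfdualProplimit} for the stratum $\Delta\oplus\Delta''$. The lattice sequence $\Lambda\oplus\Lambda'$ is self-dual by assumption. Because $\theta\in\C(\Delta)^{\sigma}$ and $\theta'\in\C(\Delta')^{\sigma'}$, the tensor character $\theta\otimes\theta'$ is fixed by the adjoint involution of $h\perp h'$; Proposition~\ref{propTransferAndInvolution} then places $\theta\otimes\theta'$ in $\C(\#(\Delta\oplus\Delta''))$ as well. Since two semisimple strata with the same self-dual lattice sequence, group level and degree that share a semisimple character have the same set of semisimple characters, we obtain $\C(\Delta\oplus\Delta'')=\C(\#(\Delta\oplus\Delta''))$ and the transfer between the two strata is the identity. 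Arguing as in the proof of Proposition~\ref{propSelfdualProplimit}, via the matching pair and Theorem~\ref{thmIntertwiningStrataGL}, this forces $-\sigma(\beta\oplus\beta'')$ to have the same minimal polynomial and associated splitting as $\beta\oplus\beta''$.

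Applying Proposition~\ref{propSelfdualProplimit} to $\Delta\oplus\Delta''$ then yields an element $\gamma$ with $[\Lambda\oplus\Lambda',n,r,\gamma]$ self-dual and with the same semisimple characters as $\Delta\oplus\Delta''$. The refinement I need is to arrange that $\gamma$ respects the splitting $V\oplus V'$ and restricts to $\beta$ on $V$. In the proof of Proposition~\ref{propSelfdualProplimit} the element $\gamma$ is obtained as $s_0(\beta\oplus\beta'')s_0^{-1}$, where $s_0$ is an accumulation point of a product of elements in $(\prod_{j\in I\sqcup I''}A^{jj})\cap S(\Delta\oplus\Delta'')$; such elements are automatically block-diagonal for the coarser splitting $V\oplus V'$. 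Since $\beta$ is already self-dual on $V$, the inductive congruence of that proof is satisfied on the $V$-block by the identity, so at each stage the approximating factor can be chosen with trivial $V$-component. With this choice $\gamma=\beta\oplus\beta'''$ for a self-dual $\beta'''\in\End_D(V')$, and replacing $\Delta''$ by $[\Lambda',n,r,\beta''']$ yields the desired stratum.

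The hard part will be the block-preserving version of Proposition~\ref{propSelfdualProplimit}, that is, running the pro-$p$ successive approximation inside the subgroup of $S(\Delta\oplus\Delta'')$ whose $V$-component is trivial. This reduces to checking that at every stage the defect of self-duality of $\beta\oplus(\beta'')^{(k)}$ lies entirely in $\End_D(V')$ modulo the appropriate power of $\mf{a}$, which follows from $\beta$ being a fixed point of $-\sigma_h$ together with the fact that the associated splitting of $\Delta\oplus\Delta''$ refines $V\oplus V'$, so that one can localise the argument to the $V'$-blocks without losing membership in $S(\Delta\oplus\Delta'')$.
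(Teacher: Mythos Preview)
Your approach is correct but takes an unnecessary detour compared with the paper. Both proofs start the same way, applying Theorem~\ref{thmDiagonalizationCharGL} to obtain~$\Delta''$ on~$V'$ with~$\theta\otimes\theta'\in\C(\Delta\oplus\Delta'')$, and both then want to invoke Proposition~\ref{propSelfdualProplimit}. The difference is \emph{where} the proposition is applied. You apply it to the big stratum~$\Delta\oplus\Delta''$ on~$(V\oplus V',h\perp h')$ and are then forced to argue that the limiting element~$\gamma$ can be chosen block-diagonal with~$V$-component exactly~$\beta$; this requires re-entering the inductive pro-$p$ approximation of the proposition and controlling the choices step by step. The paper instead observes that all hypotheses of Proposition~\ref{propSelfdualProplimit} already hold for~$\Delta''$ alone on~$(V',h')$: from~$\C(\Delta\oplus\Delta'')=\C(\#(\Delta\oplus\Delta''))$ one restricts to get~$\C(\Delta'')=\C(\#\Delta'')$; the element~$-\sigma_{h'}(\beta'')$ has the same minimal polynomial as~$\beta''$ because both equal the minimal polynomial of the skew element~$\beta$; the associated splitting is preserved because it agrees with that of the self-dual~$\beta'$; and~$\theta'$, being the restriction of~$\theta\otimes\theta'$ to both~$\Delta''$ and~$\#\Delta''$, shows the transfer is the identity. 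Applying the proposition on~$V'$ directly produces a self-dual~$\gamma$ on~$V'$ with~$\C([\Lambda',n,r,\gamma])=\C(\Delta'')$, and then~$\Delta\oplus[\Lambda',n,r,\gamma]$ is manifestly self-dual with the required character set---no block-preservation argument needed.

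One minor point: your appeal to Proposition~\ref{propTransferAndInvolution} to get~$\theta\otimes\theta'\in\C(\#(\Delta\oplus\Delta''))$ is not quite the right citation; what is actually being used is that~$\sigma.\C(\Delta\oplus\Delta'')=\C(\#(\Delta\oplus\Delta''))$ together with~$\sigma$-invariance of~$\theta\otimes\theta'$ (the paper cites~\cite[5.41]{skodlerack:17-1} for this). This does not affect the validity of your argument.
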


\begin{proof}[Proof of Theorem~\ref{thmDiagonalizationCharG}]
 We can choose a stratum~$\Delta''$ which satisfies the assertions of Theorem~\ref{thmDiagonalizationCharGL}.
 The character~$\theta\otimes\theta'$ is fixed under~$\tisigma$
 by~\cite[5.41]{skodlerack:17-1}, in particular we have~$\C(\Delta\oplus\Delta'')=\C(\#(\Delta\oplus\Delta''))$ and therefore~$\C(\Delta'')=\C(\#\Delta'')$. The elements~$\beta'',\beta$ and~$-\sigma_{h'}(\beta'')$ have the same minimal polynomial. 
 The character~$\theta'$ is a restriction of~$\theta\otimes\theta'$ to~$\Delta''$ and to~$\#\Delta''$ , and thus the transfer map from~$\Delta''$ to~$\#\Delta''$ is the identity map. Now Lemma~\ref{propSelfdualProplimit} finishes the proof. 
\end{proof}

\begin{remark}\label{remDiagCharG}
 Starting with two self-dual lattice sequences~$\Lambda$ and~$\Lambda'$ of the same~$F$-period we can always 
 obtain~$(2\Lambda_j)^\#_j=2\Lambda_{1-j}$ for all~$j\in\ZZ$ after a translation of the domain of~$\Lambda$ and similar for~$\Lambda'$ 
 So one can establish that~$2\Lambda\oplus 2\Lambda'$ is self-dual. 
\end{remark}


The same track to Theorem~\ref{thmIntertwiningStrataG} leads to: 

\begin{theorem}[see~\cite{kurinczukSkodlerackStevens:16}~9.2,9.3 for the case over~$F$]\label{thmIntertwiningCharG}
  Suppose~$\Delta$ and~$\Delta'$ are self-dual and share the~$F$-period and the parameters~$n$ and~$r$ and 
  suppose~$\theta\in\C(\Delta)^\sigma$ and~$\theta'\in\C(\Delta')^{\sigma}$ intertwine by an element of~$G$ with 
  matching~$\zeta$. Then the following holds.
\begin{enumerate}
\item\label{thmIntertwiningCharG.i} $I_G(\theta,\theta')=(G\cap S(\Delta'))(I_G(\theta,\theta')\cap \prod_{i\in I}\Hom_D(V^i,V^{\zeta(i)}))(G\cap S(\Delta))$.
\item Suppose~$\Delta$ and~$\Delta$ intertwine by an element of~$\ti{G}$ and~$\theta'=\tau_{\Delta,\Delta'}(\theta)$, then we have
\begin{equation}\label{eqGIntCharDelta} 
 I_G(\theta,\theta')=(G\cap S(\Delta')) I_G(\Delta,\Delta')(G\cap S(\Delta)).
 \end{equation}
If further there is an element~$\ti{g}$ of~$\ti{G}$ such that~$\ti{g}\beta\ti{g}^{-1}=\beta'$ then
\begin{equation}\label{eqGIntCharbeta} 
I_G(\theta,\theta')=(G\cap S(\Delta'))(G\cap \ti{g}C_A(\beta)^\times)(G\cap S(\Delta)).
\end{equation}
\end{enumerate}
\end{theorem}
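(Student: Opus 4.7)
My plan is to mimic the proof of Theorem \ref{thmIntertwiningStrataG}, substituting $S(\Delta)$ and $S(\Delta')$ for $M(\Delta)$ and $M(\Delta')$: start from the $\ti{G}$-intertwining formulas of Theorem \ref{thmIntertwiningCharGL}, verify the hypothesis \eqref{eqDCCnew} of Proposition \ref{propDCnew} using Proposition \ref{propIntersectionFormulas}(i), and take $\sigma$-fixed points via Proposition \ref{propDCnew} with $\Gamma = \{1,\sigma\}$.

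For \eqref{eqGIntCharbeta}, Theorem \ref{thmIntertwiningCharGL}(2) gives $I(\theta,\theta') = S(\Delta')\,\ti{g}\,C_A(\beta)^\times\, S(\Delta)$. The groups $S(\Delta), S(\Delta')$ are pro-$p$ and $\sigma$-stable by self-duality of $\theta, \theta'$; $C_A(\beta)^\times = C_A(-\beta)^\times$ is $\sigma$-stable because $\sigma_h(\beta) = -\beta$; and the coset $\ti{g}\,C_A(\beta)^\times$ is $\sigma$-stable because applying $\sigma_h$ to $\ti{g}\beta\ti{g}^{-1} = \beta'$ (together with $\sigma_h(\beta)=-\beta$ and $\sigma_h(\beta')=-\beta'$) shows that $\sigma_h(\ti{g})^{-1}$ also conjugates $\beta$ to $\beta'$. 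The identity \eqref{eqDCCnew} for the data $(S(\Delta'), \ti{g}, C_A(\beta)^\times, S(\Delta))$ then follows, via Remark \ref{remDoubleCoset}, from Proposition \ref{propIntersectionFormulas}(i) applied to the pair $(\ti{g}^{-1}\Delta'\ti{g}, \Delta)$, which share the element $\beta$ and hence the associated splitting. Proposition \ref{propDCnew} delivers \eqref{eqGIntCharbeta}. Equation \eqref{eqGIntCharDelta} should then drop out as a formal consequence of \eqref{eqGIntCharbeta} combined with Theorem \ref{thmIntertwiningStrataG}(2), absorbing $G \cap M(\Delta), G \cap M(\Delta')$ into $G \cap S(\Delta), G \cap S(\Delta')$.

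For (i), I would fix any $g \in I_G(\theta,\theta')$ and use Theorem \ref{thmIntertwiningCharGL}(1) to write $g = u_1 x u_2$ with $u_1 \in S(\Delta')$, $u_2 \in S(\Delta)$ and $x \in X := I(\theta,\theta') \cap \prod_{i\in I} \Hom_D(V^i, V^{\zeta(i)})$. Setting $H := \prod_{i\in I}\Aut_D(V^i)$, the uniqueness part of Theorem \ref{thmMatchingCharGL} applied to $\sigma(x) \in X$ forces $\sigma \circ \zeta = \zeta \circ \sigma$, so $H$ is $\sigma$-stable and $x^{-1}\sigma(x) \in \ti{G}\cap\prod_i\End_D(V^i) = H$, i.e.\ $xH$ is a $\sigma$-stable coset. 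The identity \eqref{eqDCCnew} reduces, via Remark \ref{remDoubleCoset}, to Proposition \ref{propIntersectionFormulas}(i) for the pair $(x^{-1}\Delta'x, \Delta)$, which share the associated splitting of $\Delta$ because $x$ is $\zeta$-block-preserving. Since $g \in S(\Delta')\,x\,S(\Delta) \subseteq S(\Delta')\,xH\,S(\Delta)$ is $\sigma$-fixed, Proposition \ref{propDCnew} produces a decomposition $g = u_1'\,y\,u_2'$ with $u_1' \in G\cap S(\Delta')$, $u_2' \in G\cap S(\Delta)$ and $y \in (xH)^\sigma$; then $y = (u_1')^{-1}g(u_2')^{-1}$ lies in $S(\Delta')\,I(\theta,\theta')\,S(\Delta) \subseteq I(\theta,\theta')$ (using $S(\Delta) \subseteq I(\theta)$ and $S(\Delta') \subseteq I(\theta')$) and in $G \cap \prod_i \Hom_D(V^i, V^{\zeta(i)})$, giving the required element of $I_G(\theta,\theta') \cap \prod_i\Hom_D(V^i, V^{\zeta(i)})$.

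The main obstacle will be the verification of \eqref{eqDCCnew}: Proposition \ref{propIntersectionFormulas}(i) is stated only for strata sharing the associated splitting, so to apply it in (i) or in \eqref{eqGIntCharbeta} one must first transport $\Delta'$ by a block-preserving element ($x$ in (i) or $\ti{g}$ in (ii)) into a stratum on the same splitting as $\Delta$, and then translate the resulting standard subgroup identity into the needed coset form via Remark \ref{remDoubleCoset}.
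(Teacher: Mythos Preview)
Your approach is essentially identical to the paper's: the paper says the proof is ``a complete analogue of the proof of Theorem~\ref{thmIntertwiningStrataG}'' (i.e.\ Theorem~\ref{thmIntertwiningCharGL} + Proposition~\ref{propIntersectionFormulas}\ref{propDCCForS} + Proposition~\ref{propDCnew} for $\Gamma=\{1,\sigma\}$), with \eqref{eqGIntCharDelta} handled separately ``from diagonalization, see Proposition~\ref{propDiagonalizationStrataG}, \eqref{eqGIntCharbeta} and Theorem~\ref{thmIntertwiningStrataG}''. Your detailed verification of the $\sigma$-stability of the coset and of hypothesis \eqref{eqDCCnew} via transport to a common splitting is exactly what is implicit in that one-line proof.

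The only small gap is in your derivation of \eqref{eqGIntCharDelta}: it is not a purely formal consequence of \eqref{eqGIntCharbeta} and Theorem~\ref{thmIntertwiningStrataG}(2), because both of those require the extra hypothesis that some $\ti{g}\in\ti{G}$ conjugates $\beta$ to $\beta'$, which is not assumed for \eqref{eqGIntCharDelta}. The paper inserts Proposition~\ref{propDiagonalizationStrataG} here: since $\Delta$ and $\Delta'$ intertwine, one may replace $\Delta'$ by an equivalent self-dual semisimple stratum whose $\beta'$ has the same minimal polynomial as $\beta$ (this leaves $S(\Delta')$, $I_G(\Delta,\Delta')$ and $\C(\Delta')$ unchanged), after which $\ti{g}$ exists and your absorption argument $M\subseteq S$ goes through.
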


\begin{proof}
 The proof is a complete analogue of the proof of Theorem~\ref{thmIntertwiningStrataG} except for~\eqref{eqGIntCharDelta},
 which follows from diagonalization, see Proposition~\ref{propDiagonalizationStrataG},~\eqref{eqGIntCharbeta} 
 and Theorem~\ref{thmIntertwiningStrataG}. 
\end{proof}

\begin{corollary}\label{corMatchingCharG}
Suppose~$\Delta$ and~$\Delta'$ are self-dual and~$\theta\in\C(\Delta)^\sigma$ and~$\theta'\in\C(\Delta')^\sigma$. 
Suppose~$I_G(\theta,\theta')\neq\emptyset$ with matching~$\zeta$.
Then~$I_G(\theta,\theta')\cap\prod_{i\in I}\Hom_D(V^i,V^{\zeta(i)})$ is non-empty  
and the~$\epsilon$-hermitian spaces~$h_i$ and~$h_{\zeta(i)}$ are isometric for every~$i\in I_0$.  
\end{corollary}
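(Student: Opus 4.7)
The argument is the direct character analogue of Corollary \ref{corIsometriesForMatchings}. Apply Theorem \ref{thmIntertwiningCharG}\ref{thmIntertwiningCharG.i} to the hypothesis $I_G(\theta,\theta')\neq\emptyset$ with matching $\zeta$: the resulting decomposition
\[
 I_G(\theta,\theta') = (G \cap S(\Delta'))\bigl(I_G(\theta,\theta') \cap \prod_{i \in I}\Hom_D(V^i, V^{\zeta(i)})\bigr)(G \cap S(\Delta))
\]
forces the middle factor to be non-empty, which is exactly the first assertion.

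For the second assertion, fix any $g$ in the middle factor and any $i \in I_0$. Then $g$ restricts to a $D$-linear isomorphism $g_i \colon V^i \to V^{\zeta(i)}$, and because $g \in G = \U(h)$ we have
\[
 h(g_i v, g_i w) = h(g v, g w) = h(v, w) = h_i(v, w), \qquad v, w \in V^i.
\]
If $\zeta(i)$ were not a fixed point of the involution attached to $\Delta'$, then $V^{\zeta(i)}$ would be paired non-trivially only with a distinct block and hence totally isotropic for $h$; the displayed chain would then force $h_i \equiv 0$, contradicting the non-degeneracy of $h_i$. Thus $\zeta(i)$ is $\sigma$-fixed, so $h_{\zeta(i)}$ is genuinely an $\epsilon$-hermitian form on $V^{\zeta(i)}$, and $g_i$ is an isometry from $h_i$ to $h_{\zeta(i)}$.

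There is no substantial obstacle here beyond having Theorem \ref{thmIntertwiningCharG}\ref{thmIntertwiningCharG.i} at hand; the corollary is the natural geometric packaging of that formula together with the definition $G = \U(h)$. As a side remark, in parallel with Corollary \ref{corIsometriesForMatchings}, one can further extract from the same $g$ that $\zeta$ commutes with the index involution: the relation $g \cdot 1^i \cdot g^{-1} = 1^{\zeta(i)}$ expressing that $g$ respects the matching, combined with $\sigma_h(g) = g^{-1}$ valid for $g \in G$ and the anti-involution property of $\sigma_h$, gives $1^{\sigma(\zeta(i))} = 1^{\zeta(\sigma(i))}$ and hence $\sigma \circ \zeta = \zeta \circ \sigma$.
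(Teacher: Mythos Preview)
Your argument has the right shape, but there is a hypothesis mismatch you glossed over. Theorem~\ref{thmIntertwiningCharG}\ref{thmIntertwiningCharG.i} is stated under the additional assumptions that~$\Delta$ and~$\Delta'$ share the~$F$-period and the parameters~$n$ and~$r$, whereas Corollary~\ref{corMatchingCharG} is stated only under the standing hypotheses of the section (same group level and same degree). So you cannot invoke the decomposition formula without first reducing to matching parameters. The paper does exactly this: it applies repeating and lowering (Remark~\ref{remRaisingRepeating}, Remark~\ref{remDiagCharG}) and then the diagonalization Theorem~\ref{thmDiagonalizationCharG} to arrange that~$\beta$ and~$\beta'$ have the same minimal polynomial and that~$\theta'=\tau_{\Delta,\Delta'}(\theta)$; after raising, formula~\eqref{eqGIntCharbeta} then produces an element of~$G$ conjugating~$\beta$ to~$\beta'$, which is the desired block-diagonal intertwiner.

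Once the parameter reduction is in place, your route via part~(i) is in fact more direct than the paper's, which passes through diagonalization and the finer formula~\eqref{eqGIntCharbeta}; either way one lands on a block-diagonal~$g\in G$, and your extraction of the isometries~$h_i\cong h_{\zeta(i)}$ for~$i\in I_0$ from such a~$g$ is correct and is exactly the argument used for strata in Corollary~\ref{corIsometriesForMatchings}. The missing ingredient is solely the preliminary reduction step.
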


\begin{proof}
 By repeating and lowering, Remark~\ref{remDiagCharG} and Theorem~\ref{thmDiagonalizationCharG} we can assume that~$\beta$ and~$\beta'$ have the same minimal polynomial and that~$\theta'=\tau_{\Delta,\Delta'}(\theta)$. 
 After raising, i.e. we consider the third parameter~$\max(r,r')$, formula~\eqref{eqGIntCharbeta}
 implies that~$\beta$ is conjugate to~$\beta'$ over some element of~$G$. Thus~$I_G(\theta,\theta')$ contains an element of~$\prod_{i\in I}\Hom_D(V^i,V^{\zeta(i)})$. this finishes the proof.
\end{proof}

\begin{corollary}\label{corTransIntertwiningCharG}
 Intertwining is an equivalence relation on the set of self-dual semisimple characters of~$G$ of the same degree and the same group level. 
\end{corollary}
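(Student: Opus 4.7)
The plan is to verify reflexivity, symmetry, and transitivity. Reflexivity is trivial because the identity element of $G$ intertwines $\theta_-$ with itself, and symmetry follows from the fact that the inverse of a $G$-intertwiner is a $G$-intertwiner. The only substantial point is transitivity, so I fix three self-dual semisimple characters $\theta_{j,-}\in\C_-(\Delta_j)$, $j=1,2,3$, of the same degree and group level, assume that $I_G(\theta_{1,-},\theta_{2,-})$ and $I_G(\theta_{2,-},\theta_{3,-})$ are both non-empty, and aim to produce an element of $I_G(\theta_{1,-},\theta_{3,-})$.

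First I would pass to the $\sigma$-fixed lifts $\theta_j\in\C(\Delta_j)^\sigma$: any $G$-element intertwining $\theta_{j,-}$ with $\theta_{j+1,-}$ automatically intertwines $\theta_j$ with $\theta_{j+1}$, because $g.\theta_j$ and $\theta_{j+1}$ are two $\sigma$-fixed characters on the pro-$p$ subgroup $gH(\Delta_j)g^{-1}\cap H(\Delta_{j+1})$ which agree on the $\sigma$-fixed points, and so coincide by the Glauberman uniqueness underlying Definition~\ref{defSelfdualSemisChar}. By Remark~\ref{remRaisingRepeating} together with Lemma~\ref{lemIntertwining&Lowering}, I can then assume that the three strata share the $F$-period and the parameters $n$ and $r$. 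Corollary~\ref{corMatchingCharG} now supplies matchings $\zeta_{12},\zeta_{23}$ and matched $G$-intertwiners in the respective $\prod_i\Hom_D(V^i,V^{\zeta(i)})$-spaces; in particular the pairs $(\theta_1,\theta_2)$ and $(\theta_2,\theta_3)$ are endo-equivalent in $\ti{G}$ (Definition~\ref{defEndoEquivCharacters}), whence $\theta_1$ and $\theta_3$ are endo-equivalent as well by the transitivity of endo-equivalence of semisimple characters for $\ti{G}$.

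The strategy is now to reduce to a transfer situation and apply the $G$-intertwining formula~\eqref{eqGIntCharDelta} of Theorem~\ref{thmIntertwiningCharG}(ii). After a harmless rescaling of the lattice domains as in Remark~\ref{remDiagCharG}, so that $\Lambda_1\oplus\Lambda_3$ becomes self-dual, Theorem~\ref{thmDiagonalizationCharG} produces a stratum $\Delta'_3$ on $V_3$ with $\Lambda'_3=\Lambda_3$ and the same parameters as $\Delta_3$ such that $\Delta_1\oplus\Delta'_3$ is a self-dual semisimple stratum and $\theta_1\otimes\theta_3\in\C(\Delta_1\oplus\Delta'_3)$. Consequently $\theta_3=\tau_{\Delta_1,\Delta'_3}(\theta_1)$, and $\theta_{3,-}\in\C_-(\Delta'_3)$, so $\Delta_3$ may be replaced by $\Delta'_3$; formula~\eqref{eqGIntCharDelta} then identifies $I_G(\theta_1,\theta_3)$ with $(G\cap S(\Delta'_3))\,I_G(\Delta_1,\Delta'_3)\,(G\cap S(\Delta_1))$, so the whole problem reduces to showing $I_G(\Delta_1,\Delta'_3)\neq\emptyset$.

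This last step is the main obstacle, because the stratum-level intertwining must be produced in $G$ rather than merely in $\ti{G}$. I would handle it by diagonalizing both strata via Proposition~\ref{propDiagonalizationStrataG} so that, after replacement within equivalence, the $i$-th block of $\beta_1$ and the $\zeta_{13}(i)$-th block of $\beta'_3$ share the same minimal polynomial for every $i$, where $\zeta_{13}=\zeta_{23}\circ\zeta_{12}$. The self-dual Skolem--Noether statement, Corollary~\ref{corSkolemNoetherSemisimple}, then yields an element of $G$ conjugating $\beta_1$ block-by-block to $\beta'_3$, which is in particular a $G$-intertwiner of $\Delta_1$ with $\Delta'_3$. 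Feeding this back into~\eqref{eqGIntCharDelta} and restricting the resulting intertwiner to $G\cap H(\Delta_1)$ produces the sought element of $I_G(\theta_{1,-},\theta_{3,-})$.
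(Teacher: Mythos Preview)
Your overall architecture is close to the paper's, but there is a genuine gap at the final step, and it is precisely the crux of the argument.

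After you diagonalize the pair~$(\theta_1,\theta_3)$ via Theorem~\ref{thmDiagonalizationCharG} and reduce to showing~$I_G(\Delta_1,\Delta'_3)\neq\emptyset$, you invoke Corollary~\ref{corSkolemNoetherSemisimple}. But that corollary does not take merely ``same minimal polynomial block-by-block'' as input: its hypothesis requires an element of~$G\cap\prod_i\Hom(V^i,V^{\zeta(i)})$ which already intertwines the coarsest-level block strata~$[\Lambda^i,n_i,n_i-1,\beta_{1,i}]$ with~$[\Lambda'^{\zeta(i)},n'_{\zeta(i)},n'_{\zeta(i)}-1,\beta'_{3,\zeta(i)}]$. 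You have not produced such an element for the pair~$(\Delta_1,\Delta'_3)$. All the~$G$-intertwining information you possess is for the pairs~$(\theta_1,\theta_2)$ and~$(\theta_2,\theta_3)$, and you discarded~$\theta_2$ after establishing~$\ti{G}$-endo-equivalence of~$\theta_1$ with~$\theta_3$. Endo-equivalence in~$\ti{G}$ alone is not enough to force~$G$-intertwining of the strata; this is exactly what the later Witt-tower criterion (Theorem~\ref{thmMatchingWitttowersIntertwining}) is about. A second, smaller circularity: formula~\eqref{eqGIntCharDelta} sits inside Theorem~\ref{thmIntertwiningCharG}, whose standing hypothesis is~$I_G(\theta,\theta')\neq\emptyset$, which is what you are proving. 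The reduction you want is still valid, but for the elementary reason~$I(\Delta_1,\Delta'_3)\subseteq I(\theta_1,\theta_3)$ coming from the transfer property, not from~\eqref{eqGIntCharDelta}.

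The paper's proof differs precisely at this point: it keeps the middle character~$\theta'$ in play. One applies Theorem~\ref{thmDiagonalizationCharG} twice, first to~$(\theta,\theta')$ and then to~$(\theta',\theta'')$, so that after replacement all three~$\beta,\beta',\beta''$ share the same minimal polynomial and the three characters are mutual transfers. Now the assumed~$G$-intertwining of~$(\theta,\theta')$ together with~\eqref{eqGIntCharbeta} (legitimately, since here~$I_G(\theta,\theta')\neq\emptyset$ is given) shows that~$G\cap\ti{g}C_A(\beta)^\times$ is non-empty, i.e.\ some~$g_1\in G$ conjugates~$\beta$ to~$\beta'$; likewise some~$g_2\in G$ conjugates~$\beta'$ to~$\beta''$. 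Then~$g_2g_1\in G$ conjugates~$\beta$ to~$\beta''$, and since~$\theta''=\tau_{\Delta,\Delta''}(\theta)$ this~$g_2g_1$ lies in~$I_G(\theta,\theta'')$. Your argument can be repaired in exactly this way: diagonalize pairwise through~$\theta_2$ rather than jumping straight to the pair~$(\theta_1,\theta_3)$, and extract the~$G$-conjugations of the~$\beta$'s from Theorem~\ref{thmIntertwiningCharG} applied to the two given intertwinings, then compose them.
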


\begin{proof}
 Suppose~$\theta_-\in\C_-(\Delta)$,~$\theta'_-\C_-(\Delta')$ and~$\theta''\in\C(\Delta'')$ are three self-dual semisimple characters
 such that~$I_G(\theta_-,\theta'_-)\neq\emptyset$ and~$I_G(\theta'_-,\theta''_-)\neq\emptyset$. Then by Glauberman 
 correspondence~$I_G(\theta,\theta')$ and~$I_G(\theta',\theta'')$ are not empty. By Remark~\ref{remDiagCharG} and Theorem~\ref{thmDiagonalizationCharG} (using repeating and lowering) we can assume without loss of generality that~$\beta$,~$\beta'$ and~$\beta''$ have the same minimal polynomials and that the three characters are transfers of each other. From Theorem~\ref{thmIntertwiningCharG} follows now (after repeating and raising) that~$\beta$,~$\beta'$ and~$\beta''$ are conjugate
 under some elements of~$G$. Thus, as transfers, the characters~$\theta$ and~$\theta''$ intertwine by some element of~$G$. 
\end{proof}

\subsection{Intertwining implies conjugacy for self-dual semisimple characters}
%
%

Another important application of Theorem~\ref{thmDiagonalizationCharG} is called intertwining and conjugacy theorem for self-dual semisimple characters.
\begin{theorem}\label{thmIntConCharG}
Suppose~$\Delta$ and~$\Delta'$ are self-dual semisimple strata sharing the~$F$-period and the parameter~$r$ and~$\theta\in\C(\Delta)^\sigma$ and~$\theta'\in\C(\Delta')^\sigma$ intertwine by an element of~$G$. Let~$(\zeta,\bar{\zeta})$ be their matching pair. 
 Let~$g$ be an element of~$G\cap\prod_i\Hom_D(V^i,V^{\zeta(i)})$ which satisfies~$g\Lambda=\Lambda'$ and such that 
 the conjugation with~$g$ verifies~$\bar{\zeta}|_{\kappa_{E_D}}$.
 Then there is an element~$u$ of~$G\cap \prod_i\Hom_D(V^i,V^{\zeta(i)})$ such that~$u\Lambda=\Lambda'$ and~$u\theta=\theta'$. We can choose~$u$ to further satisfy~$u\beta u^{-1}=\beta'$ if~$\beta$ and~$\beta'$ have the same minimal
 polynomial over~$F$ and~$\tau_{\Delta,\Delta'}(\theta)=\theta'$.  
\end{theorem}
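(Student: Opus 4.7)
My plan is to lift the ``intertwining implies conjugacy'' result from the stratum level (Theorem~\ref{thmIntConStrataG}) to the character level, with the self-dual diagonalisation Theorem~\ref{thmDiagonalizationCharG} serving as the bridge. I first conjugate by~$g$ and replace~$(\Delta',\theta')$ by~$(g^{-1}\Delta',\,g^{-1}.\theta')$; this reduces the situation to~$\Lambda=\Lambda'$,~$\zeta=\id$ and~$\bar{\zeta}|_{\kappa_{E_D}}=\id$, with~$\theta$ and~$\theta'$ still intertwined in~$G$, and I then seek~$u\in G\cap P(\Lambda)$, respecting the block decomposition, with~$u.\theta=\theta'$.

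Next, by doubling the~$F$-period via the repeating construction (Remark~\ref{remRaisingRepeating}) and translating one of the sequences (Remark~\ref{remDiagCharG}), I arrange that~$n=n'$ and that~$2\Lambda\oplus 2\Lambda$ is self-dual on~$V\oplus V$. Theorem~\ref{thmDiagonalizationCharG} then produces a self-dual semisimple stratum~$\Delta''$ on~$V$ with~$\Lambda''=\Lambda$,~$n''=n$,~$r''=r$, with~$\Delta\oplus\Delta''$ self-dual semisimple and~$\theta\otimes\theta'\in\C(\Delta\oplus\Delta'')$; in particular~$\theta'=\tau_{\Delta,\Delta''}(\theta)$ and~$\beta''$ has the same minimal polynomial as~$\beta$. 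Since~$\theta'$ is the transfer of~$\theta$, the character-level matching pair coincides with the stratum-level matching pair, both being~$(\id,\id|_{\kappa_{E_D}})$. Applying formula~\eqref{eqGIntCharDelta} to~$(\theta,\theta')$ and using~$1\in I_G(\theta,\theta')$, I deduce~$I_G(\Delta,\Delta'')\neq\emptyset$; Theorem~\ref{thmIntConStrataG} applied with~$g=1$ then yields~$u_0\in G\cap P(\Lambda)$, preserving the block decomposition, with~$u_0\beta u_0^{-1}=\beta''$.

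The main obstacle is to upgrade this stratum-level equality~$u_0\Delta=\Delta''$ to the desired character-level equality~$u_0.\theta=\theta'$. Both~$u_0.\theta$ and~$\theta'$ lie in~$\C(\Delta'')^\sigma$, and they are intertwined by~$u_0^{-1}\in G$ since~$1$ intertwines~$\theta$ with its transfer~$\theta'$. I plan to use formula~\eqref{eqGIntCharbeta} for~$(\theta,\theta')$ with~$\ti{g}=u_0$, giving a factorisation
\[
1 = s_2\cdot (u_0 c)\cdot s_1,\qquad s_2\in G\cap S(\Delta''),\ s_1\in G\cap S(\Delta),\ c\in C_A(\beta)^\times,\ u_0c\in G,
\]
and set~$u:=u_0 c=s_2^{-1}s_1^{-1}$. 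This candidate~$u$ lies in~$G$, respects the block decomposition, and still satisfies~$u\beta u^{-1}=\beta''$ as~$c$ centralises~$\beta$. The remaining identity~$u.\theta=\theta'$ should follow from a Glauberman-style pro-$p$-cohomology argument (in the spirit of Section~3) applied to the pro-$p$-groups~$S(\Delta)$ and~$S(\Delta'')$, combined with the condition~$\bar{\zeta}|_{\kappa_{E_D}}=\id$ inherited from~$u_0$, which rules out any residual twist by a character of the residue units. Finally, the second assertion of the theorem---the case where~$\beta,\beta'$ already share a minimal polynomial and~$\theta'=\tau_{\Delta,\Delta'}(\theta)$---is handled by the same argument with~$\Delta'$ itself playing the role of~$\Delta''$, so that no diagonalisation step is required and Theorem~\ref{thmIntConStrataG} directly delivers the conjugation~$u\beta u^{-1}=\beta'$.
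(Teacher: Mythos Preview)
Your overall plan matches the paper's proof: reduce via the self-dual diagonalization Theorem~\ref{thmDiagonalizationCharG} to the case where~$\beta$ and~$\beta''$ have the same minimal polynomial and~$\theta'=\tau_{\Delta,\Delta''}(\theta)$, check that~$I_G(\Delta,\Delta'')\neq\emptyset$ via~\eqref{eqGIntCharDelta} and that the matching pairs for characters and strata coincide, and then invoke Theorem~\ref{thmIntConStrataG}. The paper's proof stops exactly there.

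The difficulty you call the ``main obstacle'' is not an obstacle at all, and your proposed work-around (the factorisation~$1=s_2(u_0c)s_1$ followed by a vague ``Glauberman-style pro-$p$-cohomology argument'' to rule out residual twists) is both unnecessary and, as written, not a proof. Here is why the step is immediate. Theorem~\ref{thmIntConStrataG} gives~$u_0\in G$ with~$u_0\Lambda=\Lambda$ and~$u_0\beta u_0^{-1}=\beta''$, so~$u_0\Delta=\Delta''$ \emph{exactly}, and hence~$u_0H(\Delta)u_0^{-1}=H(\Delta'')$. Since~$\theta'=\tau_{\Delta,\Delta''}(\theta)$, the defining property of transfer gives~$I(\Delta,\Delta'')\subseteq I(\theta,\theta')$; in particular~$u_0\in I(\theta,\theta')$. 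But an intertwiner whose conjugation carries the domain of~$\theta$ onto the domain of~$\theta'$ forces equality:~$u_0.\theta$ and~$\theta'$ agree on~$u_0H(\Delta)u_0^{-1}\cap H(\Delta'')=H(\Delta'')$, i.e.\ $u_0.\theta=\theta'$. So~$u_0$ already does the job; there is no residual twist to kill and no further cohomological input is needed. The second assertion of the theorem is, as you say, the same argument with~$\Delta'$ in the role of~$\Delta''$.
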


\begin{proof}
 By diagonalization, see Theorem~\ref{thmDiagonalizationCharG}, we can assume that~$\beta$ and~$\beta'$ have the same minimal polynomial and~$\theta'=\tau_{\Delta,\Delta'}(\theta)$. The set~$I_G(\Delta,\Delta')$ is not empty by Theorem~\ref{thmIntertwiningCharG} and from the transfer property follows that~$(\theta,\theta')$ and~$(\Delta,\Delta')$ have the 
 same matching pair. Now the theorem follows from Theorem~\ref{thmIntConStrataG}.
\end{proof}

At the end of this section we show that~$\sigma$-fixed  semisimple character are always lifts of semisimple characters:
\begin{proposition}\label{propSigmaFixedCharacters}
Suppose~$\Delta$ is a semisimple stratum such~$\C(\Delta)$ is invariant under the action of~$\sigma$. 
Then there is a self-dual semisimple stratum~$\Delta'=[\Lambda,n,r,\beta']$ such that~$\C(\Delta)=\C(\Delta')$.
\end{proposition}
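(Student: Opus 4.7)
The plan is to reduce Proposition~\ref{propSigmaFixedCharacters} to Proposition~\ref{propSelfdualProplimit}, which is the technical workhorse that converts four stratum-level hypotheses into the desired self-dual stratum $[\Lambda,n,r,\gamma]$ with $\C([\Lambda,n,r,\gamma])=\C(\Delta)$. Throughout the reduction I am free to replace $\Delta$ by any equivalent semisimple stratum, since the conclusion concerns only the set of characters and this is unchanged under equivalence (by the definition $\C(\Delta'):=\C(\Delta)$ for equivalent $\Delta'$).

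The first hypothesis of Proposition~\ref{propSelfdualProplimit}, namely $\C(\Delta)=\C(\#\Delta)$, falls out immediately: the $\sigma$-action on characters sends $\C(\Delta)$ bijectively onto $\C(\#\Delta)$, so $\sigma$-stability forces $\C(\Delta)=\sigma.\C(\Delta)=\C(\#\Delta)$. In particular the common domains agree, $H(\Delta)=H(\#\Delta)$. The parahoric filtration $P_k(\Lambda)$ for $k$ above $\lceil r/2\rceil$ sits inside $H(\Delta)$ together with the corresponding filtration for $\Lambda^{\#}$ inside $H(\#\Delta)$, and the coincidence of these filtration pieces pins down $\Lambda^{\#}$ as a translate of $\Lambda$. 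This establishes hypothesis~(b): $\Lambda$ is self-dual.

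For hypothesis~(c)---that $-\sigma_h(\beta)$ has the same minimal polynomial and the same associated splitting as $\beta$---I would follow the approach of the proof of Corollary~\ref{corSemisimpleEquiv}. Applying Corollary~[4.37] of~\cite{skodlerack:17-1} we may, up to equivalence of semisimple strata, assume that the idempotents of the associated splitting of $\Delta$ are permuted by $\sigma_h$. Then $\sigma$ acts on the index set $I$, producing the decomposition $I=I_0\cup I_+\cup I_-$. On each $\sigma$-fixed index we apply Proposition~\ref{propDiagonalizationStrataG} to pass to an equivalent self-dual choice of $\beta_i$; on a pair $(i,\sigma(i))$ with $i\in I_+$ we replace $\beta_{\sigma(i)}$ by $-\sigma_h(\beta_i)$ (an equivalent choice on that block by Proposition~\ref{propDiagonalizationStrataGL}). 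After these equivalence-preserving modifications $-\sigma_h(\beta)$ and $\beta$ have the same associated splitting and, blockwise, the same minimal polynomials.

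Finally, for hypothesis~(d), once $\Lambda$ is self-dual and the associated splittings of $\Delta$ and $\#\Delta$ coincide with matching minimal polynomials on each block, the two semisimple strata $\Delta$ and $\#\Delta$ agree on the top coset of $\mathfrak{a}_{-r}$ in each block, so $\Delta$ is equivalent to $\#\Delta$ as a semisimple stratum; transfer between equivalent semisimple strata on the same ambient space is the identity on $\C(\Delta)=\C(\#\Delta)$, giving $\tau_{\Delta,\#\Delta}=\id$. Proposition~\ref{propSelfdualProplimit} now supplies an element $\gamma$ with the same minimal polynomial and associated splitting as $\beta$ such that $\Delta':=[\Lambda,n,r,\gamma]$ is self-dual and $\C(\Delta')=\C(\Delta)$, which is what we want. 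The principal obstacle I anticipate is the verification of hypothesis~(b): distilling self-duality of $\Lambda$ from the bare equality $H(\Delta)=H(\#\Delta)$ requires a careful argument that the character datum $H(\Delta)$ recovers the lattice sequence up to translation. The remaining steps are bookkeeping built on top of Corollary~\ref{corSemisimpleEquiv} and the definition of transfer.
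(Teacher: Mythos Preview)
Your reduction to Proposition~\ref{propSelfdualProplimit} is a natural idea, but the verification of hypotheses (c) and (d) does not go through as written. The obstruction is that the given hypothesis yields only $\C(\Delta)=\C(\#\Delta)$, which is strictly weaker than $\Delta\sim\#\Delta$; yet every tool you invoke for (c) requires the latter. Proposition~\ref{propDiagonalizationStrataG} demands that the input stratum already be \emph{self-dual} (i.e.\ $\sigma_h(\beta_i)=-\beta_i$), so applying it to $\Delta_i$ for $i\in I_0$ is circular. Likewise, replacing $\beta_{\sigma(i)}$ by $-\sigma_h(\beta_i)$ for $i\in I_+$ is an ``equivalent choice'' only if $\beta_{\sigma(i)}\equiv -\sigma_h(\beta_i)\bmod \mathfrak{a}_{-r}$, which again is $\Delta\sim\#\Delta$ on that block and is not supplied by equality of character sets. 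The same problem recurs in (d): having the same minimal polynomial blockwise does \emph{not} force $\beta$ and $-\sigma_h(\beta)$ to lie in the same coset of $\mathfrak{a}_{-r}$, so your deduction that $\Delta$ is equivalent to $\#\Delta$ (and hence that $\tau_{\Delta,\#\Delta}=\id$) is unjustified.

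The paper circumvents this by an induction on $n-r$ that never tries to compare $\Delta$ with $\#\Delta$ directly. The induction hypothesis produces a self-dual $\tilde{\Delta}=[\Lambda,n,r+1,\gamma]$ with $\C(\tilde{\Delta})=\C(\Delta(1+))$; the translation principle \cite[5.43]{skodlerack:17-1} then allows one to assume $\Delta(1+)\sim\tilde{\Delta}$. Comparing a $\sigma$-fixed $\theta\in\C(\Delta)$ with a $\sigma$-fixed $\tilde{\theta}\in\C(\tilde{\Delta}(1-))$ gives $\theta=\psi_a\tilde{\theta}$ for some $a\in\mathfrak{a}_{-r-1}$, and $\sigma$-invariance forces $\psi_a=\psi_{a_-}$ with $a_-=\tfrac12(a-\sigma_h(a))$ skew. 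One then shows, via the derived stratum, that $[\Lambda,n,r,\gamma+a_-]$ is equivalent to a semisimple stratum, to which Corollary~\ref{corSemisimpleEquiv} (now legitimately, since $\#(\gamma+a_-)=\gamma+a_-$) applies to yield the desired self-dual $\Delta'$. The point is that the inductive step manufactures a genuinely skew perturbation $a_-$ of a self-dual $\gamma$, so that equivalence with the $\#$-dual is automatic; your one-shot approach never produces such a situation and therefore cannot feed Corollary~\ref{corSemisimpleEquiv} or Proposition~\ref{propDiagonalizationStrataG}.
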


\begin{proof}
 The proof is done by induction on~$n-r$. If~$n=r$, then~$\beta=0$ by the definition of semisimple strata, and~$\Delta$ is therefore self-dual. Suppose~$n>r$. By induction hypothesis there is a self-dual semisimple stratum~$\tiDelta:=[\Lambda,n,r+1,\gamma]$
 such that~$\C(\tiDelta)=\C(\Delta(1+))$. By the translation principle~\cite[5.43]{skodlerack:17-1} we can assume without loss of generality that~$\Delta(1+)$ is equivalent to~$\tiDelta$. We take semisimple characters~$\theta\in\C(\Delta)^\sigma$ 
 and~$\titheta\in\C(\tiDelta(1-))^\sigma$. Then there is an element~$a\in\mf{a}_{-r-1}$ such that~$\theta=\psi_a\titheta$. 
 Thus
 \[\titheta\psi_a=\theta=\sigma.\theta=\sigma.\titheta\psi_{-\sigma_h(a)}=\titheta\psi_{-\sigma_h(a)}.\]
 Thus~$\psi_a$ is equal to~$\psi_{a_-}$ for~$a_-:=\frac{a-\sigma_h(a)}{2}$. 
 We are going to prove that~$\Delta'':=[\Lambda,n,r,\gamma+a_-]$ is equivalent to a semisimple stratum~$\Delta'$, because then we can take
 ~$\Delta'$ to be self-dual by~\ref{corSemisimpleEquiv} and we get:
 \[\C(\Delta')=\C(\tiDelta)\psi_{a_-}=\C(\Delta).\]
 We prove that the derived stratum~$\partial_\gamma(\Delta'')$ is equivalent to a semisimple multi-stratum, 
 see~\cite[4.14, 4.15]{skodlerack:17-1} for the definition and the usage of the derived multi-stratum.
 The stratum~$\partial_\gamma(\Delta)$ is a semisimple multi-stratum, see Theorem~\cite[4.15]{skodlerack:17-1}, because~$\Delta$ is semisimple. Let~$s_\gamma$ be the chosen corestriction for~$\gamma$ (for forming the derived strata).  
 Then~$s_\gamma(a_-+\gamma-\beta)$ is modulo~$\mf{a}_{-r}$ congruent to an element of~$F[\gamma]$ because it is intertwined 
 by every element of~$C_A^\times(\gamma)$ because 
 \[\C(\tiDelta(1-))=\C(\tiDelta(1-))\psi_{a_-+\gamma-\beta}.\]
 Thus~$\partial_\gamma(\Delta'')$ is equivalent to a semisimple multi-stratum. Thus~$\Delta''$ is equivalent to a semisimple stratum 
 by~\cite[4.15]{skodlerack:17-1}.
\end{proof}



\section{Self-dual semisimple pss-characters}

In this section we answer the question, when two transfers of endo-equivalent self-dual semisimple characters intertwine over 
an element of the corresponding classical group.We will introduce endo-parameters, as in~\cite{kurinczukSkodlerackStevens:16}. 

\subsection{Self-dual pss-characters}
To introduce self-dual pss-characters we will take a slightly different path than in~\cite{kurinczukSkodlerackStevens:16} because they should not depend on~$\epsilon$. 
We consider all objects of the category~$\mathcal{H}$ of orthogonal and symplectic hermitian forms over~$(D,\rho)$ and~$(F,\id)$.
We start with what should be the domain of a self-dual semisimple character: 
Let~$(\Delta,h)$ be a pair consisting of~$h\in\mc{H}$ and a self-dual semisimple stratum with respect to~$h$,
we also say~\emph{$h$-self-dual}. We define the~\emph{self-dual endo-class of~$\Delta$} as the following class: 
 \[\mathfrak{E}_-(\Delta):=\{(\Delta',h')\in\mathfrak{E}(\Delta)\times\mathcal{H}|\ \Delta'\text{ is~$h'$-self-dual}\}.\]

%
And we define now self-dual pss-character. For the definition of a pss-character (potentially semisimple character) we refer to~\cite[6.3]{skodlerack:17-1}, which is motivated 
by~\cite{broussousSecherreStevens:12}. 
\begin{definition}\label{defselfdualpss}
 Let~$\mathfrak{E}_-$ be a self-dual endo-class. A self-dual pss-character is a map 
 \[\Theta_-:\ \mathfrak{E}_-\ra\bigcup_{(\Delta,h)\in\mathcal{E}_-}\C_-(\Delta),\]
 such that~$\Theta_-(\Delta,h)\in\C_-(\Delta)$ for all~$(\Delta,h)$ and such that the values are related by transfer, i.e. 
 \[\tau_{\Delta,\Delta'}(\Theta_-(\Delta,h))=\Theta_-(\Delta,h'),\]
 for all~$(\Delta,h),(\Delta,h')\in\mathfrak{E}_-$.
\end{definition}

We attach to a self-dual pss-character~$\Theta_-$ with domain~$\mathfrak{E}_-$ the 
pss-character~$\Theta$ whose domain~$\mathfrak{E}$ contains a stratum~$\Delta$, such that~$(\Delta,h)\in\mathfrak{E}_-$ for 
some~$h\in\mathcal{H}$ and such that~$\Theta(\Delta)$ is the lift of~$\Theta_-(\Delta,h)$. We call~$\Theta$ den lift of~$\Theta_-$. 
Two pss-characters~$\Theta$ and~$\Theta'$ (self-dual pss-characters~$\Theta_-$ and~$\Theta'_-$) of the same degree and the same group 
level are called~\emph{endo-equivalent} if 
there are strata~$\Delta\in\mathfrak{E}$ and~$\Delta'\in\mathfrak{E}'$ (~$(\Delta,h)\in\mathfrak{E}_-$ 
and~$(\Delta',h)\in\mathfrak{E}'_-$) such that~$\Theta(\Delta)$ and~$\Theta'(\Delta')$ intertwine (~$\Theta_-(\Delta,h)$ 
and~$\Theta'_-(\Delta',h)$ intertwine by an element of~$\U(h)$). 

\begin{proposition}[\cite{kurinczukSkodlerackStevens:16}~6.18]\label{propEndoEqpss}
 Two self-dual pss-character of the same degree and the same group level are endo-equivalent if and only 
 if their lifts are endo-equivalent. 
\end{proposition}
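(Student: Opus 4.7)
The forward implication is straightforward. If $g\in\U(h)$ intertwines $\theta_-:=\Theta_-(\Delta,h)$ with $\theta'_-:=\Theta'_-(\Delta',h)$, consider the $\sigma_h$-stable pro-$p$-group $K:=gH(\Delta)g^{-1}\cap H(\Delta')$. Both $g.\Theta(\Delta)|_K$ and $\Theta'(\Delta')|_K$ are $\sigma_h$-fixed characters (since $g\in\U(h)$ and the lifts are $\sigma_h$-fixed by construction) which agree on $K^{\sigma_h}=K\cap G$; Glauberman's correspondence for the pro-$p$ involution $\sigma_h$ then forces them equal on all of $K$, so $g$ intertwines the lifts $\Theta(\Delta)$ and $\Theta'(\Delta')$ in $\tilde{G}$, yielding endo-equivalence of $\Theta$ and $\Theta'$.

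For the backward implication, fix any $(\Delta_0,h)\in\mathfrak{E}_-$ with underlying space $V$, so that $\theta_0:=\Theta(\Delta_0)$ is $\sigma_h$-fixed. Using the pss-character property of $\Theta'$ combined with repeating/raising as in Remark~\ref{remRaisingRepeating}, transfer $\Theta'$ to a stratum $\Delta'$ on $V$ with $\Lambda_{\Delta'}=\Lambda_0$ (hence already $h$-self-dual as a lattice sequence) and sharing the parameters $n,r$; call $\theta':=\Theta'(\Delta')$. The endo-equivalence of $\Theta$ and $\Theta'$ implies that $\theta_0$ and $\theta'$ intertwine in $\tilde{G}$. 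Then Proposition~\ref{propTransferAndInvolution} combined with the intertwining formulas of Theorem~\ref{thmIntertwiningCharGL} shows that $\sigma_h.\theta'$ also intertwines $\theta_0$ in $\tilde{G}$, forcing $\sigma_h.\theta'\in\mathcal{C}(\Delta')$ by the uniqueness of the transfer target within a $\tilde{G}$-intertwining class; by the translation principle $\mathcal{C}(\Delta')=\mathcal{C}(\#\Delta')$ is $\sigma_h$-stable, so Proposition~\ref{propSigmaFixedCharacters} replaces $\Delta'$ by a self-dual semisimple stratum $\Delta'_0$ with $\mathcal{C}(\Delta'_0)=\mathcal{C}(\Delta')$. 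In particular $(\Delta'_0,h)\in\mathfrak{E}'_-$ and $\Theta'(\Delta'_0)=\theta'$.

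It remains to upgrade the $\tilde{G}$-intertwining of $\theta_0$ and $\theta'_0:=\Theta'(\Delta'_0)$ to a $G$-intertwining for $G:=\U(h)$. The self-dual diagonalization Theorem~\ref{thmDiagonalizationCharG} reduces the argument, after suitable repeating, to the case where $\theta'_0=\tau_{\Delta_0,\Delta'_0}(\theta_0)$ and $\beta_0,\beta'_0$ share their minimal polynomial over $F$. Corollary~\ref{corSkolemNoetherSemisimple} then produces an element of $G$ conjugating $\beta_0$ to $\beta'_0$, and equation~\eqref{eqGIntCharbeta} in Theorem~\ref{thmIntertwiningCharG} yields $I_G(\theta_0,\theta'_0)\neq\emptyset$. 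Restricting any such intertwiner to the $G$-parts of the domains gives an element of $\U(h)$ intertwining $\Theta_-(\Delta_0,h)$ with $\Theta'_-(\Delta'_0,h)$, so $\Theta_-$ and $\Theta'_-$ are endo-equivalent.

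The main obstacle is the second paragraph: producing a self-dual representative $\Delta'_0\in\mathfrak{E}'$ over the same signed hermitian space $(V,h)$ as $\Delta_0$, such that $\Theta'(\Delta'_0)$ is $\sigma_h$-fixed. The delicate point is that $\sigma_h$-fixedness of the value of $\Theta'$ at an arbitrary chosen transfer $\Delta'$ on $(V,h)$ is not automatic; it is extracted from Proposition~\ref{propTransferAndInvolution} together with the fact that any two transfers of $\theta_0$ landing in the same $\tilde{G}$-intertwining class must coincide up to the translation principle, and then Proposition~\ref{propSigmaFixedCharacters} does the rest by replacing $\Delta'$ with a genuinely self-dual stratum in the same endo-class.
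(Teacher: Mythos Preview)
The paper does not give its own proof here; the proposition is simply stated with a citation to \cite{kurinczukSkodlerackStevens:16}. Your forward implication via Glauberman is correct and standard.

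The backward implication, however, has a genuine circularity. In the final paragraph you have arranged self-dual strata $\Delta_0,\Delta'_0$ on a common $(V,h)$ with $\theta'_0=\tau_{\Delta_0,\Delta'_0}(\theta_0)$ and $\beta_0,\beta'_0$ sharing their minimal polynomial, and you then invoke Corollary~\ref{corSkolemNoetherSemisimple} and equation~\eqref{eqGIntCharbeta} to obtain $I_G(\theta_0,\theta'_0)\neq\emptyset$. But both of those results take a $G$-intertwiner as \emph{input}: Corollary~\ref{corSkolemNoetherSemisimple} assumes an element of $G$ already intertwines the minimal block strata, and Theorem~\ref{thmIntertwiningCharG} is stated under the hypothesis that $\theta,\theta'$ intertwine by an element of $G$. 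All you possess at that stage is $\tilde G$-intertwining, and upgrading $\tilde G$- to $G$-intertwining is exactly the Witt-tower obstruction of Theorem~\ref{thmMatchingWitttowersIntertwining}. Nothing in your construction controls the Witt classes of $(\beta_{0,i},h_i)$ versus $(\beta'_{0,\zeta(i)},h_{\zeta(i)})$, so for an arbitrary choice of $(\Delta_0,h)$ the conclusion can fail.

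There is also a softer gap in the middle paragraph. Your sentence ``forcing $\sigma_h.\theta'\in\mathcal{C}(\Delta')$ by the uniqueness of the transfer target within a $\tilde G$-intertwining class'' is not a recognisable statement. What Proposition~\ref{propTransferAndInvolution} actually gives (using that $\Theta'$ is the lift of a self-dual pss-character) is $\sigma_h.\theta'=\Theta'(\#\Delta')\in\mathcal{C}(\#\Delta')$, hence $\#\Delta'\in\mathfrak{E}'$; this does not by itself yield $\mathcal{C}(\Delta')=\mathcal{C}(\#\Delta')$, which is what Proposition~\ref{propSigmaFixedCharacters} needs.

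The standard repair is not to fix an arbitrary $(\Delta_0,h)$ but to \emph{choose} $h$ so that the Witt obstruction vanishes: pass to a hyperbolic space (for instance $h_0\obot(-h_0)$, or a doubling $V\oplus V^\vee$), on which every block form is hyperbolic and the Witt-tower matching condition of Theorem~\ref{thmMatchingWitttowersIntertwining} is automatic. On such a space $\tilde G$-intertwining of transfers upgrades to $G$-intertwining for free, and the argument closes. This is the mechanism behind the cited result.
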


The equivalence classes are called~\emph{endo-classes} of pss-characters (self-dual pss-characters). 
A pss-character is called ps-character (potentially simple) character if its values are simple characters.
Similar we define self-dual ps-characters. A self-dual pss-character is called~\emph{elementary} if~$I_\Theta$ is a~$\sigma$-orbit, i.e. either it is simple or~$I_\Theta=I_{\Theta,\pm}$ consisting only of one element. 

\subsection{Idempotents and Witt groups}

We recall the equivalence of categories of hermitian forms. Let~$(E,\sigma_E)$ be a field extension of~$F$ together with 
an~$F$-linear non-trivial involution on~$E$

\begin{proposition}\label{propEquivHermCat}
 Let~$\mc{H}_{\sigma_E\otimes\rho,\epsilon}$ be the category of~$\epsilon$-hermitian~$E\otimes_FD$-forms with respect 
 to~$\sigma_E\otimes\rho$, and let~$\mc{H}_{\sigma_E,\epsilon}$ the category of~$\epsilon$-hermitian~$E$-forms with respect 
 to~$\sigma_E$. Then~$\mc{H}_{\sigma_E\otimes\rho,\epsilon}$ is equivalent to~$\mc{H}_{\sigma_E,\epsilon}$. 
\end{proposition}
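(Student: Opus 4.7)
The strategy is standard Morita theory with involution applied to the \emph{second-kind} (unitary) involution $\sigma_E\otimes\rho$ on the central simple $E$-algebra $A:=E\otimes_F D$. Indeed, since $\rho|_F=\mathrm{id}_F$ and $\sigma_E$ is non-trivial on $E$, the restriction of $\sigma_E\otimes\rho$ to the center $Z(A)=E$ equals $\sigma_E$, which is non-trivial. This observation puts us squarely in the setting where ``hermitian forms over $(A,\tau)$'' and ``hermitian forms over $(Z(A),\tau|_{Z(A)})$'' are Morita-equivalent categories.

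I would construct the two functors of the equivalence explicitly. The easy direction $\Psi\colon \mathcal{H}_{\sigma_E,\epsilon}\to \mathcal{H}_{\sigma_E\otimes\rho,\epsilon}$ is given by
\[
(V,h)\;\mapsto\;(V\otimes_F D,\, h\otimes h_\rho),
\]
where $h_\rho\colon D\times D\to D$ is the $D$-sesquilinear form $h_\rho(d,d')=\rho(d)d'$, which is $1$-hermitian with respect to $\rho$. A routine check (using $\rho^2=\mathrm{id}$ and the $\epsilon$-hermiticity of $h$) shows that $h\otimes h_\rho$ is $\epsilon$-hermitian with respect to $\sigma_E\otimes\rho$ on the free $A$-module $V\otimes_F D$. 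Morphisms transform by the analogous rule.

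The forward functor $\Phi$ is where the real content sits. I would handle it via a case distinction on $A$: in the \emph{split} case $A\cong M_2(E)$, the involution $\sigma_E\otimes\rho$ is the adjoint of some $\sigma_E$-hermitian form $h_0$ on $E^2$, and $\Phi$ is obtained by the standard Morita equivalence through a $(\sigma_E\otimes\rho)$-symmetric primitive idempotent $e$, sending $(W,\tilde h)\mapsto (eW,\tilde h|_{eW\times eW})$ (with the identification $eAe\cong E$); in the \emph{non-split} case, $A$ is a quaternion division $E$-algebra, and $\Phi$ is constructed via a rank-one Morita bimodule equipped with a reference $(\sigma_E\otimes\rho)$-hermitian pairing, whose existence follows from the classification of involutions of the second kind on quaternion algebras over non-archimedean local fields.

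Once $\Phi$ is in place, the quasi-inverse properties $\Phi\circ\Psi\cong \mathrm{id}$ and $\Psi\circ\Phi\cong\mathrm{id}$ are routine natural isomorphisms. The main obstacle is the non-split case: producing an explicit Morita context with involution requires a careful choice of $(\sigma_E\otimes\rho)$-symmetric trace and uses the triviality of ``unitary Brauer obstructions'' over our local fields. Alternatively, one can bypass the case analysis by invoking the general Morita-theoretic statement for algebras with involution of the second kind (see e.g.\ Knus--Merkurjev--Rost--Tignol, \emph{The Book of Involutions}, \S4), whose hypotheses are manifestly satisfied here.
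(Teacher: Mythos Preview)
Your split-case construction is essentially the paper's proof: pick an isomorphism $E\otimes_FD\cong M_2(E)$, write $\sigma_E\otimes\rho$ as $u\,\sigma_E(\,\cdot\,)^Tu^{-1}$ for a diagonal $\sigma_E$-symmetric $u$, and send $(V,\tilde h)$ to $(V\,1^1,\ \mathrm{tr}_E\circ\tilde h|_{V1^1\times V1^1})$, with the explicit inverse $(W,h_E)\mapsto(W\oplus W,\tilde h)$ defined by a $2\times2$ block formula. So in the only case that actually occurs, you and the paper agree.

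The point you are missing is that \emph{there is no non-split case}. Since $\sigma_E$ is a non-trivial $F$-linear involution, the fixed field $E_0=E^{\sigma_E}$ satisfies $[E:E_0]=2$ with $F\subseteq E_0$, hence $[E:F]$ is even. For the local field $F$ the invariant of the quaternion division algebra $D$ is $\tfrac12$, and base change multiplies the invariant by $[E:F]$; thus $\mathrm{inv}_E(E\otimes_FD)=0$ and $E\otimes_FD\cong M_2(E)$ always. The paper simply writes down this isomorphism and proceeds. Your appeal to hermitian Morita theory for the ``non-split'' branch, and the worry about unitary Brauer obstructions, are therefore unnecessary detours; once you observe the automatic splitting, your split-case argument is complete and coincides with the paper's.
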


\newcommand{\Matr}{\text{M}}
\begin{proof}
 We define a functor~$\mc{F}:\mc{H}_{\sigma_E\otimes\rho,\epsilon}\ra \mc{H}_{\sigma_E,\epsilon}$ in the following way.
 We consider an~$E$-algebra isomorphism~$\Phi:\ E\otimes_FD\cong\Matr_2(E)$. The anti-involution~$\sigma_E\otimes\rho$ is pushed forward to 
 a unitary anti-involution which can be interpreted as the adjoint anti-involution of a unitary form bilinear form. Such a form 
 has a diagonal~$\sigma_E$-symmetric Gram-matrix~$u=\diag(u_1,u_2)$ because the characteristic of~$F$ is odd. So we can choose~$\Phi$ such that 
 the push forward of~$\sigma_E\otimes\rho$ is~$u\sigma_E(\ )^Tu^{-1}$. We identify~$(E\otimes_FD,\sigma_E\otimes_F\rho)$ 
 with~$(\Matr_2(E),u\sigma_E(\ )^Tu^{-1})$. We consider
 \[1^1:=\left(\begin{matrix}1 & 0\\0 & 0\end{matrix}\right)\ 1^2:=\left(\begin{matrix}0 & 0\\0 & 1\end{matrix}\right).\]
 We define
 \newcommand{\tih}{\ti{h}}
 \newcommand{\antidiag}{\text{antidiag}}
 \[\mc{F}(V,\tih):=(V1^1,\tr_E\circ\tih|_{V1^1\times V1^1}).\]
 Exercise: Show that~$\mc{F}(V,\tih)$ is non-degenerate. 
 
 The inverse functor~$\mc{G}$ is given by
 \[\mc{G}(W,h_E):=(W\oplus W,\tih),\ \tih(w_1+w_2,w'_1+w'_2)
 =\left(\begin{matrix}h_E(w_1,w'_1)& h_E(w_1,w'_2)\\u_2u_1^{-1}h_E(w_2 ,w'_1)&u_2u_1^{-1}h_E(w_2,w'_2)\end{matrix}\right).\]
  The~$E\otimes_FD$-action on~$W\oplus W$ is given by
 \[(w_1,w_2)E_{ij}:=(\delta_{j1}w_i,\delta_{j2} w_i),\]
 for~$1\leq i,j\leq 2$, and by the~$E$ action on~$W$. 
\end{proof}

This Proposition shows that the objects~$\mc{H}_{\sigma_E\otimes\rho,\epsilon}$ inherit the decomposition properties from 
the objects of~$\mc{H}_{\sigma_E,\epsilon}$. In particular we have a Witt group of~$E\otimes_FD$ with respect to~$\sigma_E\otimes\rho$ 
and~$\epsilon$, we denote this Witt group by~$W_{\epsilon}(\sigma_E\otimes\rho)$. 

\begin{proposition}\label{propIndOfuDef}
 The Witt-group~$W_{\epsilon}(\sigma_E\otimes\rho)$ is independent of the choice of the isomorphism 
 from~$E\otimes_FD$ to~$\Matr_2(E)$.  
\end{proposition}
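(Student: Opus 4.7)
\medskip

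\noindent\textbf{Proof plan.} The key observation is that the category $\mc{H}_{\sigma_E\otimes\rho,\epsilon}$ is defined intrinsically from the triple $(E\otimes_F D,\sigma_E\otimes\rho,\epsilon)$, with no reference to any isomorphism to $\Matr_2(E)$. Consequently, the only place where the choice of $\Phi$ enters is in the construction of the equivalence of categories $\mc{F}$ of Proposition~\ref{propEquivHermCat}, which is used to transport the Witt decomposition from $\mc{H}_{\sigma_E,\epsilon}$. The plan is therefore to show that any two choices of $\Phi$ give naturally equivalent functors $\mc{F}$, so the resulting notion of anisotropic component (and hence of Witt equivalence) on the source category does not depend on $\Phi$.

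First, I would fix two $E$-algebra isomorphisms $\Phi_1,\Phi_2\colon E\otimes_F D\xrightarrow{\sim}\Matr_2(E)$, write $u_i\in GL_2(E)$ for the associated $\sigma_E$-symmetric Gram matrices, $e_i:=\Phi_i^{-1}(1^1)\in E\otimes_F D$ for the two idempotents, and $\mc{F}_i$ for the resulting functor. By the Skolem--Noether theorem, there exists $g\in GL_2(E)$ with $\Phi_2(x)=g\Phi_1(x)g^{-1}$ for all $x$. Setting $h:=\Phi_1^{-1}(g)\in(E\otimes_F D)^{\times}$, a direct computation gives $e_2=h^{-1}e_1 h$. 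A second short computation, comparing the push-forward of $\sigma_E\otimes\rho$ through $\Phi_1$ and through $\Phi_2$ and using that these push-forwards coincide after conjugation by $g$, yields a relation of the form $u_2=\lambda\,g u_1\sigma_E(g)^T$, where the scalar $\lambda$ is central and $\sigma_E$-symmetric, hence lies in the fixed field $F$.

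Second, for every object $(V,\tih)$ of $\mc{H}_{\sigma_E\otimes\rho,\epsilon}$, right-multiplication by $h$ is an $E$-linear isomorphism $\mu_{(V,\tih)}\colon Ve_1\to Ve_2,\ v\mapsto vh$. Using the relation $u_2=\lambda\,g u_1\sigma_E(g)^T$ together with $e_2=h^{-1}e_1 h$, one checks that $\mu_{(V,\tih)}$ transports $\tr_E\circ\tih|_{Ve_1\times Ve_1}$ to $\lambda$-times $\tr_E\circ\tih|_{Ve_2\times Ve_2}$. The collection $(\mu_{(V,\tih)})$ is natural in $(V,\tih)$, so this gives an isomorphism of functors $\mc{F}_1\simeq \langle\lambda\rangle\otimes\mc{F}_2$, where $\langle\lambda\rangle\otimes$ denotes twisting the form on the $E$-side by the scalar $\lambda\in F^{\times}$. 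Such twisting sends hyperbolic forms to hyperbolic forms and anisotropic forms to anisotropic forms, and hence induces a bijection on Witt classes that is compatible with the two transported Witt decompositions on $\mc{H}_{\sigma_E\otimes\rho,\epsilon}$.

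Third, I would conclude by noting that the Witt equivalence relation on $\mc{H}_{\sigma_E\otimes\rho,\epsilon}$ obtained through $\mc{F}_1$ and through $\mc{F}_2$ must agree, since both are transported from the intrinsic Witt relation on $\mc{H}_{\sigma_E,\epsilon}$ by naturally isomorphic functors. Hence the underlying set $W_\epsilon(\sigma_E\otimes\rho)$ and its group law are the same for the two choices. The principal technical point -- and the one I expect to be the main obstacle -- is keeping precise track of the scalar $\lambda$ in the identity $u_2=\lambda\,g u_1\sigma_E(g)^T$ and verifying cleanly that the induced twist on the target category is a Witt-equivalence preserving autoequivalence; this can be done either by a direct congruence argument on Gram matrices, or abstractly by observing that the Witt group on the source side is intrinsic, which then forces the functorial identification to be independent of $\Phi$.
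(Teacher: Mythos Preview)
Your approach is correct and in fact anticipates what the paper proves slightly later as Proposition~\ref{propFfFe-1}: that the composite $\mc{F}_f\circ\mc{F}_e^{-1}$ is naturally equivalent to scaling by the similitude factor $s^{-1}$. One small slip: the scalar $\lambda$ you extract from $u_2=\lambda\,g u_1\sigma_E(g)^T$ lies in the $\sigma_E$-fixed field $E_0$, not in $F$ in general; this does not affect the conclusion, since twisting by any element of $E_0^\times$ is still a Witt-preserving autoequivalence of $\mc{H}_{\sigma_E,\epsilon}$.

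The paper's own argument is more elementary and avoids Skolem--Noether entirely. It observes that $\mc{F}_2\circ\mc{F}_1^{-1}$ is an equivalence respecting isometry and orthogonal sums, so it suffices to check that a two-dimensional hyperbolic plane is sent to a hyperbolic plane. For this the paper argues directly with isotropic vectors: if $\mc{F}_1(\tih)$ is hyperbolic then $\tih$ has a non-zero isotropic vector, which generates a totally isotropic simple $E\otimes_FD$-submodule; since all simple $E\otimes_FD$-modules are isomorphic, this submodule is not killed by the second idempotent, producing a non-zero isotropic vector in $\mc{F}_2(\tih)$. Your route gives more: an explicit natural isomorphism between the two functors up to a scalar twist, which is exactly the content reused later. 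The paper's route is shorter here but defers the structural statement to Proposition~\ref{propFfFe-1}.
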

\newcommand{\tih}{\ti{h}}
\begin{proof}
 Consider two~$E$-algebra isomorphisms~$\Phi_i:\ (E\otimes_FD,\sigma_E\otimes\rho)\stackrel{\sim}{\longrightarrow} (\Matr_2(E),u_i\sigma_E(\ )^Tu_i^{-1})$, such that~$u_i$ is diagonal and symmetric with respect to~$\sigma_E$. 
 Let the above functor be denoted by~$\mc{F}_i$. Since~$\mc{F}_2\circ\mc{F}^{-1}_1$ is an equivalence, in particular it sends 
 isometric spaces to isometric spaces and respects orthogonal sums, we only need to prove that it sends hyperbolic spaces of dimension~$2$ to hyperbolic spaces of dimension~$2$. We take~$(V,\tih)\in\mc{H}_{\sigma_E\otimes\rho,\epsilon}$ such 
 that~$\mc{F}_1(\tih)$ is a hyperbolic space of dimension~$2$. Then~$\mc{F}_1(\tih)$ has a non-zero isotropic vector and thus~$\tih$ has a non-zero isotropic vector~$v\in V$. This vector generates a simple~$E\otimes_FD$-module~$<v>$ on which~$\tih$ vanishes. Let~$1^1_2$ and~$1^2_2$ be the idempotents for~$u_2$. There is a simple~$E\otimes_FD$-module which does not vanish under~$1^1_2$, see for example~$E\times E$, so~$<v>$ does not vanish under~$1^1_2$, because all simple~$E\otimes_FD$-modules are isomorphic to each other. Thus~$\mc{F}_2(\tih)$ 
 has a non-zero isotropic vector, which finishes the proof. 
\end{proof}

\newcommand{\Idemp}{\text{Idemp}}

The functor~$\mc{F}$ in the Proof of~\ref{propEquivHermCat} is completely characterized by the idempotent~$e$ of~$E\otimes_FD$
which is mapped to~$1^1$ under~$\Phi$, more precisely~$\mc{F}(\tih)=\tr_E\circ\tih|_{Ve}$. We denote the functor associated to~$e$ 
by~$\mc{F}_e$. Let~$\Idemp(\sigma_E\otimes\rho)$ be the set of rank~$1$ idempotents of~$E\otimes_FD$ which are fixed 
by~$\sigma_E\otimes\rho$. Two elements of~$\Idemp(\sigma_E\otimes\rho)$ are conjugate by a similitude~$g$ 
of~$\sigma_E\otimes\rho$, i.e.~$g$ satisfies~$g(\sigma_E\otimes\rho)(g)=s\in E^\times$.

\begin{proposition}\label{propFfFe-1}
 Let~$e,f\in\Idemp(\sigma_E\otimes\rho)$ and let~$g$ be an element of~$(E\otimes_FD)^\times$ such that~$geg^{-1}=f$ 
 and~$g(\sigma_E\otimes_F\id_D)(g)=s\in E^\times$. 
 Then~$\mc{F}_f\circ\mc{F}^{-1}_e$ is equivalent to the functor
 \[h_E\mapsto s^{-1}h_E.\]
\end{proposition}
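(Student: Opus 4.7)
The plan is to produce the natural equivalence explicitly, by right multiplication by $g^{-1}$. First, for every object $(V,\tih)$ of $\mc{H}_{\sigma_E\otimes\rho,\epsilon}$ I would define the $E$-linear map $\phi_V \colon Ve \to Vf$, $v \mapsto vg^{-1}$. Well-definedness is the identity $eg^{-1} = g^{-1}(geg^{-1}) = g^{-1}f$, $E$-linearity follows because $E$ lies in the centre of $E\otimes_F D$, and bijectivity from the obvious inverse $w \mapsto wg$. Naturality of $\{\phi_V\}$ in $V$ holds automatically, since $\phi_V$ is given by a fixed element of $(E\otimes_F D)^\times$ acting on the right.

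Next I would compute how $\phi_V$ transports the forms. Abbreviating $\sigma := \sigma_E\otimes\rho$, the hypothesis $g\sigma(g)=s$ (with $s \in E^\times$) gives $\sigma(g^{-1}) = s^{-1}g$. Using the hermitian identity $\tih(vx, v'y) = \sigma(x)\tih(v,v')y$ recorded in Subsection~\ref{subsecFirstNotations}, I get for all $v,v' \in Ve$
\[
\tih(vg^{-1}, v'g^{-1}) = \sigma(g^{-1})\,\tih(v,v')\,g^{-1} = s^{-1}\, g\,\tih(v,v')\,g^{-1}.
\]
Applying the reduced trace $\tr_E \colon E\otimes_F D \to E$ and using that $\tr_E$ is $E$-linear together with the fact that it is invariant under inner automorphisms (immediate from any identification $E\otimes_F D \cong \Matr_2(E)$, under which $\tr_E$ becomes the ordinary matrix trace), I obtain
\[
\mc{F}_f(\tih)(\phi_V(v),\phi_V(v')) = \tr_E\bigl(s^{-1}g\,\tih(v,v')\,g^{-1}\bigr) = s^{-1}\tr_E\bigl(\tih(v,v')\bigr) = s^{-1}\mc{F}_e(\tih)(v,v').
\]

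Finally I would interpret this identity as saying that the family $\{\phi_V\}$ is a natural isomorphism of functors from $\mc{F}_f \circ \mc{F}_e^{-1}$ to the scaling functor $h_E \mapsto s^{-1}h_E$; composing with $\mc{F}_e^{-1}$ to re-parametrise objects by $E$-hermitian spaces is formal. The only step needing real attention is the conjugation-invariance and $E$-linearity of $\tr_E$, but both follow from the standard description of the reduced trace via any splitting isomorphism, exactly as used in the proof of Proposition~\ref{propIndOfuDef}. Beyond that, the argument is purely bookkeeping with the identities $eg^{-1} = g^{-1}f$ and $\sigma(g^{-1}) = s^{-1}g$, so I expect no serious obstacle.
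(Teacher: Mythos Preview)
Your argument is correct and is essentially the same as the paper's proof: both produce the natural equivalence by right multiplication with the similitude, the paper using $g\colon Vf\to Ve$ and you using its inverse $g^{-1}\colon Ve\to Vf$, and both reduce the form computation to the $E$-linearity and conjugation-invariance of the reduced trace~$\tr_E$. The paper leaves the conjugation-invariance step implicit inside the chain of equalities, whereas you state it explicitly; otherwise the two proofs coincide.
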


\begin{proof}
We take forms~$\tih\in\mc{H}_{\sigma_E\otimes\rho,\epsilon}$ and~$h_E\in\mc{H}_{\sigma_E,\epsilon}$ such that~$\mc{F}_e(\tih)=h_E$. 
For~$v_1,v_2\in Vf$ we have:
\begin{eqnarray*}
 \tr_E(\tih(v_1,v_2))&=&\tr_E(f\tih(v_1,v_2)f)\\
 &=&\tr_E(geg^{-1}\tih(v_1,v_2)geg^{-1})\\
 &=&\tr_E(ge\tih(v_1gs^{-1},v_2g)eg^{-1})\\
 &=&s^{-1}h_E(v_1g,v_2g).\\ 
\end{eqnarray*}
Thus~$\mc{F}_f(\tih)$ is isometric to~$s^{-1}h_E$ via~$g:\ Vf\ra Ve$.  
Thus the maps~$g:\ \mc{F}_f(\tih)\ra\mc{F}_e(\tih)$ form a natural transformation from~$\mc{F}_f$ to~$s^{-1}\mc{F}_e$ 
which is an equivalence. Thus~$\mc{F}_f\circ\mc{F}^{-1}_e$ is equivalent to~$s^{-1}\mc{F}_e\circ\mc{F}^{-1}_e$ and the latter is
\[h_E\mapsto s^{-1}h_E\]
by the definition of~$\mc{F}_e^{-1}$ in the proof of Proposition~\ref{propEquivHermCat}. 
\end{proof} 

We are now able to describe the elements of~$W_{\epsilon}(\sigma_E\otimes\rho)$ independent of the choice of an idempotent.
We call a field extension~$E=F[\beta]|F$~\emph{self-dual}  if there is an automorphism~$\sigma_E$ of~$E|F$ which maps~$\beta$ to~$-\beta$. 
\newcommand{\wtower}{\textit{wtower}}
\begin{definition}\label{defWitttower}
 Suppose~$E=F[\beta]$ is a self-dual field extension different from~$F$. 
 A~\emph{Witt tower} for~$\sigma_E,\rho,\epsilon$ is a function \[\wtower:\ \Idemp(\sigma_E\otimes\rho)\ra W_{\epsilon}(\sigma_E)\]
 which satisfies~$\wtower(f)=s^{-1}\wtower(e)$ for all~$e,f\in\Idemp(\sigma_E\otimes\rho)$ using the 
 similitude of Proposition~\ref{propFfFe-1}. If there is no confusion with~$\rho$ and~$\epsilon$ then we also say Witt tower for~$\sigma_E$ or  for~$\beta$. 
\end{definition}

We have a canonical bijection from~$W_{\epsilon}(\sigma_E\otimes\rho)$ to the set of Witt towers for~$\sigma_E,\rho,\epsilon$:
An element~$\tih_\equiv$ is mapped to~$\wtower_{\tih}$ defined via~$\wtower_{\tih}(e)=\mc{F}_e(\tih)_\equiv\in W_{\epsilon}(\sigma_E)$. The map is well-defined by
Proposition~\ref{propIndOfuDef}. 
We identify~$W_{\epsilon}(\sigma_E\otimes\rho)$ with the set of Witt towers for~$\sigma_E,\rho,\epsilon$.
Given a field extension~$(E,\sigma_E)|F$ with non-trivial~$\sigma_E$ and an~$F$-linear~$\sigma_E$-$\id_F$-equivariant non-zero map~$\lambda:E\ra F$ there is a natural map
\[\Tr_{\lambda}:\ W_{\epsilon}(\sigma_E\otimes\rho)\ra W_{\epsilon}(\rho),\ \]
defined via~$Tr_{\lambda}(\tih_\equiv):=((\lambda\otimes_F\id_D)\circ\tih)_\equiv.$

\subsection{Matching Witt towers}

Given an~$\epsilon$-hermitian form~$h$ on~$V$ with respect to~$(D,\rho)$ and an element~$\beta$ in the Lie algebra of~$G$ such 
that~$F[\beta]$ is a field, there are unique forms~$h_\beta\in\mc{H}_{\sigma_E,\epsilon}$ 
and~$\tih_\beta\in\mc{H}_{\sigma_E\otimes\rho,\epsilon}$ such that~$\lambda_\beta\circ h_\beta=\trd_{D|F}\circ h$ 
and~$(\lambda_\beta\otimes_F\id_D)\circ \tih_\beta=h$, where~$\lambda_\beta:\ E\ra F$ is the~$F$-linear extension of~$\id_F$ with 
kernel~$\sum_{i=1}^{[E:F]-1}\beta^iF$.
We have
\[h_\beta=\tr_E\circ\tih_\beta,\]
because:
\begin{lemma}
 $\lambda_\beta\circ\tr_E=\trd_{D|F}\circ (\lambda_\beta\otimes_F\id_D)$.
\end{lemma}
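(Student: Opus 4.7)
The plan is to reduce to a computation on elementary tensors. Both sides of the claimed identity are $F$-linear maps $E\otimes_F D\to F$, so by $F$-bilinearity of the tensor product it suffices to verify the equality on elements of the form $\beta^i\otimes d$ with $0\le i\le [E:F]-1$ and $d\in D$. This is the natural basis to choose because it is adapted to the definition of $\lambda_\beta$, whose defining property is precisely $\lambda_\beta(\beta^i)=\delta_{i,0}$.

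Next I compute the right-hand side on such a tensor:
\[
\trd_{D|F}\bigl((\lambda_\beta\otimes_F\id_D)(\beta^i\otimes d)\bigr)=\trd_{D|F}(\lambda_\beta(\beta^i)d)=\delta_{i,0}\,\trd_{D|F}(d).
\]
For the left-hand side the main point is to identify $\tr_E(\beta^i\otimes d)$. Here $\tr_E$ denotes the reduced trace of the central simple $E$-algebra $E\otimes_F D$, and the key fact I would invoke is base-change compatibility of the reduced trace: the reduced trace $\tr_E$ is $E$-linear, and under the inclusion $D\hookrightarrow E\otimes_F D$, $d\mapsto 1\otimes d$, its restriction coincides with $\trd_{D|F}$ (one sees this by embedding into a splitting field of $D$, where both reduced traces become the usual matrix trace). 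Consequently
\[
\tr_E(\beta^i\otimes d)=\beta^i\cdot\tr_E(1\otimes d)=\beta^i\,\trd_{D|F}(d),
\]
and since $\trd_{D|F}(d)\in F$ and $\lambda_\beta$ is $F$-linear,
\[
\lambda_\beta(\tr_E(\beta^i\otimes d))=\trd_{D|F}(d)\,\lambda_\beta(\beta^i)=\delta_{i,0}\,\trd_{D|F}(d),
\]
matching the right-hand side.

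There is no real obstacle; the proof is essentially a one-line verification once the correct reduction and the base-change property of the reduced trace are in place. The only thing to be careful about is notational: making sure $\tr_E$ is understood as the reduced trace of the central simple $E$-algebra $E\otimes_F D$ (so that it specializes to matrix trace under the isomorphism with $\Matr_2(E)$ used earlier in Proposition \ref{propEquivHermCat}), rather than some field-theoretic trace involving $E|F$.
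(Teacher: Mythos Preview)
Your proof is correct and follows the same approach as the paper, which simply records the key fact as ``$\trd_{D|F}(x)=\tr_E(x)$ for all $x\in D$'' and leaves the rest implicit. You have spelled out the reduction to elementary tensors and the use of $E$-linearity that make this one line into an actual argument; the base-change compatibility of the reduced trace is exactly the ingredient the paper invokes.
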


\begin{proof}
 It is a direct consequence of~$\trd_{D|F}(x)=\tr_E(x)$ for all~$x\in D$. 
\end{proof}

We call the function~$\wtower_{\tih_\beta}$ the Witt tower of~$(\beta,h)$ and we also denote it 
by~$\wtower_{\beta,h}$.

We want to be able to compare Witt towers for different~$\beta$ and~$\beta'$.

Let us recall that~$\beta$ is called minimal over~$F$, if~$\nu_{F[\beta]}(\beta)$ is 
co-prime to~$e(F[\beta]|F)$ and~$\beta^{e(F[\beta]|F)}+\mf{p}_{F[\beta]}$ generates~$\kappa_{F[\beta]}$ over~$\kappa_F$. 
Define~$e_p(\beta)$ as the fraction~$\frac{e_{wild}(E|F)}{\gcd(e_{wild}(E|F),\nu_E(\beta))}$ where~$e_{wild}(E|F)$ is the wild ramification index.
Among those field extensions~$E'|F$ in~$E=F[\beta]$ which are generated by an element congruent to~$\beta^{e_p(\beta)}$ 
there is a smallest one. We write~$F[\beta_{min,tr}]$ where~$\beta_{min,tr}$ is a minimal element over~$F$ and
congruent to~$\beta^{e_p(\beta)}$. In saying~$\beta_{min,tr}$ we mean a choice. This choice is not canonical and can be changed to our purpose 
if necessary, for example if~$\beta$ is skew-symmetric, we choose~$\beta_{min,tr}$ skew-symmetric (w.r.t~$\sigma_E$).

So let us assume that~$E=F[\beta]$ and~$E'=F[\beta']$ are self-dual field extensions with non-zero~$\beta$ and~$\beta'$. 
We are going to define a map from~$W_{\epsilon}(\sigma_E\otimes\rho)$ to~$W_{\epsilon}(\sigma_{E'}\otimes\rho)$.
This is a generalization of~\cite[5.2]{kurinczukSkodlerackStevens:16}.
Let us recall them: 
\[w_{\beta,\beta'}:\ W_{-1}(\sigma_E)\ra W_{-1}(\sigma_{E'}),\ w_{\beta^2,\beta'^2}:\ W_{1}(\sigma_E)\ra W_{1}(\sigma_{E'})\]
are bijections which respect the anisotropic dimension and satisfy
\[w_{\beta,\beta'}(\langle \beta\rangle_\equiv)=\langle\beta'\rangle_\equiv,\ w_{\beta^2,\beta'^2}(\langle \beta^2\rangle_\equiv)=\langle\beta'^2\rangle_\equiv.\]
Let us write~$w_{\beta,\beta',1}$ for~$w_{\beta^2,\beta'^2}$ and~$w_{\beta,\beta',-1}$ for~$w_{\beta,\beta'}$.
%
We need two further assumptions: 
\begin{enumerate}
 \item[(A)]\label{itemCondA} Suppose there is an isomorphism from~$F[\beta_{min,tr}]|F$ to~$F[\beta'_{min,tr}]|F$ which sends~$\beta_{min,tr}$ to an element congruent to~$\beta'_{min,tr}$, we write~``$\beta_{min.tr}\mapsto\beta'_{min,tr}$'' for this unique isomorphism.
 \item[(B)]\label{itemCondB} Suppose that~$F[\beta_{min,tr}]_0\subseteq N_{E|E_0}(E)$ if and only if~$F[\beta'_{min,tr}]_0\subseteq N_{E'|E'_0}(E')$. (This is a numerical condition as the next lemma states.) 
\end{enumerate}

We define the bijection
$w_{\beta,\beta',\epsilon}:\ W_{\epsilon}(\sigma_E\otimes\rho)\ra W_{\epsilon}(\sigma_{E'}\otimes\rho)$ as follows: Let~$
e\in\Idemp(\sigma_E|_{F[\beta_{min,tr}]}\otimes_F\rho)$ and~$e'\in\Idemp(\sigma_{E'}|_{F[\beta'_{min,tr}]}\otimes_F\rho)$ 
be idempotents such that~$e\mapsto e'$ under \[F[\beta_{min,tr}]\otimes_FD\stackrel{\text{``}\beta_{min,tr}\mapsto\beta'_{min,tr}\text{''}}{\longrightarrow}F[\beta'_{min,tr}]\otimes_FD.\]
We define~$w_{\beta,\beta',\epsilon}(\wtower)$ to be the Witt tower of~$\sigma_{E'}\otimes\rho$ which satisfies \[w_{\beta,\beta',\epsilon}(\wtower)(e'):=w_{\beta,\beta',\epsilon}(\wtower(e)).\]
This map is well-defined, i.e. does not depend on the choice of the idempotent, by Proposition~\ref{propFfFe-1} and~(B).


\begin{lemma}\label{lemNormNonnorm}
 Let~$=F[\beta]|F$ be a field extension with a non-trivial involution~$\sigma_E$ given by~$\beta\mapsto -\beta$. 
 Then~$F[\beta_{min,tr}]_0$ is a subset of~$\N_{E|E_0}(E)$ if and only if~$E|F[\beta_{min,tr}]_0$ has even 
 ramification index and even inertia degree. If we do not have the above containment, then~$x\in F[\beta_{min,tr}]_0$ is 
 a norm of~$F[\beta_{min,tr}]|F[\beta_{min,tr}]_0$ if and only if it is a norm of~$E|E_0$. 
\end{lemma}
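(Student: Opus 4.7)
Set $K := F[\beta_{\min,tr}]_0$ and choose $\beta_0 := \beta_{\min,tr}$ skew with respect to $\sigma_E$, as permitted in the discussion preceding condition (A). Then $\beta_0^2 \in K$ and $F[\beta_0] = K(\beta_0)$ is a quadratic extension of $K$ on which $\sigma_E$ acts by $\beta_0 \mapsto -\beta_0$. The plan is to first put $E, E_0, F[\beta_0], K$ into the same compositum picture. Since $F[\beta_0]\cap E_0 = K$ and $[F[\beta_0]:K]=2=[E:E_0]$, the fields $F[\beta_0]$ and $E_0$ are linearly disjoint over $K$, hence $F[\beta_0]\cdot E_0$ has degree $2[E_0:K]=[E:K]$ over $K$ and equals $E$. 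Consequently $E = E_0[\beta_0]$ and the nontrivial element of $\mathrm{Gal}(E|E_0)$ is $\beta_0 \mapsto -\beta_0$. For $a,b\in E_0$ we have $N_{E|E_0}(a+b\beta_0)=a^2-b^2\beta_0^2$, so
\[
 N_{F[\beta_0]|K}(F[\beta_0]^\times)=\{a^2-b^2\beta_0^2:(a,b)\in K^2\setminus\{0\}\}\ \subseteq\ N_{E|E_0}(E^\times)\cap K^\times.
\]

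The second assertion follows from this containment by an index count. By local class field theory, the subgroup $N_{F[\beta_0]|K}(F[\beta_0]^\times)$ has index exactly $2$ in $K^\times$, while $N_{E|E_0}(E^\times)\cap K^\times$ has index $1$ or $2$ according as $K^\times\subseteq N_{E|E_0}(E^\times)$ holds or fails. If it fails, both subgroups have index $2$, and the displayed inclusion forces them to coincide.

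For the first assertion I would split on whether $E|E_0$ is unramified or ramified (exactly one holds, and $e(E|E_0)f(E|E_0)=2$). In the unramified case $f(E|K)=2f(E_0|K)$ is automatically even; Hilbert~90 gives $N_{E|E_0}(\o_E^\times)=\o_{E_0}^\times$ and $v_{E_0}(N_{E|E_0}(E^\times))=2\bbZ$. So units of $K$ are always norms, while a uniformizer $\pi_K$ is a norm iff its $E_0$-valuation $e(E_0|K)=e(E|K)$ is even, giving the condition ``both even''. In the ramified case $e(E|K)=2e(E_0|K)$ is automatically even and $v_{E_0}(N_{E|E_0}(E^\times))=\bbZ$, while $[\o_{E_0}^\times:N_{E|E_0}(\o_E^\times)]=2$. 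Since the residue characteristic is odd, $(\o_{E_0}^\times)^2\subseteq N_{E|E_0}(\o_E^\times)$ and the two groups actually agree. A non-square unit $\alpha\in\o_K^\times$ is a norm iff it is a square in $\o_{E_0}^\times$, which happens iff $E_0$ contains the unramified quadratic extension $K[\sqrt{\alpha}]|K$, i.e.\ iff $f(E_0|K)=f(E|K)$ is even. A parallel check for $\pi_K$, exploiting $\beta_0^2\in K$ and the minimality of $\beta_0$ to compute $v_K(\beta_0^2)$, reduces to the same parity condition.

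I expect the main obstacle to lie in the ramified subcase of the first assertion: showing that the image of $\pi_K$ in $\o_{E_0}^\times/N_{E|E_0}(\o_E^\times)\cong\bbZ/2$ is governed by the parity of $f(E|K)$ requires combining the minimality of $\beta_0$ (which pins down the parity of $v_K(\beta_0^2)$ in terms of the ramification of $F[\beta_0]|K$) with a tame Hilbert-symbol computation; the unit analysis is essentially Kummer theory in odd residue characteristic, but merging it with the valuation analysis demands a choice of uniformizer compatible with $\beta_0$.
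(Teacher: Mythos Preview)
Your setup and the inclusion $N_{F[\beta_0]|K}(F[\beta_0]^\times)\subseteq N_{E|E_0}(E^\times)\cap K^\times$ are correct, and your index-$2$ argument for the second assertion is exactly what the paper does. The genuine difference is in the first assertion: the paper splits according to whether $F[\beta_0]|K$ is ramified or unramified, whereas you split on $E|E_0$. Your unramified case is clean; the ramified case is where you leave the acknowledged gap, and your suggestion that minimality of $\beta_0$ over $F$ is the missing ingredient is a red herring.

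Here is how to close the gap in your framework. Suppose $E|E_0$ is ramified. Since $E=E_0[\beta_0]$ with $\beta_0^2\in K\subseteq E_0$ and the residue characteristic is odd, the extension $E_0[\sqrt{\beta_0^2}]|E_0$ can be ramified only if $v_{E_0}(\beta_0^2)$ is odd. Writing $v_{E_0}(\beta_0^2)=e(E_0|K)\,v_K(\beta_0^2)$, this forces $v_K(\beta_0^2)$ to be odd as well; in particular $F[\beta_0]|K$ is itself ramified. Now $N_{E|E_0}(\beta_0)=-\beta_0^2$ lies in $K^\times$ and has \emph{odd} $K$-valuation, so once you know $o_K^\times\subseteq N_{E|E_0}(E^\times)$ (which you have shown is equivalent to $f(E|K)$ even), the subgroup $N_{E|E_0}(E^\times)\cap K^\times$ contains both $o_K^\times$ and an element of odd $K$-valuation, hence all of $K^\times$. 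No Hilbert-symbol computation or special uniformizer is needed; the only fact used about $\beta_0$ is that it is skew and $\beta_0^2\in K$.

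Thus your approach works and is arguably more streamlined than the paper's once the ramified subcase is completed as above. The paper's case split (on $F[\beta_0]|K$) has the advantage that the relevant norm from $F[\beta_0]$ is visible from the start, whereas your split first isolates the norm group of $E|E_0$ explicitly and then must recover the link to $\beta_0$; the observation that $E|E_0$ ramified forces $F[\beta_0]|K$ ramified is exactly that link.
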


\begin{proof}
 We just write~$E'$ for~$F[\beta_{min,tr}]$. At first we remark that a norm of~$E'|E'_0$ is a norm of~$E|E_0$ because both have degree two and~$\sigma_E$ and~$\sigma_E|_{E'}$ are the Galois generators. Here we use that the residue characteristic is odd. 
 Suppose now that~$E|E'_0$ has even ramification index and even inertia degree.  Then every element of~$o_{E'_0}^\times$ is a square in~$E$ and therefore a norm of~$E|E_0$. If~$E'|E'_0$ is ramified then~$E'_0$ contains a uniformizer which is a norm of~$E'|E'_0$, and we have the desired containment. In the case of an unramified~$E'|E'_0$, since~$e(E|E'_0)$ is even there is a uniformizer of the maximal unramified extension 
 in~$E_0|E'_0$ which is a square in~$E$, and since~$-1$ is a norm of~$E|E_0$ we obtain that the mentioned uniformizer is a norm of~$E|E_0$. 
 All elements of~$o^\times_{E_0}$ are norms of~$E|E_0$ because~$E|E_0$ is also unramified. Thus there is a uniformizer of~$E'_0$ which is a norm of~$E|E_0$. 
 Suppose for the converse that all elements of~$E'_0$ are norms of~$E|E_0$. Then all elements of~$\kappa_{E'_0}$ are squares 
 in~$\kappa_E$, i.e.~$f(E|E'_0)$ is even, and there is a uniformizer of~$E'_0$ which is a norm of~$E|E_0$, in particular the~$\nu_E$-valuation of this uniformizer must  be even and therefore~$e(E|E'_0)$ is even. 
 In the case where~$E'_0$ is not contained in~$\N_{E|E_0}(E)$, say~$x\in E'_0$ is not a norm of~$E|E_0$, the set of all non-norms of~$E'|E'_0$ which is~$x\N_{E'|E'_0}(E'^\times)$ is disjoint to~$\N_{E|E_0}(E)$ which finishes the proof.  
\end{proof}

\begin{definition}\label{defMatchingWitttowers}
 let~$h$ and~$h'$ be two~$\epsilon$ hermitian forms and suppose that~$\beta\in \Lie(U(h))$ and~$\beta'\in \Lie(U(h'))$ generate field extensions~$E$ and~$E'$ different from~$F$, such that~(A) and~(B) hold.
 We say that the Witt towers of~$(\beta,h)$ and~$(\beta',h')$~\emph{match} if~$h$ is isometric to~$h'$ and~$w_{\beta,\beta',\epsilon}((\tih_\beta)_\equiv)=(\tih_{\beta'})_\equiv$. Note that matching Witt towers is an equivalence relation on
 the pairs~$(\beta,h)$ and not on Witt towers. 
\end{definition}

%
%

\begin{remark}\label{rematchingWittTowersfromconjugation}
 If~$g: h\cong h'$ is an isometry then the Witt towers of~$(\beta,h)$ and~$(g\beta g^{-1},h')$ match.
\end{remark}

\begin{proof}
 We consider~$\tih:=\tih_\beta$ and~$\tih':=c_g\circ\tih\circ (g^{-1}\times g^{-1})$. We have to show~$\tih'=\tih'_{\beta'}$.
 We write~$\phi(x)$ for~$gxg^{-1}$,~$x\in E$.
 For~$v_1,v_2\in V$ we have: 
 \begin{eqnarray*}
  \tih'(v_1,v_2\phi(x))&=& g\circ\tih(g^{-1}(v_1),g^{-1}(v_2\phi(x)))\circ g^{-1}\\
  &=& g\circ\tih(g^{-1}(v_1),(g^{-1}\circ \phi(x))(v_2))\circ g^{-1}\\
&=& g\circ\tih(g^{-1}(v_1),(x\circ g^{-1})(v_2))\circ g^{-1}\\
&=& g\circ\tih(g^{-1}(v_1),g^{-1}(v_2)x)\circ g^{-1}\\
&=& g\circ\tih(g^{-1}(v_1),g^{-1}(v_2))\circ x\circ g^{-1}\\
&=&\tih'(v_1,v_2)\phi(x)\\
 \end{eqnarray*}
 Proceeding further this way we see that~$\tih'$ is an~$\epsilon$-hermitian form which satisfies~$(\lambda_{\beta'}\otimes\id_D)\circ\tih'=h'$ which finishes the proof. 
\end{proof}

\begin{theorem}\label{thmMatchingWitttowersIntertwining}
 Let~$\Delta$ and~$\Delta'$ be two non-null self-dual simple strata on~$(V,h)$ and~$\theta_-\in\C_-(\Delta)$ and~$\theta'_-\in\C_-(\Delta')$
 be two endo-equivalent self-dual simple characters. Then~$\theta_-$ and~$\theta'_-$ intertwine by an element of~$G$ if and only if 
 the Witt towers of~$(\beta,h)$ and~$(\beta',h)$ match. 
\end{theorem}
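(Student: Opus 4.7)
The plan is to treat the two directions separately, with the forward direction being quick and the backward direction being the substantial one.

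For the forward direction, suppose $\theta_-$ and $\theta'_-$ intertwine by an element of $G$. I first reduce to a clean setup: by Corollary~\ref{corTransIntertwiningCharG} the situation is stable under transfer, and by combining diagonalization (Theorem~\ref{thmDiagonalizationCharG}) with the repeating/lowering procedure of Remark~\ref{remRaisingRepeating} and Remark~\ref{remDiagCharG}, I may replace $\Delta,\Delta'$ by equivalent strata to assume that they share $F$-period and parameters, that $\beta$ and $\beta'$ have the same minimal polynomial over $F$, and that $\theta'_- = \tau_{\Delta,\Delta'}(\theta_-)$. Then formula~\eqref{eqGIntCharbeta} of Theorem~\ref{thmIntertwiningCharG} yields an element $u\in G$ with $u\beta u^{-1}=\beta'$, and Remark~\ref{rematchingWittTowersfromconjugation} directly produces the matching of the Witt towers of $(\beta,h)$ and $(u\beta u^{-1},h)=(\beta',h)$. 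A brief check (using that the definition of $w_{\beta,\beta',\epsilon}$ is insensitive to replacing $\beta,\beta'$ by equivalent elements, because $\beta_{min,tr}$ and the residue-level data are invariant) propagates this back to the original strata.

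For the backward direction I use the same reduction to arrange same minimal polynomial and $\theta'_- = \tau_{\Delta,\Delta'}(\theta_-)$; the point is that the reductions preserve the Witt tower of $(\beta,h)$ (repeating and lowering do not touch $\beta$, and diagonalization can be arranged within the same space $(V,h)$). Endo-equivalence of the lifts together with Theorem~\ref{thmIntertwiningCharGL} then furnishes $\ti{g}\in\ti{G}$ with $\ti{g}\beta\ti{g}^{-1}=\beta'$. The key observation is that, since $\beta,\beta'$ are skew and $\ti{g}\beta\ti{g}^{-1}=\beta'$, the element $s:=\sigma_h(\ti{g})\ti{g}$ lies in $C_A(\beta)^\times=B^\times$ and is symmetric for $\sigma_h|_B$, and finding $g\in G$ with $g\beta g^{-1}=\beta'$ is equivalent to writing $s=\sigma_h(c)c$ for some $c\in B^\times$, i.e.\ to $s$ being a norm for the involution $\sigma_h|_B$. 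Formula~\eqref{eqGIntCharbeta} then promotes such a $g$ to an actual intertwiner of $\theta_-$ with $\theta'_-$.

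The hard part is to convert the hypothesis ``Witt towers of $(\beta,h)$ and $(\beta',h)$ match'' into the statement that $s$ is a norm for $\sigma_h|_B$. The plan is to interpret $\ti{g}$ as an isometry of $E\otimes_F D$-modules $(V,\tih_\beta^{s^{-1}})\to(V,\tih_{\beta'})$ under the identification $E\cong E'$ sending $\beta$ to $\beta'$, so that $\tih_\beta$ and $\tih_{\beta'}$ become isometric precisely when $s$ is a norm, i.e.\ when $\tih_\beta^{s^{-1}}$ lies in the same Witt class as $\tih_\beta$. Using the equivalence of categories of Proposition~\ref{propEquivHermCat} to pass from $W_\epsilon(\sigma_E\otimes\rho)$ to $W_\epsilon(\sigma_E)$ via any idempotent, and Proposition~\ref{propFfFe-1} to confirm that the identification $\beta_{min,tr}\mapsto\beta'_{min,tr}$ in condition~(A) is the right one, the matching hypothesis translates exactly to $(\tih_\beta)_\equiv = (\tih_{\beta'})_\equiv$ under the induced bijection of Witt groups. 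Since the anisotropic dimensions and underlying $E\otimes_F D$-dimensions coincide (as $\beta,\beta'$ have the same minimal polynomial and $V$ is fixed), this forces $\tih_\beta\cong\tih_{\beta'}$, hence the required $c\in B^\times$ exists. The delicate bookkeeping to verify conditions (A) and (B) after the reductions, and to match the norm-group description of $s$ with the Witt class description produced by $w_{\beta,\beta',\epsilon}$, is where I expect the main technical effort to lie.
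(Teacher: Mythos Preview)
Your backward direction is essentially the paper's argument, rephrased through the norm element $s=\sigma_h(\tilde g)\tilde g\in B^\times$; once the reduction to ``same minimal polynomial, transfers'' is in place, showing that $s$ is a norm for $\sigma_h|_B$ is exactly the statement that $\tih_\beta$ and $\phi^*\tih_{\beta'}$ are isometric, which is what the paper does.

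The forward direction, however, has a real gap. When you apply diagonalization (Theorem~\ref{thmDiagonalizationCharG}) you do not change $\theta'_-$, but you \emph{do} replace the stratum $\Delta'$ by a new stratum $\Delta''$ with a different element $\beta''$ (having the minimal polynomial of $\beta$). Your conjugation argument then shows that the Witt towers of $(\beta,h)$ and $(\beta'',h)$ match. The theorem, however, concerns the \emph{original} $\beta'$, and your ``brief check'' that the answer is insensitive to replacing $\beta'$ by $\beta''$ is precisely the assertion that the Witt towers of $(\beta',h)$ and $(\beta'',h)$ match. Since $\theta'_-\in\C_-(\Delta')\cap\C_-(\Delta'')$, this is nothing but the forward implication of the theorem applied to the pair $(\Delta',\Delta'')$ with intertwiner $1\in G$ --- so the argument is circular. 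Your justification (``$\beta_{min,tr}$ and the residue-level data are invariant'') does not suffice: the bijection $w_{\beta',\beta'',\epsilon}$ between $W_\epsilon(\sigma_{E'})$ and $W_\epsilon(\sigma_{E''})$ is pinned down by $\langle\beta'\rangle_\equiv\mapsto\langle\beta''\rangle_\equiv$ (resp.\ the squares), not merely by data at the level of $\beta_{min,tr}$, and the codomain Witt groups are genuinely different. That this reduction \emph{is} harmless is in fact Corollary~\ref{corIntertwMinimalCriteria}, proved as a \emph{consequence} of the theorem.

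The paper avoids this circularity by structuring the forward direction differently. It does \emph{not} diagonalize to a common minimal polynomial; instead it uses the $G$-intertwining to conjugate only the tame minimal elements, reducing to $\beta_{min,tr}=\beta'_{min,tr}$ (possible because these share a minimal polynomial by endo-equivalence, and Theorem~\ref{thmSkolemNoetherG} applies to the minimal strata). It then picks an idempotent $e\in\Idemp(\sigma_E|_{F[\beta_{min,tr}]}\otimes\rho)$, passes via $\mc{F}_e$ to hermitian spaces over $F$, and invokes the already-established $F$-case \cite[5.3]{kurinczukSkodlerackStevens:16}; the symplectic case is handled by twisting $h$ by $\beta_{min,tr}$ to reduce to the orthogonal one. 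Only once the forward direction is secured independently does the paper use it (explicitly) to justify that diagonalization preserves the Witt-tower class in the backward direction. If you want to keep your more self-contained conjugation approach, you must supply an independent proof that $\C(\Delta')=\C(\Delta'')$ (self-dual, same $\Lambda,r$) forces the Witt towers of $(\beta',h)$ and $(\beta'',h)$ to match; this is not immediate.
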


We need for the proof the twist~$h^\gamma$ of a signed hermitian form~$h$ by a skew-symmetric or symmetric element~$\gamma\in\ti{G}$, see subsection~\ref{subsecWitt}. If~$\gamma$ is skew-symmetric and invertible then~$h$ is symplectic 
if and only if~$h^\gamma$ is orthogonal

\begin{proof}
 We choose lifts~$\theta$ and~$\theta'$ for~$\theta_-$ and~$\theta'_-$. 
 The field extensions~$F[\beta_{min,tr}]|F$ and~$F[\beta'_{min,tr}]|F$ are isomorphic by a~$\sigma_h$-equivariant map which 
 sends~$\beta_{min,tr}$ to an element congruent to~$\beta'_{min,tr}$. so we can assume without loss of generality that~$\beta'_{min,tr}$
 is the image of~$\beta_{min,tr}$ under this isomorphism. 
 Suppose at first~$I_G(\theta,\theta')\neq\emptyset$. 
 Then~$\beta_{min,tr}$ and~$\beta'_{min,tr}$ are conjugate by an element of~$G$. 
 Thus we can assume~$\beta_{min,tr}=\beta'_{min,tr}$ without loss of generality. (Note that conjugation with an element of~$G$ does not
 change the equivalence class of~$(\beta,h)$ by Remark~\ref{rematchingWittTowersfromconjugation}.) Now take any idempotent~$e$ of~$F[\beta_{min,tr}]\otimes_FD$ and
 choose~$(\tiDelta,\trd_{D|F}\circ h|_{Ve})\in\mf{E}_-(\Delta,h)$ and~$(\tiDelta',\trd_{D|F}\circ h|_{Ve})\in\mf{E}_-(\Delta',h)$
 such that~$\tibeta=e\beta$ and~$\tibeta'=e\beta'$. The transfers of~$\theta$ and~$\theta'$ to~$\tiDelta$ and~$\tiDelta'$, 
 respectively, intertwine by an element of~$\Aut_F(Ve)$ by Theorem~\cite[6.1]{skodlerack:17-1}. 
 Proposition~\cite[5.3]{kurinczukSkodlerackStevens:16} implies for the orthogonal case that the Witt towers 
 of~$(\tibeta,\trd_{D|F}\circ h|_{Ve})$ and~$(\tibeta',\trd_{D|F}\circ h|_{Ve})$ match and thus the Witt towers of~$(\beta,h)$ 
 and~$(\beta',h)$ match. 
 In the symplectic case we consider the twist~$h^{\beta_{min,tr}}$. The latter is orthogonal and the argument of part one shows 
 that the Witt towers of~$(\beta,h^{\beta_{min,tr}})$ and~$(\beta',h^{\beta_{min,tr}})$ match. 
 Thus~$\wtower_{\beta,h^{\beta_{min,tr}}}(e)$ and~$\wtower_{\beta',h^{\beta_{min,tr}}}(e)$
 are mapped to each other under~$W_{1}(\sigma_{E})\cong W_{1}(\sigma_{E'})$ via~$\langle \beta^2\rangle_\equiv\mapsto
 \langle \beta'^2\rangle_\equiv$.
 The twist with~$\beta_{min,tr}$ induces the map
 \[W_{-1}(\sigma_{E})\ra W_{-1}(\sigma_{E}),\ \langle \beta\rangle_\equiv\mapsto\langle \beta^{e_p(\beta)+1}\rangle_\equiv=
\langle (-1)^{\frac{e_p(\beta)-1}2}\beta^2\rangle_\equiv, \]
 and we have the analogue formula for~$\beta'$. Now~$-1$ is a norm of~$E|E_0$ if and only if it is a norm of~$E'|E'_0$ because
 both~$E|F$ and~$E'|F$ have the same inertia degree. Thus the pull back of the map~$W_{1}(\sigma_{E})\cong W_{1}(\sigma_{E'})$ 
 under the twist is
 the map~$W_{-1}(\sigma_{E})\cong W_{-1}(\sigma_{E'})$,~$\langle \beta\rangle_\equiv\mapsto\langle\beta'\rangle_\equiv$.
 We obtain that the Witt towers of~$(\beta,h)$ and~$(\beta',h)$ match. 
 
 We now consider the converse. So suppose that the Witt towers of~$(\beta,h)$ and~$(\beta',h)$ match. 
 Without loss of generality we can assume that~$\beta$ and~$\beta'$ have the same minimal polynomial 
 by Theorem~\ref{thmDiagonalizationCharG} and that the characters are transfers of each other. 
 Note that this did not leave the equivalence class of~$(\beta,h)$
 by the first direction of this proof. We have~$e\in\Idemp(\sigma_E\otimes\rho)$ and~$e'\in\Idemp(\sigma_{E'}\otimes_F\rho)$ 
 which are mapped under $\text{``}\beta_{min,tr}\mapsto \beta'_{min,tr}\text{''}$ to each other 
 such that~$\mc{F}_e(\tih_\beta)_\equiv$ is mapped to~$\mc{F}_{e'}(\tih_{\beta'})$ 
 under~$W_{\epsilon}(\sigma_E)\cong W_{\epsilon}(\sigma_{E'})$. We consider the pullback~$\phi^*\tih_{\beta'}$ of~$\tih_{\beta'}$ 
 along the field-isomorphism~$\phi: E\ra E'$ with~$\phi(\beta)=\beta'$. Then the matching Witt tower condition says that
 ~$\mc{F}_e(\tih_\beta)$ and~$\mc{F}_e(\phi^*\tih_{\beta'})$ are isometric. Thus~$\tih_{\beta}$ and~$\phi^*\tih_{\beta'}$ are 
 isometric and thus there is an element~$g$ of~$G$ which conjugates~$\beta$ to~$\beta'$. And this intertwines~$\theta$ with~$\theta'$
because the two characters are transfers of each other.  
\end{proof}

The last proof shows a nicer version for an equivalent criteria for~$G$-intertwining.
\begin{corollary}\label{corIntertwMinimalCriteria}
 Let~$\Delta$ and~$\Delta'$ be two self-dual non-null simple strata for~$(V,h)$ 
 and~$\theta\in\C(\Delta)^\sigma$ and~$\theta'\in\C(\Delta')^\sigma$ be two endo-equivalent simple characters. Then the following
 conditions are equivalent: 
 \begin{enumerate}
  \item $I_G(\theta,\theta')\neq \emptyset$ \label{corIntertwMinimalCriteria-i}
  \item $\Delta((n-r-1)+)$  and~$\Delta'((n'-r'-1)+)$ intertwine under some element of~$G$.\label{corIntertwMinimalCriteria-ii}
  \item $(\beta_{min,tr},h)$ and~$(\beta'_{min,tr},h)$  have matching Witt towers. \label{corIntertwMinimalCriteria-iii}
  \item $(\beta,h)$ and~$(\beta',h)$ have matching Witt towers. \label{corIntertwMinimalCriteria-iv}
 \end{enumerate} 
\end{corollary}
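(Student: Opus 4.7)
The plan is to close the cycle \ref{corIntertwMinimalCriteria-iv}$\Rightarrow$\ref{corIntertwMinimalCriteria-i}$\Rightarrow$\ref{corIntertwMinimalCriteria-ii}$\Rightarrow$\ref{corIntertwMinimalCriteria-iii}$\Rightarrow$\ref{corIntertwMinimalCriteria-iv}. The equivalence \ref{corIntertwMinimalCriteria-i}$\Leftrightarrow$\ref{corIntertwMinimalCriteria-iv} is precisely Theorem~\ref{thmMatchingWitttowersIntertwining}, so only two new links need to be constructed; throughout, I would use repeating (Remark~\ref{remRaisingRepeating}) to reduce to the case $e(\Lambda|F)=e(\Lambda'|F)$ and $n=n'$, which is harmless thanks to endo-equivalence of $\theta,\theta'$ and Proposition~\ref{propEndoStrataDegrees}.

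For \ref{corIntertwMinimalCriteria-i}$\Rightarrow$\ref{corIntertwMinimalCriteria-ii}, I use the standard fact that a lift $\theta\in\mathcal{C}(\Delta)^{\sigma}$ agrees with the additive character $\psi_\beta$ on the deepest level $1+\mf{a}_{n}\subseteq H(\Delta)$. Hence any $g\in G$ which intertwines $\theta$ with $\theta'$ must intertwine $\psi_\beta$ with $\psi_{\beta'}$ on this intersection, which is exactly the condition that $g$ intertwines the top strata $[\Lambda,n,n-1,\beta]$ and $[\Lambda',n,n-1,\beta']$, i.e.~the strata of \ref{corIntertwMinimalCriteria-ii}.

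For \ref{corIntertwMinimalCriteria-ii}$\Rightarrow$\ref{corIntertwMinimalCriteria-iii}, endo-equivalence of $\theta$ with $\theta'$ produces an isomorphism $F[\beta_{min,tr}]\cong F[\beta'_{min,tr}]$ sending $\beta_{min,tr}$ to an element congruent to $\beta'_{min,tr}$, so I may assume $\beta_{min,tr}\equiv\beta'_{min,tr}$. The intertwining of the top strata by $g\in G$ yields $g\beta g^{-1}\equiv\beta'\pmod{\mf{a}'_{1-n}}$. Raising to the $e_p$-th power (the odd residue characteristic ensures that no binomial terms survive) propagates this to a top-level congruence between $g\beta_{min,tr}g^{-1}$ and $\beta'_{min,tr}$; equivalently the two simple strata $[\Lambda,n_0,n_0-1,\beta_{min,tr}]$ and $[\Lambda',n_0,n_0-1,\beta'_{min,tr}]$ intertwine by $g\in G$. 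Since $\beta_{min,tr}$ and $\beta'_{min,tr}$ already have the same minimal polynomial, Skolem--Noether in $G$ (Theorem~\ref{thmSkolemNoetherG}) produces $g_0\in G$ with $g_0\beta_{min,tr}g_0^{-1}=\beta'_{min,tr}$, and Remark~\ref{rematchingWittTowersfromconjugation} applied to $g_0$ yields the matching of Witt towers of $(\beta_{min,tr},h)$ and $(\beta'_{min,tr},h)$.

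For \ref{corIntertwMinimalCriteria-iii}$\Rightarrow$\ref{corIntertwMinimalCriteria-iv}, I chase the construction of $w_{\beta,\beta',\epsilon}$: by Definition~\ref{defWitttower} and the discussion preceding it, this bijection is defined through a pair of corresponding idempotents $e\in\Idemp(\sigma_E|_{F[\beta_{min,tr}]}\otimes\rho)$ and $e'\in\Idemp(\sigma_{E'}|_{F[\beta'_{min,tr}]}\otimes\rho)$ under the isomorphism $\beta_{min,tr}\mapsto\beta'_{min,tr}$, and a scalar bijection between $W_\epsilon(\sigma_{F[\beta_{min,tr}]})$ and $W_\epsilon(\sigma_{F[\beta'_{min,tr}]})$. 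The crucial observation is that $\mathcal{F}_e(\tilde h_\beta)$ and $\mathcal{F}_e(\tilde h_{\beta_{min,tr}})$ represent the same Witt class in $W_\epsilon(\sigma_{F[\beta_{min,tr}]})$, which follows from the trace relation $\lambda_\beta=[E:F[\beta_{min,tr}]]^{-1}\lambda_{\beta_{min,tr}}\circ\tr_{E|F[\beta_{min,tr}]}$ (the unit factor becomes a square in $1+\mf{p}$ since $p$ is odd and is absorbed). Unwinding then gives that \ref{corIntertwMinimalCriteria-iii} and \ref{corIntertwMinimalCriteria-iv} are equivalent. The main obstacle is precisely this last identification: keeping track of the scalar forms $\mathcal{F}_e(\tilde h_\beta)$ versus $\mathcal{F}_e(\tilde h_{\beta_{min,tr}})$ through the involutions $\sigma_E$ and $\sigma_E|_{F[\beta_{min,tr}]}$ and through the idempotent correspondence is the most delicate piece of bookkeeping in the proof.
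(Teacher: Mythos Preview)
Your cycle \ref{corIntertwMinimalCriteria-iv}$\Rightarrow$\ref{corIntertwMinimalCriteria-i}$\Rightarrow$\ref{corIntertwMinimalCriteria-ii}$\Rightarrow$\ref{corIntertwMinimalCriteria-iii}$\Rightarrow$\ref{corIntertwMinimalCriteria-iv} matches the paper's, and your treatment of \ref{corIntertwMinimalCriteria-i}$\Rightarrow$\ref{corIntertwMinimalCriteria-ii} and of \ref{corIntertwMinimalCriteria-ii}$\Rightarrow$\ref{corIntertwMinimalCriteria-iii} is essentially the paper's argument spelled out in more detail (the paper simply invokes the already proven implication \ref{corIntertwMinimalCriteria-i}$\Rightarrow$\ref{corIntertwMinimalCriteria-iv} for the top strata, which amounts to your raising-to-$e_p$th-power plus Skolem--Noether step).

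There is, however, a genuine gap in your \ref{corIntertwMinimalCriteria-iii}$\Rightarrow$\ref{corIntertwMinimalCriteria-iv}. You describe $w_{\beta,\beta',\epsilon}$ as built from ``a scalar bijection between $W_\epsilon(\sigma_{F[\beta_{min,tr}]})$ and $W_\epsilon(\sigma_{F[\beta'_{min,tr}]})$'', but by the paper's definition the scalar bijection lives between $W_\epsilon(\sigma_E)$ and $W_\epsilon(\sigma_{E'})$: the idempotent $e$ is chosen in $F[\beta_{min,tr}]\otimes_F D$ only to ensure well-definedness, yet $\wtower_{\tih_\beta}(e)$ is still a class in $W_\epsilon(\sigma_E)$, not $W_\epsilon(\sigma_{F[\beta_{min,tr}]})$. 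Consequently your ``crucial observation'' that $\mathcal{F}_e(\tih_\beta)$ and $\mathcal{F}_e(\tih_{\beta_{min,tr}})$ lie in the same Witt class in $W_\epsilon(\sigma_{F[\beta_{min,tr}]})$ is not even well-typed---these two forms live in the Witt groups of $\sigma_E$ and of $\sigma_{F[\beta_{min,tr}]}$, respectively. The trace relation you write down does not repair this: $\lambda_\beta$ is \emph{not} a scalar multiple of $\lambda_{\beta_{min,tr}}\circ\tr_{E|F[\beta_{min,tr}]}$ in general (it is the $F$-linear form with kernel $\sum_{i\geq 1}F\beta^i$, not a normalized trace), so the ``unit factor in $1+\mf p$'' claim is unfounded.

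The paper avoids this bookkeeping entirely. From \ref{corIntertwMinimalCriteria-iii} it first deduces that $\beta_{min,tr}$ and $\beta'_{min,tr}$ are $G$-conjugate (this follows from the converse direction of Theorem~\ref{thmMatchingWitttowersIntertwining} applied to the minimal elements, since matching Witt towers for $(\beta_{min,tr},h)$ and $(\beta'_{min,tr},h)$ means the associated forms $\tih_{\beta_{min,tr}}$ and $\phi^*\tih_{\beta'_{min,tr}}$ are isometric). After conjugating to arrange $\beta_{min,tr}=\beta'_{min,tr}$, the paper then re-runs the forward direction of the proof of Theorem~\ref{thmMatchingWitttowersIntertwining}: take a common idempotent $e\in\Idemp(\sigma_{F[\beta_{min,tr}]}\otimes\rho)$, transfer to the split setting on $Ve$, and invoke~\cite[5.3]{kurinczukSkodlerackStevens:16} together with the endo-equivalence of $\theta,\theta'$ to deduce the matching of Witt towers for $(\beta,h)$ and $(\beta',h)$. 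This route never compares Witt classes across $W_\epsilon(\sigma_E)$ and $W_\epsilon(\sigma_{F[\beta_{min,tr}]})$ directly.
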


\begin{proof}
 The first and the last assertion are equivalent by 
 Theorem~\ref{thmMatchingWitttowersIntertwining} and~\cite[7.1]{kurinczukSkodlerackStevens:16},~\ref{corIntertwMinimalCriteria-i} implies \ref{corIntertwMinimalCriteria-ii}, and further the assertion~\ref{corIntertwMinimalCriteria-ii} implies~\ref{corIntertwMinimalCriteria-iii} by the implication
 \ref{corIntertwMinimalCriteria-i}$\Rightarrow $\ref{corIntertwMinimalCriteria-iv} for 
 strata (equivalently: characters which are transfers). 
 So let us assume~\ref{corIntertwMinimalCriteria-iii}. We can assume without loss of generality that~$\beta'_{min,tr}$ is the image 
 of~$\beta_{min,tr}$ under~$F[\beta_{min,tr}]|F\cong F[\beta'_{min,tr}]|F$. Then matching Witt towers implies that~$\beta_{min,tr}$ 
 and~$\beta'_{min,tr}$ are conjugate by an element of~$G$, so we can assume without loss of generality that~$\beta_{min,tr}$ and~$\beta'_{min,tr}$ 
 coincide. Now we proceed as in the proof of Theorem~\ref{thmMatchingWitttowersIntertwining} to conclude from the 
 endo-equivalence of~$\theta$ with~$\theta'$ that the Witt towers of~$(\beta,h)$ and~$(\beta',h)$ match. 
\end{proof}

We now generalize the formalism of~\cite{kurinczukSkodlerackStevens:16} which leads to endo-parameters
for quaterionic inner forms of classical groups. 

For that we need to include the case~$\beta=0$ in the definition of matching Witt towers. We call~$h_\equiv\in W_{\epsilon}(\id_F)$ the Witt tower 
of~$(0,h)$. We say that the Witt towers of~$(0,h)$ and~$(0,h')$ match if~$h$ is isometric to~$h'$. Let us recall we can identify the index sets of strata of the domain of a pss-character to one set~$I_\Theta$, and endo-equivalent pss-characters~$\Theta$ and~$\Theta'$ determine a matching~$\zeta:\ I_\Theta\ra I_{\Theta'}$, see Theorem~\cite[6.18]{skodlerack:17-1}. 
Given lifts~$\Theta$ and~$\Theta'$ of self-dual pss-characters the index set decomposes into~$I_\Theta=I_{\Theta,0}\cup I_{\Theta,\pm}$, and we sometimes choose 
a disjoint union~$I_{\Theta,\pm}=I_{\Theta,+}\cup I_{\Theta,-}$ such that~$\sigma(I_{\Theta,+})=I_{\Theta,-}$.  
Let us recall that given a bijection~$\zeta: I_\Theta\ra I_{\Theta'}$, a~$\zeta$-comparison pair is a pair~$(\Delta,\Delta')\in \mf{E}\times\mf{E}'$ such that both strata are defined over the same skew-field~$D$ and~$\dim_DV^i=\dim_DV^{\zeta(i)}$ for all~$i\in I_\Theta$, see~\cite[6.17]{skodlerack:17-1}.

\begin{theorem}\label{thmEndoequivalentselfdualpss}
 Let~$\Theta_-$ on~$\mf{E}_-$ and~$\Theta'_-$ on~$\mf{E}'_-$ be two endo-equivalent self-dual pss-characters with lifts~$\Theta$ and~$\Theta'$ and matching~$\zeta=\zeta_{\Theta,\Theta'}$. Then for given pairs~$(\Delta,h)\in\mf{E}_-$ and~$(\Delta',h)\in \mf{E}_-$ are equivalent:
 \begin{enumerate}
  \item $\Theta_-(\Delta,h)$ and~$\Theta_-(\Delta',h)$ intertwine by an element of~$U(h)$.\label{thmEndoequivalentselfdualpss-i}
  \item $(\Delta,\Delta')$ is a~$\zeta$-comparison pair, and the Witt towers of~$(\beta_i,h_i)$ 
  and~$(\beta'_{\zeta(i)},h_{\zeta(i)})$ match, for all~$i\in I_{\Theta,0}$. \label{thmEndoequivalentselfdualpss-ii}
 \end{enumerate}
\end{theorem}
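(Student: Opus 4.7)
The plan is to prove both implications by reducing to the block-by-block level and invoking Theorem~\ref{thmMatchingWitttowersIntertwining}. Fix lifts $\theta \in \C(\Delta)^\sigma$ and $\theta' \in \C(\Delta')^\sigma$ of $\Theta_-(\Delta,h)$ and $\Theta'_-(\Delta',h)$. By the Glauberman correspondence (as in the proof of Corollary~\ref{corTransIntertwiningCharG}), a $G$-intertwiner of $\theta_-$ with $\theta'_-$ exists if and only if one for $\theta$ with $\theta'$ does. When it exists, the character matching $\zeta_{\theta,\theta'}$ agrees with $\zeta_{\Theta,\Theta'}=\zeta$ because the $i$-th block simple characters are endo-equivalent under the pss-character matching.

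For (i)$\Rightarrow$(ii), a $G$-intertwiner of $\theta$ with $\theta'$ implies by Theorem~\ref{thmMatchingCharGL} that the matched blocks have equal $D$-dimension, giving the $\zeta$-comparison pair condition. Corollary~\ref{corMatchingCharG} then lets us replace the intertwiner by an element $g = \sum_i g_i \in G \cap \prod_i \Hom_D(V^i,V^{\zeta(i)})$ with $h_i \cong h_{\zeta(i)}$ for every $i \in I_{\Theta,0}$. For such $i$, $g_i$ is an isometry $(V^i,h_i) \to (V^{\zeta(i)},h_{\zeta(i)})$ intertwining the simple characters $\theta_i$ and $\theta'_{\zeta(i)}$. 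Pulling $\theta'_{\zeta(i)}$ back to $V^i$ along $g_i$ (which leaves the Witt tower class unchanged by Remark~\ref{rematchingWittTowersfromconjugation}) and applying Theorem~\ref{thmMatchingWitttowersIntertwining} on $(V^i,h_i)$ forces the Witt towers of $(\beta_i,h_i)$ and $(\beta'_{\zeta(i)},h_{\zeta(i)})$ to match.

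For (ii)$\Rightarrow$(i), first use repeating, Remark~\ref{remDiagCharG} and Theorem~\ref{thmDiagonalizationCharG} to reduce to the case where $\Delta$ and $\Delta'$ share the $F$-period and parameters $n,r$, where $\beta_i$ and $\beta'_{\zeta(i)}$ have the same minimal polynomial for every $i$, and where $\theta' = \tau_{\Delta,\Delta'}(\theta)$; applied block-by-block, Remark~\ref{rematchingWittTowersfromconjugation} keeps the matching Witt tower hypothesis intact through these moves. For each $i \in I_{\Theta,0}$, Theorem~\ref{thmMatchingWitttowersIntertwining} yields an element $g_i \in \Hom_D(V^i,V^{\zeta(i)})$ which is an isometry $h_i \to h_{\zeta(i)}$ and intertwines $\theta_i$ with $\theta'_{\zeta(i)}$. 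For each $i \in I_{\Theta,+}$, use Theorem~\ref{thmIntertwiningCharGL} (via endo-equivalence of the $i$-th block simple characters) to pick any $g_i \in I(\theta_i,\theta'_{\zeta(i)}) \cap \Hom_D(V^i,V^{\zeta(i)})$ and set $g_{\sigma(i)} := \sigma_h(g_i)^{-1}$ as in the proof of Corollary~\ref{corSkolemNoetherSemisimple}; the pair $g_i \oplus g_{\sigma(i)}$ is then an isometry between the hyperbolic pairings on $V^i \oplus V^{\sigma(i)}$ and $V^{\zeta(i)} \oplus V^{\sigma\zeta(i)}$. The block-diagonal sum $g := \sum_i g_i$ lies in $G \cap \prod_i \Hom_D(V^i,V^{\zeta(i)})$, and by Theorem~\ref{thmIntertwiningCharG} it intertwines $\theta$ with $\theta'$; its restriction to $H_-(\Delta)$ intertwines $\theta_-$ with $\theta'_-$.

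The main obstacle is making sure the matching Witt tower hypothesis survives every reduction step (raising, repeating, and especially the passage to transfers via Theorem~\ref{thmDiagonalizationCharG}), since each of these alters $\beta$ and $\beta'$ within their equivalence classes. The key stability tool is Remark~\ref{rematchingWittTowersfromconjugation}, together with the congruence arguments that appeared in the last part of the proof of Theorem~\ref{thmMatchingWitttowersIntertwining}. A secondary subtlety is that on $\sigma$-fixed blocks one needs the single element $g_i$ to be simultaneously an isometry of $h_i$ with $h_{\zeta(i)}$ and a character-intertwiner; this is precisely what Theorem~\ref{thmMatchingWitttowersIntertwining} provides, and where the whole theory of matching Witt towers is doing the decisive work.
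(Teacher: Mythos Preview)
Your argument is essentially the same as the paper's: for (i)$\Rightarrow$(ii) you invoke Corollary~\ref{corMatchingCharG} and then the Witt-tower criterion block-wise, and for (ii)$\Rightarrow$(i) you reduce to transfers with coinciding minimal polynomials and assemble a block-diagonal isometry; the paper does this in one line, citing Corollaries~\ref{corMatchingCharG} and~\ref{corIntertwMinimalCriteria} for the forward direction and \cite[6.18]{skodlerack:17-1} together with Corollary~\ref{corIntertwMinimalCriteria} for the converse.

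One small point in your write-up: the final appeal to Theorem~\ref{thmIntertwiningCharG} to conclude that the assembled element $g$ intertwines $\theta$ with $\theta'$ is circular, since that theorem already presupposes $I_G(\theta,\theta')\neq\emptyset$. What actually closes the argument is that, after your reductions, each $g_i$ (both for $i\in I_{\Theta,0}$ via the converse part of the proof of Theorem~\ref{thmMatchingWitttowersIntertwining}, and for $i\in I_{\Theta,+}$ via Skolem--Noether) can be taken to conjugate $\beta_i$ to $\beta'_{\zeta(i)}$; hence $g\in I_G(\Delta,\Delta')$, and since $\theta'=\tau_{\Delta,\Delta'}(\theta)$ this gives $g\in I_G(\theta,\theta')$ directly by the transfer property (cf.\ \eqref{eqGIntCharDelta}). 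With that correction the proof is fine.
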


\begin{proof}
 The direction~\ref{thmEndoequivalentselfdualpss-i}$\Rightarrow$\ref{thmEndoequivalentselfdualpss-ii} follows from the
 Corollaries~\ref{corMatchingCharG} and~\ref{corIntertwMinimalCriteria}. Backwards follows block-wise from Theorem~\cite[6.18]{skodlerack:17-1} and Corollary~\ref{corIntertwMinimalCriteria}.
\end{proof}

Finally we can introduce endo-parameters for quaterionic inner forms of classical groups. 

\subsection{Endo-parameters}\label{sectionEndoParameter}

At first we generalize the notion of Witt type from~\cite[Section 13.2]{kurinczukSkodlerackStevens:16}. 
We fix~$\rho$ and~$\epsilon$.
We consider pairs~$(\beta,t)$ where~$F[\beta]$ is a self-dual field extension and~$t\in W_\epsilon(\sigma_E\otimes\rho)$.
We call~$(\beta,t)$ equivalent to~$(\beta',t')$ if at least one of the following points hold:
\begin{itemize}
 \item $t$ and~$t'$ are hyperbolic.
 \item $\beta$ and~$\beta'$ are non-zero,~$\Tr_{\lambda_\beta}(t)=\Tr_{\lambda_{\beta'}}(t')$ and the Witt towers match, i.e.~$w_{\beta,\beta',\epsilon}(t)=t'$. 
 \item $\beta=\beta'=0,\ t=t'$ and~$t$ is not hyperbolic. 
\end{itemize}
The equivalence  classes of these pairs are called~$(\rho,\epsilon)$-\emph{Witt types} and the factor set is denoted by~$\mc{W}_{\rho,\epsilon}$. The Witt type of the pairs~$(\beta,t)$ with~$t$ hyperbolic is denoted by~$0$.

The second data needed for endo-parameters are certain endo-classes: A semisimple character~$\theta\in\C(\Delta)$ is called full if~$r=0$. A pss-character is called full if its domain contains a stratum with~$r=0$, these are the pss-characters of group level zero or infinity. Similar for self-dual pss-characters. 
We denote by~$\mc{E}$ the set of all full endo-classes of ps-characters and by~$\mc{E}_-$ the set of all 
elementary full endo-classes. 
We denote for a full semisimple character~$\theta\in\C(\Delta)$ and a full endo-class~$c\in\mc{E}$ by~$\theta_c\in\Delta_c$
the restriction of theta to the summand corresponding to~$c$, if all simple endo-classes in~$c$ occur in~$\theta$. 
Similar for self-dual semisimple characters. 
The~\emph{degree} of~$c_-\in\mc{E}_-$ we define to be the degree of a simple block restriction~$c_1$ of~$c_-$ where
the degree of~$c\in\mc{E}$ is the degree of any stratum in the domain an element of~$c$. We write~$\deg(c)$ and~$\deg(c_-)$. 

We say that a Witt type~$[(\beta,t)]$ is a Witt type~\emph{for}~$c_-\in\mc{E}_-$ if either~$c_-$ is simple and there is a simple stratum~$\Delta=[\Lambda,n,0,\beta]$
which occurs as a first coordinate in the domain of some element of~$c_-$ or if~$c_-$ is not simple and~$[(\beta,t)]=0$.

\begin{definition}\label{defEndo}
 A~$(\rho,\epsilon)$-endo-parameter is a map~$f_-=(f_1,f_2):\ \mc{E}_-\ra \bbN\times \mc{W}_{\rho,\epsilon}$
 with finite support, such that~$f_2(c_-)$  is a Witt type for~$c_-$  and~$f_1(c_-)$
 is divisible by~$\frac{\deg(D)}{\gcd(\deg(c_-),\deg(D))}$ for every~$c_-\in\mc{E}_-$ (This divisibility condition is empty for the non-null simple elementary~$c_-$.).
 Recall that a~$\GL$-endo-parameter is just a map~$f:\ \mc{E}\ra \bbN_0$ of finite support. 
 See Definition~\cite[7.1]{skodlerack:17-1}.
\end{definition}

We can attach to a~$(\rho,\epsilon)$-endo-parameter~$f_-$ a~$GL$-endo-parameter~$f$, also called its lift, where we define
for a simple block restriction~$c$ of~$c_-$:
\[f(c)=\left\{\begin{array}{ll}f_1(c_-)&,\text{ if }c_-\text{ is not simple}\\
               2f_1(c_-)+\diman(f_2(c_-))\frac{\deg(D)}{\gcd(\deg(c_-),\deg(D))}&, \text{ else}
              \end{array}\right.\]
where~$\diman(f_2(c_-))$ is the anisotropic~$F[\beta]$-dimension of~$t(e)$ (for any idempotent~$e$) in the Witt type~$f_2(c_-)=[(\beta,t)]$. 
We define
\[\deg(f_-):=\deg(f):=\sum_{c\in\mc{E}}f(c)\deg(c).\]

We attach to any Witt type~$[(\beta,t)]$ the Witt tower~$WT_D([(\beta,t)]):=\Tr_{\lambda_\beta}(t)$. 



\begin{theorem}[see~\cite{kurinczukSkodlerackStevens:16}~13.11, for the~$F$-case]\label{thmEndoparameter}
 The set of intertwining classes of full semisimple characters for~$G=\U(h)$ is in canonical bijection to the 
 set of endo-parameters~$f_-$ which satisfy: 
 \begin{enumerate}
  \item $\deg(f_-)=\deg(\End_D(V))$.
  \item $\sum_{c_-\in\mc{E}_-}WT_D(f_2(c_-))=h_\equiv$.
 \end{enumerate}
 The map is constructed as follows: Given an intertwining class of~$\theta_-\in\C_-(\Delta)$ we define 
 \[f_1(c_-)=\left\{\begin{array}{ll}(\text{Witt index of}~\tih_{\beta_{c_-}})\frac{\deg(D)}{\gcd(\deg(c_-),\deg(D))}& \text{ for simple }c_-\in \mc{E}_-\\
                    \deg(\End_{E_c\otimes_F D}V^c) & \text{ if~$c$ is a block restriction of }c_-\in \mc{E}_-\text{ non-simple.}  
                   \end{array}\right.,\]
 and 
 \[f_2(c_-)=\left\{\begin{array}{ll}\text{Witt type of}~(\beta_{c_-},h|_{V^{c_-}})& \text{ for simple }c_-\in \mc{E}_-\\
                    0 & \text{ for non-simple }c_-\in \mc{E}_-.  
                   \end{array}\right.
 \]
\end{theorem}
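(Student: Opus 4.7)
The plan is to mirror the strategy of \cite{kurinczukSkodlerackStevens:16}~Theorem 13.11, but with the new tools developed in the excerpt (particularly Theorem~\ref{thmEndoequivalentselfdualpss} and Corollary~\ref{corIntertwMinimalCriteria}) replacing their classical-group counterparts. I would organise the argument into four steps: well-definedness of the assignment, verification of the two numerical conditions, injectivity, and surjectivity.

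First I would check that the map $\theta_- \mapsto f_-$ is well-defined on intertwining classes. Given $\theta_- \in \C_-(\Delta)$ and $\theta'_- \in \C_-(\Delta')$ full self-dual semisimple characters which intertwine by an element of $G$, Corollary~\ref{corMatchingCharG} provides a matching $\zeta$ between their index sets commuting with $\sigma$, and Theorem~\ref{thmEndoequivalentselfdualpss} together with Corollary~\ref{corIntertwMinimalCriteria} guarantees that for each $\sigma$-fixed simple block the Witt towers of $(\beta_i,h_i)$ and $(\beta'_{\zeta(i)},h_{\zeta(i)})$ match. Since matching Witt towers is exactly equality of Witt types (using the trace condition $\Tr_{\lambda_\beta}(t)=\Tr_{\lambda_{\beta'}}(t')$, which forces $h_i \cong h_{\zeta(i)}$), the simple-block contributions to $f_2$ coincide. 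For $\sigma$-orbits of length $2$, both blocks of $h$ are hyperbolic (they are hyperbolic planes over the appropriate field extension), so $f_2$ takes value $0$ and only the degree count $f_1$ matters; this is controlled by $\dim_D V^i = \dim_D V^{\zeta(i)}$ from the matching.

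Next I would verify the two numerical conditions. The degree identity $\deg(f_-)=\deg(\End_D V)$ follows by summing block dimensions, lifting to the $\GL$-endo-parameter $f$ via the formulas of the theorem: the factor $\frac{\deg(D)}{\gcd(\deg(c_-),\deg(D))}$ records the splitting behaviour of $D$ under each field extension $E_{i,D}$, and the whole sum reproduces the $\GL$ identity from \cite[7.1]{skodlerack:17-1}. For the Witt-class identity, since $h = \bigoplus_{i \in I} h_i = \bigoplus_{i \in I_0} h_i \oplus \bigoplus_{i \in I_+} (h_i \oplus h_{\sigma(i)})$, and each summand in the second sum is hyperbolic, in $W_\epsilon(\rho)$ we have $h_\equiv = \sum_{i \in I_0} (h_i)_\equiv$, which via the identification $\Tr_{\lambda_{\beta_i}}(\ti h_{\beta_i})_\equiv = (h_i)_\equiv$ is precisely $\sum_{c_- \in \mc{E}_-} WT_D(f_2(c_-))$.

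For injectivity, suppose $\theta_- \in \C_-(\Delta)$ and $\theta'_- \in \C_-(\Delta')$ produce the same endo-parameter. The supports being equal gives a canonical bijection $\zeta$ of index sets sending $\sigma$-orbits to $\sigma$-orbits; the $\GL$-endo-parameters agree so by the $\GL$ version \cite[7.1]{skodlerack:17-1} the lifts $\theta,\theta'$ are endo-equivalent, and by Proposition~\ref{propEndoEqpss} so are the self-dual pss-characters. The matching $f_2$-data then supplies, for every simple elementary $c_-$, matching Witt towers of the restrictions, and for non-simple $c_-$ the dimension match on the pair of blocks; together these are precisely hypothesis~\ref{thmEndoequivalentselfdualpss-ii} of Theorem~\ref{thmEndoequivalentselfdualpss}, yielding an intertwiner in $G$.

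For surjectivity, given an $f_-$ satisfying the two conditions, I would construct a self-dual semisimple character realising it by choosing, for each $c_- \in \mathrm{supp}(f_-)$, a representative self-dual simple stratum and a signed hermitian form on a block of the prescribed Witt type (a concrete direct-sum construction over $D$, doubling to a hyperbolic pair for non-simple $c_-$ or for $\sigma$-orbits of length two). Condition~(1) ensures that the total block dimensions match $\dim_D V$, and condition~(2) ensures that the Witt sum of the block forms equals $h_\equiv$, so after correcting by a hyperbolic form of appropriate dimension one obtains a signed hermitian space isometric to $(V,h)$. The main obstacle will be this last surjectivity step: one must argue carefully that, within a fixed Witt type, one can realise the required blocks on sub-spaces of $V$ whose orthogonal sum is isometric to $(V,h)$. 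This reduces to the standard classification of signed hermitian forms over $E \otimes_F D$ via Proposition~\ref{propEquivHermCat} and Proposition~\ref{propWittGroupD}, together with Witt cancellation, but one has to keep track of the distinction between simple self-dual blocks (which carry genuine Witt-type data) and non-simple elementary ones (which are forced to be hyperbolic), and one must verify that the resulting character's endo-parameter recovers $f_-$ via the formulas in the statement.
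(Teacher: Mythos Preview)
Your proposal is correct and follows essentially the same route as the paper: well-definedness via Theorem~\ref{thmEndoequivalentselfdualpss}, injectivity by first lifting to the~$\GL$ result (the paper cites \cite[7.2]{skodlerack:17-1} here rather than~7.1, which is only the definition) and then descending via Theorem~\ref{thmEndoequivalentselfdualpss}, and surjectivity by assembling block-by-block and using the Witt-sum condition to transport back to~$(V,h)$.

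One point in your surjectivity step deserves more care. After you have constructed, for each~$c_-$ in the support, a self-dual character~$\theta_{c_-}$ on its own hermitian block, you need to know that the tensor product~$\otimes_{c_-}\theta_{c_-}$ is a \emph{self-dual} semisimple character, i.e.\ that there exists a self-dual semisimple stratum~$\Delta$ on the orthogonal sum with~$\otimes_{c_-}\theta_{c_-}\in\C_-(\Delta)$. The~$\GL$ result \cite[7.1]{skodlerack:17-1} gives you only a semisimple stratum whose~$\C(\cdot)$ contains the (lifted) tensor product and is~$\sigma$-stable as a set; the paper then invokes Proposition~\ref{propSigmaFixedCharacters} to replace this stratum by a self-dual one with the same~$\C(\cdot)$. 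The same issue arises already in the non-simple~$c_-$ block construction (hyperbolic doubling of a simple character). You do not mention this step, and without it the argument is incomplete. Your discussion of the Witt-sum/isometry obstruction is on target and matches what the paper does.
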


One can see the non-simple~$c_-$ as~$GL$-parts of the endo-parameter.  

\begin{proof}
 The map is well defined by Theorem~\ref{thmEndoequivalentselfdualpss}.
 We show at first the injectivity of the map. We consider two full self-dual semisimple characters~$\theta_-\in\C_-(\Delta)$ and~$\theta'_-\in\C_-(\Delta')$ for~$\U(h)$ with 
 the same~$(\rho,\epsilon)$-endo-parameter~$f_-$. Then their lifts~$\theta$ and~$\theta'$ intertwine by Theorem~\cite[7.2]{skodlerack:17-1} because they have the 
 same $GL$-endo-parameter. Now 
 Theorem~\ref{thmEndoequivalentselfdualpss} implies that~$\theta_-$ and~$\theta'_-$ intertwine. Conversely, we have to show that any endo-parameter~$f_-$ of the form 
 given in the theorem is attained by a self-dual semisimple character for~$\U(h)$. 
 We only consider~$c_-$ in the support of~$f_-$. 
 \begin{itemize}
  \item For a simple~$c_-$ we take a full self-dual semisimple character~$\theta_{c_-}\in\C(\Delta_c)$ whose self-dual ps-character is an element of~$c_-$
  such that~$\tih_{\beta_{c_-}}$ has Witt index~$f_1(c_-)\frac{\gcd(\deg(c_-),\deg(D)}{\deg(D)}$ and Witt type~$f_2(c_-)$. 
  \item For a non-simple~$c_-$ we consider a simple block~$c_1$ of~$c_-$. We take a simple character~$\theta_{c_1}\in\C(\Delta_{c_1})$ with 
  endo-parameter supported in~$c_1$ which maps~$c_1$ to~$f_1(c_-)$. We construct a hyperbolic~$\epsilon$-hermitian space~$h_{c_-}$ 
  with Lagrangian~$V^{c_1}$. There is a full semisimple character with block restrictions~$\theta_{c_1}$ and~$\theta_{c_1}^{\sigma}$ by~\cite[7.1]{skodlerack:17-1}. 
  We can take the stratum to be self-dual by Proposition~\ref{propSigmaFixedCharacters}.  
\end{itemize}
Again by~\cite[7.1]{skodlerack:17-1} and~\ref{propSigmaFixedCharacters} there is a self-dual semisimple stratum~$\Delta$ for~$\bigobot_{c_-}h_{c_-}$
such that~$\theta_-:=\otimes_{c_-}\theta_{c_-}\in\C_-(\Delta).$ 
Now~$h$ is isometric to~$\bigobot_{c_-}h_{c_-}$ and we can take an isometry to conjugate~$\theta_-$ to a character for~$h$. By construction and Remark~\ref{rematchingWittTowersfromconjugation} 
the character~$\theta_-$ has endo-parameter~$f_-$.  
\end{proof}


%
%
%
%
%
%

\appendix
\section{Intertwining classes of self-dual embeddings}

In this section we answer the following question. Let us underline that in this section we use that~$D$ is not a field. Say~$\theta_-\in\C_-(\Delta)$ is a full 
self-dual semisimple character with lift~$\theta\in\C_-(\Delta)^{\sigma}$. 
How many~$G$-intertwining classes of~$\sigma$-fixed semisimple characters are contained the~$\ti{G}$-intertwining class of~$\theta$. 
Let~$f_-$ be the endo-parameter of~$\theta_-$ and~$f$ its lift. By Theorem~\ref{thmEndoparameter} we only need to find all endo-parameters~$f'_-$ with 
lift~$f$ and such that~$\sum_{c_-}WT_D(f'_2(c_-))=h_\equiv$. 
These endo-parameters only differ in their values for simple~$c_-\in\mc{E}_-$. 
We start the simple case.
Let us recall: We call an equivariant-$F$-algebra homomorphism~$\phi:\ (E,\sigma_E)\ra (\End_D(V),\sigma_h)$ a \emph{self-dual embedding}. 

\begin{proposition}\label{propSelfdualEmbeddings}
 Let~$E=F[\beta]$ be a self-dual field-extension different from~$F$ and suppose there is a self-dual embedding into~$(\End_D(V),\sigma_h)$.
 Then there are precisely two~$G$-conjugacy classes of self-dual embeddings of~$(E,\sigma_E)$ into~$(\End_D(V),\sigma_h)$.
\end{proposition}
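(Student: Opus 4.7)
The plan is to reduce the count of $G$-conjugacy classes to a question about a unitary Witt group, then to exhibit two explicit orbits and check no more can occur.

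\emph{Step 1: Setup via Skolem--Noether.} Fix a self-dual embedding $\phi_0$, which exists by hypothesis. By Skolem--Noether applied to the central simple $F$-algebra $A:=\End_D(V)$, every $F$-algebra embedding of $E$ into $A$ is $\ti G$-conjugate to $\phi_0$, so every self-dual embedding is of the form $\phi=\mathrm{Ad}(\ti g)\circ\phi_0$ for some $\ti g\in\ti G$, and a short computation shows that self-duality of $\phi$ is equivalent to $a(\ti g):=\sigma_h(\ti g)\ti g$ lying in $B^\times$, where $B:=C_A(\phi_0(E))=\End_{E\otimes_F D}(V)$. Moreover, two such $\ti g,\ti g'$ give $G$-conjugate embeddings iff $\ti g'\in G\,\ti g\,B^\times$, so the $G$-conjugacy classes of self-dual embeddings are in bijection with the double-coset set $G\backslash\{\ti g\in\ti G:a(\ti g)\in B^\times\}/B^\times$.

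\emph{Step 2: Reduction to Witt classes and the upper bound.} Sending $\phi$ to the unique $(\sigma_E\otimes\rho)$-compatible $\epsilon$-hermitian form $\tih_\phi$ on $V_\phi$ with trace $h$ (see the discussion before Proposition~\ref{propEquivHermCat}), transported to $V_{\phi_0}$ by $\ti g^{-1}$, produces a bijection between the double-coset set above and the isometry classes of such forms on the fixed $E\otimes_F D$-module $V$ whose trace under $\lambda_\beta\otimes\id_D$ is isometric to $h$. By Proposition~\ref{propEquivHermCat} when $E$ splits $D$, and by the analogous analysis when $E\otimes_F D$ is a division algebra, these isometry classes are parametrized by $W_\epsilon(\sigma_E\otimes\rho)\cong W_\epsilon(\sigma_E)$, the unitary Witt group of the quadratic extension $E/E_0$. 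For a quadratic extension of local fields of odd residue characteristic this group has order two, which gives the upper bound: at most two $G$-conjugacy classes.

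\emph{Step 3: Exhibiting a second orbit.} To reach the lower bound I consider $\phi_1:=\phi_0\circ\sigma_E$, which is again self-dual since $\sigma_h\circ\phi_0=\phi_0\circ\sigma_E$ and $\sigma_E^2=\id$, and satisfies $\phi_1(\beta)=-\phi_0(\beta)$. A $G$-conjugator $T$ of $\phi_0$ and $\phi_1$ would be an element of $G$ anti-commuting with $\phi_0(\beta)$, and the main obstacle is to rule out such $T$. Using Proposition~\ref{propNrd1} (every element of $G$ has reduced norm one) together with the non-splitness of $D$ emphasized at the start of the appendix, any candidate $T$ yields a norm equation in $D$ of the shape ``$\langle i^2,j^2\rangle$ represents $1$ over $F$'' (or a higher-rank analogue) which has no solution precisely because the relevant $2$-dimensional quadratic form over $F$ is anisotropic---equivalently, because $D$ is non-split. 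Hence $\phi_0$ and $\phi_1$ lie in distinct $G$-orbits, and combining this with Step~2 gives exactly two $G$-conjugacy classes, completing the proof.
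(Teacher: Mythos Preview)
Your Step~3 contains a genuine error: the embeddings $\phi_0$ and $\phi_1=\phi_0\circ\sigma_E$ are \emph{not} in general in distinct $G$-orbits. Take $V=D^2$ with the hyperbolic $\epsilon$-hermitian form $h$ (Gram matrix $J=\left(\begin{smallmatrix}0&1\\\epsilon&0\end{smallmatrix}\right)$) and $\phi_0(\beta)=\mathrm{diag}(\beta_0,\beta_0)$ for a $\rho$-skew element $\beta_0\in D^\times$, say $\beta_0=\pi_D l_s$. Then
\[
T=\begin{pmatrix}0 & l_s\\ \epsilon\,l_s^{-1} & 0\end{pmatrix}
\]
anti-commutes with $\phi_0(\beta)$ and one checks directly that $\sigma_h(T)T=1$, so $T\in G$ conjugates $\phi_0$ to $\phi_1$. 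Thus your proposed representatives coincide, and the vague appeal to ``$\langle i^2,j^2\rangle$ does not represent $1$ (or a higher-rank analogue)'' fails precisely because the higher-rank analogue \emph{does} have solutions. Your Step~2 also misstates the order of the unitary Witt group $W_\epsilon(\sigma_E)$: it is $4$, not $2$; what you need (and what is true) is that there are exactly two isometry classes of hermitian forms of each fixed dimension, which does give the upper bound.

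The paper obtains the lower bound by a completely different mechanism: rather than exhibiting two embeddings, it shows that the trace map $\Tr_{\lambda_\beta}:W_\epsilon(\sigma_E\otimes\rho)\to W_\epsilon(\rho)$ sends the maximal anisotropic Witt tower to the hyperbolic one. Hence both Witt towers of the relevant parity trace to the same class in $W_\epsilon(\rho)$, which must be $h_\equiv$ since one of them is already realised. The key input is Proposition~\ref{propWittGroupD} (the quaternionic Witt group is an elementary $2$-group and $\langle d\rangle_\equiv=\langle dx\rangle_\equiv$ for $x\in F^\times$), together with a reduction to a $\sigma_E$-stable quadratic sub-extension $E'|F$. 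This argument uses the non-split nature of $D$ in an essential but different way from what you attempted.
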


Proposition~\ref{propEquivHermCat} and~\ref{propFfFe-1} are true if one replaces~$\sigma_E$  with~$\id_E$. Note that~$\id_E\otimes_F\rho$ is orthogonal and thus 
the set~$\Idemp(\id_E\otimes_F\rho)$  of~$\id_E\otimes_F\rho$-fixed idempotents of rank~$1$  is non-empty. We can again identify the 
Witt-group~$W_\epsilon(\id_E\otimes_F\rho)$ with the set of Witt towers, defined as in Definition~\ref{defWitttower}, i.e. as maps 
\[\Idemp(\id_E\otimes \rho)\ra W_\epsilon(\id_E),\ \wtower_{\tih}(e):=\mc{F}_e(\tih)_\equiv.\]
For an extension~$(E|\sigma_E)|(E'|\sigma_{E'})$, both of even degree over~$F$ and an~$E'$-linear non-zero map~$\lambda$ 
we get a map\[\Tr_\lambda:\ W_{\epsilon}(\id_E\otimes\rho)\ra W_\epsilon(\id_{E'}\otimes\rho),\]
which in terms of Witt towers is given by
\[\wtower_{\Tr_\lambda(t)}(e')=\Tr_\lambda(\wtower_t(e')),\ e'\in\Idemp(\id_{E'}\otimes_F\rho).\]
where~$\Tr_\lambda:\ W_\epsilon(\sigma_E)\ra W_\epsilon(\sigma_{E'})$ is given by~$\tih_\equiv\mapsto (\lambda\circ\tih)_\equiv$.
We are now able to prove Proposition~\ref{propSelfdualEmbeddings}.

\begin{proof}
There are at most two conjugacy classes because the parity of the anisotropic dimension of~$\tih\in W_{\epsilon}(\sigma_e\otimes \rho)$ such 
that~$(\lambda_\beta\otimes\id_D)\circ\tih$ is isometric to~$h$ is determined by the degree of~$E|F$ and~$\dim_DV$ and there are exactly two Witt towers 
in~$W_\epsilon(\sigma_E\otimes\rho)$ with the same parity for the anisotropic dimension.  
 It is enough to show that the maximal anisotropic Witt tower is mapped to the hyperbolic Witt tower under~$\Tr_{\lambda_\beta}$. 
 $E|F$ contains a~$\sigma_E$-invariant quadratic extension~$(E',\sigma_{E'}=\sigma_E|_{E'})$ of~$F$. The image of the maximal anisotropic Witt tower under~$\Tr_{\lambda} $ 
 does not depend on the choice of~$\lambda:\ (E,\sigma_E) \ra F$, non-zero, equivariant and~$F$-linear, by~\cite[2.2]{kurinczukSkodlerackStevens:16}. 
 So, we could replace~$\lambda_\beta$ by~$\tr_{E'|F}\circ\lambda'$ for some non-zero, equivariant~$E'$-linear~$\lambda':(E,\sigma_E)\ra (E',\sigma_{E'})$. 
 The map~$\Tr_{\lambda'}$ sends the maximal anisotropic Witt tower to the maximal anisotropic one by~\cite[4.4]{skodlerackStevens:16}. 
 So, we have to show that~$\Tr_{E'|F}$ sends the maximal anisotropic Witt tower to the hyperbolic one. The maximal anisotropic Witt tower in~$W_\epsilon(\sigma_{E'}\otimes\rho)$
 can be written in the form~$X=(x\tih)_\equiv+\tih_\equiv$ for some~$\tih_\equiv\in W_\epsilon(\sigma_{E'}\otimes\rho)$ and~$x\in F^\times$. 
 By~\ref{propWittGroupD} the forms~$(\tr_{E'|F}\otimes\id_D)\circ\tih$ and~$x((\tr_{E'|F}\otimes\id_D)\circ\tih)$ are isometric and the Witt group of~$(D,\rho)$ is an
 elementary~$2$-group. So~$\Tr_{E'|F}(X)$ is the hyperbolic Witt tower. 
\end{proof}

Two count the number of~$(\epsilon,\rho)$-endo-parameters~$f'_-$ with the same lift~$f$ we only have to arrange Witt types such that~$\sum_{c_-}WT_D(f'_2(c_-))=h_\equiv$.
The above proposition shows that for the two choices for~$f'_2(c_-)$ we have~$WT_D(f_2(c_-))=WT_D(f'_2(c_-))$. So for every non-null simple~$c_-\in\mc{E}_-$ we have two choices for the Witt type and for the others the Witt type is determined by~$f$. So:

\begin{proposition}\label{propNumberGConjugacyClasses}
 The number of~$\U(h)$-intertwining classes of~$\sigma$-fixed semisimple characters in the~$\ti{G}$-intertwining class of~$\theta$ is equal
 $2^{\# I_0}$ if there is no null block restriction for~$\theta$ and~$2^{\# I_0-1}$ if~$\theta$ has a null block restriction. 
\end{proposition}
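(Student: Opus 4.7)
The plan is to apply Theorem~\ref{thmEndoparameter}, which puts the $G$-intertwining classes of full self-dual semisimple characters into bijection with the set of $(\rho,\epsilon)$-endo-parameters $f'_-$ satisfying the degree condition $\deg(f'_-)=\deg(\End_D(V))$ and the Witt-tower identity
\[\sum_{c_-\in\mc{E}_-}WT_D(f'_2(c_-))=h_\equiv.\]
Fixing the $\tilde{G}$-intertwining class of the lift $\theta$ amounts, by Theorem~\cite[7.2]{skodlerack:17-1}, to fixing the $\GL$-endo-parameter $f$. Since $f$ is the lift of $f'_-$, the counting problem becomes: how many $(\rho,\epsilon)$-endo-parameters $f'_-$ have lift equal to $f$ and satisfy the Witt-tower identity?

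First I would split the contributions according to the type of $c_-$ in the support. For a non-simple $c_-$, the definition of the lift together with $f'_2(c_-)=0$ forces $f'_1(c_-)=f(c)$ for each simple block restriction $c$ of $c_-$; so these values are uniquely determined by $f$ and contribute trivially both to the count and to the Witt-tower sum (with hyperbolic image). For a non-null simple $c_-$ in the support, the lift equation
\[f(c)=2f'_1(c_-)+\diman(f'_2(c_-))\frac{\deg(D)}{\gcd(\deg(c_-),\deg(D))}\]
leaves several a priori possibilities for the pair $(f'_1(c_-),f'_2(c_-))$, but the admissible Witt types $f'_2(c_-)=[(\beta_{c_-},t)]$ correspond, via the bijection between $W_\epsilon(\sigma_{E_{c_-}}\otimes\rho)$ and Witt towers together with the functor $\mc{F}_e$, to $G$-conjugacy classes of self-dual embeddings. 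By Proposition~\ref{propSelfdualEmbeddings} there are exactly two such classes, and the key calculation from its proof (the maximal anisotropic Witt tower in $W_\epsilon(\sigma_E\otimes\rho)$ is mapped to the hyperbolic Witt tower by $\Tr_{\lambda_\beta}$, using Proposition~\ref{propWittGroupD}) shows that the two choices have the same image under $WT_D$. Hence each non-null simple $c_-$ in the support contributes a factor of $2$ independently to the count, and each contributes the same value to the Witt-tower sum in both cases.

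Finally I would deal with the possible null simple $c_-$ in the support: for this class, $E=F$ and $f'_2(c_-)$ ranges over Witt types $[(0,t)]$ with $t\in W_\epsilon(\rho)$; here $WT_D(f'_2(c_-))=t$ itself. The sum condition then forces $t$ to equal $h_\equiv$ minus the (choice-independent) contributions of all other $c_-$, so the null block's Witt type is uniquely determined, contributing no additional factor. Identifying the non-null simple elementary self-dual endo-classes appearing in the support of $f$ with the $\sigma$-fixed blocks of the associated splitting (those $i\in I_0$ for which $\beta_i\neq 0$), we get $2^{\#I_0}$ choices when $\theta$ has no null block and $2^{\#I_0-1}$ when it does, as claimed.

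The main obstacle I expect is the bookkeeping between the two descriptions of the "two choices" for a non-null simple $c_-$: on the one hand, the two $G$-conjugacy classes of self-dual embeddings of Proposition~\ref{propSelfdualEmbeddings}, and on the other, the two solutions of the lift equation for $(f'_1,f'_2)$. One needs to verify that every admissible $(f'_1,f'_2)$ really arises from a Witt tower, and that the parity-of-anisotropic-dimension argument in the proof of Proposition~\ref{propSelfdualEmbeddings} cuts the possibilities down to exactly these two, producing the same $WT_D$-image via the norm/transfer computation in $W_\epsilon(\sigma_{E'}\otimes\rho)$.
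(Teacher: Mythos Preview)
Your proof plan is correct and follows essentially the same approach as the paper: reduce via Theorem~\ref{thmEndoparameter} to counting $(\rho,\epsilon)$-endo-parameters with a fixed lift~$f$, then use Proposition~\ref{propSelfdualEmbeddings} (and its proof that the maximal anisotropic Witt tower maps to the hyperbolic one under~$\Tr_{\lambda_\beta}$) to see that each non-null simple~$c_-$ contributes an independent factor of~$2$ with identical $WT_D$-image, while the remaining blocks are forced. Your treatment of the null block via the Witt-tower sum constraint is slightly more explicit than the paper's, but the argument is the same.
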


\def\Circlearrowleft{\ensuremath{%
  \rotatebox[origin=c]{180}{$\circlearrowleft$}}}

\bibliographystyle{plain}
\bibliography{/home/zdsk/LaTeX/bib/bibliography}

\end{document}